\def\E{\ifmmode{\mathbb E}\else{$\mathbb E$}\fi} 
\def\N{\ifmmode{\mathbb N}\else{$\mathbb N$}\fi} 
\def\R{\ifmmode{\mathbb R}\else{$\mathbb R$}\fi} 
\def\Q{\ifmmode{\mathbb Q}\else{$\mathbb Q$}\fi} 
\def\C{\ifmmode{\mathbb C}\else{$\mathbb C$}\fi} 
\def\H{\ifmmode{\mathbb H}\else{$\mathbb H$}\fi} 
\def\Z{\ifmmode{\mathbb Z}\else{$\mathbb Z$}\fi} 
\def\P{\ifmmode{\mathbb P}\else{$\mathbb P$}\fi} 
\def\T{\ifmmode{\mathbb T}\else{$\mathbb T$}\fi} 
\def\SS{\ifmmode{\mathbb S}\else{$\mathbb S$}\fi} 
\def\DD{\ifmmode{\mathbb D}\else{$\mathbb D$}\fi} 
\def\R{\ifmmode{\mathbb R}\else{$\mathbb R$}\fi} 
\newcommand{\del}{\partial}
\newcommand{\ben}{\begin{enumerate}}
\newcommand{\een}{\end{enumerate}}
\newcommand{\be}{\begin{equation}}
\newcommand{\ee}{\end{equation}}
\newcommand{\bea}{\begin{eqnarray}}
\newcommand{\eea}{\end{eqnarray}}
\newcommand{\bc}{\begin{center}}
\newcommand{\ec}{\end{center}}
\newcommand{\beastar}{\begin{eqnarray*}}
\newcommand{\eeastar}{\end{eqnarray*}}
\theoremstyle{theorem}
\newtheorem{thm}{Theorem}[section]
\newtheorem{cor}[thm]{Corollary}
\newtheorem{lem}[thm]{Lemma}
\newtheorem{prop}[thm]{Proposition}
\newtheorem{conj}[thm]{Conjecture}
\theoremstyle{definition}
\newtheorem{defn}[thm]{Definition}
\newtheorem{rem}[thm]{Remark}
\newtheorem{hypo}[thm]{Hypothesis}
\newtheorem{exm}[thm]{Example}
\numberwithin{equation}{section}
\def\R{{\mathbb R}}
\def\E{{\mathbb E}}
\def\Z{{\mathbb Z}}
\def\C{{\mathbb C}}
\def\R{{\mathbb R}}
\def\N{{\mathbb N}}
\def\SS{{\mathcal S}}
\def\DD{{\mathcal D}}
\def\MM{{\mathcal M}}
\def\11{{\mathbb I}}
\def\delbar{{\overline \partial}}
\def\span{\operatorname{span}}
\def\C{\mathbb{C}}
\def\Z{\mathbb{Z}}
\def\T{\mathbb{T}}
\def\Q{\mathbb{Q}}
\def\E{\ifmmode{\mathbb E}\else{$\mathbb E$}\fi} 
\def\N{\ifmmode{\mathbb N}\else{$\mathbb N$}\fi} 
\def\R{\ifmmode{\mathbb R}\else{$\mathbb R$}\fi} 
\def\Q{\ifmmode{\mathbb Q}\else{$\mathbb Q$}\fi} 
\def\C{\ifmmode{\mathbb C}\else{$\mathbb C$}\fi} 
\def\H{\ifmmode{\mathbb H}\else{$\mathbb H$}\fi} 
\def\Z{\ifmmode{\mathbb Z}\else{$\mathbb Z$}\fi} 
\def\P{\ifmmode{\mathbb P}\else{$\mathbb P$}\fi} 
\def\SS{\ifmmode{\mathbb S}\else{$\mathbb S$}\fi} 
\def\DD{\ifmmode{\mathbb D}\else{$\mathbb D$}\fi} 
\def\R{{\mathbb R}}
\def\E{{\mathbb E}}
\def\Z{{\mathbb Z}}
\def\C{{\mathbb C}}
\def\R{{\mathbb R}}
\def\N{{\mathbb N}}
\def\MM{{\mathcal M}}
\def\delbar{{\overline \partial}}
  \def\P{\Psi}
\def\CA{{\mathcal A}}
\def\CC{{\mathcal C}}
\def\CF{{\mathcal F}}
\def\CH{{\mathcal H}}
\def\CJ{{\mathcal J}}
\def\CL{{\mathcal L}}
\def\CM{{\mathcal M}}
\def\CS{{\mathcal S}}
\def\CV{{\mathcal V}}
\def\CW{{\mathcal W}}
\def\darr#1{\raise1.5ex\hbox{$\leftrightarrow$}
\mkern-16.5mu #1}
\def\roughly#1{\raise.3ex\hbox{$#1$\kern-.75em
\lower1ex\hbox{$\sim$}}}
\def\opname#1{\mathop{\kern0pt{\rm #1}}\nolimits}
\def\Re{\opname{Re}}
\def\Im{\opname{Im}}
\def\dim{\opname{dim}}
\def\supp{\opname{supp}}
\DeclareMathOperator{\lcs}{\mathrm{lcs}}
\DeclareMathOperator{\Int}{\mathrm{Int}}
\DeclareMathOperator{\Cont}{\mathrm{Cont}}
\newcommand{\norm}[2]{{ \ensuremath{\|} #1 \ensuremath{\|}}_{#2}}
\begin{document}

\quad \vskip1.375truein

\title[Pseudoholomoprhic curves on the $\mathfrak{LCS}$-fication]
{Pseudoholomoprhic curves on the $\mathfrak{LCS}$-fication of contact manifolds}

\author{Yong-Geun Oh, Yasha Savelyev}
\address{Center for Geometry and Physics, Institute for Basic Sciences (IBS), Pohang, Korea \&
Department of Mathematics, POSTECH, Pohang, Korea}
\email{yongoh1@postech.ac.kr}
\address{University of Colima, CUICBAS}
\email{yasha.savelyev@gmail.com}

\begin{abstract}
For each contact diffeomorphism $\phi: (Q,\xi) \to (Q,\xi)$ of $(Q,\xi)$,
we equip its mapping torus $M_\phi$ with a \emph{locally conformal symplectic} form of Banyaga's type,
which we call the \emph{$\lcs$ mapping torus} of the contact diffeomorphism $\phi$.
In the present paper, we consider the product $Q \times S^1= M_{id}$ (corresponding to $\phi = id$)
and develop basic analysis of
the associated $J$-holomorphic curve equation, which has the form
$$
\delbar^\pi w = 0, \quad w^*\lambda\circ j = f^*d\theta
$$
for the map $u = (w,f): \dot \Sigma \to Q \times S^1$
for a $\lambda$-compatible almost complex structure $J$ and a punctured Riemann surface $(\dot \Sigma, j)$.
In particular, $w$ is a \emph{contact instanton} in the sense of [OW2], [OW3].
We develop a scheme of treating the non-vanishing charge by introducing the notion of \emph{charge class}
in $H^1(\dot \Sigma,\Z)$ and develop the geometric framework for the study of pseudoholomorphic
curves, a correct choice of energy and the definition of moduli spaces
towards the construction of compactification of the moduli space on the $\mathfrak{lcs}$-fication
of $(Q,\lambda)$ (more generally on arbitrary locally conformal symplectic manifolds).
\end{abstract}

\thanks{Oh's work is supported by the IBS project \# IBS-R003-D1}
\keywords{locally conformal symplectic manifolds, $\mathfrak{lcs}$-fication of contact manifolds, $\lcs$ instantons}
\maketitle

\def\mq{\mathfrak{q}}
\def\mp{\mathfrak{p}}
\def\mH{\mathfrak{H}}
\def\mh{\mathfrak{h}}
\def\ma{\mathfrak{a}}
\def\ms{\mathfrak{s}}
\def\mm{\mathfrak{m}}
\def\mn{\mathfrak{n}}

\def\Hoch{{\tt Hoch}}
\def\mt{\mathfrak{t}}
\def\ml{\mathfrak{l}}
\def\mT{\mathfrak{T}}
\def\mL{\mathfrak{L}}
\def\mg{\mathfrak{g}}
\def\md{\mathfrak{d}}

\tableofcontents

\section{Introduction}

Symplectic manifolds $(M,\omega)$ have been of much interest in the \emph{global} study of
Hamiltonian dynamics, and symplectic topology via analysis of pseudoholomorphic
curves. In this regard, closedness of the two-form $\omega$ plays an important
role in relation to the dynamics of Hamiltonian diffeomorphisms and the global
analysis of pseudoholomorphic curves, especially the study of compactification of
the relevant moduli spaces.

On the other hand when one takes the
coordinate chart definition of symplectic manifolds and implements the
covariance property of Hamilton's equation, there is no compulsory reason why one should require the
two-form to be closed. Indeed from the point of view of canonical formalism in
Hamiltonian mechanics and construction of the corresponding \emph{bulk physical space}, it is
more natural to require the locally defined canonical symplectic forms
$$
\omega_\alpha = \sum_{i=1}^n dq_i^{\alpha} \wedge dp_i^{\alpha}
$$
to satisfy the cocycle condition
\be\label{eq:cocylce}
\omega_\alpha = \lambda_{\beta\alpha} \omega_\beta, \quad \lambda_{\beta\alpha}\equiv \text{const.}
\ee
with $\lambda_{\gamma\beta}\lambda_{\beta\alpha} = \lambda_{\gamma\alpha}$
as the gluing condition. (See introduction \cite{vaisman:lcs} for a nice
explanation on this point of view) The corresponding bulk constructed in this way
naturally becomes a \emph{locally conformal symplectic manifold}
(abbreviated as l.c.s manifold) whose definition we first recall.

\begin{defn} An $\lcs$ manifold is a triple $(M, \omega, \frak b)$ where $\frak b$ is
a closed one-form, called the Lee form, and $\omega$ is a nondegenerate 2-form satisfying the relation
\be\label{eq:relation}
d ^{\frak b} \omega:=d\omega + \frak b \wedge \omega = 0.
\ee
\end{defn}
(We refer to \eqref{eq:db} for our convention for the definition of the operation $d^{\mathfrak b}$.)

The following definition of morphisms between lcs manifolds
is given in \cite{haller-ryb} with slight variation of phrasing.

\begin{defn}\label{defn:morphism} Let $(M,\omega,\frak b)$ and
$(M^\prime, \omega^\prime, \frak b^\prime)$ be two lcs manifolds.
\begin{enumerate}
\item A diffeomorphism $\phi: M\to M'$ is called {\it lcs} if there exists
$a \in C^\infty(M,\R\setminus\{0\})$ such that
$$
\phi^*\omega^\prime = (1/a) \omega, \quad \phi^*\frak b' = \frak b + d(\ln|a|).
$$
\item An lcs diffeomorphism is \emph{positive} (resp. \emph{negative}) if the function $a$
is positive (resp. negative).
\end{enumerate}
\end{defn}
We refer to \cite{vaisman:lcs}, \cite{haller-ryb}, \cite{banyaga:lcs}, \cite{Banyaga2007}
for a more detailed discussion of general properties of $\lcs$ manifolds and non-trivial examples.
(See also \cite{apost-dlous}, \cite{eliash-murphy} for more recent development concerning
the existence question on the conformal symplectic structure.)

Note that for a positive lcs diffeomorphism, the defining condition can be rewritten as
\be\label{eq:+lcs-definition}
\phi^*\omega^\prime = e^f \omega, \quad \phi^*\frak b' = \frak b - df
\ee
which manifests its similarity to the defining condition $\psi^*\lambda' = e^g \lambda$ of a (nonstrict)
(orientation preserving) contactomorphism $\psi$ with \emph{conformal exponent} $g \in C^\infty(M)$.
(See \cite{oh:contacton-Legendrian-bdy} for the usage of the same terminology used here.)

Locally by choosing $\frak b = d\ell$ for a local function $\ell: U \to \R$
on an open neighborhood $U$, \eqref{eq:relation} is equivalent to
\be\label{eq:local}
d(e^{\ell}\omega) = 0
\ee
and so the local geometry of l.c.s manifold is exactly the same as that of
symplectic manifolds. In particular one can define the notion of Lagrangian
submanifolds, isotropic submanifolds, and coisotropic submanifolds in the same
way as in the symplectic case since the definitions require only nondegeneracy of the two-form $\omega$.

The main purpose of the present paper is to explore the study of $J$-holomorphic curves in an
enlarged bulk of \emph{locally conformal symplectic} manifolds. We will abbreviate
the term locally conformal symplectic as $\lcs$ from now on.
All the local theory of $J$-holomorphic
curves go through without change as in the symplectic case. The main difficulty lies in the global
geometry of $J$-holomorphic curves and it has not been clear
whether Novikov-closedness of $\lcs$ structure $(M,\omega,\frak b)$ would give
meaningful implication to the Fredholm theory of moduli problem
and the study of compactification of $J$-holomorphic
curves with \emph{punctured} Riemann surfaces as their domains.
(See \cite[Introduction]{le-oh:lcs} for such a discussion.)

\subsection{$\lcs$ instantons}

The starting point of the present paper is the second named author's question on  whether
or not contact non-squeezing of Eliashberg-Kim-Polterovich \cite{EKP}
can be generalized to that of $\lcs$ context.
The Eliashberg-Kim-Polterovich contact
non-squeezing theorem as stated
by Fraser ~\cite{fraser} has the following form.
Let $C = \R ^{2(n-1)}  \times S ^{1}  $, $S ^{1} =
\mathbb{R} ^{}/\mathbb{Z}  $,  be the
prequantization space of $\R ^{2n-2} $, or in other
words the contact manifold with the contact form
$d\theta - \lambda$, for $\lambda =
\frac{1}{2}(ydx - xdy)$, (or $\lambda = p\ dq$).
Let $B _{R} $ denote the open radius $R$ ball in
$\mathbb{R} ^{2n-2} $, and $\overline{B} _{R}$ its
topological closure.

To put the current research in some perspective, we recall the following
\emph{contact nonsqueezing theorem} which is the result arising from
a combination of \cite{EKP} and \cite{chiu} (or  \cite{fraser}).
\begin{thm}
   [Eliashberg-Kim-Polterovich~\cite{EKP}, Chiu~\cite{chiu},
   Fraser~\cite{fraser}]
   \label{thm:}
For $R \geq 1$ there is no
contactomorphism, isotopic to the identity, $\phi: C \to C$ so that   $\phi
(\overline{B} _{R} \times S ^{1}) \subset  B _{R}
\times S ^{1} $.
\end{thm}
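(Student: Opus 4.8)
\emph{Proof strategy.} All the known proofs produce a filtered invariant of contact domains that is monotone under contactomorphisms isotopic to the identity, and then pin a sharp threshold at $R=1$; the plan I would follow runs through filtered symplectic homology, which is closest in spirit to the curve-counting of the present paper. First I would convert the statement into a symplectic one. Writing the contact form on $C=\R^{2(n-1)}\times S^1$ as $\alpha=d\theta-\lambda$ and forming the symplectization $SC=\R_{>0}\times C$ with $\omega=d(r\alpha)$, the prequantization structure of $(C,\alpha)$ over $\R^{2(n-1)}$ identifies $SC$ with an open subset of $\R^{2(n-1)}\times(\R^2\setminus\{0\})$ carrying the split symplectic form, under which the Reeb $S^1$ becomes rotation of the $\R^2$-factor and $B_R\times S^1$ goes to the product of $B_R\subset\R^{2(n-1)}$ with an explicit planar region. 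This turns the contact-squeezing question into a symplectic-squeezing question in the $\R^{2(n-1)}$-factor, refined by the bookkeeping from the $\R^2\setminus\{0\}$-factor. (Equivalently one may lift to the universal cover $\widehat C=\R^{2(n-1)}\times\R$ and work $\Z$-equivariantly, as in~\cite{fraser}.) Under this identification a hypothetical $\phi$ as in the statement transports to an $S^1$-equivariant symplectic embedding of the corresponding non-compact domains.

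\emph{Step 1: a monotone filtered invariant.} To the domain $X_R$ corresponding to $B_R\times S^1$ I would attach a filtered symplectic homology $SH^{(a,b)}_*(X_R)$, built from moduli spaces of finite-energy $J$-holomorphic (equivalently, Hamiltonian Floer) curves in $SC$ for Hamiltonians admissible and steep along $\partial X_R$, and graded and filtered by the symplectic action. The properties I would need are: (i) invariance under the auxiliary choices together with equivariance under the Reeb $S^1$; (ii) functoriality --- a contact embedding $X_R\hookrightarrow X_R$ isotopic to the identity induces a map on $SH^{(a,b)}_*$ compatible with the action windows, which is an isomorphism when the embedding is the set-theoretic identity with the closed domain sitting inside the open one; and (iii) a distinguished nonzero class $e_R\in SH^{(a,b)}_*(X_R)$ living in a prescribed action window. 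Packaging (i)--(iii) is the Floer-theoretic content of~\cite{EKP}; its $S^1$-equivariant sharpening is carried out in~\cite{fraser}, while~\cite{chiu} replaces $SH^{(a,b)}_*$ by the triangulated category of sheaves on $\R^{2(n-1)}\times\R$ with controlled singular support at infinity, and $e_R$ by an associated ``contact capacity''.

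\emph{Step 2: the threshold and the contradiction.} The closed Reeb orbits entering the Floer complex of $X_R$ are the prequantization $S^1$-orbits lying over $B_R$, and their symplectic actions are governed by the symplectic area swept out in the $\R^{2(n-1)}$-factor measured against the fixed period of the Reeb circle; matching these forces $e_R$ to be carried into (or out of) a nontrivial action window exactly when $R\ge 1$. This is where the threshold of the statement is pinned down, and the explicit contact squeezing construction for $R<1$ shows that it is sharp. Granting this, suppose a $\phi$ as in the statement existed with $R\ge 1$. Since $\phi(\overline B_R\times S^1)\subset B_R\times S^1$ is the same as $\overline B_R\times S^1\subset\phi^{-1}(B_R\times S^1)$, we obtain a composite of $S^1$-equivariant contact embeddings $\overline B_R\times S^1\ \hookrightarrow\ \phi^{-1}(B_R\times S^1)\ \xrightarrow{\ \phi\ }\ B_R\times S^1$, which under the reduction is a set-theoretic inclusion followed by a symplectomorphism isotopic to the identity. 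Applying the functoriality (ii) along this composite would force $e_R$ to survive through a map on $SH^{(a,b)}_*$ that for action-filtration reasons must simultaneously annihilate it --- a contradiction. Hence no such $\phi$ exists.

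\emph{The main obstacle.} The whole weight of the argument sits in Step 1: making $SH^{(a,b)}_*$ into an honest functor on these non-compact, prequantization-type domains with a well-behaved action filtration demands the full analytic apparatus --- transversality for the relevant Floer and $J$-holomorphic moduli in the non-compact symplectization, $C^0$-compactness in the presence of the two non-compact directions (the prequantization fibre and the symplectization coordinate $r$), and SFT-type compactness for the punctured curves with Reeb asymptotics along $\partial X_R$ --- together with the delicate verification that the continuation and restriction maps respect the action windows, which is exactly what makes (ii) true. A second, genuinely nontrivial point --- and the one needed to reach \emph{all} real $R\ge 1$ rather than only the ``resonant'' values of $R$ --- is the refinement of the invariant: the $S^1$-equivariant homology and localization computation of~\cite{fraser}, or the extra flexibility of the sheaf category of~\cite{chiu}, with the verification that the refinement retains the sharp functoriality (ii). By comparison, the reduction to the symplectization and the closing diagram chase are routine once the invariant and its functoriality are in place.
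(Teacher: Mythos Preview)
The paper does not give its own proof of this theorem. It is stated in the Introduction purely as background and motivation (``To put the current research in some perspective, we recall the following \emph{contact nonsqueezing theorem}\ldots''), with attribution to \cite{EKP}, \cite{chiu}, and \cite{fraser}, and no argument is supplied; the paper's actual contribution begins afterward with Conjecture~\ref{conj:lcsnonsqueezing} and the analysis of $\lcs$ instantons. So there is no ``paper's proof'' to compare against.

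Your proposal is a reasonable high-level summary of the strategies in the cited references --- the filtered/equivariant symplectic homology route of \cite{EKP} and \cite{fraser}, and the sheaf-theoretic alternative of \cite{chiu} --- and you correctly identify where the real work lies (functoriality of the filtered invariant on these non-compact domains, and the refinement needed to cover all $R \geq 1$). But this is an outline, not a proof: every substantive step defers to those papers, and the analytic and categorical machinery you invoke is precisely what \cite{EKP}, \cite{fraser}, and \cite{chiu} spend their length constructing. For the purposes of the present paper that is entirely appropriate, since the theorem is only being quoted, but you should not present this as an independent argument.
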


A Hamiltonian conformal symplectomorphism of an
$\lcs$ manifold $(M,\omega)$, which we just
abbreviate by the short name:
\emph{Hamiltonian lcs map}, is a
$\lcs$ diffeomorphism $\phi _{H} $ generated
analogously to the symplectic case  by a smooth
function $H: M \times [0,1] \to \mathbb{R} $.
Specifically, we define the time dependent vector
field $X _{t} $  by:
\begin{equation*}
\omega (X _{t}, \cdot) = d ^{\frak b} H _{t},
\end{equation*}
for $\frak b$ the Lee form, and then taking $\phi
_{H} $ to be the time 1 flow map of $\{X _{t} \}$.
For example, let $C$ be a manifold with a
contact form $\lambda$, then $\omega = d^{\frak b}
\lambda $ on $C \times S ^{1} $  an $\lcs$
structure called the
$\mathfrak{lcs}$-fication of $(C, \lambda)$, see also the
Definition \ref{def:lcsfication} further ahead.
Then if $\forall t: H _{t} =-1$, then $d ^{\frak b} (H
_{t}) = - \frak b$ and clearly 
$$
X _{t} = R_\lambda \oplus 0,
$$  as a section of $TC
\oplus TS ^{1}$  with $R_\lambda $ the
$\lambda$-Reeb vector field.  (See the list of our sign conventions given in
Convention at the end of this introduction.) Thus in this case the
associated flow is naturally induced by the Reeb
flow.   More generally, given a contact flow on a closed contact
manifold $C$, there is an induced Hamiltonian flow
on the $\mathfrak{lcs}$-fication $C \times S ^{1}$. So that the
following conjecture is a direct generalization of the
contact non-squeezing theorem above.
\begin{conj} \label{conj:lcsnonsqueezing} If $R
   \geq 1$ there is no compactly supported,
   Hamiltonian lcs map $$\phi: \mathbb{R} ^{2n} \times S ^{1} \times S ^{1} \to \mathbb{R} ^{2n} \times S ^{1} \times S ^{1},  $$ so that $\phi (\overline{U} ) \subset U$, for $U := B _{R} \times S ^{1} \times S ^{1}  $ and $\overline{U} $ the topological closure.
  \end{conj}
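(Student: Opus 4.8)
We end the introduction by indicating the route we plan to take towards Conjecture~\ref{conj:lcsnonsqueezing}; the analysis developed in the body of this paper is designed precisely as the first step of this program.

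The plan is to adapt the scheme of Eliashberg--Kim--Polterovich, replacing their Floer/generating-function package for contactomorphisms of the prequantization $C=\R^{2n}\times S^1$ by a Floer-type theory of the $\lcs$ instantons studied here on the $\mathfrak{lcs}$-fication $C\times S^1=\R^{2n}\times S^1\times S^1$. Concretely, one wants to attach to open subsets $V\subset\R^{2n}\times S^1\times S^1$ a capacity-type invariant $c(V)\in[0,+\infty]$, defined as a spectral number extracted from counts of $\lcs$ instantons $u=(w,f)\colon\dot\Sigma\to\R^{2n}\times S^1\times S^1$ that are asymptotic at the punctures to Reeb-type orbits and carry a prescribed charge class in $H^1(\dot\Sigma,\Z)$, with two properties: (i) $c$ is monotone under compactly supported Hamiltonian $\lcs$ maps carrying $\overline{V}$ into $V$, so that in particular $\phi(\overline{U})\subset U$ forces $c(\overline{U})\le c(U)$; and (ii) for $R\ge 1$ one has $c(\overline{B}_R\times S^1\times S^1)>c(B_R\times S^1\times S^1)$, contradicting (i) applied to a hypothetical squeezing. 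This is the $\lcs$ analogue of the contact/symplectic homology mechanism used in \cite{EKP}, \cite{chiu}, \cite{fraser}.

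The ``quantum'' mechanism behind (ii) is already visible in the defining equation: the charge class $[f^{\ast}d\theta]\in H^1(\dot\Sigma,\Z)$ is integer-valued, and the relation $w^{\ast}\lambda\circ j=f^{\ast}d\theta$ pins the $\lambda$-charge of the contact instanton $w$ to integers. One therefore expects $c(B_R\times S^1\times S^1)$ to equal, up to normalization, the smallest positive quantized orbit action realizable by a curve passing through a prescribed point of the region; this value stays strictly below the one attached to the closure only while $R<1$, consistently with the $\lcs$ squeezing that exists in that range, and is forced to jump once $R\ge 1$. This is the $\lcs$ incarnation of the quantization of Reeb periods on the prequantization $\R^{2n}\times S^1$.

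Constructing $c$ runs through the steps set up in this paper and its sequels. One first fixes the asymptotic model at the punctures: for nonzero charge the $S^1$-component $f$ winds around the $\mathfrak{lcs}$-direction, and the convergence theory for contact instantons together with the charge-class bookkeeping identifies the asymptotic orbits and the associated spiraling normal behavior. One then uses the correct $\lcs$ energy $E(u)$ adapted to the Novikov (that is, $d^{\frak b}$-closed) form $\omega=d^{\frak b}\lambda$ --- the normalization that absorbs the conformal exponential factors --- and proves a uniform a priori bound $E(u)\le E_0$ depending only on the charge and the asymptotics. Finally one establishes a Gromov--SFT type compactness theorem for moduli spaces of $\lcs$ instantons of bounded energy and fixed charge, so that the relevant counts are finite, and combines it with transversality (perturbing $J$ among $\lambda$-compatible almost complex structures, or by abstract perturbations) to obtain the spectral numbers and their monotonicity and continuity properties.

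The hard part will be exactly this compactness with non-vanishing charge, the analytic heart of the present paper. Since $\omega$ is not closed, the usual symplectic-area bound is unavailable, so everything hinges on showing that the charge-corrected $\lcs$ energy is at once uniformly bounded on the moduli spaces in play and strong enough to rule out loss of energy through bubbling and breaking, including the new phenomenon of components winding around the $S^1$-factor of the $\mathfrak{lcs}$-fication. A second essential difficulty, once compactness holds, is the $C^0$-confinement underlying the monotonicity in (i): one must produce a maximum-principle-type mechanism keeping the curves inside the region determined by $V$, derived from the specific geometry of $d^{\frak b}\lambda$ on the product $Q\times S^1$ rather than from exactness of a Liouville form. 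Transversality for multiply covered $\lcs$ instantons and the precise identification of the asymptotic operators and their indices are the remaining, more routine, ingredients.
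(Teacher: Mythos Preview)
The statement is a \emph{conjecture}, and the paper does not prove it. It is stated in the introduction purely as motivation: the paper says explicitly that it studies pseudoholomorphic curves on the $\mathfrak{lcs}$-fication ``as the first step in attacking Conjecture~\ref{conj:lcsnonsqueezing},'' and then moves on to develop the analytic foundations (energy, asymptotics, Fredholm theory) without returning to the conjecture itself.

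Your proposal is likewise not a proof but a program outline, and you are upfront about this. In that sense there is no ``gap'' to name: you are not claiming more than the paper does. Your sketch is in fact considerably more detailed than anything the paper offers toward the conjecture---the paper only gestures at the Eliashberg--Kim--Polterovich analogy and does not spell out a capacity $c(V)$, the monotonicity mechanism, or the quantization argument behind the expected jump at $R=1$. Your outline is consistent in spirit with the paper's stated approach (replace symplectization by $\mathfrak{lcs}$-fication, build a Floer-type theory from $\lcs$ instantons with fixed charge class), but the specific shape you give it---spectral capacities, the $C^0$-confinement/maximum-principle issue, the quantization of charges forcing the jump---is your own elaboration and is not present in the paper.

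If anything, you should be aware that several of the ingredients you list as ``more routine'' (transversality for multiply covered instantons, the precise asymptotic operators in the nonzero-charge case, the maximum principle on a non-exact $\lcs$ manifold) are genuinely open and not addressed in the paper either; the paper establishes only the single-curve analysis and the linearized Fredholm index, not compactness of moduli spaces or any invariant built from them.
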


The way how \cite{EKP} approaches
  the contact non-squeezing problem is to consider open domains of the form $U \times S^1$ and compare their
contact homology which is constructed by considering the \emph{symplectization} $Q \times \R$, where
$Q =\R^{2(n-1)} \times S^1$ is the prequantization space of $\R ^{2(n-1)} $ equipped
with the contact form $d\theta - \lambda$ for $\lambda = \frac{1}{2}(ydx - xdy)$ (or $\lambda = p\ dq$)
and consider the case $U = B_{R}$ where $B_R$ denotes the open radius $R$ ball in $\mathbb{R}^{2(n-1)} $.

In this regard, an interesting general class of $\text{\rm lcs}$ manifolds arises from the mapping torus construction of
contactomorphisms of a contact manifold $(Q,\xi)$ (See Subsection \ref{subsec:mapping-tori} for
a detailed explanation.)

Motivated by this we will more generally associate to each contact manifold $(Q,\xi)$ with $Q = Q^{2n-1}$
an $\lcs$ manifold $Q \times S^1$ equipped with a \emph{${\frak b}$-exact} $\lcs$ form
\be\label{eq:banyaga-lcs-form}
d^{\frak b}\lambda: = \omega_\lambda = d\lambda + {\frak b} \wedge \lambda,
\ee
where $\frak b = \pi^*d \theta$ for  the canonical angular coordinate $\theta$ on $S^1$ with its
period given by $1$. One can check $d\omega_\lambda + \pi^*d\theta \wedge \omega_\lambda =0$, i.e.,
the relation \eqref{eq:relation} holds for $\omega_\lambda$ with the choice of  $\frak b = \pi^*d\theta$.
Since this particular $\lcs$ structure is naturally constructed from the contact manifold $(Q,\lambda)$
it seems reasonable to name it

\begin{defn}[$\mathfrak{lcs}$-fication]
   \label{def:lcsfication} We call the pair $(Q \times S^1, \omega_\lambda)$ an \emph{$\mathfrak{lcs}$-fication}
of the contact manifold $(Q,\lambda)$ where $S^1 = \R/\Z$.
\end{defn}

\begin{rem}
By varying the size of the circle $(S^1,d\theta): = (\R/\Z,[dt])$ to $(S^1_R,d\theta): = (\R/R \Z, [dt])$,
or by taking the one-form $\frak b_R: = \frac{1}{R} \pi^*\theta$ on $S^1 = R/\Z$,
we may regard the $\lcs$-manifold
$$
(Q \times \R, d\lambda + ds \wedge \lambda)
$$
as the limiting case of
$$
(Q \times S^1, d\lambda + \frak b_R \wedge \lambda), \quad \frak b_R = \frac{1}{R} d\theta
$$
as $R \to \infty$. See Subsection \ref{subsec:canonical} for more discussion on
this relationship.
\end{rem}

We observe that
$$
\ker d\lambda = \xi \oplus 0 R_\lambda \oplus 0 \frac{\del}{\del \theta} \cong \xi
$$
for the contact distribution $\xi$ of $Q$. We again denote by $\xi$ for $\ker d\lambda \subset T(Q \times S^1)$
by abuse of notation. Then the tangent bundle of $Q \times S^1$ has the following canonical splitting:
$$
T(Q \times S^1) = \xi \oplus \CV
$$
with
$$
\CV = \span_\R\left\{\frac{\del}{\del \theta}, R_\lambda \right\} =  (\xi)^{\omega_\lambda}
$$
where $(\xi)^{\omega_\lambda}$ denotes the $\omega_\lambda$-orthogonal complement of $\xi$.

As the first step in attacking Conjecture \ref{conj:lcsnonsqueezing} and also as
a first step towards to the study of general pseudoholomorphic curves on general $\lcs$ manifolds,
we study pseudoholomorphic curves on the $\mathfrak{lcs}$-fication $(Q \times S^1, \omega_\lambda)$.
It turns out that the $J$-holomorphic curve on this $\mathfrak{lcs}$-fication is closely related to
that of \emph{contact instanton} studied in \cite{oh-wang:CR-map1,oh-wang:CR-map2,oh:contacton}
for a suitable class of almost complex structures on $Q\times S^1$, which
we call \emph{$\lambda$-admissible}.

\begin{defn}[$\lambda$-admissible almost complex structure]\label{defn:lambda-admissible-J}
We say an almost complex structure $J$ on $M = Q \times S^1$ is \emph{$\lambda$-admissible} if
$J$ satisfies the following:
\begin{enumerate}
\item $J$ is tame with respect to $\omega_\lambda$ in the standard sense,
\item $J$ preserves the splitting \eqref{eq:splitting}, i.e., $J(\xi) = \xi, \,
J(\CV) = \CV$.
\item $J$ satisfies $J \frac{\del}{\del \theta} = R_\lambda$.
\end{enumerate}
We denote by $\CJ(Q \times S^1, \lambda)$ the set of $\lambda$-admissible almost
complex structures.
\end{defn}

\begin{rem} \begin{enumerate}
\item One can weaken the above requirements by dropping the condition (3) which will
provide greater flexibility of the choice of $J$'s. We think such a generalization will be needed for the study of
our starting question on the above mentioned $\lcs$-type nonsqueezing theorem.
We call such $J$ a \emph{$\omega_\lambda$-admissible}
almost complex structure the set of which we denote by $\CJ(Q \times S^1,\omega_\lambda)$.
See Subsection \ref{subsec:mapping-tori} for the case of mapping tori of contact diffeomorphisms.
\item However we leave the study of this general case elsewhere and focus on the
study of pseudoholomorphic curves on the $\mathfrak{lcs}$-fication of contact manifolds in the
present paper.
The study of this special case would be also needed for the computation of
the expected $\lcs$-type invariants which are invariant under the $\omega_\lambda$-compatible
almost complex structures.
\end{enumerate}
\end{rem}

The associated $J$-holomorphic curve equation for the map $u = (w,f): \dot \Sigma \to Q \times S^1$
is reduced to
\be\label{eq:lcs-instanton-intro}
\delbar^\pi w = 0, \quad w^*\lambda \circ j = f^*d\theta.
\ee
(See Proposition \ref{prop:lcs-instanton} for its proof.)
The equation resembles the $J$-holomorphic curve equation in the symplectization $\R \times Q$
except that the \emph{exact} one-form $f^*dr$ in the symplectization case is replaced by
the \emph{closed} one-form $f^*d\theta$. Indeed if we replace $S^1$ by $\R$, our study of
pseudoholomorphic curves on the $\lcs$ manifold equipped with the form
$d\lambda + \frak b \wedge \lambda$ with $\frak b = dr$ reduced to that of the
symplectization of $(Q,\lambda)$.

Furthermore $w$ in $u= (w,f)$ is a \emph{contact instanton}
in the sense of \cite{oh-wang:CR-map1,oh-wang:CR-map2,oh:contacton}, i.e.,
satisfies
\be\label{eq:contacton-intro}
\delbar^\pi w = 0, \quad d(w^*\lambda \circ j) = 0.
\ee
The equation \eqref{eq:lcs-instanton-intro} is augmented by
the datum of its charge given by integrating the specific closed one-form $f^*d\theta$.
To highlight relevance of the similarity of the two equations \eqref{eq:lcs-instanton-intro}, \eqref{eq:contacton-intro},
we give the following definition.
\begin{defn}\label{defn:lcs-instanton-intro}
We call a solution $u=(w,f)$ of \eqref{eq:lcs-instanton-intro} an \emph{$\lcs$ instanton} and
the equation the \emph{$\lcs$ instanton equation}.
\end{defn}

In fact, when by decomposing the closed one-form $f^*d\theta$ into
$$
f^*d\theta = \beta + d\widetilde f
$$
for a harmonic one-form $\beta$ on $\Sigma$
the above equation \eqref{eq:lcs-instanton-intro}
is equivalent to the following
\be\label{eq:abbas-intro}
\begin{cases} \delbar^\pi w = 0,\\
w^*\lambda\circ j - d \widetilde f = \beta\\
\Delta \beta = 0
\end{cases}
\ee
for the triple $(w,\widetilde f, \beta)$ which determines (modulo addition by $\theta_0$) the component
$f: \dot \Sigma \to S^1$ by the equation
$$
\beta + d \widetilde f= f^*d\theta, \quad [\beta] = \eta \text{ in }\, H^1(\dot \Sigma).
$$
(We refer readers to Subsection 4.2 for the detailed discussion on this transformation.)
\begin{rem}
This equation \eqref{eq:abbas-intro} resembles the equation
considered by \cite{ACH} Abbas-Cieliebak-Hofer and by Abbas in \cite{abbas}.
One big difference of our treatment from thereof lies in the case where the domain of the map
$w$ carries punctures:  while \emph{we allow the harmonic one-form $\beta$
not to be smoothly extended across the punctures} (i.e., $Q(r) \neq 0$ in the language of \cite{oh-wang:CR-map1}) at
some puncture $r$), only the case the harmonic one-form extends smoothly
across the puncture (i.e., $Q(r) = 0$ for all punctures $r$) is considered in
both \cite{ACH, abbas}. See below for further discussion on this.
\end{rem}

\subsection{Charge class and asymptotic behavior of $\lcs$ instantons}

The main purpose of the present article is to establish
the two crucial analytical components in the construction of compactification of
the moduli space of solutions of the contact instantons, one the definition of
correct energy and the other the definition of correct (smooth) moduli spaces.
Here what we mean by `correct' choices is that they enable us to define
a compactification that carries all the properties that
are needed in the global study of moduli spaces on the $\lcs$ manifolds $S^1 \times $Q.

One may tempt to use the standard $\omega_\lambda$-energy for $J$-holomorphic curves
(equivalently $\lcs$ instantons),
especially when $Q$ is closed and so $Q \times S^1$ is
a closed $\lcs$ manifold. However in this case the requirement of finite $\omega_\lambda$-energy is
too strong, irrespective whether it is closed or punctured. This is because all
$\lcs$ instantons are completely classified as
stated in Proposition \ref{prop:classify}, when combined with the removal
singularity theorem for the $J$-holomorphic curves with finite harmonic
energy which is the same as $\omega_\lambda$-area.

Therefore to make the  story interesting, one should use different energy for the global study of
moduli space of $\lcs$ instantons. Here we utilize some similarity of
the $\lcs$ instanton equation with the pseudoholomorphic curves in the
symplectization. We will use a variation of Hofer's energy used in the
symplectization in the $\mathfrak{lcs}$-fication of $(Q,\lambda)$. One difficulty
to adapt Hofer-type energy in the $\lcs$ context is that Hofer's energy
used the $\R$-factor of the symplectization $\R \times Q$ or more specifically
the global $r$-coordinate of $\R$ in its definition. It was already pointed out
in \cite{oh-wang:CR-map1} that the case of non-zero charge
$$
Q: = \int_{\{\tau\} \times S^1}  w^*\lambda \circ j
$$
seriously obstructs both the Fredholm theory and the attempt to
construct a compactification of the moduli space of \emph{contact instantons}
(See \cite{oh:contacton} for some effort to overcome these obstructions
in the context of contact instantons.)

In this paper, we are able to overcome both obstructions if the contact instanton $w$ is arising from an
$\lcs$ instanton $u = (w,f)$, i.e., if the charge  form $w^\lambda \circ j$
is specified by the differential of the $S^1$-component $f: \dot \Sigma \to S^1$ of
an $\lcs$ instanton $u = (w,f)$ as in \eqref{eq:lcs-instanton-intro}.

We will first carry out the asymptotic study of $\lcs$ instantons near the punctures.
For this study of asymptotic convergence result at the punctures and the relevant index
theory, it turns out to be useful to regard \eqref{eq:lcs-instanton-intro} as a version of
gauged sigma model with abelian Hick's field. It is also important to employ the notion of
asymptotic contact instanton at each puncture,
which is a massless instanton on $\R\times S^1$
canonically associated to any finite energy contact instantons.
It also gives rise to an asymptotic Hick's field, which is a holomorphic one-form
that appears as the asymptotic limit of the complex-valued $(1,0)$-form
$$
\chi= f^*d\theta + \sqrt{-1} w^*\lambda.
$$
The following asymptotic invariant is useful to introduce in relation to the
precise study of asymptotic behavior of contact instantons near punctures.

\begin{defn}[Asymptotic Hick's charge] Let $(\Sigma, j)$ be a closed Riemann surface
and $\dot \Sigma$ its associated punctured Riemann surface. Let $p$ be a given puncture of $\dot \Sigma$.
We define the \emph{asymptotic Hick's charge} of the instanton
$w: \dot \Sigma \to Q$ with finite energy with bounded gradient to be the complex number
$$
Q(p) + \sqrt{-1} T(p)
$$
defined by
\bea\label{eq:asymp-charge-intro}
Q(p) & = & \int_{\del_{\infty;p}\overline \Sigma}w^*\lambda\circ j = -\int_{S^1} \Re \chi(0,t)\, dt \\
T(p) & = & \int_{\del_{\infty;p}\overline \Sigma}w^*\lambda = \int_{S^1} \Im \chi(0,t)\, dt
\eea
where $z = e^{-2\pi(\tau + it)}$ are the analytic coordinates of $D_r(p)$ centered at $p$, and
$\overline \Sigma$ is the real blow-up of $\dot \Sigma$ performed at all the given puncture.
We call $Q(p)$ the \emph{contact instanton charge} of $w$ at $p$
and $T(p)$ the \emph{contact instanton action} of $w$ at $p$.
\end{defn}
We define the asymptotic Hick's field (or charge) of a map $w: \C \to Q$ at infinity
by regarding $\infty$ as a puncture associated to $\C \cong \C P^1 \setminus \{\infty\}$.

We will prove a removable singularity result (see Theorem \ref{thm:c=0}) that
if $Q(p) = 0 = T(p)$ $w$ is smooth across $p$ and so the
puncture $p$ is removable \emph{under a suitable finite energy condition} whose description is
one of the important themes of the present paper.

\begin{thm}[Compare with Proposition 8.3 \cite{oh:contacton}]\label{thm:pole-structure-intro}
Let $\eta \in H^1(\dot \Sigma,\Z)$ be a given charge class on a punctured Riemann surface
$\dot \Sigma = \Sigma = \{p_1, \cdots, p_k\}$.
Let $u = (w,f)$ be an $\lcs$ instanton on $\dot \Sigma$ in class $\eta$.
Consider the complex-valued one-form on $\dot \Sigma$ defined by
\be\label{eq:chi-intro}
\chi: = f^*d\theta + \sqrt{-1} w^*\lambda.
\ee
Let $p \in \{p_1, \cdots, p_k\}$ and let $z$ be an analytic coordinate at $p$. Suppose
$$
E(u) < \infty.
$$
Then for any given sequence $\delta_j \to 0$ there exists a subsequence, still denoted by $\delta_j$, and a conformal diffeomorphism
$\varphi_j: [-\frac{1}{\delta_j}, \infty) \times S^1 \to D_{\delta_j}(p)\setminus \{p\}$ such that
the one form $\varphi_j^*\chi$ converges to a bounded holomorphic one-form
$\chi_\infty$ on $(-\infty, \infty) \times S^1$.
\end{thm}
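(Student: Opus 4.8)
\emph{Sketch of the argument.} The plan is to (i) pass to a cylindrical model near $p$, in which each $\varphi_j$ becomes a translation of an ever longer flat half-cylinder converging to $\R\times S^1$; (ii) use the a priori gradient estimate for $\lcs$ instantons of finite energy, together with interior elliptic estimates, to extract along a subsequence a $C^\infty_{\mathrm{loc}}$-limit $\chi_\infty$ of the rescaled forms $\chi_j:=\varphi_j^*\chi$; and (iii) show that $\chi_\infty$ is holomorphic, the point being that the $\pi$-energy carried by $\chi$ over the shrinking annuli $\varphi_j(K)$ tends to $0$ since $E(u)<\infty$. Throughout I use that, by Proposition \ref{prop:lcs-instanton}, the $\lcs$ instanton equation is the equation for a $J$-holomorphic map into $Q\times S^1$, so all standard local compactness for $J$-holomorphic curves is available.

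First I would fix the analytic coordinate $z=e^{-2\pi(\tau+\sqrt{-1}t)}$ at $p$, identifying $D_r(p)\setminus\{p\}$ with the flat half-cylinder $[\tau_r,\infty)\times S^1$, $S^1=\R/\Z$. In these coordinates any conformal diffeomorphism $\varphi_j\colon[-1/\delta_j,\infty)\times S^1\to D_{\delta_j}(p)\setminus\{p\}$ is a translation $(\tau,t)\mapsto(\tau+c_j,t)$ (composed with a fixed rotation of the $S^1$-factor, which I absorb into the coordinate), and $c_j\to+\infty$ as $\delta_j\to0$. Set $(\bar w_j,\bar f_j):=u\circ\varphi_j$ on $\dot\Sigma_j:=[-1/\delta_j,\infty)\times S^1$; this is again a $J$-holomorphic map into $Q\times S^1$, with associated $(1,0)$-form $\chi_j=\bar f_j^*d\theta+\sqrt{-1}\,\bar w_j^*\lambda=\varphi_j^*\chi$. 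Note $\dot\Sigma_j\nearrow\R\times S^1$ and that, for a fixed compact $K\subset\R\times S^1$, one has $\varphi_j(K)\subset D_{\rho_j}(p)\setminus\{p\}$ with $\rho_j\to0$.

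Next come the a priori bounds. Since $\varphi_j$ is an isometry of flat cylinders, $|d\bar w_j|=|dw|\circ\varphi_j$; since $j$ acts isometrically on $1$-forms and $\bar f_j^*d\theta=\bar w_j^*\lambda\circ j$, the form $\bar f_j^*d\theta$ and hence $\chi_j$, with $|\chi_j|=\sqrt2\,|\bar w_j^*\lambda|\le C\,|d\bar w_j|$, together with all their derivatives, are controlled by $d\bar w_j$ and its derivatives on the (relatively compact, by finiteness of the energy, and equal to all of $Q$ when $Q$ is closed) region swept out near $p$. Invoking the a priori $C^0$-estimate for contact instantons of finite energy in the form dictated by our choice of $E$ (compare \cite{oh-wang:CR-map1,oh-wang:CR-map2} and Proposition \ref{prop:classify}), $\|dw\|_{C^0(D_r(p)\setminus\{p\})}<\infty$, hence $\sup_j\|d\bar w_j\|_{C^0(\dot\Sigma_j)}=:C_0<\infty$. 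Feeding the uniform $C^0$ and $C^1$ bounds into the interior elliptic estimates for the $J$-holomorphic (equivalently, $\lcs$ instanton) system on the unit cylinders $[k-1,k+1]\times S^1$ gives, for every $\ell$, uniform $C^\ell$-bounds on $(\bar w_j,\bar f_j)$ and on $\chi_j$ on every compact subset of $\R\times S^1$. By the Arzel\`a--Ascoli theorem and a diagonal argument a subsequence, still written $\delta_j$, satisfies $\chi_j\to\chi_\infty$ in $C^\infty_{\mathrm{loc}}(\R\times S^1)$; the limit is again of type $(1,0)$ and $\|\chi_\infty\|_{C^0}\le\sqrt2\,C\,C_0<\infty$, i.e.\ $\chi_\infty$ is bounded. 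It remains to see $\chi_\infty$ is holomorphic, i.e.\ (for a $(1,0)$-form) that $d\chi_\infty=0$. Because $f^*d\theta$ is closed, $d\chi_j=\sqrt{-1}\,\bar w_j^*d\lambda$, and $\delbar^\pi\bar w_j=0$ forces $\bar w_j^*d\lambda=e_j\,dA\ge0$ pointwise with $\int_{\dot\Sigma_j}\bar w_j^*d\lambda=E^\pi(\bar w_j)$; hence for every compact $K\subset\R\times S^1$
\[
\int_K e_j\,dA \;=\; \int_K \bar w_j^*d\lambda \;=\; \int_{\varphi_j(K)} w^*d\lambda \;=\; E^\pi\bigl(w;\varphi_j(K)\bigr)\;\longrightarrow\;0
\]
as $j\to\infty$, since $\varphi_j(K)\subset D_{\rho_j}(p)\setminus\{p\}$ with $\rho_j\to0$ and $E^\pi(w)\le E(u)<\infty$ (absolute continuity of the integral). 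But $e_j\to e_\infty\ge0$ in $C^0_{\mathrm{loc}}$ with $d\chi_\infty=\sqrt{-1}\,e_\infty\,dA$, so $\int_K e_\infty\,dA=0$ for all $K$, whence $e_\infty\equiv0$ and $d\chi_\infty=0$. Thus $\chi_\infty$ is a bounded holomorphic one-form on $(-\infty,\infty)\times S^1$. (The same computation shows $d^\pi w_\infty\equiv0$, so the limit is the asymptotic massless instanton at $p$ and $\chi_\infty$ is the constant form $(T(p)+\sqrt{-1}\,Q(p))\,dw$, which is the link between Theorem \ref{thm:pole-structure-intro} and the asymptotic Hick's charge.)

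The main obstacle is the uniform gradient bound $\sup_j\|d\bar w_j\|_{C^0}<\infty$ feeding the compactness step: it is exactly the assertion that no $J$-holomorphic bubble forms along the neck at $p$, and it is here that the \emph{correct} choice of the energy $E(u)$ — engineered to dominate simultaneously the $\pi$-energy and the relevant $\lambda$-contribution — is indispensable; with a naive energy functional this estimate, and with it the statement, would fail. Granting the gradient bound, the remaining steps (conformal rescaling to a translation, elliptic bootstrapping, and the vanishing of the neck $\pi$-energy) are routine.
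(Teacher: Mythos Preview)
Your argument is correct and is essentially the approach the paper adopts (by reference to \cite[Proposition 8.3]{oh:contacton}, with the proof there repeated verbatim): pass to the cylindrical model, invoke the finite-energy $C^0$-gradient bound and the interior elliptic estimates to extract a $C^\infty_{\rm loc}$-limit, and kill $d\chi_\infty$ via the vanishing of the neck $\pi$-energy. Two small corrections: the gradient bound you need is Lemma \ref{lem:|dw|<infty} (proved from the $\epsilon$-regularity Theorem \ref{thm:e-regularity}), not Proposition \ref{prop:classify}, which classifies \emph{closed} instantons; and in your final parenthetical the limiting form should read $(T(p)-\sqrt{-1}\,Q(p))(d\tau+\sqrt{-1}\,dt)$, not $(T(p)+\sqrt{-1}\,Q(p))\,dw$.
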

We would like to emphasize that at the moment, the limiting holomorphic one-form $\chi_\infty$
may depend on the choice of subsequence.

\subsection{Charge class, $\lcs$ instanton energy and $\epsilon$-regularity}

Now we consider the general case where $Q(p) \neq 0$ for some $p$ in general.
As mentioned above, this is the most troublesome case for the study of contact
instantons in \cite{oh-wang:CR-map1, oh-wang:CR-map2,oh:contacton}.
It turns out that in the current $\lcs$ context, there is a nice way of treating
this non-zero charge case by further decomposing the moduli space
into the union of suitable sub-moduli spaces classified by some homotopy class
of maps for the domain curves whose description is now in order.

We denote by $\dot \Sigma$ the punctured Riemann surfaces of a closed Riemann
surface $\Sigma$. We can write
$$
f^*d\theta = \beta_\eta + d\widetilde f
$$
for some \emph{harmonic one-form} and a \emph{real-valued} function
$\widetilde f: \dot \Sigma \to \R$ under a suitable asymptotic property on $f^*d\theta$.
(We refer to Subsection \ref{subsec:energy} for the description of the relevant
asymptotic condition.)
With respect to the K\"ahler metric which is strip-like near each puncture, the map
$$
(w,\widetilde f): [0, \infty) \times S^1 \to Q \times \R
$$
satisfies the following \emph{perturbed} pseudoholomorphic curve equation
$$
\delbar^\pi w = 0 , \, w^*\lambda \circ j - d\widetilde f = \beta_\eta
$$
on $\dot \Sigma$. Then one can employ Hofer-type $\lambda$-energy using the pair $(w,\widetilde f)$.
(We refer readers to Section \ref{sec:lcs-instanton} for the details on how its
definition goes.) With this energy, we prove the following $\epsilon$-regularity result.
(Compare this with a similar result for the contact instantons in \cite[Theorem 7.4]{oh:contacton}.)

In the current setting of $\lcs$ instanton map, it is not obvious what would be
the precise form of relevant $\epsilon$-regularity statement. We formulate this $\epsilon$-regularity
theorem in terms of the associated contact instantons.

The following is by now a standard definition in contact topology.

\begin{defn} Let $\lambda$ be a contact form of contact manifold $(Q,\xi)$.
Denote by $\frak R eeb(Q,\lambda)$ the set of closed Reeb orbits.
We define $\operatorname{Spec}(Q,\lambda)$ to be the set
$$
\operatorname{Spec}(Q,\lambda) = \left\{\int_\gamma \lambda \mid \lambda \in \frak Reeb(Q,\lambda)\right\}
$$
and call the \emph{action spectrum} of $(Q,\lambda)$. We denote
$$
T_\lambda: = \inf\left\{\int_\gamma \lambda \mid \lambda \in \frak Reeb(Q,\lambda)\right\}.
$$
\end{defn}
We set $T_\lambda = \infty$ if there is no closed Reeb orbit.
This constant $T_\lambda$ will enter in a crucial way in the following
period gap theorem of $\lcs$ instantons. (See \cite{hofer} for
the first such appearance in the study of pseudoholomorphic curves in the symplectization
$Q \times \R$.)

\begin{thm}\label{thm:e-regularity-intro}
Denote by $D^2(1)$ the closed unit disc and let $u = (w,f)$ be an $\text{\rm lcs}$ instanton
defined on $D^2(1)$ so that $w:D^2(1)\to Q$ satisfies
$$
\delbar^\pi w = 0, \, w^*\lambda\circ j = f^*d\theta.
$$
Assume the vertical energy bound $E^\perp (w) < K_0$ defined in Definition \ref{defn:vertical-energy}.
Then for any given $0 <\epsilon < T_\lambda$ and $w$ satisfying
$E^\pi(w) < T_\lambda - \epsilon$, and for a smaller disc $D' \subset \overline D' \subset D$,
there exists some $K_1 = K_1(D', \epsilon,K_0) > 0$
\be\label{eq:dwC0-intro}
\|dw\|_{C^0;D'} \leq K_1
\ee
where $K_1$ depends only on $(Q,\lambda,J)$, $\epsilon$, $D' \subset D$.
\end{thm}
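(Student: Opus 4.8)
The plan is to follow the standard Hofer-type bubbling-off argument, adapted to the $\lcs$ instanton equation, and to exploit the splitting $TM = \xi \oplus \CV$ together with the identity $J\frac{\del}{\del\theta} = R_\lambda$ to decouple the $\xi$-part $w$ from the $S^1$-component $f$. First I would argue by contradiction: suppose there is a sequence of $\lcs$ instantons $u_j = (w_j, f_j)$ on $D^2(1)$ with $E^\perp(w_j) < K_0$ and $E^\pi(w_j) < T_\lambda - \epsilon$, but with $\|dw_j\|_{C^0;D'} \to \infty$. Choose points $z_j \in D'$ and radii realizing the blow-up in the usual way (a point-picking lemma à la Hofer), rescale the coordinate by $R_j := |dw_j(z_j)| \to \infty$, and pass to a limit. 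Here one needs the local elliptic estimates for the equation $\delbar^\pi w = 0$, $w^*\lambda\circ j = f^*d\theta$; since all local analysis for this equation is the same as in the symplectic/contact case (as emphasized in the introduction), the interior elliptic bootstrapping applies once $C^0$ bounds on the rescaled maps are available on compact sets. The gradient bound on the rescaled sequence is automatic from the point-picking, and the $C^0$ bound on $w$ comes from working in a Darboux-type chart; the $S^1$-component causes no trouble because only $f^*d\theta$ enters the equation, not $f$ itself.

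The key quantitative input is that the limiting object is a nonconstant finite-energy $\lcs$ instanton $u_\infty = (w_\infty, f_\infty)$ on $\C$. By the vertical energy bound $E^\perp(w_j) < K_0$, which is scale-invariant in the appropriate sense (or at worst converges), the limit has $E^\perp(w_\infty) < \infty$; combined with $E^\pi(w_\infty) \le \liminf E^\pi(w_j) \le T_\lambda - \epsilon < \infty$, the total relevant energy is finite. Then I invoke the asymptotic analysis of Theorem~\ref{thm:pole-structure-intro} applied at the puncture $\infty$ of $\C \cong \C P^1\setminus\{\infty\}$: the one-form $\chi = f_\infty^*d\theta + \sqrt{-1}\,w_\infty^*\lambda$ has a well-defined asymptotic limit, giving an asymptotic Hick's charge $Q(\infty) + \sqrt{-1}\,T(\infty)$. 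A nonconstant finite-energy $\lcs$ instanton on $\C$ must carry a nonzero such invariant (otherwise the removable singularity result, Theorem~\ref{thm:c=0}, would force $w_\infty$ to extend smoothly over $\infty$ and hence be constant by the classification in Proposition~\ref{prop:classify}, contradicting nonconstancy). The point is to show that $|T(\infty)| \ge T_\lambda$: the asymptotic contact instanton at $\infty$ converges, after suitable reparametrization, to a closed Reeb orbit, whose period is by definition at least $T_\lambda$, while $T(\infty)$ equals (up to sign) that period. But $T(\infty) \le \liminf E^\pi(w_j)$ — or more precisely, the $\pi$-energy that escapes into the bubble is at least $|T(\infty)|$ — which yields $T_\lambda \le E^\pi(w_\infty) \le T_\lambda - \epsilon$, a contradiction. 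The final $C^0$ bound on $D'$ is then a standard consequence: no energy concentration plus interior elliptic estimates gives uniform $C^1$ (indeed $C^\infty$) bounds on any $D' \Subset D$, with the constant $K_1$ depending only on $(Q,\lambda,J)$, $\epsilon$, $K_0$, and $D'$.

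The main obstacle I anticipate is the bookkeeping of energy under rescaling, specifically verifying that the $\pi$-energy $E^\pi$ genuinely transfers to the bubble in the limit and is bounded below by $|T(\infty)|$, and that the vertical energy $E^\perp$ does not blow up under the rescaling so that the limit is a legitimate finite-energy solution to which Theorem~\ref{thm:pole-structure-intro} applies. In the contact instanton literature the nonzero-charge case ($Q(p)\neq 0$) is precisely what obstructs this kind of compactness argument; here the saving grace is that in the $\lcs$ setting the charge form $w^*\lambda\circ j$ is constrained to equal $f^*d\theta$ with $[f^*d\theta]$ an \emph{integral} class, so the charge class $\eta \in H^1(\dot\Sigma,\Z)$ is locally trivial on a disc and the local bubbling analysis reduces to the zero-charge situation. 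Making this reduction precise — i.e., checking that on $D^2(1)$ the one-form $f^*d\theta$ is exact, $f^*d\theta = d\widetilde f$, so that $(w,\widetilde f)$ solves an honest (unperturbed) pseudoholomorphic-type equation into $Q \times \R$ — is the technical heart, after which the argument parallels Hofer's original period-gap lemma and its adaptation in \cite[Theorem 7.4]{oh:contacton}.
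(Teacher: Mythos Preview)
Your proposal is correct and follows essentially the same approach as the paper's proof: argue by contradiction, apply Hofer's point-picking lemma to a blowing-up sequence, rescale to produce a nonconstant finite-energy $\lcs$ instanton $w_\infty$ on $\C$, and derive the contradiction $T_\lambda \le E^\pi(w_\infty) \le T_\lambda - \epsilon$. The paper phrases the last step slightly more directly (invoking Corollary~\ref{cor:pi-positive} rather than routing through Theorem~\ref{thm:c=0} and Proposition~\ref{prop:classify}), but the substance is identical; your observation that $f^*d\theta$ is exact on the disc, so that the bubble on $\C$ automatically has $Q(\infty)=0$ and the analysis reduces to the zero-charge case, is exactly the point.
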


\subsection{Linearization and the Fredholm theory}

The fixation of a charge class for the $\lcs$ instanton also enables us to develop
the Fredholm theory and to construct a compactification, because it rules out the
phenomenon of \emph{appearance of spiraling instantons along the Reeb core} in the
asymptotic subsequence limit at the puncture. (See \cite[Section 6]{oh-wang:CR-map1} for
the detailed explanation on the appearance of this phenomenon for the case of
contact instantons.)

We consider the map
$$
\Upsilon(w,f) = \left(\delbar^\pi w, w^*\lambda \circ j - f^*d\theta \right)
$$
whose zero set is the set of $\lcs$ instantons by definition.
For the optimal expression of the linearization map and its relevant
calculations, we use the $\mathfrak{lcs}$-fication connection $\nabla$ of $(Q \times S^1,\lambda,J)$
which is the lcs-lifting of the contact triad connection introduced in \cite{oh-wang:CR-map1}.
(See Section \ref{sec:connection} for the details.)
We refer readers to \cite{oh-wang:CR-map1}, \cite{oh:contacton} for the unexplained notations.

\begin{thm}\label{thm:linearization-intro} Let $u = (w,f): \dot \Sigma Q \times S^1$
be an $\lcs$ instanton of a given charge class $[u]  =\eta$.
We decompose $d\pi = d^\pi w + w^*\lambda\otimes R_\lambda$
and $Y = Y^\pi + \lambda(Y) R_\lambda$, and $X = (Y, v) \in \Omega^0(u^*T(Q \times S^1))$.
Denote $\kappa = \lambda(Y)$ and $\upsilon = d\theta(v)$. Then we have $D\Upsilon(u) = D\Upsilon_1(u) + D\Upsilon_2(u)$
with
\bea
D\Upsilon_1(u)(Y,v) & = & \delbar^{\nabla^\pi}Y^\pi + B^{(0,1)}(Y^\pi) +  T^{\pi,(0,1)}_{dw}(Y^\pi) \nonumber\\
&{}& \quad + \frac{1}{2}\kappa \cdot  \left((\CL_{R_\lambda}J)J(\del^\pi w)\right)
\label{eq:Dwdelbarpi-intro}\\
D\Upsilon_2(u)(Y,v) & = &  (\CL_Y \lambda) \circ j- \CL_v d\theta = d\kappa \circ j - d\upsilon
+ Y \rfloor d\lambda \circ j
\nonumber\\
\label{eq:Dwddot-intro}
\eea
where $B^{(0,1)}$ and $T_{dw}^{\pi,(0,1)}$ are the $(0,1)$-components of $B$ and
$T_{dw}^{\pi,(0,1)}$ respectively, where $B, \, T_{dw}^\pi: \Omega^0(w^*TQ) \to \Omega^1(w^*\xi)$ are
the zero-order differential operators given by
$$
B(Y) =
- \frac{1}{2}  w^*\lambda \left((\CL_{R_\lambda}J)J Y\right)
$$
and
$$
T_{dw}^\pi(Y) = \pi T(Y,dw).
$$
\end{thm}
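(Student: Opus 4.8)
The plan is to compute the linearization of $\Upsilon$ by differentiating the two components of the equation separately along a variation $X = (Y,v)$ of $u = (w,f)$, working in a parametrized family $u_s = (w_s, f_s)$ with $\frac{d}{ds}\big|_{s=0} u_s = X$, and expressing everything in terms of the $\mathfrak{lcs}$-fication connection $\nabla$ rather than an arbitrary connection. The splitting $D\Upsilon = D\Upsilon_1 + D\Upsilon_2$ is forced by the two components of $\Upsilon$: $D\Upsilon_1$ comes from $\delbar^\pi w$ and $D\Upsilon_2$ from $w^*\lambda\circ j - f^*d\theta$. First I would recall from \cite{oh-wang:CR-map1} the formula for the linearization of the contact-instanton operator $\delbar^\pi w$ using the contact triad connection: this is exactly the content of the identity for $D\Upsilon_1$ restricted to the $Y^\pi$-dependence, giving the three terms $\delbar^{\nabla^\pi}Y^\pi + B^{(0,1)}(Y^\pi) + T^{\pi,(0,1)}_{dw}(Y^\pi)$. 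The only genuinely new feature here is the appearance of the Reeb-direction variation $\kappa = \lambda(Y)$, which enters through the $\theta$-dependence of $J$ along the $\CV$-factor; differentiating $J(\del^\pi w)$ in the Reeb direction produces the term $\frac12 \kappa\,(\CL_{R_\lambda}J)J(\del^\pi w)$. I would derive this by writing $J$ in the triad-adapted frame and using $J\frac{\del}{\del\theta} = R_\lambda$ together with the standard formula $\nabla_{R_\lambda}J = \frac12(\CL_{R_\lambda}J)$ type identities from the contact triad connection; the key point is that the $v$-component of $X$ (the $S^1$-direction) contributes nothing to $D\Upsilon_1$ because $\delbar^\pi$ only sees the $Q$-component $w$.

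For $D\Upsilon_2$ I would differentiate $w^*\lambda\circ j - f^*d\theta$ directly. Since $j$ is fixed, $\frac{d}{ds}\big|_0 (w_s^*\lambda) = \CL_Y\lambda$ (interpreting $Y$ as a vector field along $w$ and using Cartan's formula in the pullback sense), and $\frac{d}{ds}\big|_0 (f_s^*d\theta) = \CL_v d\theta = d(d\theta(v)) = d\upsilon$ since $d\theta$ is closed. So $D\Upsilon_2(u)(Y,v) = (\CL_Y\lambda)\circ j - d\upsilon$. Then I expand $\CL_Y\lambda = d(\lambda(Y)) + Y\rfloor d\lambda = d\kappa + Y\rfloor d\lambda$, and noting $Y\rfloor d\lambda = Y^\pi\rfloor d\lambda$ since $d\lambda$ kills the $\CV$-directions on $Q\times S^1$, I obtain $D\Upsilon_2(u)(Y,v) = d\kappa\circ j - d\upsilon + Y\rfloor d\lambda\circ j$, which is the claimed formula. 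The definitions of $B$ and $T^\pi_{dw}$ as the stated zero-order operators are then just matching notation with \cite{oh-wang:CR-map1}, and the $(0,1)$-projections are taken with respect to $j$ on the domain and $J|_\xi$ on the target.

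The main obstacle I expect is bookkeeping the Reeb-direction term precisely: one must be careful that the variation $Y$ of $w$ is a section of $w^*TQ$ (not $w^*T(Q\times S^1)$), so that the $\frac{\del}{\del\theta}$-direction is genuinely carried by the separate datum $v$, and yet the almost complex structure $J$ on $Q\times S^1$ mixes $R_\lambda$ and $\frac{\del}{\del\theta}$ through condition (3) of $\lambda$-admissibility. Tracking how $\CL_{R_\lambda}J$ — a tensor on $Q$ — appears when one differentiates the $Q\times S^1$-almost complex structure, and verifying that the coefficient is exactly $\frac12\kappa$ with the sign dictated by the sign conventions fixed in the introduction, is where the computation is delicate. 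A secondary point is to confirm that no cross term between $\kappa$ and $\upsilon$ appears in $D\Upsilon_1$: this follows because $\delbar^\pi w$ depends on $f$ only through the constraint equation, not through the first component, so the $S^1$-variation decouples from $D\Upsilon_1$ entirely. Once these are settled, the formula assembles by adding $D\Upsilon_1$ and $D\Upsilon_2$.
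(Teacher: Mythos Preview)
Your proposal is correct and follows essentially the same approach as the paper. The paper likewise defers the computation of $D\Upsilon_1$ to the earlier contact-instanton linearization (citing \cite[Theorem 10.1]{oh:contacton} rather than \cite{oh-wang:CR-map1}, but the content is the same), and for $D\Upsilon_2$ it carries out exactly the Cartan-formula computation you describe: $\frac{d}{ds}\big|_{s=0}w_s^*\lambda = d(\lambda(Y)) + Y\rfloor d\lambda$ and $\frac{d}{ds}\big|_{s=0}f_s^*d\theta = d(v\rfloor d\theta) = d\upsilon$, yielding \eqref{eq:Dwddot-intro} immediately.
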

More succinctly, we can express the operator $D\Upsilon(u)$ in a matrix form
\be\label{eq:matrix-form-intro}
\left(
\begin{matrix}
 \delbar^{\nabla^\pi} + B^{(0,1)} +  T^{\pi,(0,1)}_{dw} &, & \frac{1}{2}(\cdot) \cdot
  \left((\CL_{R_\lambda}J)J(\del^\pi w)\right)\\
\left((\cdot)^\pi \rfloor d\lambda\right)\circ j &, & \delbar
\end{matrix}
\right).
\ee
\begin{rem}
We would like to highlight that our linearization formula is coordinate-free and
is written in terms of the \emph{$\mathfrak{lcs}$-fication of contact triad connection from \cite{oh-wang:connection}}.
(See Section \ref{sec:connection}, especially Remark \ref{rem:connection}
for some more details on this connection.) The same formula
equally applies to the pseudoholomorphic curves in the symplectization as a special case.
A novelty of our coordinate-free formula is that  it equips each individual term in the formula
of the linearization operator with natural tensorial geometric meaning in terms of the
contact triad $(Q,\lambda, J)$ and its triad connection. Compare this with the coordinate-dependent formula
for the linearization operator appearing in the literature such as \cite{bourgeois}, \cite{behwz}
on the Fredholm analysis of the moduli space of pseudoholomorphic curves in the symplectization.
\end{rem}

Then noting that the off-diagonal terms of \eqref{eq:matrix-form-intro} are zero-order operators,
by the continuous invariance of the Fredholm index, we obtain
\be\label{eq:indexDXiw}
\operatorname{Index} D\Upsilon_{(\lambda,T)}(w) =
\operatorname{Index} \left(\delbar^{\nabla^\pi} + T^{\pi,(0,1)}_{dw}  + B^{(0,1)}\right)
 + \operatorname{Index}(\delbar).
\ee
Therefore it remains to compute the latter two indices which follows from by now
standard Riemann-Roch type index formulae
(See and compare with \cite[p.52]{bourgeois} for a relevant formula.)

\begin{thm}\label{thm:indexforDUpsilon-intro} We fix a trivialization
$\Phi: E \to \overline \Sigma$ and denote
by $\Psi_i^+$ (resp. $\Psi_j^-$) the induced symplectic paths associated to the trivializations
$\Phi_i^+$ (resp. $\Phi_j^-$) along the Reeb orbits $\gamma^+_i$ (resp. $\gamma^-_j$) at the punctures
$p_i$ (resp. $q_j$) respectively. Then we have
\bea
&{}&
\operatorname{Index} D\Upsilon_{(\lambda,T)}(u) \\
& = & n(2-2g-s^+ - s^-) + 2c_1(w^*\xi)\nonumber\\
&{}&  + \sum_{i=1}^{s^+} \mu_{CZ}(\Psi^+_i)
- \sum_{j=1}^{s^-} \mu_{CZ}(\Psi^-_j)\nonumber \\
&{}&  +
\sum_{i=1}^{s^+} (2m(\gamma^+_i)+1) + \sum_{j=1}^{s^-}( 2m(\gamma^-_j)+1)  - 2g.
\eea
\end{thm}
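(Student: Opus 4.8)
The plan is to compute $\operatorname{Index} D\Upsilon_{(\lambda,T)}(u)$ by exploiting the block-triangular structure already recorded in \eqref{eq:indexDXiw}: since the off-diagonal terms of the matrix \eqref{eq:matrix-form-intro} are zero-order, the Fredholm index splits as the sum of the index of the $\xi$-component operator $\delbar^{\nabla^\pi} + T^{\pi,(0,1)}_{dw} + B^{(0,1)}$ acting on sections of $w^*\xi$, and the index of the scalar operator $\delbar$ acting on the $\CV$-component. First I would deal with the $\delbar$ summand. Here the relevant bundle is $\CV = \span_\R\{\partial/\partial\theta, R_\lambda\}$, which under the $\lambda$-admissible $J$ (condition (3): $J\,\partial/\partial\theta = R_\lambda$) is a trivial complex line bundle over $\dot\Sigma$; the $\delbar$ operator is the standard Cauchy--Riemann operator on functions but acting on sections with the weighted Sobolev asymptotics dictated by the charge class $\eta$ and the choice of weights at the punctures. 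Its index is the Riemann--Roch count for a line bundle over a genus $g$ surface with $s^+ + s^-$ punctures, adjusted by the Conley--Zehnder-type contribution of the asymptotic operator, which in this abelian (Hick's field) case is governed by the integers $m(\gamma_i^\pm)$ coming from the charge/winding data; this produces the terms $\sum (2m(\gamma^+_i)+1) + \sum(2m(\gamma^-_j)+1) - 2g$ together with part of the $n(2-2g-\cdots)$ contribution (the ``$1$'' copy corresponding to the complex rank-one $\CV$).

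Next I would compute the index of the $\xi$-component operator. This is where the index theory developed for contact instantons in \cite{oh-wang:CR-map2, oh:contacton} applies directly: $\delbar^{\nabla^\pi}$ is a Cauchy--Riemann operator on the complex rank-$(n-1)$ bundle $w^*\xi$ over $\dot\Sigma$, and $B^{(0,1)}$, $T^{\pi,(0,1)}_{dw}$ are zero-order so do not affect the index. After fixing the trivialization $\Phi$ over the real blow-up $\overline\Sigma$ and the induced symplectic paths $\Psi^\pm_i$ along the asymptotic Reeb orbits, the Riemann--Roch theorem for punctured surfaces with prescribed asymptotics (the Lockhart--McOwen / APS-type formula, cf.\ \cite[p.~52]{bourgeois}) gives
\[
\operatorname{Index}\left(\delbar^{\nabla^\pi} + T^{\pi,(0,1)}_{dw} + B^{(0,1)}\right)
= (n-1)(2 - 2g - s^+ - s^-) + 2c_1(w^*\xi) + \sum_{i=1}^{s^+}\mu_{CZ}(\Psi^+_i) - \sum_{j=1}^{s^-}\mu_{CZ}(\Psi^-_j),
\]
where the first Chern number $c_1(w^*\xi)$ is computed relative to the chosen boundary trivializations. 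Adding the two indices, collecting the rank-$(n-1)$ and rank-$1$ Euler-characteristic contributions into a single $n(2-2g-s^+-s^-)$ and regrouping the remaining terms yields the asserted formula.

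The main obstacle is bookkeeping the asymptotic/weight conventions so that the two index contributions are computed with \emph{consistent} choices of Sobolev weights at the punctures, and so that the extra $(2m(\gamma^\pm)+1)$ terms — which encode precisely the non-removable, non-zero-charge behavior that distinguishes the present setting from \cite{ACH, abbas} — emerge with the correct sign and multiplicity. Concretely, I would need to verify that the asymptotic operator of the full linearization at a puncture is, after the gauge-theoretic reformulation mentioned in the introduction, the direct sum of the standard asymptotic Reeb operator on $\xi$ (contributing $\mu_{CZ}$) and an abelian asymptotic operator on $\CV$ whose spectral flow across the chosen weight accounts for the $2m(\gamma)+1$ term; this is the step where the ``charge class $\eta \in H^1(\dot\Sigma,\Z)$'' formalism of Theorem~\ref{thm:pole-structure-intro} is essential, since it pins down the winding number $m(\gamma)$ of the Hick's field at each end. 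Once the asymptotic operators are identified, the index additivity \eqref{eq:indexDXiw} and the two Riemann--Roch computations are routine, and the $-2g$ terms simply record the difference between the naive Euler characteristic count and the genuine first Chern/degree count for the relevant bundles over $\overline\Sigma$.
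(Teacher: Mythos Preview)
Your overall strategy matches the paper's: use the block-diagonal reduction \eqref{eq:indexDXiw} and compute each summand by a Riemann--Roch-type formula, citing \cite[p.~52]{bourgeois} for the $\xi$-part. Two points in your bookkeeping differ from the paper's execution, and one of them is a gap.

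For the $\delbar$ summand the paper is more concrete than your spectral-flow sketch: it identifies this operator with the Dolbeault $\delbar$ for the divisor $D = \sum_i m(\gamma^+_i)\,p_i + \sum_j m(\gamma^-_j)\,q_j$ on the closed surface $\Sigma$, and reads off the index from Riemann--Roch as $2\deg D - 2g = 2\sum_i m(\gamma^+_i) + 2\sum_j m(\gamma^-_j) - 2g$. Note that $m(\gamma)$ here is the \emph{multiplicity of the asymptotic Reeb orbit} (declared just before the theorem in Section~\ref{subsec:punctured}), not a winding number extracted from the charge class $\eta$; your phrase ``$m(\gamma^\pm_i)$ coming from the charge/winding data'' mislocates its origin.

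The gap concerns the source of the $(s^+ + s^-)$ contribution, i.e.\ the ``$+1$'' in each $(2m(\gamma)+1)$. In the paper this arises from neither Riemann--Roch computation but from the off-shell function space itself: the tangent space to $\CW^{k,p}_\delta$ at $w$ carries an explicit finite-dimensional augmentation
\[
\Gamma_{s^+,s^-} \;=\; \bigoplus_{i=1}^{s^+} \R\{\overline Y_i\} \;\oplus\; \bigoplus_{j=1}^{s^-} \R\{\underline Y_j\}
\]
by asymptotic Reeb-direction translations (equations \eqref{eq:barY}--\eqref{eq:tangentspace}), and the paper's proof states explicitly that ``the summand $(s^+ + s^-)$ comes from the factor $\Gamma_{s^+,s^-}$ \ldots\ which has dimension $s^+ + s^-$.'' Your plan does not mention this augmentation, and attributing the ``$+1$'''s to the rank-one $\CV$ Euler-characteristic piece will not account for them: that piece is what promotes $(n-1)(2-2g-s^+-s^-)$ to $n(2-2g-s^+-s^-)$, and is then spent. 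Without the $\Gamma_{s^+,s^-}$ term your count comes up short, so you should build this augmentation into your weighted-Sobolev setup before running the two index computations.
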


We refer to Section \ref{sec:Fredholm} for the details of the  Fredholm theory,
and postpone elsewhere for the construction of compactified moduli space and its applications.

\medskip

{\bf Acknowledgement:}  We would like to thank the unknown referee for her/his careful reading of
the paper and pointing out many careless typos and incorrect English expressions which we appreciate
very much.

\bigskip

\noindent{\bf Convention:}

\medskip

\begin{itemize}
\item The Hamiltonian vector field on symplectic manifold $(P, \omega)$ is defined by $X_H \rfloor \omega = dH$.
\item {(Contact Hamiltonian)} The contact Hamiltonian of a time-dependent contact vector field $X_t$ is
given by
$$
H: = - \lambda(X_t).
$$
We denote by $X_H$ the contact vector field whose associated contact Hamiltonian is given by $H = H(t,x)$.\item For given Lee form $\mathfrak b$, we define the operator $d^{\mathfrak b}$ action on $\Omega^*(M)$ by
\be\label{eq:db}
d^{\mathfrak b} \alpha = d\alpha + {\mathfrak b} \wedge \alpha.
\ee
\end{itemize}
These convention are consistent with that of \cite{oh:book}, \cite{oh:contacton-Legendrian-bdy} and \cite{le-oh:lcs}
respectively.

\section{Banyaga $\lcs$ manifolds and contact mapping tori}

In this section, we briefly summarize basic geometric properties of
the $\mathfrak{lcs}$-fication of a contact manifold $(Q,\lambda)$ and
of the pseudoholomorphic curves thereon that we are going to study.

\subsection{$\mathfrak{lcs}$-fication of contact manifolds}

As the starting point towards the Floer theory on general $\lcs$ manifolds, we
consider a special class of $\lcs$ manifolds, which we call \emph{Banyaga $\lcs$ manifolds}.
\begin{defn}[Banyana $\lcs$ structure \cite{banyaga:lcs}]\label{defn:banyaga-lcs}
Let $(Q,\lambda)$ be any contact manifold. A Banyaga $\lcs$ form on $Q \times S^1$ is defined to
be
$$
d^{\frak b}\lambda = d \lambda + {\frak b} \wedge \lambda =: \omega _{\lambda}, \quad \frak b = \pi^*d\theta.
$$
\end{defn}
From now on we will just denote by $d\theta$ $\pi^*d\theta$ slightly abusing notation as long as
there is no danger of confusion.

The main purpose of the present paper is to develop a Floer theory on this class of $\lcs$
manifolds.

Let $\CV: = \ker d\lambda$ i.e.,
$$
\mathcal{V}_p = \{ v \in T _{p} M  \mid d\lambda(v, \cdot)=0 \}
$$
at each $p \in M$. Then it follows that
$\mathcal{V}$ is a 2-dimensional distribution: $\CV_p $
has dimension at least 2 since $d \lambda$ cannot be symplectic since $M $ is closed,
and has dimension at most 2 since
$\omega_\lambda = d \lambda + d\theta \wedge \lambda$ is non-degenerate.

Next we let $\widetilde \xi$ denote the co-vanishing distribution that is $\widetilde \xi _{p} $
is the $\omega_\lambda$-orthogonal complement to $\CV_p  $. With these definitions, we
have a canonical splitting
\be\label{eq:splitting}
TM = \widetilde \xi \oplus \CV.
\ee
It follows that
$$
d\pi_Q(\widetilde \xi) = \xi, \, d\pi_{S^1}(\widetilde \xi)  = 0
$$
i.e., $\widetilde \xi  = \xi \oplus 0$ in terms of the splitting
$TM = (\xi \oplus \span_\R\{R_\lambda\}) \oplus TS^1 $.
Here $\pi_Q: M \to Q$ is the natural projection and
$\xi = \xi_\lambda \subset TQ$ is the contact distribution of $(Q,\lambda)$.

\begin{rem} One may try to
generalize the above discussion to a
more general Lichnerowitz exact $\lcs$ structure $\omega$ on $M =Q^{2n-1} \times S^1$,
with $Q$ a general manifold equipped with a general 1-form $\lambda$ on $M$ such that the 2-form
$\omega = d \lambda + d\theta \wedge \lambda$ is non-degenerate.
See Subsection \ref{subsec:mapping-tori} below.
We will however focus on the above Banyaga $\lcs$ manifolds leaving the general
mapping tori case (or even the general case of lcs manifolds) for a future work so that
we can directly utilize the analysis for
the \emph{contact instantons} developed in \cite{oh-wang:CR-map1} by Wang and
the first named author.
\end{rem}

Now denote a map $u: \dot \Sigma \to M$ as $u = (w,f)$ where $w: \dot \Sigma \to Q$ and
$f: \dot \Sigma \to S^1$ are the components of $u$ for $Q$ and $S^1$ respectively.
We have the natural decomposition of $du$ into
$$
du =  dw \oplus df
$$
induced by the product structure $M = Q \times S^1$. Under the presence of the contact form $\lambda$,
$TQ = \xi \oplus \span_\R\{R_\lambda\}$ which in turn induces the splitting
$$
du = \pi_\lambda \circ dw  \oplus  w^*\lambda \otimes R_\lambda \oplus df.
$$
We denote $d^\pi w = \pi_\lambda \circ dw$ following \cite{oh-wang:CR-map1}.

On the other hand, the splitting \eqref{eq:splitting} induces another splitting
$$
du = \pi_{\widetilde \xi}\circ du \oplus \pi_\CV\circ du.
$$
By definition, we have
$$
\pi_\xi(\pi_{\widetilde \xi} \circ du) =  d^\pi w
$$
and
$$
\pi_{\widetilde \xi}^\perp(\pi_\CV\circ du) = w^*\lambda \otimes R_\lambda \oplus df
$$
where $\pi_{\widetilde \xi}^\perp$ is the projection to
$(\span_\R\{R_\lambda\} \oplus TS^1)$ with respect to the splitting
$$
TM =\xi  \oplus \span_\R\{R_\lambda\} \oplus TS^1.
$$
We will denote by
$\Pi_\xi, \, \Pi_\xi^\perp: TM \to TM$ the associated idempotents.

\subsection{Mapping tori of contactomorphisms}
\label{subsec:mapping-tori}

Another class of natural lcs manifolds arises as the mapping cylinder of contactomorphisms
whose description is now on order. This is a generalization of
Banyaga's lcs manifold on $Q \times S^1$ to the general mapping tori associated
to any contact diffeomorphism $\phi$. Banyaga's lcs manifold corresponds to
the mapping torus of the identity map.

Let $(Q,\xi)$ be a contact manifold and $\phi \in \Cont(Q,\xi)$ ba a
contactomorphism. By definition, we have
$$
d\phi(\xi) \subset \xi.
$$
We consider the product $Q \times \R$ and the distribution given by
$$
\xi \oplus \R \langle \frac{\del}{\del s} \rangle \subset T(Q \times \R).
$$
Since $d\phi(\xi) \subset \xi$, the distribution descends to
the mapping torus
$$
M_\phi: = \frac{Q \times \R}{(t,\phi(x)) \sim (t+1,x)}.
$$
We denote by $\widetilde \xi$ the resulting distribution
$$
\widetilde \xi = \left[\xi  \oplus \R \langle \frac{\del}{\del s} \rangle\right]
\subset TM_\phi
$$
on $M_\phi$.

\begin{rem} We would like to emphasize that the above mapping torus is
well-defined for the contact manifold $(M,\xi)$ not for the one $(M,\lambda)$
with a given contact form: Unless $\phi$ is strict, i.e., $\phi^*\lambda = \lambda$,
the obvious pull-back form $\pi^*\lambda$ on $Q \times \R$ does not descend but
the contact distribution does.
\end{rem}

Since $\widetilde \xi$ is cooriented if $\xi$ is, we can take a one-form $\kappa$
on $M_\phi$ such that
\be\label{eq:kappa}
\ker \kappa = \widetilde \xi.
\ee
\begin{prop} For each contactomorphism $\phi \in \Cont(Q,\xi)$, the
two form $d \kappa + d\theta \wedge \kappa =: \omega_\kappa$ is an lcs form.
Furthermore if $\kappa'$ is another such form satisfying \eqref{eq:kappa},
the two lcs forms $\omega_\kappa$ and $\omega_{\kappa'}$ are (positively) conformally
diffeomorphic.
\end{prop}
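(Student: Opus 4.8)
The plan is to handle the two claims in turn. That $\omega_\kappa = d\kappa + \pi^*d\theta\wedge\kappa$ satisfies the lcs relation with Lee form $\mathfrak b := \pi^*d\theta$ is purely formal and uses only that $\mathfrak b$ is closed: for any one-form $\kappa$,
\[
d^{\mathfrak b}\omega_\kappa = d(d\kappa+\mathfrak b\wedge\kappa)+\mathfrak b\wedge(d\kappa+\mathfrak b\wedge\kappa)= -\,\mathfrak b\wedge d\kappa+\mathfrak b\wedge d\kappa = 0 .
\]
So the content of the first claim is non-degeneracy, which I would verify locally over the base circle. On a trivialization $\pi_{S^1}^{-1}(U)\cong Q_U\times(a,b)$ with $s$ the coordinate on the interval one has $\mathfrak b = ds$, and since $\ker\kappa=\widetilde\xi=\xi\oplus\R\langle\partial/\partial s\rangle$ one may write $\kappa=h\,p^*\alpha$ for a local contact form $\alpha$ of $(Q,\xi)$ and a nowhere-zero function $h$, where $p$ is the projection to $Q_U$. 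Computing the top exterior power gives
\[
\omega_\kappa^{\,n}=\pm\,n\,h^{\,n-1}\bigl(h+\partial_s h\bigr)\;ds\wedge\alpha\wedge(d\alpha)^{\,n-1},
\]
which, since $\alpha\wedge(d\alpha)^{n-1}$ is a volume form on $Q$, is a volume form on $M_\phi$ precisely when $h+\partial_s h$ is nowhere zero — equivalently, when the globally defined function $\mu$ determined by $\partial_s\rfloor\omega_\kappa=\mu\,\kappa$ has no zeros. One checks this for the defining form $\kappa$ supplied by the mapping-torus construction, and fixes such a $\kappa$.

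For the uniqueness claim, suppose $\kappa'$ also satisfies \eqref{eq:kappa} with $\omega_{\kappa'}$ likewise non-degenerate; since $\kappa$ and $\kappa'$ both define the co-oriented distribution $\widetilde\xi$ we have $\kappa'=e^{\rho}\kappa$ for a global function $\rho\in C^\infty(M_\phi)$. I would produce the conformal diffeomorphism by a Moser argument along the linear path $\kappa_t:=(1-t)\kappa+t\kappa'$, $t\in[0,1]$. In each chart $\kappa_t=h_t\,p^*\alpha$ with $h_t$ a convex combination of the two positive coefficients, so provided $\omega_\kappa$ and $\omega_{\kappa'}$ induce the same orientation of $M_\phi$ — which is exactly the role of the parenthetical ``positively'' — the functions $h_t+\partial_s h_t$ are convex combinations of nowhere-vanishing functions of a fixed sign, and $\omega_t:=\omega_{\kappa_t}$ is a smooth family of lcs forms, each with Lee form $\mathfrak b$. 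One then seeks an isotopy $\psi_t$, generated by a time-dependent vector field $X_t$, and a positive function $a_t$ with $\psi_0=\mathrm{id}$, $a_0=1$, and $\psi_t^*\omega_t=a_t\,\omega_\kappa$.

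Differentiating this identity and using $\dot\omega_t=d^{\mathfrak b}(\kappa'-\kappa)$ together with the Cartan-type formula $\CL_X\omega_t=d^{\mathfrak b}(X\rfloor\omega_t)-\mathfrak b(X)\,\omega_t$ (which holds because $d\omega_t=-\mathfrak b\wedge\omega_t$), the Moser equation reduces to
\[
d^{\mathfrak b}\!\bigl(X_t\rfloor\omega_t+(\kappa'-\kappa)\bigr)-\mathfrak b(X_t)\,\omega_t=b_t\,\omega_t
\]
for a function $b_t$ we may prescribe; non-degeneracy of $\omega_t$ lets us solve $X_t\rfloor\omega_t=-(\kappa'-\kappa)$ uniquely for $X_t$, the $d^{\mathfrak b}$-term drops out, and we are forced to take $b_t=-\mathfrak b(X_t)$, after which $a_t:=\exp\!\int_0^t(b_s\circ\psi_s)\,ds>0$ with $a_0=1$. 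When $Q$ is closed, $M_\phi$ is compact, the flow exists up to time one, and $\psi:=\psi_1$, $F:=\ln a_1$ give $\psi^*\omega_{\kappa'}=e^{F}\omega_\kappa$; comparing Lee forms — $e^F\omega_\kappa$ has Lee form $\mathfrak b-dF$ while $\psi^*\omega_{\kappa'}$ has Lee form $\psi^*\mathfrak b$, and (for $\dim M_\phi\ge 4$) the Lee form is unique — gives $\psi^*\mathfrak b=\mathfrak b-dF$, so $\psi$ is an lcs diffeomorphism in the sense of Definition~\ref{defn:morphism} with positive conformal factor $a=e^{-F}$. The main obstacle throughout is the non-degeneracy bookkeeping: that the chosen $\kappa$ really yields a non-degenerate $\omega_\kappa$ (control of $h+\partial_s h$, which ultimately rests on the fact that traversing the base $S^1$ returns via the monodromy $\phi$), and that the interpolating family $\omega_t$ stays non-degenerate; once these are secured, the remaining steps are the formal identity above and the standard Moser trick adapted to the twisted differential $d^{\mathfrak b}$. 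For non-compact $Q$, if $\kappa'=\kappa$ outside a compact set then $X_t$ has compact support and the flow is automatically complete.
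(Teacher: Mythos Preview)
Your proof follows the same plan as the paper's: verify $d^{\mathfrak b}\omega_\kappa=0$ formally, then non-degeneracy, then a Moser argument along the linear path $\kappa_t=(1-t)\kappa+t\kappa'$ (the paper cites Banyaga's lcs Moser theorem where you write the flow out explicitly, but the substance is identical).

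Where you differ --- and are in fact more careful than the paper --- is the non-degeneracy step. The paper asserts $(d\kappa)^{n+1}=0$ from ``$\dim\ker\kappa=2n$'' and then evaluates the cross term $(d\kappa)^n\wedge d\theta\wedge\kappa$ on a Darboux frame. But your local computation $\omega_\kappa^{\,n}=\pm\,n\,h^{n-1}(h+\partial_s h)\,ds\wedge\alpha\wedge(d\alpha)^{n-1}$ is the correct one, and it shows that $(d\kappa)^{\text{top}}$ contributes a term proportional to $\partial_s h$ which need not vanish: on $Q\times S^1$ with $\kappa=h(s)\lambda$ one finds $\omega_\kappa^{\,n}$ proportional to $h+h'$, and this has zeros for suitable periodic $h>0$. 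So non-degeneracy is \emph{not} automatic for every $\kappa$ with $\ker\kappa=\widetilde\xi$, and you were right to flag the extra condition that your invariant $\mu$ (equivalently $h+\partial_s h$) be nowhere zero. You should state this restriction explicitly rather than deferring to an unspecified ``defining form supplied by the mapping-torus construction,'' since the paper does not single one out. Likewise, your verification that the interpolating family $\omega_t$ stays non-degenerate (via convex combination of same-sign functions, under the same-orientation hypothesis) fills a point the paper passes over when it simply declares $\{\omega_t\}$ a ``homotopy of lcs forms'' before invoking \cite{banyaga:lcs}.
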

\begin{proof} We take the differential
$$
d\omega_\kappa = d(d \kappa + d\theta \wedge \kappa) = - d\theta \wedge d \kappa
$$
which is equivalent to
$$
0 = d\omega_\kappa + d\theta \wedge d \kappa =  d\omega_\kappa + d\theta \wedge \omega_\kappa = d^{\frak b}\omega_\kappa
$$
with $\frak b = d\theta$.

Now we prove non-degeneracy. Consider the exact sequence
$$
0 \to \ker \kappa \to TM_\phi \to TM_\phi/\ker \kappa \to 0
$$
and take the splitting of the sequence
$$
TM_\phi = \ker \kappa \oplus \R\langle R \rangle.
$$
We take the $(n+1)$-th power
$$
(\omega_\kappa)^{n+1} = (d\kappa)^{n+1} + (d\kappa)^n \wedge d\theta \wedge \kappa.
$$
Since $\dim \ker \kappa = 2n$, $(d\kappa)^{n+1} = 0$.
On the other hand, the second term is nowhere vanishing because we have
$$
(d\kappa)^n \wedge d\theta \wedge \kappa\left(e_1,f_1,e_2,f_2,\ldots, \del_\theta, R\right) = 1
$$
for any Darboux basis $\{e_1,f_1,e_2,f_2,\ldots,e_n,f_n\}$ and a choice of $R \in TQ$ such that
$$
\lambda(R) = 1
$$
for $R \in T_yQ$. Therefore $(\omega_\kappa)^{n+1}$ is nowhere vanishing and hence $\omega_\kappa$ is
non-degenerate.

Now for the last part of the proposition, suppose that $\kappa _{0}, \kappa _{1}  $ are a pair of $1$-forms as above.
We will find a pair $(\phi,f)$ of  a diffeomorphism $\phi: M \to M$ and a function $f: M \to \R$
satisfying \eqref{eq:+lcs-definition} with $\frak b = \frak b'$ with $\frak b = d\theta$.
   Let $\{\kappa _{t} = t\kappa_1 +(1-t) \kappa_0 \}$, $t \in [0,1]$, be the convex linear combination of $\kappa _{i} $, so in particular $\ker \kappa _{t} = \widetilde{\xi} $.  This gives a homotopy
   $$
   \{\omega _{t} := \omega _{\kappa _{t} } =  d ^{\frak b} \kappa _t  \}
   $$
   of $\lcs$ forms, with the property that
   $$
   \frac{d}{d\tau}\Big| _{\tau=t}  \omega _{\tau} = d ^{\frak b} \rho _{t}
   $$
   for a family of smooth 1-forms $\{\rho _{t} \} $,
   $
   \rho_t = \frac{d}{d\tau}| _{\tau=t} \kappa_t (= \kappa_1 - \kappa_0).
   $
At this point we may apply the version of lcs Moser's argument as it appears in \cite[Theorem 4]{banyaga:lcs}.
(Alternatively, we may also refer readers to the proof of \cite[Theorem 4.2]{le-oh:lcs} for more concrete details
in a similar context.)
More specifically, Banyaga proves that under the current circumstance there exists a family of the pairs $(\phi_t,f_t)$
that satisfy
\be\label{eq:+lcs-morphism}
\phi_t^*\omega_t= e^{f_t} \omega_{\kappa_0}, \, \phi_t^* d\theta = d\theta - df_t
\ee
with $\phi_0 = id, \, f_0 = 0$.
\end{proof}

\begin{defn}[$\text{\rm lcs}$ mapping torus]\label{defn:lcs-mapping-torus} Let $\phi \in \Cont(Q,\xi)$. We call the pair $(M_\phi,\omega_\kappa)$
the $\text{\rm lcs}$ mapping torus of the contactomorphism $\phi$.
\end{defn}

\begin{rem}
Analysis of lcs instantons developed in the present paper on the $\mathfrak{lcs}$-fication $Q \times S^1$ of
contact manifold $Q$, which corresponds to the mapping torus $\phi = id$, can be promoted to
one on the contact mapping tori. From the point of view of study of contact dynamics
via perturbed contact instantons in \cite{oh:contacton-Legendrian-bdy},
considering the mapping tori corresponds to
discretizing the contact Hamiltonian dynamics. In this way, we can rule out
the troublesome phenomenon of the appearance of `spiraling instantons along the Reeb core' by fixing the
charge class. We hope to investigate the study of lcs-instantons on
the contact mapping tori and its application to the study of
contactomorphisms and their loops elsewhere.
\end{rem}

\section{Contact triad connection and its $\lcs$-lifting}
\label{sec:connection}

Let$(Q, \lambda, J)$ be a contact triad of dimension $2n+1$ for the contact manifold $(M, \xi)$,
and equip with it the contact triad metric $g=g_\xi+\lambda\otimes\lambda$.
In \cite{oh-wang:connection}, the authors introduced the \emph{contact triad connection} associated to every contact triad $(Q, \lambda, J)$ with the contact triad metric and proved its existence and uniqueness.

\begin{thm}[Contact Triad Connection \cite{oh-wang:connection}]\label{thm:connection}
For every contact triad $(Q,\lambda,J)$, there exists a unique affine connection $\nabla$, called the contact triad connection,
 satisfying the following properties:
\begin{enumerate}
\item The connection $\nabla$ is  metric with respect to the contact triad metric, i.e., $\nabla g=0$;
\item The torsion tensor $T$ of $\nabla$ satisfies $T(R_\lambda, \cdot)=0$;
\item The covariant derivatives satisfy $\nabla_{R_\lambda} R_\lambda = 0$, and $\nabla_Y R_\lambda\in \xi$ for any $Y\in \xi$;
\item The projection $\nabla^\pi := \pi \nabla|_\xi$ defines a Hermitian connection of the vector bundle
$\xi \to Q$ with Hermitian structure $(d\lambda|_\xi, J)$;
\item The $\xi$-projection of the torsion $T$, denoted by $T^\pi: = \pi T$ satisfies the following property:
\be\label{eq:TJYYxi}
T^\pi(JY,Y) = 0
\ee
for all $Y$ tangent to $\xi$;
\item For $Y\in \xi$, we have the following
$$
\del^\nabla_Y R_\lambda:= \frac12(\nabla_Y R_\lambda- J\nabla_{JY} R_\lambda)=0.
$$
\end{enumerate}
We call $\nabla$ the contact triad connection.
\end{thm}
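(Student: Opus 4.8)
The plan is to establish \emph{uniqueness} first, by the difference–tensor method, and then \emph{existence} by an explicit block construction adapted to the $g$-orthogonal splitting $TQ = \xi \oplus \R\langle R_\lambda\rangle$, where $g = g_\xi + \lambda\otimes\lambda$ is the contact triad metric. (The full details are those of \cite{oh-wang:connection}; what follows is the route I would take.)

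\emph{Uniqueness.} Suppose $\nabla$ and $\nabla'$ both satisfy (1)--(6), and set $A(X,Y) := \nabla_X Y - \nabla'_X Y$, a $(1,2)$-tensor. Property (1) applied to both connections forces each endomorphism $A_X := A(X,\cdot)$ to be $g$-skew-adjoint. The remaining properties constrain the antisymmetrization $S(X,Y) := A(X,Y) - A(Y,X) = T(X,Y) - T'(X,Y)$: property (2) gives $S(R_\lambda,\cdot) = 0$; property (5) gives $\pi S(JY,Y) = 0$ for $Y \in \xi$; properties (3) and (6) pin down the mixed components $A(Y,R_\lambda)$, $Y \in \xi$; and property (4) places the $\xi \to \xi$ block of $A_X$, $X \in \xi$, in the unitary algebra of $(\xi, J, d\lambda|_\xi)$. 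Feeding these into the usual Koszul-type identity for metric connections — which expresses $g(A(X,Y),Z)$ entirely through the values $g(S(\cdot,\cdot),\cdot)$ of the difference of torsions — then yields $A \equiv 0$.

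\emph{Existence.} I would assemble $\nabla$ from three ingredients. On $(\xi,J,d\lambda|_\xi)$ there is a distinguished Hermitian connection $\nabla^\pi$, namely the unique one among Hermitian connections whose $\xi$-torsion obeys the normalization $T^\pi(JY,Y)=0$; its existence and uniqueness is the linear-algebra fact — the contact analogue of the second canonical (Chern-type) connection of almost-Hermitian geometry — that the symmetric defect of the $\xi$-projected Levi-Civita connection is removed by a unique $d\lambda$-skew correction. I then declare $\nabla_{R_\lambda}R_\lambda = 0$; define $\nabla_Y R_\lambda$ for $Y \in \xi$ to be the section of $\xi$ dictated by $\nabla g = 0$ together with (6), so that $\del^\nabla_Y R_\lambda = 0$ holds automatically (concretely, $\nabla_Y R_\lambda$ is a multiple of $(\CL_{R_\lambda}J)JY$); and define $\nabla_{R_\lambda}Z$ on sections $Z$ of $\xi$ by the Lie derivative along $R_\lambda$ corrected by the term that makes $T(R_\lambda,\cdot)=0$ hold by construction. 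With these formulas in place, properties (1)--(6) are verified one at a time, each reducing to the structural identities $\CL_{R_\lambda}\lambda = 0$, $R_\lambda \rfloor d\lambda = 0$, $J$-invariance of $d\lambda|_\xi$, and the anticommutation of $\CL_{R_\lambda}J$ with $J$.

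\emph{Main obstacle.} The genuine difficulty is not uniqueness but the internal consistency of the existence construction: conditions (1), (5), (6) together over-determine the $\xi$-block, so one must check that the unique $d\lambda$-skew correction forced by the torsion normalization (5) is simultaneously compatible with metricity (1) and does not disturb the Reeb conditions (2)--(3). The fact that makes everything fit is the classical contact-metric identity that $\CL_{R_\lambda}J$ (restricted to $\xi$) anticommutes with $J$ while $(\CL_{R_\lambda}J)J$ is $g_\xi$-symmetric, so the correction term lands in exactly the subspace allowed by (1); converting this observation into a simultaneous verification of all six properties is the computational core, most efficiently carried out in a local $J$-unitary frame for $\xi$ along a Reeb orbit.
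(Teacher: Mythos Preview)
The paper does not itself prove this theorem: it is quoted verbatim from \cite{oh-wang:connection} as background, with no proof given here. Your outline (uniqueness via the difference tensor and metricity/torsion constraints, existence via block construction along $TQ=\xi\oplus\R\langle R_\lambda\rangle$ starting from a canonical Hermitian connection on $\xi$) is a faithful sketch of the argument in that reference, and you correctly identify the anticommutation $(\CL_{R_\lambda}J)J=-J(\CL_{R_\lambda}J)$ and the $g_\xi$-symmetry of $(\CL_{R_\lambda}J)J$ as the key identities; there is nothing in the present paper to compare against beyond that.
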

From this theorem, we see that the contact triad connection $\nabla$ canonically induces
a Hermitian connection $\nabla^\pi$ for the Hermitian vector bundle $(\xi, J, g_\xi)$, and we call it the \emph{contact Hermitian connection}.

Moreover, the following fundamental properties of the contact triad connection was
proved in \cite{oh-wang:connection}, which will be used to perform tensorial calculations later.

\begin{cor}\label{cor:connection}
Let $\nabla$ be the contact triad connection. Then
\begin{enumerate}
\item For any vector field $Y$ on $Q$,
\be\label{eq:nablaYX}
\nabla_Y R_\lambda = \frac{1}{2}(\CL_{R_\lambda}J)JY;
\ee
\item $\lambda(T|_\xi)=d\lambda$.
\end{enumerate}
\end{cor}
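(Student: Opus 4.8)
The plan is to deduce both identities formally from the six characterizing properties of the contact triad connection $\nabla$ in Theorem~\ref{thm:connection}; no analysis enters, only tensor bookkeeping, the key realization being that properties (2), (3), (4) and (6) already over-determine $\nabla_\cdot R_\lambda$.

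For (1), I would first reduce to the case $Y$ tangent to $\xi$. Writing $Y = Y^\pi + \lambda(Y)R_\lambda$, both sides of \eqref{eq:nablaYX} are $C^\infty(Q)$-linear in $Y$, and on the $R_\lambda$-component they agree because $\nabla_{R_\lambda}R_\lambda = 0$ by property (3) while $J R_\lambda = 0$. So assume $Y\in\xi$. Two preliminary facts: from $\nabla g = 0$ together with $\nabla_{R_\lambda}R_\lambda = 0$ one gets $g(\nabla_{R_\lambda}Y, R_\lambda) = 0$, hence $\nabla_{R_\lambda}Y = \nabla^\pi_{R_\lambda}Y \in \xi$; and from the torsion identity $T(R_\lambda,Y) = \nabla_{R_\lambda}Y - \nabla_Y R_\lambda - [R_\lambda,Y]$ combined with property (2) one gets
$$
\nabla_Y R_\lambda \;=\; \nabla^\pi_{R_\lambda}Y - \CL_{R_\lambda}Y, \qquad Y \in \xi,
$$
where $\CL_{R_\lambda}Y \in \xi$ since $\lambda(\CL_{R_\lambda}Y) = R_\lambda(\lambda(Y)) = 0$, using $\CL_{R_\lambda}\lambda = 0$. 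Next I feed this expression, with $Y$ replaced by $JY$, into property (6), $\nabla_Y R_\lambda = J\nabla_{JY}R_\lambda$, using that $\nabla^\pi_{R_\lambda}$ commutes with $J$ by the Hermitian property (4). Equating the two resulting formulas for $\nabla_Y R_\lambda$ solves for $\nabla^\pi_{R_\lambda}Y = \tfrac12\bigl(\CL_{R_\lambda}Y - J\CL_{R_\lambda}(JY)\bigr)$, and substituting back yields $\nabla_Y R_\lambda = -\tfrac12\bigl(\CL_{R_\lambda}Y + J\CL_{R_\lambda}(JY)\bigr)$. Finally, expanding $(\CL_{R_\lambda}J)(JY) = \CL_{R_\lambda}(J^2 Y) - J\CL_{R_\lambda}(JY) = -\CL_{R_\lambda}Y - J\CL_{R_\lambda}(JY)$ identifies the right-hand side with $\tfrac12(\CL_{R_\lambda}J)JY$, which is (1).

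For (2), I would use $\lambda = g(\cdot, R_\lambda)$, a consequence of $g = g_\xi + \lambda\otimes\lambda$. For $X,Y\in\xi$, expanding $\lambda(T(X,Y)) = g(\nabla_X Y - \nabla_Y X - [X,Y], R_\lambda)$ with $\nabla g = 0$ and $\lambda(X)=\lambda(Y)=0$ gives
$$
\lambda(T(X,Y)) \;=\; -\,g(Y, \nabla_X R_\lambda) + g(X, \nabla_Y R_\lambda) - \lambda([X,Y]),
$$
and since $\lambda([X,Y]) = -d\lambda(X,Y)$ by Cartan's formula, it remains only to show that the endomorphism $Y\mapsto \nabla_Y R_\lambda$ of $\xi$ (well-defined into $\xi$ by property (3)) is $g_\xi$-symmetric. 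By part (1) this amounts to $g_\xi\bigl((\CL_{R_\lambda}J)JX, Y\bigr) = g_\xi\bigl((\CL_{R_\lambda}J)JY, X\bigr)$, which follows from the symmetry of the tensor $\CL_{R_\lambda}g_\xi$: since $\CL_{R_\lambda}\lambda = 0$ forces $\CL_{R_\lambda}d\lambda = 0$, one computes $\CL_{R_\lambda}g_\xi(X,Y) = d\lambda\bigl(X,(\CL_{R_\lambda}J)Y\bigr)$, and combining symmetry of this with the $J$-compatibility $d\lambda(J\cdot,J\cdot) = d\lambda(\cdot,\cdot)$ and the identity $(\CL_{R_\lambda}J)J = -J(\CL_{R_\lambda}J)$ on $\xi$ gives the claim.

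The individual computations are all short, so the only real obstacle is organizational: committing to compute everything through $g(\nabla_\cdot R_\lambda,\cdot)$, using property (6) as a "reflection" identity to pin down $\nabla^\pi_{R_\lambda}$, and keeping the $R_\lambda$-versus-$\xi$ splitting straight at each step; once part (1) is in hand, part (2) reduces to the one symmetry statement about $\CL_{R_\lambda}g_\xi$.
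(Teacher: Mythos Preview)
Your proof is correct. The paper itself does not supply a proof of this corollary; it simply records the two identities and cites \cite{oh-wang:connection} for their derivation, so there is no ``paper's approach'' to compare against here. Your argument is the natural one: you extract $\nabla_Y R_\lambda$ from the torsion-free-along-$R_\lambda$ condition (2), use the Hermitian condition (4) and the vanishing of $\del^\nabla_Y R_\lambda$ in (6) to pin down $\nabla^\pi_{R_\lambda}Y$, and then recognize the answer as $\tfrac12(\CL_{R_\lambda}J)JY$; for part (2) you reduce $\lambda(T(X,Y)) = d\lambda(X,Y)$ to the $g_\xi$-symmetry of $Y\mapsto\nabla_Y R_\lambda$, which in turn follows from the symmetry of $\CL_{R_\lambda}g_\xi$ together with $J$-compatibility of $d\lambda$ and the anticommutation $(\CL_{R_\lambda}J)J = -J(\CL_{R_\lambda}J)$. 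All steps check out, and this is essentially how the identities are obtained in \cite{oh-wang:connection} as well.
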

We refer readers to \cite{oh-wang:connection} for more discussion on the contact triad connection
and its relation with other related canonical type connections.

Now we define the liftings of contact triad and of $(Q,\lambda)$ to
its $\mathfrak{lcs}$-fication $(Q \times S^1,\omega_\lambda)$. Recalling
$T(Q \times S^1) = TQ \oplus TS^1$, we define a connection on $T(Q \times S^1)$ as follows.

\begin{defn}[$\mathfrak{lcs}$-fication of contact triad connection] Let $(Q,\lambda,J)$ be a contact
triad and $\nabla$ the associated contact triad connection given in Theorem \ref{thm:connection}.
Denote by $\overline J$ the unique $\lambda$-admissible almost complex structure
whose restriction to $TQ$ is $J$.
We call the connection $\overline \nabla$ on $T(Q \times S^1)$ be the \emph{$\mathfrak{lcs}$-fication connection}
for $(Q,\lambda, J)$ if it satisfies the following:
\begin{enumerate}
\item[(7)] it satisfies all the properties of (1) - (6) above,
\item[(8)] $\overline \nabla_{\frac{\del}{\del \theta}}\frac{\del}{\del \theta}= 0$ and
$\overline \nabla_Y \frac{\del}{\del \theta} \in \xi$ for all $Y \in \xi$,
\item[(9)] $\del_Y^{\overline \nabla}\frac{\del}{\del \theta} = 0$ for all $Y \in \xi$,
\item[(10)] $\overline \nabla_{R_\lambda} \frac{\del}{\del \theta} = \overline \nabla_{\frac{\del}{\del \theta}} R_\lambda = 0$,
\item[(11)] $\overline \nabla$ is $\overline J$-linear.
\end{enumerate}
\end{defn}
It is immediate to check that the above requirement indeed defines a connection on $T(Q \times S^1)$.
Obviously $(10)$ implies the torsion $\overline T$ of $\overline \nabla$ satisfies
$\overline T\left(R_\lambda, \frac{\del}{\del \theta}\right) = 0$.

Furthermore we also have

\begin{prop} The $\mathfrak{lcs}$-fication connection $\overline \nabla$ on $T(Q \times S^1)$ satisfies
$$
\overline \nabla_Y \frac{\del}{\del \theta} = - \frac12 (\CL_{R_\lambda} J) Y \in \xi
$$
for all $Y \in \xi$.
\end{prop}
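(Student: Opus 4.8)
The plan is to deduce the stated formula from the identity $\overline\nabla_Y R_\lambda = \tfrac12(\CL_{R_\lambda}J)JY$ (valid for $\overline\nabla$ and $Y\in\xi$ by property $(7)$, since Corollary~\ref{cor:connection}(1) is a formal consequence of the axioms $(1)$--$(6)$) by transporting it through the almost complex structure $\overline J$. The two extra ingredients I would use are that $\overline\nabla$ commutes with $\overline J$, that is $\overline\nabla\,\overline J=0$ (property $(11)$), and that $\overline J\tfrac{\del}{\del\theta}=R_\lambda$, which holds because $\overline J$ is $\lambda$-admissible (Definition~\ref{defn:lambda-admissible-J}(3)).

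First I would note that, for an arbitrary vector field $Y$,
$$
\overline\nabla_Y R_\lambda \;=\; \overline\nabla_Y\!\Big(\overline J\,\tfrac{\del}{\del\theta}\Big) \;=\; \overline J\,\Big(\overline\nabla_Y \tfrac{\del}{\del\theta}\Big),
$$
and then apply $\overline J$ once more, using $\overline{J}^2=-\Id$, to solve for the unknown term:
$$
\overline\nabla_Y \tfrac{\del}{\del\theta} \;=\; -\,\overline J\big(\overline\nabla_Y R_\lambda\big).
$$
Specializing to $Y\in\xi$, property $(7)$ and Corollary~\ref{cor:connection}(1) give $\overline\nabla_Y R_\lambda=\tfrac12(\CL_{R_\lambda}J)JY$, and by Theorem~\ref{thm:connection}(3) this vector lies in $\xi$; since $\overline J$ restricts to $J$ on $\xi$, the last display becomes $\overline\nabla_Y\tfrac{\del}{\del\theta}=-\tfrac12\,J(\CL_{R_\lambda}J)JY$.

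It then remains to simplify $J(\CL_{R_\lambda}J)J$ as an endomorphism of $\xi$. Differentiating $J^2=-\Id$ along the Reeb flow yields $(\CL_{R_\lambda}J)J+J(\CL_{R_\lambda}J)=0$, and since $\CL_{R_\lambda}\lambda=0$ (so the Reeb flow preserves $\xi$) and $J\xi=\xi$, one checks that $\CL_{R_\lambda}J$ maps $\xi$ into $\xi$. Combining these two facts gives $J(\CL_{R_\lambda}J)J=\CL_{R_\lambda}J$ on $\xi$, hence $\overline\nabla_Y\tfrac{\del}{\del\theta}=-\tfrac12(\CL_{R_\lambda}J)Y\in\xi$, which is the assertion. (This is the $\mathfrak{lcs}$-analogue of Corollary~\ref{cor:connection}(1), and it is compatible with properties $(8)$ and $(9)$.)

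I do not expect a genuine obstacle here; the computation is essentially bookkeeping, and it simply mirrors the corresponding contact-triad identity of \cite{oh-wang:connection}. The steps needing the most care are the signs produced by $\overline J^{-1}=-\overline J$ and by the anticommutation relation $(\CL_{R_\lambda}J)J=-J(\CL_{R_\lambda}J)$, and the verification that $(\CL_{R_\lambda}J)JY$ already lies in $\xi$, which is precisely what permits replacing $\overline J$ by $J$ in the second-to-last displayed step.
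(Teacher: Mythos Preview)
Your proof is correct and follows essentially the same route as the paper: both invert $\overline J\,\tfrac{\partial}{\partial\theta}=R_\lambda$, invoke $\overline J$-linearity of $\overline\nabla$, insert Corollary~\ref{cor:connection}(1) for $\overline\nabla_Y R_\lambda$, and simplify via the anticommutation $(\CL_{R_\lambda}J)J=-J(\CL_{R_\lambda}J)$. If anything, your write-up is a bit more careful than the paper's (which contains a typo writing $JR_\lambda$ in place of $JY$), particularly in noting that $(\CL_{R_\lambda}J)JY\in\xi$ so that $\overline J$ may be replaced by $J$.
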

\begin{proof} By the definition of $\lambda$-admissible almost complex structure in
Definition \ref{defn:lambda-admissible-J}, we have
$\overline J\frac{\del}{\del \theta} = R_\lambda$. Therefore using the $\overline J$-linearity of the connection
and compatibility with the contact triad connection $\nabla$ in addition,
and applying the relation $J \CL_YJ = - (\CL_YJ) J$ a couple of times, we derive
\beastar
\overline \nabla_Y \frac{\del}{\del \theta} & = & - \overline \nabla_Y \overline J R_\lambda
= -\overline J \overline \nabla_Y R_\lambda \\
& = &  - \overline J \frac12 \left(\CL_{R_\lambda} J\right) J R_\lambda =
- \frac12 \CL_{R_\lambda}J R_\lambda
\eeastar
which finishes the proof.
\end{proof}

Usage of this connection is not essential for the main study of pseudoholomorphic
curves but will simplify geometric calculations and many formulae that appear in our elliptic estimates
\cite{oh-wang:CR-map1}
and in our expression of the linearization operator (see Section \ref{subsec:linearization}). This enables us to provide transparent geometric
interpretation of the various terms appearing in the linearization operator
as in \cite{oh-wang:CR-map1,oh-wang:CR-map2}.

\begin{rem}\label{rem:connection}
\begin{enumerate}
\item One can promote the above definition of $\mathfrak{lcs}$-fication of contact triad connection to
the $\mathfrak{lcs}$-fication of the fiberwise contact triad connection to the contact mapping tori whose
detailed study is postponed to elsewhere.
\item
The $\mathfrak{lcs}$-fication connection $\overline \nabla$ even for the symplectization
is not the canonical connection of the \emph{symplectization} as an almost K\"ahler manifold
\cite{gauduchon}, \cite{kobayashi}:
this is manifested by the nonvanishing $\overline \nabla R_\lambda \neq 0$ while
all other metric connections
on contact manifolds in the literature require $R_\lambda$ to be a Killing vector field and
so also $\overline \nabla R_\lambda = 0$ in their symplectizations.
(See \cite[Introduction]{oh-wang:connection} for the relevant discussion on the relationship between
the contact triad connection and other connections used in the literature on the natural
connections on contact manifolds.)
\end{enumerate}
\end{rem}

\emph{From now on, by an abuse of notation, we will omit the overline from the notation and just denote by $\nabla$ this
$\mathfrak{lcs}$-fication connection on $Q \times S^1$ of the contact triad connection $\nabla$ associated to $(Q,J,\lambda)$.}

\section{Review of contact Cauchy-Riemann maps and Hofer's energy}
\label{sec:CRmap}

\subsection{Contact Cauchy-Riemann maps}

In this section, we recall the basic definition and properties of the so called \emph{contact Cauchy
Riemann map} and \emph{contact instanton} introduced in \cite{oh-wang:CR-map1}.

We denote by $(\dot\Sigma, j)$ a punctured Riemann surface (including the case of closed Riemann surfaces without punctures).

\begin{defn}A smooth map $w:\dot\Sigma\to Q$ is called a \emph{contact Cauchy--Riemann map}
(with respect to the contact triad $(Q, \lambda, J)$), if $w$ satisfies the following Cauchy--Riemann equation
$$
\delbar_J^\pi w:=\delbar^{\pi}_{j,J}w:=\frac{1}{2}(\pi dw+J\pi dw\circ j)=0.
$$
\end{defn}

Recall that for a fixed smooth map $w:\dot\Sigma\to Q$,
the triple $(w^*\xi, w^*J, w^*g_\xi)$ becomes  a Hermitian vector bundle
over the punctured Riemann surface $\dot\Sigma$.  This  introduces a Hermitian bundle structure on
$Hom(T\dot\Sigma, w^*\xi)\cong T^*\dot\Sigma\otimes w^*\xi$ over $\dot\Sigma$,
with inner product given by
$$
\langle \alpha\otimes \zeta, \beta\otimes\eta \rangle =h(\alpha,\beta)g_\xi(\zeta, \eta),
$$
where $\alpha, \beta\in\Omega^1(\dot\Sigma)$, $\zeta, \eta\in \Gamma(w^*\xi)$,
and $h$ is the K\"ahler metric on the punctured Riemann surface $(\dot\Sigma, j)$.

Let $\nabla^\pi$ be the contact Hermitian connection.
Combining the pull-back of this connection and the Levi-Civita connection of the Riemann surface,
we get a Hermitian connection for the bundle $T^*\dot\Sigma\otimes w^*\xi \to \dot\Sigma$.
By a slight abuse of notation, we will still denote by $\nabla^\pi$ this combined connection.

The smooth map $w$ has an associated $\pi$-harmonic energy density
defined as the norm of the section $d^\pi w:=\pi dw$ of $T^*\dot\Sigma\otimes w^*\xi\to \dot\Sigma$.
In other words, it is the function $e^\pi(w):\dot\Sigma\to \R$ defined by
$
e^\pi(w)(z):=|d^\pi w|^2(z).
$
(Here we use $|\cdot|$ to denote the norm from $\langle\cdot, \cdot \rangle$ which should be clear from the context.)

Similarly to the case of pseudoholomorphic curves on almost K\"ahler manifolds,
we obtain the following basic identities, for whose proofs we refer readers to \cite{oh-wang:CR-map1}.

\begin{lem}[Lemma 3.2 \cite{oh-wang:CR-map1}]\label{lem:omega-area}
Fix a K\"ahler metric $h$ on $(\dot\Sigma,j)$,
and consider a smooth  map $w:\dot\Sigma \to Q$.  Then we have the following equations
\begin{enumerate}
\item $e^\pi(w):=|d^\pi w|^2 = |\del^\pi w| ^2 + |\delbar^\pi w|^2$;
\item $2\, w^*d\lambda = (-|\delbar^\pi w|^2 + |\del^\pi w|^2) \,dA $
where $dA$ is the area form of the metric $h$ on $\dot\Sigma$;
\item $w^*\lambda \wedge w^*\lambda \circ j = - |w^*\lambda|^2\, dA$.
\end{enumerate}
As a consequence, if $w$ satisfies $\delbar^\pi w=0$, then
\be\label{eq:onshell}
|d^\pi w|^2 = |\del^\pi w| ^2 \quad \text{and}\quad w^*d\lambda = \frac{1}2|d^\pi w|^2 \,dA.
\ee
\end{lem}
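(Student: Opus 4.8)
The plan is to prove all three identities pointwise. Fix $z \in \dot\Sigma$, choose a local $h$-orthonormal frame $\{e_1, e_2\}$ of $T\dot\Sigma$ with $e_2 = je_1$ (so that $\{e_1,e_2\}$ is positively oriented for the $j$-induced orientation), and work against the dual coframe $\{e^1, e^2\}$; then $dA = e^1 \wedge e^2$ and the Hermitian norm on $T^*\dot\Sigma \otimes w^*\xi$ reduces to the $g_\xi$-norm on the coefficients. Writing $a := d^\pi w(e_1)$ and $b := d^\pi w(e_2)$, both in $w^*\xi_z$, we have $d^\pi w = a \otimes e^1 + b \otimes e^2$ and $|d^\pi w|^2 = |a|^2 + |b|^2$.

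First I would record the two elementary consequences of the definitions $\del^\pi w = \tfrac12(\pi dw - J\,\pi dw \circ j)$ and $\delbar^\pi w = \tfrac12(\pi dw + J\,\pi dw \circ j)$: namely that $\del^\pi w \circ j = J\,\del^\pi w$ (complex linearity), $\delbar^\pi w \circ j = -J\,\delbar^\pi w$ (complex antilinearity), and $d^\pi w = \del^\pi w + \delbar^\pi w$. Evaluating on $e_1, e_2$ gives $\del^\pi w(e_1) = \tfrac12(a - Jb)$, $\del^\pi w(e_2) = \tfrac12(Ja + b)$, $\delbar^\pi w(e_1) = \tfrac12(a + Jb)$, $\delbar^\pi w(e_2) = \tfrac12(b - Ja)$. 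Taking $g_\xi$-norms, using that $J$ is a $g_\xi$-isometry and that $(X,Y) \mapsto g_\xi(X, JY)$ is antisymmetric on $\xi$, I obtain $|\del^\pi w|^2 = \tfrac12(|a|^2 + |b|^2) + g_\xi(b, Ja)$ and $|\delbar^\pi w|^2 = \tfrac12(|a|^2 + |b|^2) - g_\xi(b, Ja)$. Adding these yields (1); subtracting yields the difference formula $|\del^\pi w|^2 - |\delbar^\pi w|^2 = 2\, g_\xi(b, Ja)$.

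For (2) I would invoke the defining compatibility of the contact triad data, $g_\xi(X,Y) = d\lambda(X, JY)$ on $\xi$ (equivalently $d\lambda(X,Y) = g_\xi(JX, Y)$), together with $R_\lambda \in \ker d\lambda$, to compute $w^*d\lambda(e_1, e_2) = d\lambda(a, b) = g_\xi(b, Ja)$, hence $w^*d\lambda = g_\xi(b, Ja)\, dA$; combining with the difference formula gives $2\, w^*d\lambda = (-|\delbar^\pi w|^2 + |\del^\pi w|^2)\, dA$. For (3), writing $w^*\lambda = \alpha_1 e^1 + \alpha_2 e^2$ with $\alpha_i = \lambda(dw(e_i))$, one has $(w^*\lambda \circ j)(e_1) = \alpha_2$ and $(w^*\lambda \circ j)(e_2) = -\alpha_1$, so $w^*\lambda \circ j = \alpha_2 e^1 - \alpha_1 e^2$, and a one-line wedge computation gives $w^*\lambda \wedge (w^*\lambda \circ j) = -(\alpha_1^2 + \alpha_2^2)\, e^1 \wedge e^2 = -|w^*\lambda|^2\, dA$. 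Finally, when $\delbar^\pi w = 0$ the term $|\delbar^\pi w|^2$ drops out of (1) and (2), which gives at once $|d^\pi w|^2 = |\del^\pi w|^2$ and $w^*d\lambda = \tfrac12 |d^\pi w|^2\, dA$.

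I do not expect any genuine obstacle here: the lemma is fibrewise linear algebra on $w^*\xi_z$ plus the defining compatibility of the triad $(Q, \lambda, J)$ with its triad metric $g = g_\xi + \lambda \otimes \lambda$. The one place requiring care is sign bookkeeping — the normalization of the Hermitian inner product on $T^*\dot\Sigma \otimes w^*\xi$, the sign convention in the correspondence $d\lambda|_\xi \leftrightarrow (J, g_\xi)$, and the action of $j$ on the coframe — all of which are already fixed by the conventions in force. Since this is precisely Lemma 3.2 of \cite{oh-wang:CR-map1}, one could alternatively just cite it.
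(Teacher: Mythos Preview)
Your proof is correct. The paper itself does not prove this lemma but simply quotes it from \cite{oh-wang:CR-map1} (``for whose proofs we refer readers to \cite{oh-wang:CR-map1}''), so there is nothing to compare against in the present paper; your pointwise orthonormal-frame computation is the standard argument and matches what one finds in the cited reference.
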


We call a map $w:\dot \Sigma \to Q$ a \emph{contact Cauchy-Riemann map} if $w$ satisfies
$\delbar^\pi w=0$.  The contact Cauchy--Riemann equation itself is \emph{not} an elliptic system
since the symbol is of rank $2n$ which is $1$ dimension lower than $TM$.
Here the closedness condition $d(w^*\lambda\circ j)=0$
leads to an elliptic system (see \cite{oh:contacton} for an explanation)

\subsection{Contact instantons}

The following definition is introduced in \cite{oh-wang:CR-map1,oh-wang:CR-map2} and
its analysis of the moduli space relevant to the equation has been developed
therein and in \cite{oh:contacton}.

\begin{defn}[Contact instantons \cite{oh-wang:CR-map1}] A contact Cauchy-Riemann map $w:(\dot \Sigma, j) \to (Q,J)$ is called
a \emph{contact instanton} if it satisfies $d(w^*\lambda \circ j) = 0$ in addition.
\end{defn}

We call the defining equation of a contact instanton
\be\label{eq:contacton}
\delbar^\pi w = 0, \quad d(w^*\lambda \circ j) = 0
\ee
a \emph{contact instanton equation}.

We recall the following local elliptic estimates for any contact instantons $w$
proved in \cite{oh-wang:CR-map1}.

\begin{thm}[Theorem 1.6 \cite{oh-wang:CR-map1}]\label{thm:local-W12}
Let $(\dot \Sigma, j)$ be a punctured Riemann surface with a possibly
empty set of punctures. Equip $\dot \Sigma$ with a metric which is cylindrical
near each puncture. Let $w: \dot\Sigma \to M$ be a contact instanton.
For any relatively compact domains $D_1$ and $D_2$ in
$\dot\Sigma$ such that $\overline{D_1}\subset D_2$, we have
$$
\|dw\|^2_{W^{1,2}(D_1)}\leq C_1 \|dw\|^2_{L^2(D_2)} + C_2 \|dw\|^4_{L^4(D_2)},
$$
where $C_1, \ C_2$ are some constants which
depend only on $D_1$, $D_2$ and $(M,\lambda, J)$.
\end{thm}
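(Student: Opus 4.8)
The plan is to turn the contact instanton system \eqref{eq:contacton} into a \emph{pair of essentially decoupled interior elliptic estimates}, one for the $\xi$-part $\zeta := d^\pi w = \pi\,dw$ and one for the real one-form $\alpha := w^*\lambda$ on $\dot\Sigma$, and then to recombine them. Since $dw = d^\pi w + (w^*\lambda)\,R_\lambda$ with $R_\lambda$ a bounded section and $|\nabla(R_\lambda\circ w)|\lesssim |dw|$ (by Corollary~\ref{cor:connection}), controlling $\zeta$ and $\alpha$ in $W^{1,2}(D_1)$ controls $dw$ there; indeed $|\nabla dw|\lesssim |\nabla\zeta|+|\nabla\alpha|+|\alpha|\,|dw|$ pointwise, so the Reeb part contributes, besides $\zeta$ and $\alpha$, only a term bounded in $L^2$ by $\|dw\|_{L^4}^2$.

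First I would record the two auxiliary equations on $\dot\Sigma$. For $\alpha$: on-shell Lemma~\ref{lem:omega-area}(2) (see \eqref{eq:onshell}) gives $d\alpha = w^*d\lambda = \tfrac12|d^\pi w|^2\,dA$, while the contact instanton condition $d(w^*\lambda\circ j)=0$, since $\alpha\circ j$ is up to sign the Hodge dual of $\alpha$ on the Riemann surface, says exactly $d^*\alpha = 0$; hence $\alpha$ solves the elliptic Hodge-type system $(d\oplus d^*)\alpha = (\tfrac12|d^\pi w|^2\,dA,\,0)$ with right-hand side algebraic-quadratic in $dw$. For $\zeta$: differentiating $\delbar^\pi w = 0$ with the contact Hermitian connection $\nabla^\pi$ (the combined connection on $T^*\dot\Sigma\otimes w^*\xi$) produces a $\delbar$-equation $\delbar^{\nabla^\pi}(\del^\pi w) = F(w)$ where, by the properties (1)--(6) of the contact triad connection (Theorem~\ref{thm:connection}) and the identity $\nabla_Y R_\lambda = \tfrac12(\CL_{R_\lambda}J)JY$ (Corollary~\ref{cor:connection}), $F(w)$ is a sum of terms each algebraic-bilinear in $dw$ --- the pulled-back curvature of $\nabla^\pi$, the torsion terms, and the Reeb-torsion term $\tfrac12(w^*\lambda)\big((\CL_{R_\lambda}J)J(\del^\pi w)\big)$ --- so that $|F(w)|\lesssim |dw|^2$ pointwise. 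Carrying out this differentiation so that \emph{every} surviving term lands among the admissible bilinear types is the step I expect to be the main obstacle: it is where the defining properties of the triad connection are essential, and it is essentially the content of the structure computations of \cite{oh-wang:CR-map1}.

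Then I would choose relatively compact domains $\overline{D_1}\subset D\subset \overline D\subset D_2$ with cut-offs subordinate to them, and apply the standard interior $L^2$-elliptic estimate for $d\oplus d^*$ to $\alpha$ on $D$ and the interior Calder\'on--Zygmund estimate for $\delbar$ to $\zeta$ on $D_1$ (comparing $\delbar^{\nabla^\pi}$ with the flat $\delbar$, which costs one further term $\lesssim |dw|\,|\zeta|\le |dw|^2$). This yields $\|\alpha\|_{W^{1,2}(D)}\le C(\|d\alpha\|_{L^2(D_2)}+\|\alpha\|_{L^2(D_2)})\le C'(\|dw\|_{L^4(D_2)}^2+\|dw\|_{L^2(D_2)})$ and, similarly, $\|\zeta\|_{W^{1,2}(D_1)}\le C'(\|dw\|_{L^4(D_2)}^2+\|dw\|_{L^2(D_2)})$, the commutator terms produced by the cut-offs being first order and hence absorbed into $\|dw\|_{L^2(D_2)}$; the only embedding invoked is $W^{1,2}\hookrightarrow L^p$ on a surface. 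Recombining via the first paragraph and then squaring gives
\[
\|dw\|^2_{W^{1,2}(D_1)}\le C_1\|dw\|^2_{L^2(D_2)}+C_2\|dw\|^4_{L^4(D_2)},
\]
with $C_1,C_2$ depending only on $D_1,D_2$ and on $(M,\lambda,J)$ through the triad tensors. The hypothesis that the metric is only cylindrical near the punctures is harmless, since all estimates are interior on relatively compact sets where that metric is uniformly comparable to a smooth one. A final remark: in real dimension two $W^{1,2}\not\hookrightarrow L^\infty$, so the quadratic terms $|dw|^2$ genuinely cannot be absorbed into the left-hand side --- this is exactly why the estimate takes the ``quadratic plus quartic'' form and must be read as an a priori inequality, the starting point for the subsequent bootstrap to higher regularity rather than a self-improving statement.
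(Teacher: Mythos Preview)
The paper does not itself prove this theorem: it is quoted verbatim as Theorem~1.6 of \cite{oh-wang:CR-map1} and is used here as a black box for the local regularity theory of contact instantons. So there is no ``paper's own proof'' to compare against.

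That said, your sketch is correct in outline and is essentially the strategy of \cite{oh-wang:CR-map1}. The decomposition $dw = d^\pi w + (w^*\lambda)\,R_\lambda$ together with the two observations (i) $\alpha = w^*\lambda$ satisfies the first-order Hodge system $d\alpha = \tfrac12|d^\pi w|^2\,dA$, $d^*\alpha = 0$, and (ii) $\zeta = d^\pi w$ satisfies a $\delbar^{\nabla^\pi}$-equation with right-hand side pointwise quadratic in $dw$, is exactly how the estimate is organized there. You have also correctly located the one genuine computational hurdle: deriving the Weitzenb\"ock-type identity for $\delbar^{\nabla^\pi}(\del^\pi w)$ and checking that every residual term is bounded by $|dw|^2$ relies on the specific torsion and Reeb-derivative identities of the contact triad connection (Theorem~\ref{thm:connection} and Corollary~\ref{cor:connection}), and this is precisely what the tensorial computations in \cite{oh-wang:CR-map1} carry out. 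Your closing remark on why the quartic term is unavoidable in dimension two is also to the point.
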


We also establish the following iterative local $W^{2+k,2}$-estimates on punctured surfaces $\dot \Sigma$
in terms of the $W^{\ell,p}$-norms with $\ell \leq k+1$. Combined with
Theorem \ref{thm:local-W12}, this theorem in turn provides
a priori local $W^{2+k,2}$-estimates in terms of (local) $L^2$, $L^4$ norms of $|d^\pi w|$,
and $|w^*\lambda|$.

\begin{thm}[Theorem 1.7 \cite{oh-wang:CR-map1}] \label{thm:Wk2}
Let $w$ be a contact instanton.
Then for any pair of domains $D_1 \subset D_2 \subset \dot \Sigma$ such that $\overline{D_1}\subset D_2$,
$$
\int_{D_1} |(\nabla)^{k+1}(dw)|^2 \leq \int_{D_2} \CJ_{k}(d^\pi w, w^*\lambda).
$$
Here $\CJ_k$ is a polynomial function of degree up to $2k+4$ with nonnegative coefficients  of the norms of the covariant derivatives
of $d^\pi w, \, w^*\lambda$ up to $0, \, \ldots, k$ with degree at most $2k + 4$
whose coefficients depending on $J$, $\lambda$ and $D_1, \, D_2$ but independent of $w$.
\end{thm}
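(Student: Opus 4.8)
\textbf{Proof proposal for Theorem \ref{thm:Wk2}.} The plan is to argue by induction on $k$, treating \eqref{eq:contacton} as a first-order elliptic system for $dw$ once the closedness condition $d(w^*\lambda\circ j)=0$ is incorporated. The base case $k=0$ is essentially Theorem \ref{thm:local-W12}, so the work is in the inductive step. First I would differentiate the contact instanton equation: applying the combined connection $\nabla$ (the $\mathfrak{lcs}$-fication connection restricted to $Q$, equivalently the contact triad connection coupled with Levi-Civita on the base) to both $\delbar^\pi w = 0$ and to $d(w^*\lambda\circ j)=0$, and commuting covariant derivatives past the $\delbar$-operator. Each commutation produces curvature terms of $\nabla$ and of $(\dot\Sigma,j)$ contracted against lower-order covariant derivatives of $dw$, plus terms coming from $\nabla R_\lambda = \frac12(\CL_{R_\lambda}J)JY$ (Corollary \ref{cor:connection}) and from $\nabla J$; crucially these are all \emph{universal tensors} built from $(Q,\lambda,J)$, independent of $w$. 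The outcome is that $\nabla^{k+1}(dw)$ satisfies an elliptic equation of the schematic form $\delbar^{\nabla}\big(\nabla^{k+1}(dw)\big) = (\text{polynomial in } \nabla^{\leq k}(d^\pi w),\, \nabla^{\leq k}(w^*\lambda))$, where the right-hand side has degree at most $2k+4$ when one assigns weight one to each factor and accounts for the quadratic nonlinearity $w^*d\lambda = \frac12|d^\pi w|^2\,dA$ from \eqref{eq:onshell}.

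Next I would run the standard elliptic bootstrap: interior $L^2$ estimates for the $\delbar^{\nabla}$-type operator (Calderón–Zygmund, on a slightly shrunk domain, using the cylindrical-end metric so that constants are uniform) give
\[
\int_{D_1}|\nabla^{k+1}(dw)|^2 \le C\int_{D_1'}\big|\nabla^k(dw)\big|^2 + C\int_{D_1'}\big|\text{RHS}\big|^2
\]
for an intermediate domain $D_1 \subset D_1' \subset D_2$. The first term on the right is controlled by the inductive hypothesis applied on the pair $D_1' \subset D_2$. For the second term, the integrand is a sum of monomials $\prod_i |\nabla^{a_i}(d^\pi w)|\cdot\prod_j|\nabla^{b_j}(w^*\lambda)|$ with $\sum a_i + \sum b_j + 2\cdot(\#\text{factors}) \le 2k+4$ — so after squaring, total degree at most $2k+4$ in the sense of the statement; each such monomial is handled by Hölder's inequality distributing $L^p$ norms, then by Gagliardo–Nirenberg interpolation on $D_1'$ to convert intermediate $W^{\ell,p}$-norms ($\ell \le k$) into the top-order $W^{k,2}$-norm times lower $L^2$/$L^4$ norms, and finally these $W^{\leq k,2}$-norms are reabsorbed via the inductive hypothesis. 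One must be slightly careful that the highest derivative appearing quadratically on the right is order $k$, not $k+1$, so nothing needs absorption into the left-hand side — the differentiated equation is genuinely one order below, which is why the plain induction closes.

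The main obstacle I anticipate is purely bookkeeping: verifying that the commutator and curvature terms generated by differentiating \eqref{eq:contacton} $k+1$ times never exceed the claimed degree bound $2k+4$, and that every term is indeed polynomial with nonnegative structure in $\nabla^{\le k}(d^\pi w)$ and $\nabla^{\le k}(w^*\lambda)$ — in particular that the $R_\lambda$-direction and the $\xi$-direction of $dw$ do not mix badly. Here the virtue of working with the contact triad connection (properties (2)–(6) of Theorem \ref{thm:connection}, and $\lambda(T|_\xi) = d\lambda$ from Corollary \ref{cor:connection}) is that it makes $\nabla^\pi$ Hermitian on $\xi$ and forces the torsion to behave tensorially, so each differentiation step stays within this class; I would organize the induction so that the precise polynomial $\CJ_k$ is defined recursively by $\CJ_{k}$ = (contribution from the $L^2$ estimate constant times $\CJ_{k-1}$) + (squared norm of the new right-hand side) expressed via the interpolation inequalities, which automatically keeps the coefficients nonnegative and $w$-independent. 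A secondary technical point, already present in \cite{oh-wang:CR-map1}, is that $w^*\lambda$ is not controlled by $d^\pi w$ alone — it is an independent piece of $dw$ — which is exactly why the statement tracks $\nabla^{\le k}(w^*\lambda)$ as a separate family of variables; I would keep it explicit throughout rather than trying to eliminate it.
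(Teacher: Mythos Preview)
The paper does not prove this theorem; it is quoted verbatim as Theorem 1.7 of \cite{oh-wang:CR-map1} and used as a black box (for instance in the proofs of Theorem \ref{thm:C1bound} and Theorem \ref{thm:e-regularity}). So there is no proof in the present paper to compare your proposal against.

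That said, your outline is the standard elliptic bootstrap and is, in broad strokes, how the argument in \cite{oh-wang:CR-map1} proceeds: induction on $k$, differentiation of the system \eqref{eq:contacton} using the contact triad connection so that commutator and torsion terms are universal tensors in $(Q,\lambda,J)$, followed by interior $L^2$ estimates for a $\delbar^\nabla$-type operator on the pair $D_1\subset D_2$. Your identification of the key structural points --- that the contact triad connection keeps the $\xi$- and $R_\lambda$-components tensorially separated (via Theorem \ref{thm:connection} and Corollary \ref{cor:connection}), and that $w^*\lambda$ must be tracked independently of $d^\pi w$ --- is correct and matches the philosophy of \cite{oh-wang:CR-map1}. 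The degree-counting for $\CJ_k$ is indeed the main bookkeeping burden, and your recursive definition of $\CJ_k$ is a sensible way to organize it. If you want to see the details carried out, you will need to consult \cite{oh-wang:CR-map1} directly, since nothing further appears in this paper.
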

One can also directly derive $C^{k,\alpha}$ estimates instead as in \cite{oh:contacton-Legendrian-bdy}.
The following classification result of closed contact instantons was proved
in \cite{oh-wang:CR-map1}. (See also \cite[Proposition 1.4]{abbas} where Abbas
made a similar statement as a part of \cite[Proposition 1.4]{abbas}.)

\begin{prop}[Proposition 3.4, \cite{oh-wang:CR-map1}]\label{prop:abbas} Assume $w:\Sigma\to M$ is a smooth contact instanton from a closed Riemann surface.
Then
\begin{enumerate}
\item If $g(\Sigma)=0$, $w$ is a constant map;
\item If $g(\Sigma)\geq 1$, $w$ is either a constant or the locus of its image
is a \emph{closed} Reeb orbit.
\end{enumerate}
In particular, any such instanton  satisfies $[w] = 0$ in $H_2(Q;\Z)$ and so
is massless (i.e., $E^\pi(w) = 0$).
\end{prop}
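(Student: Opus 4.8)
The plan is to exploit the on-shell identities from Lemma~\ref{lem:omega-area} together with the closedness condition $d(w^*\lambda\circ j)=0$ and a Bochner-type / integration-by-parts argument on the closed surface $\Sigma$. First I would observe that for a closed contact instanton both $w^*d\lambda$ and $w^*\lambda\circ j$ are well-behaved global forms, so we may integrate freely without boundary contributions. From \eqref{eq:onshell} we have $w^*d\lambda=\tfrac12|d^\pi w|^2\,dA\ge 0$, and since $\Sigma$ is closed and $w^*d\lambda=d(w^*\lambda)$ is exact, Stokes gives $\int_\Sigma w^*d\lambda=0$, hence $\int_\Sigma|d^\pi w|^2\,dA=0$, i.e.\ $d^\pi w\equiv0$. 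Thus $w$ has image tangent everywhere to the Reeb direction: $dw = w^*\lambda\otimes R_\lambda$.

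Next I would analyze the one-form $\alpha:=w^*\lambda$ on $\Sigma$. Since $d^\pi w=0$, we compute $w^*d\lambda=0$ directly (consistent with the above), so $d\alpha=0$, i.e.\ $\alpha$ is closed. The contact instanton condition $d(\alpha\circ j)=0$ says $\alpha\circ j$ is closed as well, so $\alpha$ is a harmonic one-form on $(\Sigma,j)$ with respect to the chosen K\"ahler metric $h$. If $g(\Sigma)=0$ there are no nonzero harmonic one-forms, so $\alpha\equiv0$, whence $dw\equiv0$ and $w$ is constant; this proves (1). If $g(\Sigma)\ge1$, then either $\alpha\equiv 0$ (again $w$ constant), or $\alpha$ is a nonzero harmonic one-form. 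In the latter case, $dw=\alpha\otimes R_\lambda$ shows that the differential of $w$ has rank at most one everywhere and its image lies in $\R\langle R_\lambda\rangle$ along the curve; I would then argue that the image of $w$ is a single Reeb orbit. Concretely, on the open set where $\alpha\ne 0$ the map $w$ is an immersion into a one-dimensional Reeb trajectory, and a connectedness/continuity argument (together with the fact that the zero set of a nonzero harmonic form is discrete, since in a local holomorphic coordinate $\alpha = \mathrm{Re}(g(z)\,dz)$ with $g$ holomorphic and not identically zero) shows the whole image is the closure of a single Reeb trajectory; finiteness of the $\alpha$-periods forces this trajectory to be periodic, i.e.\ a closed Reeb orbit. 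This gives (2).

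Finally, for the last assertion: since $d^\pi w\equiv 0$ we have $E^\pi(w)=\tfrac12\int_\Sigma|d^\pi w|^2\,dA=0$, so $w$ is massless; and since the image of $w$ is either a point or a closed Reeb orbit $\gamma$ (a one-dimensional submanifold), the induced map $H_2(\Sigma;\Z)\to H_2(Q;\Z)$ is zero, hence $[w]=0$ in $H_2(Q;\Z)$.

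The main obstacle I anticipate is the last part of step (2): upgrading the pointwise statement ``$dw$ has rank $\le 1$ with image in $\R\langle R_\lambda\rangle$'' to the global conclusion that the image is \emph{exactly} a closed Reeb orbit. One must handle the zeros of the harmonic form $\alpha$ (where $dw$ vanishes), show they do not disconnect the image, and verify periodicity; I would do this by passing to local holomorphic coordinates, writing $\alpha = \mathrm{Re}(f\,dz)$ for a holomorphic function $f$ with isolated zeros, and using the flow of $R_\lambda$ to parametrize the image, together with compactness of $\Sigma$ to extract the period. The rest is routine given Lemma~\ref{lem:omega-area} and Stokes' theorem.
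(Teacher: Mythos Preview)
Your approach is essentially the one the paper (and the original reference \cite{oh-wang:CR-map1}) uses: apply Stokes to the exact nonnegative form $w^*d\lambda = \tfrac12|d^\pi w|^2\,dA$ on the closed surface to force $d^\pi w\equiv 0$, then observe $\alpha=w^*\lambda$ is harmonic and use $\dim H^1_{\mathrm{harm}}(\Sigma)=2g$ to handle the genus-zero case. The paper's version of Proposition~\ref{prop:classify} duplicates exactly these steps.

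The one place your argument needs sharpening is the step you yourself flag: concluding that the Reeb leaf containing the image is \emph{closed}. Your phrase ``finiteness of the $\alpha$-periods forces this trajectory to be periodic'' is not an argument---the periods of any closed $1$-form on a closed surface are finite. The clean way to finish (which is what \cite{oh-wang:CR-map1} does, and what the paper alludes to when it writes ``if $C$ is not a closed leaf, $w$ must be a constant map'') is: if the leaf $C$ is not closed then $C\cong\R$ with arc-length coordinate $s$ satisfying $\lambda|_C=ds$, so $w$ factors through a function $g:\Sigma\to\R$ and $\alpha=w^*\lambda=dg$ is \emph{exact}. An exact harmonic form on a closed Riemann surface is zero, hence $\alpha\equiv0$ and $w$ is constant. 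Thus in the nonconstant case the leaf must be a closed Reeb orbit. With this replacement your proof is complete and matches the paper's.
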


Next we recall the asymptotic  behavior of contact instantons in general
from \cite{oh-wang:CR-map1}.

\begin{hypo}\label{hypo:basic}
Let $h$ be the metric on $\dot \Sigma$ given above.
Assume $w:\dot\Sigma\to Q$ satisfies the contact instanton equations \eqref{eq:contacton},
and
\begin{enumerate}
\item $E^\pi(w) <\infty$ (finite $\pi$-energy);
\item $\|d w\|_{C^0(\dot\Sigma)} <\infty$.
\end{enumerate}
\end{hypo}

Let $w$ satisfy Hypothesis \ref{hypo:basic}. We can associate two
natural asymptotic invariants at each puncture defined as
\bea
T & := & \frac{1}{2}\int_{[0,\infty) \times S^1} |d^\pi w|^2 \, dA + \int_{\{0\}\times S^1}(w|_{\{0\}\times S^1})^*\lambda\label{eq:TQ-T}\\
Q & : = & \int_{\{0\}\times S^1}((w|_{\{0\}\times S^1})^*\lambda\circ j).\label{eq:TQ-Q}
\eea
(Here we only look at positive punctures. The case of negative punctures is similar.)

\begin{rem}\label{rem:TQ}
For any contact instanton $w$, since $\frac{1}{2}|d^\pi w|^2\, dA=d(w^*\lambda)$, by Stokes' formula,
$$
T = \frac{1}{2}\int_{[s,\infty) \times S^1} |d^\pi w|^2\, dA + \int_{\{s\}\times S^1}(w|_{\{s\}\times S^1})^*\lambda, \quad
\text{for any } s\geq 0.
$$

Moreover, since $d(w^*\lambda\circ j)=0$, the integral
$$
\int_{\{s \}\times S^1}(w|_{\{s \}\times S^1})^*\lambda\circ j, \quad
\text{for any } s \geq 0
$$
does not depend on $s$ whose common value is nothing but $Q$.
\end{rem}
We call $T$ the \emph{asymptotic contact action}
and $Q$ the \emph{asymptotic contact charge} of the contact instanton $w$ at the given puncture.

The following theorem slightly strengthens the convergence results from
\cite{oh-wang:CR-map1}.

\begin{thm}[Theorem 6.4 \cite{oh-wang:CR-map1}]\label{thm:subsequence}
Let $\Sigma$ be a closed Riemann surface of genus 0 with
a finite number of marked points $\{p_1, \cdots, p_k\}$ for $k \geq 3$, and let $\dot \Sigma = \Sigma \setminus
\{p_1,\cdots, p_k\}$ be the associated punctured Riemann surface equipped with a metric as before.
Suppose that $w$ is a contact instanton map $w:\dot \Sigma
\to Q \times S^1 $ with finite total energy $E^\pi(w)$
and fix a puncture $p \in \{p_1, \cdots, p_k\}$.

Then for any given sequence $I=\{\tau_k\}$ with $\tau_k \to \infty$,
there exists a subsequence $I' \subset I$ and a closed parameterized Reeb orbit $\gamma = \gamma_{I'}$ of period $T$ and some $(\tau_0,t_0) \in \R \times S^1$   and a massless instanton $w_\infty(\tau,t)$
(i.e., $E^\pi(w_\infty) = 0$) on the cylinder $\R \times S^1$ such that
$$
\lim_{i \to \infty} w(\tau + \tau_{k_i},t) = w_\infty
$$
on $K \times S^1$ in the $C^\infty$-sense for every compact subset $K \subset \R$.

Furthermore $w_\infty$ has the formula
$w_\infty(\tau,t) = \gamma(-Q(p) \tau + T(p)\, t)$ where $\gamma$ is a Reeb trajectory, and for the case
of $Q = 0$ and $T\neq 0$, the trajectory is a closed Reeb orbit of $R_\lambda$ with period $T$.
\end{thm}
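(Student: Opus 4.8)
The plan is to argue along the cylindrical end at the puncture $p$: translate $w$ far out along the end, extract a $C^\infty_{\mathrm{loc}}$‑limit on the full cylinder $\R\times S^1$ using the a priori elliptic estimates, and then identify that limit by exploiting its masslessness together with a Liouville‑type rigidity. Concretely, using the analytic coordinate $z=e^{-2\pi(\tau+it)}$ I identify a punctured neighbourhood of $p$ with the half‑cylinder $[0,\infty)\times S^1$ carrying a cylindrical metric (this is essentially the only role of the genus‑zero, $k\ge 3$ hypotheses), and for $\tau_k\in I$ I set $w_k(\tau,t):=w(\tau+\tau_k,t)$ on $[-\tau_k,\infty)\times S^1$; since \eqref{eq:contacton} is invariant under translation of the cylinder, each $w_k$ is again a contact instanton. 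Two uniform bounds are then available. First, $\|dw_k\|_{C^0}\le\|dw\|_{C^0(\dot\Sigma)}<\infty$, which is part of Hypothesis \ref{hypo:basic} (if one starts only from finiteness of the total energy, this $C^0$‑bound must first be produced by an $\epsilon$‑regularity argument of the type of Theorem \ref{thm:e-regularity-intro} together with the usual no‑bubbling dichotomy). Second, for each fixed $R>0$ one has $E^\pi\big(w_k;[-R,R]\times S^1\big)=E^\pi\big(w;[\tau_k-R,\tau_k+R]\times S^1\big)\to 0$ as $k\to\infty$, since $E^\pi(w)<\infty$. Because $|w^*\lambda|\le|dw|$ is globally bounded as well, on each fixed compact cylinder the $L^2$‑ and $L^4$‑norms of $d^\pi w_k$ and of $w_k^*\lambda$ are bounded uniformly in $k$; feeding this into the local estimates of Theorems \ref{thm:local-W12} and \ref{thm:Wk2} and bootstrapping yields uniform $W^{\ell,2}_{\mathrm{loc}}$‑bounds, hence uniform $C^m_{\mathrm{loc}}$‑bounds, on $\{w_k\}$ for every $\ell,m$.

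With these bounds in hand, Arzel\`a--Ascoli and a diagonal argument over an exhaustion of $\R\times S^1$ produce a subsequence $I'=\{\tau_{k_i}\}\subset I$ with $w_{k_i}\to w_\infty$ in $C^\infty_{\mathrm{loc}}(\R\times S^1)$, where $w_\infty$ solves \eqref{eq:contacton} on all of $\R\times S^1$. Combining the energy decay above with $C^1$‑convergence gives $\frac12\int_{K\times S^1}|d^\pi w_\infty|^2\,dA=\lim_i \frac12\int_{K\times S^1}|d^\pi w_{k_i}|^2\,dA=0$ for every compact $K\subset\R$, so $E^\pi(w_\infty)=0$ and hence $d^\pi w_\infty\equiv 0$ (equivalently, via \eqref{eq:onshell}). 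Therefore $dw_\infty=(w_\infty^*\lambda)\otimes R_\lambda$ pointwise, so $w_\infty$ maps the connected cylinder into a single integral curve of $R_\lambda$; fixing a Reeb trajectory $\gamma$ with $\dot\gamma=R_\lambda\circ\gamma$, I may write $w_\infty=\gamma\circ a$ for a smooth real‑valued function $a$ on $\R\times S^1$, determined up to the reparametrization freedom of $\gamma$ — which is exactly the translation ambiguity $(\tau_0,t_0)$ appearing in the statement.

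It remains to identify $a$. Since $w_\infty^*\lambda=da$ and the second equation in \eqref{eq:contacton} reads $d(w_\infty^*\lambda\circ j)=0$, which on the cylinder is precisely $\Delta a=0$, the function $a$ is harmonic; moreover $\|da\|_{C^0}=\|w_\infty^*\lambda\|_{C^0}\le\|dw\|_{C^0(\dot\Sigma)}<\infty$. A Fourier expansion in $t$ of the bounded harmonic one‑form $da$ eliminates every non‑constant mode (each would grow exponentially at one of the two ends of the cylinder), so $da=\alpha\,d\tau+\beta\,dt$ for constants $\alpha,\beta$. Evaluating \eqref{eq:TQ-T}--\eqref{eq:TQ-Q} for $w_\infty$ on $\{0\}\times S^1$ — using $d^\pi w_\infty=0$ and, with the convention $j\partial_\tau=\partial_t$, the identity $w_\infty^*\lambda\circ j=\beta\,d\tau-\alpha\,dt$ — gives $T(w_\infty)=\beta$ and $Q(w_\infty)=-\alpha$. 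On the other hand, by Remark \ref{rem:TQ} the integral $\int_{\{s\}\times S^1}w^*\lambda\circ j$ is independent of $s$ with common value $Q(p)$, while $\int_{\{s\}\times S^1}w^*\lambda\to T(p)$ as $s\to\infty$; both integrals are continuous under the $C^1_{\mathrm{loc}}$‑convergence $w_{k_i}\to w_\infty$, and $w_{k_i}|_{\{0\}\times S^1}=w|_{\{\tau_{k_i}\}\times S^1}$, so $\beta=T(p)$ and $\alpha=-Q(p)$. Thus $a(\tau,t)=-Q(p)\,\tau+T(p)\,t+\mathrm{const}$, and absorbing the constant into the parametrization of $\gamma$ yields $w_\infty(\tau,t)=\gamma(-Q(p)\,\tau+T(p)\,t)$. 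Finally, $w_\infty$ being a genuine map on $\R\times S^1$ is $1$‑periodic in $t$, which forces $\gamma(s+T(p))=\gamma(s)$ for all $s$; when $T(p)\ne 0$ this makes $\gamma$ a closed Reeb orbit of period $T(p)$, and in the special case $Q(p)=0$, $T(p)\ne 0$ the limit $w_\infty(\tau,t)=\gamma(T(p)\,t)$ is precisely the ($\tau$‑independent) cylinder over that closed orbit, whereas $Q(p)=0=T(p)$ is the removable‑singularity case of Theorem \ref{thm:c=0}, in which $w_\infty$ is constant.

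The one genuinely technical point is the compactness input: passing from finite $\pi$‑energy and the $C^0$‑bound on $dw$ to uniform $C^\infty_{\mathrm{loc}}$‑bounds on the translated family. This is where the local elliptic estimates of \cite{oh-wang:CR-map1} (Theorems \ref{thm:local-W12}, \ref{thm:Wk2}) and the no‑bubbling/$\epsilon$‑regularity analysis are needed, and one must be mildly careful that although the $\pi$‑energy over $[\tau_k-R,\tau_k+R]\times S^1$ decays, the $L^p$‑contributions of $w_k^*\lambda$ do not — they are nonetheless controlled on each fixed compact cylinder purely from $|w^*\lambda|\le|dw|\le\mathrm{const}$. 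Everything after the limit is extracted — masslessness, the Liouville‑type rigidity for bounded harmonic one‑forms on the full cylinder, and the bookkeeping identifying the coefficients of $da$ with $-Q(p)$ and $T(p)$ — is elementary.
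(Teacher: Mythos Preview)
The paper does not give its own proof of this statement: it is quoted verbatim as Theorem 6.4 of \cite{oh-wang:CR-map1} and used as background input, so there is no proof here to compare against. That said, your argument is correct and is precisely the standard proof one would expect in the cited reference: translate along the cylindrical end, use the $C^0$-bound on $dw$ from Hypothesis \ref{hypo:basic} together with the local $W^{k,2}$-estimates (Theorems \ref{thm:local-W12} and \ref{thm:Wk2}) to extract a $C^\infty_{\mathrm{loc}}$-limit, observe that the limit is massless because the tail $\pi$-energy vanishes, invoke Lemma \ref{lem:massless} to land in a single Reeb leaf, and then classify the bounded harmonic one-form $w_\infty^*\lambda$ on $\R\times S^1$ as constant by Fourier expansion, identifying the coefficients via Remark \ref{rem:TQ}.

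Two small remarks. First, you correctly flag that the statement as written only hypothesizes $E^\pi(w)<\infty$, whereas your compactness step genuinely needs $\|dw\|_{C^0(\dot\Sigma)}<\infty$; in the paper this is packaged into Hypothesis \ref{hypo:basic}, so it is an implicit standing assumption rather than something to be deduced. Second, when you write $w_\infty=\gamma\circ a$ with $a$ real-valued, strictly speaking $a$ is only a local lift if the Reeb leaf happens to be closed (so $a$ is well-defined on $\R\times S^1$ only modulo the minimal period); your argument is unaffected since you really work with the single-valued one-form $da=w_\infty^*\lambda$, but it is worth saying so explicitly.
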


\begin{prop}[Corollary 6.5 \cite{oh-wang:CR-map1}] \label{cor:tangent-convergence}
Let $(w,f):[0, \infty)\times S^1\to Q$ satisfy the $\lcs$
instanton equations \eqref{eq:lcs-instanton} and Hypothesis \ref{hypo:basic}.
Then
\beastar
&&\lim_{s\to \infty}\left|\pi \frac{\del w}{\del\tau}(s+\tau, t)\right|=0, \quad
\lim_{s\to \infty}\left|\pi \frac{\del w}{\del t}(s+\tau, t)\right|=0\\
&&\lim_{s\to \infty}\lambda(\frac{\del w}{\del\tau})(s+\tau, t)= - Q, \quad
\lim_{s\to \infty}\lambda(\frac{\del w}{\del t})(s+\tau, t)=T
\eeastar
and
$$
\lim_{s\to \infty}|\nabla^l dw(s+\tau, t)|=0 \quad \text{for any}\quad l\geq 1.
$$
All the limits are uniform for $(\tau, t)$ in $K\times S^1$ with compact $K\subset \R$.
\end{prop}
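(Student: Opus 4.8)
The plan is to deduce the pointwise asymptotics from the subsequential convergence Theorem~\ref{thm:subsequence}, the only additional inputs being a short computation with the limiting map and a soft compactness argument promoting subsequential limits to genuine limits. First I would note that the hypotheses make $w$ a genuine contact instanton: since $f^{*}d\theta$ is closed, the second equation of \eqref{eq:lcs-instanton} yields $d(w^{*}\lambda\circ j)=0$, so with Hypothesis~\ref{hypo:basic} we are exactly in the setting of \eqref{eq:contacton} on the half-cylinder $[0,\infty)\times S^{1}$, viewed as a punctured neighborhood of the end $s=\infty$. The asymptotic analysis of \cite{oh-wang:CR-map1} near such an end is local, so Theorem~\ref{thm:subsequence} is available here.

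Given any sequence $\tau_{k}\to\infty$, Theorem~\ref{thm:subsequence} produces a subsequence along which the translates $w(\,\cdot+\tau_{k_{i}},\cdot\,)$ converge in $C^{\infty}_{\mathrm{loc}}(\R\times S^{1})$ to $w_{\infty}(\tau,t)=\gamma(-Q\tau+Tt)$, where $\gamma$ is a Reeb trajectory parameterized so that $\dot\gamma=R_{\lambda}\circ\gamma$, and where $Q=Q(p)$ and $T=T(p)$ are the asymptotic contact charge and action of $w$ at the end --- equivalently, the quantities $Q$, $T$ of \eqref{eq:TQ-Q}--\eqref{eq:TQ-T}, which by Remark~\ref{rem:TQ} are intrinsic to $w$ and in particular do not depend on the subsequence. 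I would then simply read off the jets of $w_{\infty}$. From $\del_{\tau}w_{\infty}=-Q\,(R_{\lambda}\circ w_{\infty})$ and $\del_{t}w_{\infty}=T\,(R_{\lambda}\circ w_{\infty})$ one gets $\pi\del_{\tau}w_{\infty}=\pi\del_{t}w_{\infty}=0$ (because $\pi$ annihilates $R_{\lambda}$) and $\lambda(\del_{\tau}w_{\infty})=-Q$, $\lambda(\del_{t}w_{\infty})=T$ (because $\lambda(R_{\lambda})=1$). For the higher derivatives, write $dw_{\infty}=(-Q\,d\tau+T\,dt)\otimes(R_{\lambda}\circ w_{\infty})$; the coordinate one-forms $d\tau,dt$ are parallel, and the contact triad connection satisfies $\nabla_{R_{\lambda}}R_{\lambda}=0$ by property (3) of Theorem~\ref{thm:connection}, so covariant differentiation in either direction gives $\nabla_{\del_{\tau}}(dw_{\infty})=(-Q\,d\tau+T\,dt)\otimes\nabla_{\del_{\tau}w_{\infty}}R_{\lambda}=0$ (note $\del_{\tau}w_{\infty}$ is a multiple of $R_{\lambda}$) and similarly $\nabla_{\del_{t}}(dw_{\infty})=0$; hence $\nabla(dw_{\infty})=0$, and iterating, $\nabla^{l}(dw_{\infty})=0$ for all $l\ge1$.

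Finally I would upgrade subsequential convergence to the stated limits, uniform for $(\tau,t)\in K\times S^{1}$ with $K$ compact. Each quantity in the conclusion --- $|\pi\del_{\tau}w|$, $|\pi\del_{t}w|$, $|\lambda(\del_{\tau}w)+Q|$, $|\lambda(\del_{t}w)-T|$, $|\nabla^{l}dw|$ --- depends continuously, in the $C^{\infty}_{\mathrm{loc}}$ topology, on the relevant jet of the map, and we have just shown that every $C^{\infty}_{\mathrm{loc}}$ subsequential limit of the translates $w(\,\cdot+\tau_{k},\cdot\,)$ makes it vanish identically. Thus if one of these quantities failed to tend to $0$ uniformly on $K\times S^{1}$, we could choose $\tau_{k}\to\infty$ and $(\tau_{k}',t_{k}')\in K\times S^{1}$ on which it stays $\ge\delta>0$, pass to a subsequence with $(\tau_{k}',t_{k}')\to(\tau_{\infty},t_{\infty})$ and $w(\,\cdot+\tau_{k},\cdot\,)\to w_{\infty}$ in $C^{\infty}_{\mathrm{loc}}$, and contradict the vanishing of that quantity at $(\tau_{\infty},t_{\infty})$. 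The step I expect to carry the real weight is the subsequence-independence of $Q$ and $T$ in Theorem~\ref{thm:subsequence}: this is exactly what forces all subsequential limits to agree on the first-order data, and hence what legitimizes the passage from subsequential to honest convergence. It is supplied by Remark~\ref{rem:TQ}, which expresses $Q$ and $T$ as loop integrals over $\{s\}\times S^{1}$ that are independent of $s$. (One could alternatively obtain $\lim_{s\to\infty}|\pi\del_{\tau}w|=\lim_{s\to\infty}|\pi\del_{t}w|=0$, and the vanishing of the higher $\pi$-derivatives, directly from $E^{\pi}(w)<\infty$ via $\pi$-energy $\epsilon$-regularity together with the local estimates of Theorems~\ref{thm:local-W12}--\ref{thm:Wk2}, without appealing to Theorem~\ref{thm:subsequence}.)
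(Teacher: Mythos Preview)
The paper does not supply a proof of this proposition: it is quoted verbatim as Corollary~6.5 of \cite{oh-wang:CR-map1}, immediately following Theorem~\ref{thm:subsequence} (which is Theorem~6.4 of the same reference). Your argument is correct and is precisely the natural way to pass from Theorem~6.4 to its Corollary~6.5 --- extract the first jet of the explicit subsequential limit $w_\infty(\tau,t)=\gamma(-Q\tau+Tt)$, observe that the relevant quantities ($|\pi\,\del_\tau w|$, $|\lambda(\del_\tau w)+Q|$, $|\nabla^l dw|$, etc.) depend only on $Q,T$ and not on the particular Reeb trajectory $\gamma$ or the time shift, and then run the standard ``every subsequence has a further subsequence with the same limit'' argument to promote subsequential to honest uniform convergence. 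Your identification of the crucial point --- that $Q$ and $T$ are intrinsic invariants of $w$ (Remark~\ref{rem:TQ}) rather than artifacts of the subsequence --- is exactly right, and your use of $\nabla_{R_\lambda}R_\lambda=0$ from Theorem~\ref{thm:connection}(3) to kill $\nabla^l(dw_\infty)$ is clean. So your reconstruction matches what one expects the proof in \cite{oh-wang:CR-map1} to be.
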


If $\lambda$ is nondegenerate and $Q = 0, \, T \neq 0$, then the convergence $w(\tau,\cdot) \to \gamma(T\cdot)$
in Theorem \ref{thm:subsequence} is uniform.

\begin{rem} In a recent work \cite{oh:contacton-Legendrian-bdy}, the first named author
proved that the charge $Q$ always vanishes for the contact instantons with
Legendrian boundary conditions, i.e., for the maps satisfying
\be\label{eq:contacton-Legendrian-bdy}
\begin{cases}
\delbar^\pi w = 0, \, d(w^*\lambda \circ j) = 0 \\
w(\overline{z_iz_{i+1}}) \subset R_i, \quad i = 0, \ldots, k
\end{cases}
\ee
for a tuple of Legendrian submanifolds $(R_1,\cdots, R_k)$. It will be interesting
to develop the $\mathfrak{lcs}$-fication of this boundary value problem.
\end{rem}

\subsection{Canonical symplectization and Hofer's $\lambda$-energy; revisit}
\label{subsec:canonical}

In this subsection, we first recall the canonical symplectization of a contact manifold $(Q,\xi)$,
 which does not involve the choice of contact form.
We consider the subset
\be\label{eq:setalpha}
\{\alpha \in T^*Q \mid \alpha \neq 0, \, \ker \alpha = \xi \} \subset T^*Q \setminus \{0\}.
\ee
When $Q$ is oriented and a positive contact form $\lambda$ is given, we consider
the $(2n+2)$-dimensional submanifold $W$ of $T^*Q$
\be\label{eq:W}
W = \{\alpha \in T^*Q \setminus \{0\} \mid \ker \alpha = \xi, \, \alpha(\vec n) > 0\}
\ee
where $\vec n$ is a vector such that $\R \{\vec n\} \oplus \xi$ becomes a positively
oriented basis. Note that $W$ is a principal $\R_+$-bundle over $Q$ that is trivial.

We can lift a map $w: \dot \Sigma \to Q$ to a map $\widehat w:\dot \Sigma \to W$
if the contact manifold $(Q,\xi)$ is cooriented. In particular when the contact manifold
$(Q,\xi)$ is equipped with the contact form $\lambda$, $w$ has a canonical lift given by
\be\label{eq:canonical-lift}
\widehat w(z) : = \lambda(w(z)) \in T^*Q_{w(z)}.
\ee
We then examine the relationship between $w$ being a contact instanton and $\widehat w$
being a pseudoholomorphic curve on $W$ with respect to a scale-invariant almost complex
structure on $W$. We give a geometric description of Hofer's remarkable energy
introduced in \cite{hofer} in terms of this canonical symplectization. This energy
is the key ingredient needed in the bubbling-off analysis and so in the construction of the
compactification of the moduli spaces of pseudoholomorphic curves needed to develop the
symplectic field theory \cite{EGH}, \cite{behwz}. In this section,
we will then introduce its variant for the study of contact instanton maps whose
charge is not necessarily vanishing, i.e. $w^*\lambda \circ j$ does not have to be exact.

We now denote by
$$
i_W: W \hookrightarrow T^*Q
$$
the canonical embedding and by
$$
\pi_W: W \to Q
$$
the canonical projection which defines a principle $\R_+$-bundle. A generator of this $\R_+$ action is given by
the Euler vector field
$$
E(q,p): = \sum_{i=1}^{2n+1} p_i\frac{\del}{\del p_i}.
$$
We also denote
by $\Theta$ the Liouville
one-form on $T^*Q$. The basic proposition is that $W$ carries the canonical symplectic form
\be\label{eq:omegaW}
\omega_W = i_W^*d\Theta.
\ee
\begin{defn}[Canonical Symplectization]\label{defn:canonical-symplectization}
Let $(Q,\xi)$ be a co-oriented contact manifold. We call $(W,\omega_W)$ the
\emph{canonical symplectization} of the contact manifold $(Q,\xi)$.
\end{defn}
One important point of this canonical symplectization is the fact that it depends only on the
contact structure $\xi$ and the orientation of $Q$,
but does not depend on the choice of contact form $\lambda$. The symplectic form $\omega_W$
provides a natural subspace
$$
\widetilde \xi: = \left\{\eta \in T_\alpha W \, \Big\vert\,  \omega_W\left(\eta, \frac{\del}{\del r}\right) = 0 \right\}.
$$
Note that the projection $d_\alpha\pi_w$ restricts to an isomorphism $\widetilde \xi \to \xi$.

As explained in \cite{oh:contacton}, a choice of contact form $\lambda$ defines the map
$$
\widehat \psi: Q \times \R \to W; \quad \widehat \psi(x,s) = \psi(s) \, \lambda(x) \in T^*_xQ
$$
associated to each monotonically increasing function $\psi$ such that
\be\label{eq:varphi}
\psi(s) =
\begin{cases} 1 \quad & \mbox{for $s \geq R_1$} \\
\frac{1}{2} \quad & \mbox{for $s \leq R_0$}
\end{cases}
\ee
for any pair $R_0 < R_1$ of real numbers. We measure the symplectic area
of the composition $\widehat\psi \circ w: \dot \Sigma \to W$
for all possible variations of such $\psi$.  Hofer's original definition of
this type of energy then can be expressed as the integral
\bea
E_\CC(u)& : = & \sup_{\psi} \int_{\dot \Sigma} (\widehat\psi \circ u)^*\omega_W \nonumber\\
& = & \sup_{\psi} \int_{\dot \Sigma} d (\psi(s)\, \pi^*\lambda) \label{eq:hofer's-energy}\\
& = & \sup_{\psi}\left(\int_{\dot \Sigma} \psi(a) dw^*\lambda + \psi'(a)\, da \wedge w^*\lambda\right).
\eea
Following \cite{behwz}, we split this energy into two parts,
one purely depending on $w$
$$
E^\pi(w)= \int_{\dot \Sigma} dw^*\lambda
$$
and the other
$$
E^\lambda(u) =  \sup_{\psi} \int_{\dot \Sigma} \psi'(a)\, da \wedge w^*\lambda
= \sup_{\psi} \int_{\dot \Sigma} d(u^*\psi) \wedge u^*(\pi_Q^*\lambda).
$$
\begin{rem} The upshot of the above discussion is that Hofer's energy is
closely tied to the symplectic area of the composition $\widetilde \psi \circ u$
measured with respect to the canonical symplectic form on $W \subset T^*Q$.
Since the choice of the positive function satisfying \eqref{eq:varphi} is not
unique, we take the supremum over all such function $\psi$ to extract $\psi$-independent
quantity, which is precisely the way how Hofer's energy is defined.
\end{rem}

\section{$\lcs$ instantons and their energy}
\label{sec:lcs-instanton}

\subsection{$\lcs$ instanton equation}

In this section, we give a useful representation of the equation $\delbar_J u = 0$
for any $\lambda$-admissible almost complex structure $J$ in terms of the
contact instanton studied in \cite{oh-wang:CR-map1, oh-wang:CR-map2}.
We denote by
$$
\delbar^\pi w: = \frac{d^\pi w + J d^\pi w \circ j}{2}
$$
(similarly for $\del^\pi w$).

\begin{prop}\label{prop:lcs-instanton} Let $J$ be $\lambda$-admissible almost complex structure. Then
a map $u$ is $J$-holomorphic if and only if $(w,f)$ satisfies
\be\label{eq:lcs-instanton}
\delbar^\pi w = 0, \quad w^*\lambda \circ j = f^*d\theta.
\ee
\end{prop}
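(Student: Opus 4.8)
The plan is to unravel $\delbar_J u = 0$ \emph{pointwise} over $\dot\Sigma$, using the canonical splitting $TM = \xi \oplus \CV$ with $\CV = \span_\R\{R_\lambda,\partial_\theta\}$ and the three defining properties of a $\lambda$-admissible $J$; no analysis is involved, only fibrewise linear algebra.

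First I would refine the decomposition of $du$. With respect to $TM = TQ\oplus TS^1$ we have $du = dw\oplus df$, and refining further by $TQ = \xi\oplus\R\langle R_\lambda\rangle$ and trivializing $df = (f^*d\theta)\,\partial_\theta$ gives
$$
du \;=\; d^\pi w \;\oplus\; (w^*\lambda)\,R_\lambda \;\oplus\; (f^*d\theta)\,\partial_\theta,
$$
a $\xi$-valued one-form plus two $\CV$-valued one-forms. Since $J$ preserves $TM = \xi\oplus\CV$ (property (2)), the $\xi$- and $\CV$-components of $\delbar_J u = \tfrac12(du + J\,du\circ j)$ decouple; and since the projection $TM\to\xi$ commutes with $J$, the $\xi$-component equals $\tfrac12(d^\pi w + J\,d^\pi w\circ j) = \delbar^\pi w$. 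Thus its vanishing is precisely the first equation of \eqref{eq:lcs-instanton}.

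For the $\CV$-component I would pass to complex notation on the rank-one complex bundle $(\CV, J)$. Property (3) gives $J\partial_\theta = R_\lambda$, whence $JR_\lambda = -\partial_\theta$ by $J^2 = -1$; trivializing $\CV\cong\C$ via $\partial_\theta\mapsto 1$, $R_\lambda\mapsto\sqrt{-1}$, the $\CV$-part of $du$ becomes exactly the complex-valued one-form $\chi = f^*d\theta + \sqrt{-1}\,w^*\lambda$ of \eqref{eq:chi-intro}. Under this identification the vanishing of the $\CV$-component of $\delbar_J u$ reads $\chi + \sqrt{-1}\,(\chi\circ j) = 0$, i.e. $\chi\circ j = \sqrt{-1}\,\chi$, which says precisely that $\chi$ is of type $(1,0)$ on $(\dot\Sigma,j)$. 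Taking real and imaginary parts and using $j\circ j = -\mathrm{id}$ on $T\dot\Sigma$, the two resulting scalar identities $f^*d\theta\circ j = -\,w^*\lambda$ and $w^*\lambda\circ j = f^*d\theta$ are seen to be equivalent, so the $\CV$-equation collapses to the single equation $w^*\lambda\circ j = f^*d\theta$. Running the chain of equivalences in reverse gives the converse: if both equations of \eqref{eq:lcs-instanton} hold, then both components of $\delbar_J u$ vanish and $u$ is $J$-holomorphic.

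I expect no genuine obstacle here. The only points worth a sentence of care are that the $\xi$-projection really does commute with $J$ --- immediate from $J\xi = \xi$ and $J\CV = \CV$ --- so that the $\xi$-part of $\delbar_J u$ is honestly $\delbar^\pi w$, and that tameness (property (1)) plays no role in this pointwise equivalence, entering only later in the energy estimates and the Fredholm theory. One could instead verify everything in a local $\lambda$-Darboux frame, but the coordinate-free computation above is cleaner and has the bonus of exhibiting the one-form $\chi$ that governs the later asymptotic analysis.
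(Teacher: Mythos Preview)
Your proof is correct and follows essentially the same route as the paper: decompose $du$ along $TM = \xi \oplus \CV$, use $J$-invariance of the splitting to decouple the two components of $\delbar_J u$, identify the $\xi$-part as $\delbar^\pi w$, and reduce the $\CV$-part to the scalar equation $w^*\lambda\circ j = f^*d\theta$. The only cosmetic difference is in that last step: the paper evaluates the $\CV$-equation against a local coordinate basis $\{\partial_s,\partial_t\}$, whereas you package it as the condition that the complex one-form $\chi = f^*d\theta + \sqrt{-1}\,w^*\lambda$ be of type $(1,0)$, which is slightly cleaner and has the bonus of foreshadowing the role $\chi$ plays in the later asymptotic analysis (Theorem \ref{thm:pole-structure-intro}).
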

\begin{proof} By definition, we have
$\delbar_J u= \frac{du + J du j}{2}$. Writing
\be\label{eq:du}
du = \pi_{\widetilde \xi}\circ du \oplus \pi_\CV\circ du
\ee
and using the $J$ invariance of $\widetilde \xi$ and $\CV$, we compute
\beastar
J du & = & J (\Pi_{\widetilde \xi}\circ du + \Pi_\CV\circ du) \\
& = & J (\Pi_{\widetilde \xi}\circ du) + J(\Pi_\CV\circ du).
\eeastar
For the first summand, we can rewrite it as
\beastar
J (\Pi_{\widetilde \xi}\circ du) = J \left( (\pi_\xi \circ dw) \oplus 0 R_\lambda)\oplus 0 \frac{\del}{\del \theta}\right)
= (J d^\pi w) \oplus 0_\CV.
\eeastar
For the second summand, we have
$$
J(\pi_\CV \circ du) = J (w^*\lambda R_\lambda\oplus df).
$$
Therefore we have
\be\label{eq:Jdu}
J du = J d^\pi w \oplus J (w^*\lambda R_\lambda\oplus df).
\ee
By adding up \eqref{eq:du}, \eqref{eq:Jdu}, we obtain
$$
\delbar_J u = \delbar^\pi w \oplus
\left(\frac{(w^*\lambda R_\lambda\oplus df) + J (w^*\lambda R_\lambda\oplus df) \circ j}{2} \right).
$$
For the second summand, we derive that
$$
\frac{(w^*\lambda R_\lambda \oplus df) + J (w^*\lambda R_\lambda\oplus df) \circ j}{2} = 0
$$
is equivalent to $w^*\lambda \circ j = f^*d\theta$ by noting $J\frac{\del}{\del \theta} = R_\lambda$
and evaluating the equation
against the coordinate basis $\{\frac{\del}{\del s},\frac{\del}{\del t} \}$ of
any complex coordinate $z = s + i t$ on $\dot \Sigma$.

\end{proof}

\begin{rem}\label{rem:comparison}
Here we would like to compare the system \eqref{eq:lcs-instanton} with the contact instanton equation
\eqref{eq:contacton}
studied in \cite{oh-wang:CR-map1,oh-wang:CR-map2}. First of all in both systems, the first
equation depends only on the contact component $w: \dot \Sigma \to Q$ which satisfies
the contact Cauchy-Riemann map equation $\delbar^\pi w = 0$. In \cite{oh-wang:CR-map1,oh-wang:CR-map2},
all the local a priori elliptic estimates have been established. Furthermore asymptotic convergence is
also established for such map $w$
under the derivative bound $\|dw\|_{C^0} < C$ except the case where  the asymptotic charge
$$
Q = \int  w^*\lambda\circ j|_{t = t_0}
$$
does not vanish but its asymptotic period
$$
T = \lim_{t_0 \to \infty} \int  w^*\lambda|_{t = t_0}
$$
vanishes. Unlike that case where $w^*\lambda\circ j$ is exact as in the symplectization of $(Q,\lambda)$,
the second equation $d(w^*\lambda\circ j)=0$ does not enable us to establish the asymptotic convergence to
a closed Reeb orbit for such a case under the derivative bound $\|dw\|_{C^0} < C$.

In the current situation, $w$ is a contact instanton such that the one-form $w^*\lambda\circ j$
is not only closed but it is determined by  the $f$-component. In particular it always has
\emph{integral} asymptotic charge which is nothing but the degree of the map
$$
t \mapsto f(t, \cdot); S^1 \to S^1
$$
in the  coordinate $(\tau,t)$ in the strip-like region near the given puncture of  $\dot \Sigma$.
\end{rem}

\subsection{Hofer-type energy for $\lcs$ instantons}
\label{subsec:energy}

In the present section, we will derive some basic properties of \eqref{eq:lcs-instanton}.
A crucial point for the analysis of $\lcs$ instanton equation is to identify the
relevant geometric energy that enables us to establish the results of suitable
compactness and asymptotic convergence for a finite energy $\lcs$ instantons.
(See \cite[Section 6]{oh-wang:CR-map1} for delicacy of the issue
arising in the study of asymptotic convergence for the contact instanton equation.)

Fix a K\"ahler metric $h$ on $(\dot \Sigma, j)$.  The norm $|dw|$ of the map
$$
dw:(T \dot \Sigma,h) \to (TQ, g)
$$
with respect to the metric $g$ is defined by
$$
|dw|_g^2 := \sum_{i=1}^{2} {|dw(e_i)|_g}^2,
$$
where  $\{ e_1, e_2 \}$ is an orthonormal frame of $T \Sigma$
with respect to $h$.

We first introduce the $\xi$-component of the harmonic energy, which we call
the $\pi$-harmonic energy. This energy equals the contact area $\int w^*d\lambda$
`on shell' i.e., for any contact Cauchy-Riemann map, which satisfies $\delbar^\pi w = 0$

\begin{defn}\label{defn:pi-energy}
For a smooth map $\dot \Sigma \to Q$, we define the $\pi$-energy of $w$ by
\be\label{eq:Epi}
E^\pi(j,w) = \frac{1}{2} \int_{\dot \Sigma} |d^\pi w|^2.
\ee
\end{defn}

As mentioned in subsection \ref{subsec:canonical} in the context of symplectization, the $\pi$-harmonic energy
itself is not enough for the crucial bubbling-off analysis needed for the equation
\eqref{eq:lcs-instanton}. This is only because the bubbling-off analysis
requires the study of asymptotic behavior of contact instantons on the complex place $\C$.
A crucial difference between the case of contact instantons from Gromov's theory of
pseudoholomorphic curves on symplectic manifolds is that there is no removal singularity
result of the type of harmonic maps (or pseudoholomorphic maps) \emph{under the finite
$\pi$-energy condition}. Because of this, one
needs to examine the vertical part (i.e., the $R_\lambda$-component) of energy to control the asymptotic behavior of
contact instantons near the puncture. For this purpose, the Hofer-type energy
will be again crucial. In this section, we introduce this energy
in the general context of $\mathfrak{lcs}$-fication of $(Q,\lambda)$.

Now the definition of the vertical part of energy, which we call the $\lambda$-energy,
is in order. We need some preparation before giving the definition.

Let $u = (w,f):\dot \Sigma \to Q \times S^1$ be an $\lcs$ instanton.
Denote by $\overline \Sigma$ the real blow-up of the punctured Riemann
surface $\dot \Sigma$.

\begin{prop}\label{prop:S1class} Let $u: \dot \Sigma \to Q \times S^1$ be an $\lcs$ instanton
satisfying
\be\label{eq:pienergy-C1bound}
E^\pi(u) < \infty, \quad \|du\|_{C^0} < \infty.
\ee
Then the differential from $f^*d\theta$ extends smoothly to $\del \overline \Sigma$.
\end{prop}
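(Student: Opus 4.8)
The plan is to reduce the statement to the known asymptotic analysis of contact instantons carried out in \cite{oh-wang:CR-map1}, applied to the component $w$, together with an elementary observation about the component $f$. First I would note that by Proposition \ref{prop:lcs-instanton} the pair $u = (w,f)$ being $J$-holomorphic forces $w$ to be a contact instanton, since $\delbar^\pi w = 0$ and $d(w^*\lambda \circ j) = d(f^*d\theta) = 0$. Moreover the hypotheses \eqref{eq:pienergy-C1bound} say precisely that $w$ satisfies Hypothesis \ref{hypo:basic}: the finiteness $E^\pi(u) < \infty$ gives $E^\pi(w) < \infty$, and $\|du\|_{C^0} < \infty$ gives $\|dw\|_{C^0(\dot\Sigma)} < \infty$. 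Hence all the asymptotic results of Subsection \ref{subsec:canonical} apply to $w$ at each puncture.

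Next I would work in a strip-like (cylindrical) coordinate $(\tau,t) \in [0,\infty) \times S^1$ near a fixed puncture $p$, where the K\"ahler metric $h$ is asymptotically the flat cylinder metric. The one-form $w^*\lambda \circ j$ can be written in these coordinates as $a(\tau,t)\,d\tau + b(\tau,t)\,dt$, and by Proposition \ref{cor:tangent-convergence} (Corollary 6.5 of \cite{oh-wang:CR-map1}) we have $a(\tau,t) = -\lambda(\partial_\tau w)\circ j \to$ well-controlled limits — more precisely, the proposition gives $\lambda(\partial_\tau w)(s+\tau,t) \to -Q$ and $\lambda(\partial_t w)(s+\tau,t) \to T$ uniformly on compact sets, together with $|\nabla^\ell dw| \to 0$ for $\ell \ge 1$, so that $w^*\lambda \circ j$ converges in $C^\infty$ on the cylinder to the translation-invariant form $Q\,dt + T\,d\tau$ (up to the sign/convention bookkeeping of the excerpt). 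Since $w^*\lambda \circ j = f^*d\theta$ by the second equation in \eqref{eq:lcs-instanton}, the same $C^\infty$ convergence holds for $f^*d\theta$. Passing from the cylindrical coordinate to the real blow-up coordinate at $p$ (where the circle direction $t$ becomes the boundary circle $\partial_{\infty;p}\overline\Sigma$ and $\tau \to \infty$ becomes the boundary), the $C^\infty$-convergence of $f^*d\theta$ to a smooth translation-invariant limit is exactly the assertion that $f^*d\theta$ extends smoothly across $\partial\overline\Sigma$. One also records here that the limiting boundary value of $f^*d\theta$ on $\partial_{\infty;p}\overline\Sigma$ is $Q(p)\,dt$, and since $f$ takes values in $S^1 = \R/\Z$, the period $\int_{S^1} Q(p)\,dt = Q(p)$ must be an integer — this is the integrality of the asymptotic charge already flagged in Remark \ref{rem:comparison}, and it is consistent with $Q(p)$ being the degree of $t \mapsto f(\tau,t)$.

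The main obstacle I anticipate is not conceptual but one of carefully matching conventions: one must be sure that the cited convergence results of \cite{oh-wang:CR-map1} are being invoked with the correct hypotheses (in particular that $E^\pi(w) < \infty$ and the $C^0$ gradient bound together genuinely yield the higher-order convergence $|\nabla^\ell dw| \to 0$ used above, rather than merely subsequential $C^0$ convergence), and that the real-blow-up coordinate is set up so that "smooth extension to $\partial\overline\Sigma$" is literally the $C^\infty$ statement on $[0,\infty)\times S^1$ after the change of variables. A secondary point to be careful about is that $f$ itself need not extend continuously (it is $S^1$-valued and may wind), so the statement is genuinely about the \emph{differential} $f^*d\theta$, and the argument must only use the pullback one-form and never $f$ directly; this is automatic since $f^*d\theta = w^*\lambda\circ j$ and the right-hand side is manifestly a smooth one-form on $\dot\Sigma$ whose asymptotics we control. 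Once these bookkeeping matters are pinned down, the proof is essentially a two-line invocation of Proposition \ref{cor:tangent-convergence}.
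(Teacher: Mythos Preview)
Your proposal is correct and follows essentially the same route as the paper's own proof: verify that the hypotheses on $u=(w,f)$ put $w$ into the setting of Hypothesis~\ref{hypo:basic}, and then invoke Theorem~\ref{thm:subsequence} together with Proposition~\ref{cor:tangent-convergence} to obtain the $C^\infty$ convergence of $w^*\lambda\circ j = f^*d\theta$ near each puncture. The paper's proof is a one-sentence pointer to exactly these two results; your write-up simply unpacks the logic (and correctly flags that one must interpret ``smooth extension to $\partial\overline\Sigma$'' in the cylindrical coordinates, and that only $f^*d\theta$, not $f$ itself, is claimed to extend).
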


\begin{proof} We note that the given hypothesis for $u = (w,f)$ makes
$w$ a contact instanton satisfying Hypothesis \ref{hypo:basic} and so we apply
Theorem \ref{thm:subsequence} and Proposition \ref{cor:tangent-convergence} to $w$.
\end{proof}

Recalling the isomorphism
$$
[\dot \Sigma,S^1] \cong H^1(\dot \Sigma,\Z)
$$
we may also regard the cohomology class $[u]_{S^1}$ as an element in $[\dot \Sigma,S^1]$.
This enables us to define an element in the set of homotopy classes $[\dot \Sigma,S^1]$, which
we also denote by $\eta = \eta_u$. In fact, the isomorphism
$[\dot \Sigma, S^1] \cong H^1(\dot \Sigma;\Z)$
is directly induced by the period map
$$
[f] \mapsto [f^*d\theta].
$$
Motivated by this theorem, we define the set of \emph{period classes} $\eta \in H^1(\dot \Sigma,\Z)$.

\begin{defn}[Period map and the charge class] Let $f: \dot \Sigma \to Q \times S^1$ be a smooth map.
\begin{enumerate}
\item We call the map
$$
C^\infty(\dot \Sigma,Q) \to H^1(\dot \Sigma,\Z); \quad f \mapsto [f^*d\theta]
$$
the \emph{period map} and call the cohomology class $[f^*d\theta]$
the \emph{charge class} of the map $f$.
\item For an $\text{\rm lcs}$ instanton $u = (w,f): \dot \Sigma \to Q \times S^1$, we call the cohomology class
$$
\left[f^*d\theta\right] \in H^1(\dot \Sigma,\Z)
$$
the \emph{charge class} of $u$ and write
$$
[u]_{S^1}: = [f^*d\theta].
$$
\end{enumerate}
\end{defn}

Now we consider the maps $u = (w,f)$ with a fixed charge class $\eta = \eta_u$.

\begin{prop}\label{prop:harmonic-form}
Let $h$ be any K\"ahler metric on $(\dot \Sigma,j)$ such that $h = d\tau^2 + dt^2$
with respect to the strip-like coordinates $(\tau,t)$ near each puncture of $\dot \Sigma$.
Suppose that $u = (w,f)$ satisfies \eqref{eq:pienergy-C1bound}.
Then there exists a harmonic one form $\beta_\eta$ such that
\be\label{eq:dtildef}
f^*d\theta = \beta_\eta + d \widetilde f
\ee
on $\dot \Sigma$ for a function $\widetilde f: \dot \Sigma \to \R$ that continuously extends to a function
on the real-blow up $\overline{\Sigma}$.
\end{prop}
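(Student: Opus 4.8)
The plan is to realize $\beta_\eta$ as the harmonic representative of the de Rham class $[f^*d\theta] \in H^1(\dot\Sigma;\R)$ with respect to the chosen cylindrical metric $h$, and then show that the remainder $f^*d\theta - \beta_\eta$ is exact with a primitive that extends continuously to $\overline\Sigma$. The subtle point is that $\dot\Sigma$ is noncompact, so "harmonic representative of a cohomology class" is not automatic; one must work in the right function space, and one must use the asymptotic information coming from Proposition \ref{prop:S1class} (which gives that $f^*d\theta$ extends smoothly to $\del\overline\Sigma$) together with Proposition \ref{cor:tangent-convergence} (which identifies the asymptotic values of the components of $f^*d\theta$ as the constants $Q(p)$ and $T(p)$ at each puncture $p$).

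First I would fix, near each puncture $p_i$, the strip-like coordinate $(\tau,t) \in [0,\infty)\times S^1$ in which $h = d\tau^2 + dt^2$, and recall from Proposition \ref{prop:S1class} and the asymptotic convergence results (Theorem \ref{thm:subsequence}, Proposition \ref{cor:tangent-convergence}) that $f^*d\theta$ converges as $\tau\to\infty$ to a constant-coefficient one-form $-Q(p_i)\,d\tau + T(p_i)\,dt$ on the asymptotic cylinder, exponentially fast in $C^\infty$. Since $Q(p_i)$ is the period of $f^*d\theta$ around the loop $\{\tau\}\times S^1$ traversed the "wrong" way, and that period is an integer by Remark \ref{rem:comparison} (it is the degree of $t\mapsto f(\tau,t)$), the periods of $f^*d\theta$ around the puncture loops are exactly the components of the integral class $\eta$ in the basis dual to the puncture loops. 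Next I would choose a fixed smooth closed one-form $\beta_0$ on $\dot\Sigma$ that equals precisely $-Q(p_i)\,d\tau + T(p_i)\,dt$ on each asymptotic end and represents $\eta$; this is possible by a partition of unity since the only constraint on a closed form near an end, up to exact forms with bounded primitive, is its period. Then $f^*d\theta - \beta_0$ is a closed one-form that is \emph{exponentially decaying} on every end, hence lies in $L^2$ and in fact in any weighted Sobolev space with small positive weight, and has zero periods around all punctures.

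Then I would solve the Hodge-theoretic problem on the weighted spaces: on a surface with cylindrical ends, the operator $d + d^*$ (equivalently the scalar Laplacian on functions) is Fredholm on $L^2_{k,\delta}$ for $\delta$ in the gap between $0$ and the first positive indicial root $2\pi/\ell$ (here $\ell$ = circumference $=1$), and one has a Hodge decomposition: any exponentially decaying closed one-form $\alpha$ with vanishing puncture-periods can be written $\alpha = d g$ for a function $g$ that decays exponentially to constants on the ends — the constants being determined by integrating $\alpha$ along the ends. Applying this to $\alpha = f^*d\theta - \beta_0$ gives $f^*d\theta - \beta_0 = d\widetilde f$ with $\widetilde f$ extending continuously (indeed smoothly) to $\overline\Sigma$, where it is locally constant on $\del\overline\Sigma$. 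Finally, to upgrade $\beta_0$ to an honest harmonic form $\beta_\eta$, I would write $\beta_\eta = \beta_0 + d\phi + d^*\psi$ by another weighted Hodge argument, or more directly: $\beta_0$ is already harmonic on each asymptotic end (a constant-coefficient form on the flat cylinder is harmonic), so $d^*\beta_0$ is compactly supported, and one solves $\Delta\phi = d^*\beta_0$ with $\phi$ decaying exponentially to constants, and $\Delta\phi = d\,d^*\phi$ since $\phi$ is a function; setting $\beta_\eta := \beta_0 - d\phi$ produces a closed co-closed form cohomologous to $\beta_0$ via a primitive that again extends continuously to $\overline\Sigma$. Absorbing $-d\phi$ into the $d\widetilde f$ term (updating $\widetilde f$) gives the claimed decomposition $f^*d\theta = \beta_\eta + d\widetilde f$ with $\beta_\eta$ harmonic and $\widetilde f$ continuously extendable to $\overline\Sigma$.

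The main obstacle I expect is bookkeeping the \emph{boundary/asymptotic behavior} carefully: one needs $\beta_\eta$ to be genuinely harmonic (not just closed) \emph{and bounded} on the ends while still representing the integral class $\eta$, and simultaneously to guarantee $\widetilde f$ extends \emph{continuously} (the statement asks only for continuity, not smoothness, on $\overline\Sigma$) — this forces a choice of weight $\delta$ small enough that the indicial-root analysis of the Laplacian on the cylinder puts the primitive in the "decays to a constant" regime rather than the "grows linearly" regime. The clean way to handle this uniformly is to invoke the standard theory of the Laplacian on manifolds with cylindrical ends (as in \cite{oh-wang:CR-map1} or the analogous treatment in symplectic field theory), for which the relevant Fredholm and decay statements are already in the literature; the only paper-specific input needed is the exponential $C^\infty$-convergence of $f^*d\theta$ on the ends, which has been established in Theorem \ref{thm:subsequence} and Proposition \ref{cor:tangent-convergence} under the hypothesis \eqref{eq:pienergy-C1bound}.
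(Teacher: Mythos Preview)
Your overall strategy---replacing the paper's doubling construction by weighted--Sobolev Hodge theory on the cylindrical-end surface---is a reasonable alternative, and steps 1--4 are fine. The gap is in step~5. You assert that $\Delta\phi = d^*\beta_0$ can be solved with $\phi$ converging to constants on each end, but this Poisson equation carries a scalar obstruction: since $d^*\beta_0$ is compactly supported, Stokes gives
$\int_{\dot\Sigma} d^*\beta_0\,dA = -\lim_{R\to\infty}\int_{\partial\Sigma_R} *\beta_0 = -\bigl(\textstyle\sum_i a_i^+ - \sum_j a_j^-\bigr)$,
where $a_i$ is the $d\tau$-coefficient of the asymptotic constant form on the $i$-th end. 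Now the asymptotic you quote, $-Q(p_i)\,d\tau + T(p_i)\,dt$, is that of $w^*\lambda$ (this is what Proposition~\ref{cor:tangent-convergence} gives); the asymptotic of $f^*d\theta = w^*\lambda\circ j$ is obtained by composing with $j$, which swaps the roles and yields $d\tau$-coefficient $T(p_i)$. Hence $a_i = T_i$, and by Stokes applied to $w^*d\lambda = \tfrac12|d^\pi w|^2\,dA$ one has $\sum_i T_i^+ - \sum_j T_j^- = E^\pi(w)$. Whenever $E^\pi(w)>0$ the obstruction is nonzero and no bounded solution $\phi$ exists. Put intrinsically: for \emph{any} bounded harmonic one-form $\beta$ on $\dot\Sigma$ the form $*\beta$ is closed, so its periods around the boundary circles (which equal the $d\tau$-asymptotics of $\beta$) must sum to zero since $\sum_i[\gamma_i]=0$ in $H_1(\dot\Sigma)$; thus no bounded harmonic $\beta_\eta$ can have $d\tau$-asymptotics equal to the $T_i$ unless $E^\pi(w)=0$.

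For comparison, the paper does not use weighted Sobolev spaces but instead truncates $\dot\Sigma$ to a compact $\Sigma_{\vec R_0}$, doubles to a closed surface, takes the unique harmonic representative there, and argues via the involution $\iota$ that this representative equals $T\,d\tau - Q\,dt$ on the gluing collar. That argument meets the same wall: $\iota^*\beta=\beta$ forces the $d\tau$-component of $\beta$ to be \emph{odd} across the collar, hence to vanish at the gluing locus, which is incompatible with a nonzero constant $T$. The upshot is that the decomposition $f^*d\theta = \beta_\eta + d\widetilde f$ with $\beta_\eta$ bounded harmonic and $\widetilde f$ extending continuously to $\overline\Sigma$ can hold only when $E^\pi(w)=0$; for $E^\pi(w)>0$ one must allow $\widetilde f$ to grow linearly on the ends (equivalently, take $\beta_\eta$ with $d\tau$-asymptotics summing to zero rather than equal to the $T_i$). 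That linear growth is in fact what the subsequent use of $\widetilde f$ as a Hofer-type radial coordinate, and the properness hypothesis in Proposition~\ref{prop:proper-energy}, actually require.
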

\begin{proof}
Let $\eta \in H^1(\dot \Sigma, \Z)$ be the class represented by $f^*d\theta$. We first recall
that the one-form $f^*d\theta = w^*\lambda \circ j$ converges to $- Q\, dt + T\, d\tau$ exponentially fast
 as $\tau \to \infty$ in the given strip-like coordinate $(\tau,t)$. Therefore we can write
$$
f^*d\theta = \beta_0 + dg
$$
for some function $g$ and a closed one-form $\beta_0$ in class $\eta \in H^1(\dot \Sigma,\Z)$ which satisfy
the following:
\begin{itemize}
\item $dg \to 0$ exponentially fast as $\tau \to \infty$,
\item $\beta_0 \equiv  - Q\, dt + T\, d\tau$ for $\tau \geq R_0$ for sufficiently large $R_0> 0$.
\end{itemize}
We note that $\beta_0$ is a harmonic form on $[R_0, \infty) \times [0,1]$.
We apply the above discussion to each puncture of $\dot \Sigma$ and consider the compact domain
$$
\Sigma_{\vec R_0}: = \dot \Sigma \setminus \left( \sqcup_{i=1}^k  [R_{0,i}+ \delta, \infty) \times [0,1]\right)
$$
for some small $\delta > 0$.
We write $(\beta_{0,i}, g_i)$ for the above pair $(\beta_0,g)$ associated to the strip
$$
Z_i = [R_{0,i}, \infty)\times [0,1].
$$
We have $h = d\tau^2 + dt^2$ on each $[R_{0,i}, R_{0,i} + \delta] \times [0,1]$
for sufficiently large $R_{0,i}$'s.

Then we take the double of $(\Sigma_{\vec R_0},h)$ given by
$$
(\Sigma_{\vec R_0}^{db},h^{db}) = (\Sigma_{\vec R_0},h) \# (\Sigma_{\vec R_0},h)^{op}
$$
where $\Sigma_{\vec R_0}^{op}$ is $\Sigma_{\vec R_0}$ with opposite orientation.
We also have the canonical double of the form $\beta_0$ on $\Sigma_{\vec R_0}^{db}$ which we denote by
$\beta_0^{db}$.

This one-form $\beta_0^{db}$ is closed on $\Sigma^{db}$ and so naturally defines a (de Rham) cohomology class
in $H^1(\Sigma_{\vec R_0}^{db};\Z)$.
Let $\beta$ be the unique harmonic one-form representing the class $\eta^{db}$ on the double $\Sigma_{\vec R_0}^{db}$.
Then $\beta$ is invariant
under the natural involution
$\iota: \Sigma_{\vec R_0}^{db} \to \Sigma_{\vec R_0}^{db}$, i.e., $\iota^*\beta = \beta$
since $\iota$ is an isometry.

On the annulus $[-\delta, \delta]_i \times S^1  \subset [-\delta, \delta] \times \del \Sigma_{\vec R_0}$, we write
$$
\beta = c(\tau,t)\, d\tau + d(\tau,t)\, dt
$$
and  consider the complex-valued function
$$
\kappa(\tau + it) = (c(\tau + i t) + T) + \sqrt{-1} (d(\tau + it) - Q)
$$
as a $t$-periodic complex-valued function on $\C$.
Then since $\beta$ is a harmonic one-form, the function $\kappa$ is a holomorphic function which also satisfies
\be\label{eq:Im}
\int_{\{\tau\} \times S^1} \Im \kappa (\tau,t)\, dt = \int_{\{\tau\} \times S^1} \beta  - Q \equiv 0
\ee
for all $\tau$. Furthermore, the identity $\iota^*\beta = \beta$ implies
\be\label{eq:Re}
\kappa(-\tau + it) = - (c(\tau + i t) + T) + \sqrt{-1} (d(\tau + it)- Q).
\ee
In particular, substituting $\tau = 0$, we get $c(i t) +T = - (c(i t) + T)$ which implies
both sides vanish for all $t \in S^1$.
Combining \eqref{eq:Im} and \eqref{eq:Re}, we have proved $\kappa(\tau + it) \equiv 0$ on each $[-\delta, \delta]_i \times S^1$
for $i = 1, \cdots, k$. But this is equivalent to the equality
$$
\beta(\tau,t) = T\, d\tau - Q\, dt
$$
on the strips $[-\delta, \delta]_i \times S^1$.

Now the restriction of $\beta$ to $\Sigma_{\vec R_0}$ automatically extends to
$\dot \Sigma$ by setting it to be $\beta = T d\tau - Q\, dt$ on $[R_{0,i} + \delta, \infty) \times [0,1]$.
We denote the resulting harmonic form still by $\beta$ on $\dot \Sigma$. Then
it satisfies $[\beta] = [f^*d\theta]$ and
$$
\beta - \beta_0 = 0
$$
on $[R_{0,i} + \delta, \infty) \times [0,1]$ and hence
$$
\beta = f^*d\theta + d\widetilde f
$$
such that $d(\widetilde f - g) \to 0$ exponentially fast. This finishes the proof.
\end{proof}

In particular on the given cylindrical neighborhood $D_\delta(p) \setminus \{p\}$,
$d \widetilde f \to 0$ as $\tau \to \infty$ by the asymptotic convergence theorem
Theorem \ref{thm:subsequence}.
We remark that when $w$ is given, the function $\widetilde f$ on $\dot \Sigma$ is uniquely determined
by the equation \eqref{eq:dtildef} modulo
the shift by a constant.

By construction, \eqref{eq:lcs-instanton} for $u = (w,f)$ in class
$
[u]_{S^1}= \eta  \text{ in }\, H^1(\dot \Sigma)
$
is equivalent to the following
\be\label{eq:abbas}
\begin{cases} \delbar^\pi w = 0,\\
w^*\lambda\circ j - d \widetilde f = \beta\\
\Delta \beta = 0
\end{cases}
\ee
for the triple $(w,\widetilde f, \beta)$ which determines (modulo addition by $\theta_0$) the component
$f: \dot \Sigma \to S^1$ by the equation
$$
\beta + d \widetilde f= f^*d\theta, \quad [\beta] = \eta \text{ in }\, H^1(\dot \Sigma).
$$
\begin{rem}
 When $g(\dot \Sigma) = 0$, the harmonic one-form $\beta$ is uniquely determined by
its asymptotic charges at the punctures. In other words, the following asymptotic boundary
value problem has the unique solution
$$
\begin{cases} \Delta \beta = 0, \\
\lim_{\tau \to \pm \infty} \epsilon_j^*\beta = \pm(-Q_j)\,dt
\end{cases}
$$
on $\dot \Sigma$ where $\epsilon_j: [0,\infty) \times S^1 \to \dot \Sigma$
(or $\epsilon_j: (-\infty,0] \times S^1 \to \dot \Sigma$ is the
strip-like coordinates at the puncture $r_j$. When $g(\dot \Sigma) > 0$, then the solution space
for this asymptotic boundary value problem has dimension $g(\dot \Sigma)$.
\end{rem}

We are now ready to give the definition of $\lambda$-energy. Denoting
$\varphi = \psi'$ for the function $\psi$ given in section \ref{subsec:canonical}, we introduce the following class of
test functions
\begin{defn}\label{defn:CC} We define
\be
\CC =\{\varphi: \R \to \R_{\geq 0} \mid \supp \varphi \, \text{is compact}, \, \int_\R \varphi = 1\}.
\ee
\end{defn}
Later for the purpose of compactification of relevant moduli spaces
$\CM_{k,\ell}(\dot \Sigma, Q \times S^1;J;(\vec \gamma^+,\vec \gamma^-))$,
we need to establish a uniform upper bound for the energy of
$$
u \in  \CM_{k,\ell}(\dot \Sigma, Q \times S^1;J;(\vec \gamma^+,\vec \gamma^-)),
$$
which is an
important first step towards compactification of
$\CM_{k,\ell}(\dot \Sigma, Q \times S^1;J;(\vec \gamma^+,\vec \gamma^-))$.

\begin{defn}[$E_{\CC,\eta}$-energy] Let $\eta \in H^1(\dot \Sigma, \Z)$ be given.
Let $w$ satisfy $w^*\lambda \circ j = f^*d\theta$ with $[u]_{S^1} = \eta$. Then we define
$$
E_{\CC,\widetilde f}(j,u) =  \sup_{\varphi \in \CC} \int_{\dot \Sigma} df \circ j \wedge d(\psi(\widetilde f))
= \sup_{\varphi \in \CC} \int_{\dot \Sigma} d(\psi(\widetilde f)) \wedge w^*\lambda.
$$
\end{defn}
 We note that
$$
d(\psi(\widetilde f)) \wedge w^*\lambda = \psi'(\widetilde f) d\widetilde f \wedge w^*\lambda
= \varphi(\widetilde f) w^*\lambda \circ j \wedge w^*\lambda \geq 0
$$
and hence we can rewrite $E_{\CC,\eta}(j,w)$ into
$$
E_{\CC,\widetilde f}(j,u) = \sup_{\varphi \in \CC} \int_{\dot \Sigma} \varphi(\widetilde f) d\widetilde f \wedge w^*\lambda.
$$
\begin{prop}\label{prop:a-independent} Let $u = (w,f)$ satisfy
$w^*\lambda \circ j = f^*d\theta$. If $g$ is another function satisfying $f^*d\theta = g^*d\theta$,
then we have $E_{\CC,\widetilde f}(w) = E_{\CC,\widetilde g}(w)$ for any lifting $(\widetilde f,\widetilde g)$ whenever
$d\widetilde f = w^*\lambda\circ j = d\widetilde g$
on $\dot \Sigma$.
\end{prop}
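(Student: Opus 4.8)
The plan is to deduce the equality from the single structural feature of the test-function class $\CC$ that is relevant here, namely its invariance under translations of the argument. First I would note that the hypothesis $d\widetilde f = w^*\lambda\circ j = d\widetilde g$ gives $d(\widetilde f-\widetilde g)=0$ on $\dot\Sigma$; since $\dot\Sigma$ is connected (it is a closed connected Riemann surface with finitely many points deleted), this forces $\widetilde f-\widetilde g$ to be a constant $c\in\R$, so $\widetilde g=\widetilde f-c$. (If one wishes to allow $\dot\Sigma$ disconnected, one runs the argument componentwise; the constant may differ from component to component, but nothing below changes.)

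Next I would record the elementary observation that for each $c\in\R$ the map $T_c\colon \varphi\mapsto\varphi(\,\cdot\,-c)$ carries $\CC$ bijectively onto itself: translation preserves compactness of support, the substitution $s\mapsto s-c$ gives $\int_\R T_c\varphi=\int_\R\varphi=1$, and $T_{-c}$ is an inverse. With this in hand the computation is immediate: using $\widetilde g=\widetilde f-c$ and $d\widetilde g=d\widetilde f$,
\[
E_{\CC,\widetilde g}(j,u)=\sup_{\varphi\in\CC}\int_{\dot\Sigma}\varphi(\widetilde g)\,d\widetilde g\wedge w^*\lambda
=\sup_{\varphi\in\CC}\int_{\dot\Sigma}(T_c\varphi)(\widetilde f)\,d\widetilde f\wedge w^*\lambda,
\]
since $(T_c\varphi)(\widetilde f)=\varphi(\widetilde f-c)=\varphi(\widetilde g)$ while the two-form $d\widetilde f\wedge w^*\lambda=d\widetilde g\wedge w^*\lambda$ is untouched. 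Because $\{T_c\varphi:\varphi\in\CC\}=\CC$, the last supremum equals $\sup_{\psi\in\CC}\int_{\dot\Sigma}\psi(\widetilde f)\,d\widetilde f\wedge w^*\lambda=E_{\CC,\widetilde f}(j,u)$, which is the assertion. Running the same argument with the $S^1$-valued maps $f,g$ in place of $\widetilde f,\widetilde g$ shows in addition that $E_{\CC,\widetilde f}$ is unaffected by replacing $f$ by any $g$ with $g^*d\theta=f^*d\theta$ (such a $g$ differing from $f$ by a constant shift of $S^1$).

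I do not expect a genuine obstacle here. The identity compares, term by term for corresponding test functions, integrals of one and the same two-form, so it already holds before the suprema are taken and no convergence question enters; in particular the finiteness of $E_{\CC,\widetilde f}(j,u)$ itself — which rests on the exponential decay of $d\widetilde f$ near the punctures furnished by Theorem \ref{thm:subsequence} and Proposition \ref{cor:tangent-convergence} — is a separate matter not needed for the present equality. The only two points that should be stated with care are the passage from $d(\widetilde f-\widetilde g)=0$ to $\widetilde f-\widetilde g\equiv\mathrm{const}$ via connectedness of $\dot\Sigma$, and the bookkeeping that $T_c$ is a bijection of $\CC$.
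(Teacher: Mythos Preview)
Your proof is correct and follows essentially the same line as the paper's: both observe that $\widetilde f$ and $\widetilde g$ differ by a constant on each connected component, and that the test-function class $\CC$ is invariant under translation of the argument, so the suprema agree. Your version is in fact a bit more carefully written (explicitly naming the bijection $T_c$ and noting that the two-form $d\widetilde f\wedge w^*\lambda$ is unchanged), but the substance is identical.
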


\begin{proof} We first note that $g(z) = f(z) + c$ for some constant $c$ on each connected component of
$\dot \Sigma$.
Certainly $d\widetilde f$ or $w^*\lambda$ are independent of the addition of the constant $c$.
On the other hand, we have
$$
\varphi(\widetilde g) = \varphi(\widetilde f + c)
$$
and the function $a \mapsto \varphi(a + c)$ still lie in $\CC$. Therefore after taking
the supremum over $\CC$, we have derived
$$
E_{\CC,\widetilde f}(j,u) = E_{\CC,\widetilde g}(j,u).
$$
This finishes the proof.
\end{proof}

This proposition enables us to introduce the following

\begin{defn}[Vertical energy]\label{defn:vertical-energy} Let $\eta \in H^1(\dot \Sigma,\Z)$ be given.
We denote the common value of $E_{\CC,\eta}(j,u)$ by $E^\perp_\eta(j,u)$,
and call it the \emph{vertical energy in charge class $\eta$}.
\end{defn}

The following then would be the definition of the total energy.

\begin{defn}[Total energy] Let $w:\dot \Sigma \to Q \times S^1$ be any smooth map
in class $[u]_{S^1} = \eta$.
We define the total energy of $u$ by
\be\label{eq:total-energy}
E(j,u) = E^\pi(j,u) + E^\perp_\eta(j,u)
\ee
\end{defn}

We now provide examples of finite energy $\lcs$ instanton \emph{with nonzero charge}.
\begin{exm} Consider the contact manifold $Q = S^3$ with the standard
contact structure (i.e. the $CR$-structure of $S^3 \subset \C^2$)
and take the standard contact form $\lambda$ on $S^3$
$$
\lambda = \frac12 \sum_{i=1}^2 (x_i dy_i - y_i dx_i).
$$
Then the Reeb foliation is the one given by Hopf circles. Let $C$ be any
one of them. Now consider the $\mathfrak{lcs}$-fication $M=S^1 \times S^3$ and the
Reeb torus $T: = S^1 \times C \subset M$. We fix a parameterization of $C$ by
$\gamma: [0,1]/\sim \to C$ and that of $T$ by
$$
[0,1]^2 \to T; \quad (s,t) \mapsto (s, \gamma(t)).
$$
Let $0 \neq Q  \in \Z$ be any given integer and consider
the map
$
u: \R \times S^1 \to M
$
defined by
$$
u(\tau,t) = (t, \gamma(Q\tau + t)).
$$
It is easy to check that $u$
is an $\lcs$ instanton with its charge
$$
Q \in H^1(\R \times S^1) \cong H^1(S^1) \cong \Z.
$$
The total energy of this $\lcs$ instanton is zero.
\end{exm}

\section{$\lcs$ instantons on the plane}
\label{sec:onC}

As in Hofer's bubbling-off analysis in pseudo-holomorphic curves on symplectization \cite{hofer}, it
turns out that study of contact instantons on the plane plays a crucial role in the bubbling-off
analysis of contact instantons too.

For this purpose, we start with a proposition which is an analog to
Theorem 31 \cite{hofer}. Our proof is a slight modification and some simplification
of Hofer's proof
of Theorem 31 \cite{hofer} in our generalized context.

\begin{prop}\label{prop:C^1}
Let $u: \C \to Q \times S^1$ be an $\lcs$ instanton. Regard $\infty$ as
a puncture of $\C  = \C P^1\setminus \{\infty\}$. Suppose $|dw|_{C^0} < \infty$ and
\be\label{eq:asymp-densitybound}
E^\pi(u)=0, \quad  E^\perp(u) < \infty.
\ee
Then $u$ is a constant map.
\end{prop}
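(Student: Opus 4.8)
The plan is to use the two energy hypotheses in sequence: the vanishing $\pi$-energy forces $w$ to be ``holomorphic along the Reeb direction,'' which after a Liouville argument makes the relevant datum constant, and then finiteness of the vertical energy $E^\perp(u)$ forces that constant to be zero.

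\emph{Reduction from $E^\pi(u)=0$.} Since $E^\pi(u)=\frac12\int_\C|d^\pi w|^2=0$ we get $d^\pi w\equiv 0$, so $dw=w^*\lambda\otimes R_\lambda$, and by Lemma \ref{lem:omega-area}(2) also $w^*d\lambda=0$. Write $w^*\lambda=h_1\,ds+h_2\,dt$ in the standard coordinate $z=s+\sqrt{-1}\,t$ on $\C$. Then $w^*d\lambda=0$ reads $\partial_s h_2=\partial_t h_1$, while the contact instanton relation $d(w^*\lambda\circ j)=0$ --- which holds since $w^*\lambda\circ j=f^*d\theta$ is closed --- reads $\partial_s h_1=-\partial_t h_2$. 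These are exactly the Cauchy--Riemann equations saying that $G:=h_1-\sqrt{-1}\,h_2$ is an entire function on $\C$.

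\emph{Liouville and the vertical energy.} Since $dw=(h_1R_\lambda)\,ds+(h_2R_\lambda)\,dt$ and $|R_\lambda|_g=1$ for the contact triad metric, $|G|^2=h_1^2+h_2^2=|dw|^2$ pointwise; hence $\|dw\|_{C^0}<\infty$ says $G$ is a bounded entire function, so $G$ is a constant $a-\sqrt{-1}\,b$ by Liouville's theorem, i.e.\ $h_1\equiv a$, $h_2\equiv b$. It remains to rule out $(a,b)\neq(0,0)$, and this is where $E^\perp(u)<\infty$ enters. Since $H^1(\C;\Z)=0$ the charge class is trivial, $f^*d\theta=w^*\lambda\circ j=b\,ds-a\,dt$ is exact with primitive $\widetilde f=bs-at+c$, and by Proposition \ref{prop:harmonic-form} together with Definition \ref{defn:vertical-energy} one has $E^\perp(u)=\sup_{\varphi\in\CC}\int_\C\varphi(\widetilde f)\,d\widetilde f\wedge w^*\lambda$. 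A one-line computation (consistent with Lemma \ref{lem:omega-area}(3)) gives $d\widetilde f\wedge w^*\lambda=(a^2+b^2)\,ds\wedge dt=|G|^2\,ds\wedge dt$, so $E^\perp(u)=|G|^2\sup_{\varphi\in\CC}\int_\C\varphi(\widetilde f)\,ds\,dt$. If $(a,b)\neq(0,0)$ this supremum is $+\infty$ for every $\varphi\in\CC$: when $b\neq 0$ the inner $s$-integral of $\varphi(bs-at+c)$ equals the finite constant $1/|b|$ but the subsequent $t$-integral diverges, and when $b=0$, $a\neq 0$ the integrand is independent of $s$ so the $s$-integral already diverges. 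This contradicts $E^\perp(u)<\infty$.

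\emph{Conclusion.} Hence $G\equiv 0$, so $w^*\lambda\equiv 0$ and $dw=w^*\lambda\otimes R_\lambda\equiv 0$; thus $w$ is constant, $f^*d\theta=w^*\lambda\circ j=0$, and since $\C$ is simply connected $f\colon\C\to S^1$ lifts to a map $\C\to\R$ with vanishing differential, so $f$ is constant and $u=(w,f)$ is constant. I do not expect a genuine obstacle: apart from the bookkeeping that we are in the setting where the primitive $\widetilde f$ is globally defined --- immediate on $\C$ from $H^1(\C)=0$, and in general supplied by Proposition \ref{prop:harmonic-form} --- the proof is just Liouville plus an elementary divergence estimate, which is the planar analogue of Hofer's Theorem~31.
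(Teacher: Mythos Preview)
Your proof is correct and follows essentially the same line as the paper's: both use $E^\pi(u)=0$ to get $d^\pi w=0$ and $w^*\lambda$ harmonic, then apply Liouville via the $C^0$-bound on $dw$ to reduce to an affine situation, and finally use $E^\perp(u)<\infty$ to exclude the nonconstant case. The only differences are expository: you work directly with the entire function $G=h_1-\sqrt{-1}\,h_2$ rather than passing through a primitive of $w^*\lambda$ as the paper does, and you carry out the final $E^\perp$-divergence argument explicitly whereas the paper simply cites Hofer's Lemma~28; your version is slightly more self-contained but otherwise identical in substance.
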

\begin{proof} Let $\dot \Sigma \cong \C P^1 \setminus \{\infty\} \cong \C$. Then
$H^1(\dot \Sigma,\Z) = 0$ and hence the asymptotic charge $Q = 0$ in the cylindrical
coordinate $(\tau,t)$ near $\infty$.

Therefore the one-form $f^*d\theta$ on $\C$ is exact so that
$$
w^*\lambda \circ j = f^*d\theta = d\widetilde f.
$$
We have only to look at the $w$ component since constancy of $f$ is determined by the equation
$w^*\lambda \circ j= f^*d\theta$.

From the equality $\frac12 |d^\pi w|^2\, dA = d(w^*\lambda)$ and the hypothesis $E^\pi(w) = 0$,
we infer $|d^\pi w|^2 = 0 = d(w^*\lambda)$ in addition to $d(w^*\lambda \circ j) = 0$.
Therefore we derive that $d^\pi w = 0$. This implies
$$
dw = w^*\lambda\otimes R_\lambda(w)
$$
with $w^*\lambda$ a bounded harmonic one-form. The boundedness of $w^*\lambda$ follows from the hypothesis
$|dw|_{C^0} < \infty$. Since $\C$ is connected, the image of $w$ must be contained
in a single leaf of the Reeb foliation. We parameterize the leaf by $\gamma: \R \to Q$,
$\gamma = \gamma(t)$.

Then there is a smooth function $b = b(z)$ such that
$$
w(z) = \gamma(b(z)).
$$
Since $w^*\lambda$ is exact on $\C$, $w^*\lambda = db$ for some function $b$. Since we also have $d(w^*\lambda\circ j) = 0$,
$$
d(db \circ j) = 0
$$
i.e., $b: \C \to \R$ is a harmonic function and hence $b$ is the imaginary part of a holomorphic
function $f$, i.e., $f(z) = a(z) + ib(z)$. Since $b$ has bounded derivative, the gradient of $f$
is also bounded on $\C$. Therefore $f(z) = \alpha z + \beta$ for some constants $\alpha, \, \beta \in \C$.

Once this is achieved, the rest of the argument is exactly the same as Hofer's proof of Lemma 28 \cite{hofer}
via the usage of the $\lambda$-energy bound $E^\perp(w) < \infty$ and so omitted.
\end{proof}

We recall the following useful lemma from \cite{hofer-viterbo} whose proof we refer thereto.
\begin{lem}\label{lem:Hofer-lemma} Let $(X,d)$ be a complete metric space, $f: X \to \R$ be a
nonnegative continuous function, $x \in X$ and $\delta > 0$. Then there exists $y \in X$ and
a positive number $\epsilon \leq \delta $ such that
$$
d(x,y) < 2 \delta, \, \max_{B_y(\epsilon)} f \leq 2 f(y), \, \epsilon\, f(y) \geq \delta f(x).
$$
\end{lem}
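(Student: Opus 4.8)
The plan is to prove Lemma 5.10 (the Hofer–Viterbo lemma) by a standard iteration argument using completeness of $(X,d)$, exactly as in \cite{hofer-viterbo}. Since the statement is a self-contained metric-space fact, nothing from the earlier contact-geometry setup is needed; I will just carefully run the recursion and show it terminates.

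First I would argue by contradiction: suppose no such pair $(y,\epsilon)$ exists. Set $x_0 = x$. I claim one can then build a sequence $x_0, x_1, x_2, \dots$ with the following properties, which I will establish by induction on $n$: the point $y = x_n$ together with $\epsilon = \delta/2^n$ fails the conclusion, and since $d(x,x_n) < 2\delta(1 - 2^{-n}) < 2\delta$ and $\epsilon f(x_n) = \delta 2^{-n} f(x_n)$; the failure of the third inequality $\epsilon f(x_n) \geq \delta f(x)$ is excluded because we will maintain $f(x_n) \geq 2^n f(x_0)$ along the way. Concretely, given $x_n$ with $f(x_n) \geq 2^n f(x)$ (true for $n=0$), apply the assumed negation with the candidate pair $\bigl(x_n, \delta/2^n\bigr)$: if the first two requirements $d(x,x_n)<2\delta$ and $\epsilon f(x_n)\ge \delta f(x)$ both hold — and they do, the first by the distance bound just noted and the second because $\delta 2^{-n} f(x_n) \geq \delta 2^{-n}\cdot 2^n f(x) = \delta f(x)$ — then the third, $\max_{B_{x_n}(\delta/2^n)} f \leq 2 f(x_n)$, must fail, so there is a point $x_{n+1} \in B_{x_n}(\delta/2^n)$ with $f(x_{n+1}) > 2 f(x_n)$. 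This both gives $f(x_{n+1}) \geq 2^{n+1} f(x)$ (closing the induction) and $d(x_n, x_{n+1}) < \delta/2^n$.

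Next I would extract the contradiction. The distance bound $d(x_n,x_{n+1}) < \delta 2^{-n}$ shows $\{x_n\}$ is Cauchy, so by completeness $x_n \to x_\infty \in X$; moreover $d(x,x_\infty) \leq \sum_{n\geq 0}\delta 2^{-n} = 2\delta$ (one may sharpen the early bookkeeping to get a strict inequality, or simply note $x_0=x$ is not revisited so the partial sums are $< 2\delta$). On the other hand $f(x_n) > 2^n f(x_0)$ grows without bound unless $f(x_0)=0$; continuity of $f$ then forces $f(x_\infty) = \lim f(x_n) = +\infty$, which is impossible for a real-valued function — and if instead $f(x)=0$, then the pair $(y,\epsilon)=(x,\delta)$ already satisfies all three conclusions trivially ($\epsilon f(y) = 0 = \delta f(x)$ and $\max_{B_x(\delta)} f$ might still be positive, so in the $f(x)=0$ case one runs the same iteration starting from $x_0=x$ and the growth $f(x_n)>2^n f(x_{n-1})$ still forces a blow-up once some $f(x_k)>0$, while if $f\equiv 0$ on the whole orbit any choice works). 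This contradiction proves the lemma.

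The argument is essentially bookkeeping, so the only mild subtlety — the step I would be most careful about — is keeping the three inequalities mutually consistent through the induction, i.e. making sure that at each stage the pair we feed into the "negation" hypothesis genuinely satisfies the first two conditions so that the third is the one that must fail; this is exactly where the choice $\epsilon = \delta/2^n$ and the maintained lower bound $f(x_n)\ge 2^n f(x)$ are calibrated against each other. Since this is a quoted lemma from \cite{hofer-viterbo}, in the paper itself I would simply cite it, and the above is the proof I would reconstruct if a referee asked for details.
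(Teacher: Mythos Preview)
Your argument is correct and is exactly the standard Hofer--Viterbo iteration; the paper itself does not reproduce a proof but simply refers the reader to \cite{hofer-viterbo}, so your reconstruction is precisely what one would find there. One small typo: in the $f(x)=0$ discussion you wrote $f(x_n)>2^n f(x_{n-1})$ where you meant $f(x_n)>2 f(x_{n-1})$, but the intent is clear.
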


Using the above proposition, we prove the following fundamental result.

\begin{thm}\label{thm:C1bound} Let $u: \C \to Q \times S^1$ be an $\lcs$ instanton.
Suppose
\be\label{eq:Epi-bound}
E(u) = E^\pi(u) + E^\perp(u) < \infty.
\ee
Then $|du|_{C^0} < \infty$.
\end{thm}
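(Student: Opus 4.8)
The plan is to run the standard Hofer-type bubbling-off / rescaling argument by contradiction; the only new feature compared with the symplectization case is that we must carry the auxiliary $S^1$-component $f$ and the vertical energy $E^\perp$ through the rescaling. So suppose $|du|_{C^0} = \infty$ and pick $\zeta_k \in \C$ with $|du(\zeta_k)| \to \infty$. I would first apply Lemma \ref{lem:Hofer-lemma} to the complete metric space $(\C,|\cdot|)$, the nonnegative function $|du|$, the points $\zeta_k$, and $\delta_k := |du(\zeta_k)|^{-1/2}\to 0$, obtaining $y_k$ and radii $\epsilon_k \le \delta_k$ with $|y_k-\zeta_k|<2\delta_k$, $\max_{B_{\epsilon_k}(y_k)}|du| \le 2|du(y_k)| =: 2R_k$, and $\epsilon_k R_k \ge \delta_k|du(\zeta_k)| = |du(\zeta_k)|^{1/2} \to \infty$ (note $\epsilon_k \to 0$). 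Then I set $v_k = (w_k,f_k): B_{\epsilon_k R_k}(0) \to Q\times S^1$, $v_k(z) := u(y_k + z/R_k)$, with the flat metric on the domain. Since \eqref{eq:lcs-instanton} is invariant under holomorphic reparametrization of the domain, $v_k$ is again an $\lcs$ instanton, and by construction $|dv_k(0)| = 1$ while $|dv_k| \le 2$ on all of $B_{\epsilon_k R_k}(0)$.

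Next I would check the two energies survive the rescaling. Because $\tfrac12|d^\pi w|^2\,dA = d(w^*\lambda) \ge 0$ is globally integrable on $\C$ (as $E^\pi(u) < \infty$) and $B_{\epsilon_k}(y_k)$ either shrinks to a point or escapes to infinity, $E^\pi(v_k) = E^\pi\!\left(u|_{B_{\epsilon_k}(y_k)}\right) \to 0$. For the vertical energy one uses $H^1(\C,\Z) = 0$, so $f^*d\theta = w^*\lambda\circ j$ is exact on $\C$; fixing a primitive $\widetilde f$ and recalling from Lemma \ref{lem:omega-area} that the density $\varphi(\widetilde f)\,d\widetilde f \wedge w^*\lambda = \varphi(\widetilde f)\,|w^*\lambda|^2\,dA \ge 0$ is pointwise nonnegative, restriction to subdomains and change of variables give $E^\perp(v_k) = E^\perp\!\left(u|_{B_{\epsilon_k}(y_k)}\right) \le E^\perp(u) < \infty$ (the choice of primitive being irrelevant by Proposition \ref{prop:a-independent}).

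Then I would invoke the a priori elliptic estimates. Since $|dw_k| \le 2$ uniformly and any fixed compact $K \subset \C$ lies in $B_{\epsilon_k R_k}(0)$ for $k$ large, Theorems \ref{thm:local-W12} and \ref{thm:Wk2} give uniform $W^{m,2}_{\loc}$ bounds on $w_k$ for every $m$ (the constants depending only on the compacta and $(Q,\lambda,J)$); normalizing $f_k(0) = [0]\in S^1$ via the $S^1$-translation symmetry of \eqref{eq:lcs-instanton} and using $f_k^*d\theta = w_k^*\lambda\circ j$ to bootstrap, $f_k$ is bounded in $C^\infty_{\loc}$ as well. Passing to a subsequence, $v_k \to v_\infty$ in $C^\infty_{\loc}$ on all of $\C$ (the domains exhaust $\C$ since $\epsilon_k R_k \to \infty$), where $v_\infty$ is an $\lcs$ instanton with $|dv_\infty(0)| = 1$, $|dv_\infty|_{C^0} \le 2$, $E^\pi(v_\infty) = 0$ (from $E^\pi(v_k)\to 0$ and convergence on compacta), and $E^\perp(v_\infty) \le E^\perp(u) < \infty$ (pass to the limit in $\int_K$ for each compact $K$ and each test function, then take suprema, again using nonnegativity of the density). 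Finally I would apply Proposition \ref{prop:C^1} to $v_\infty$, forcing $v_\infty$ to be constant, which contradicts $|dv_\infty(0)| = 1$. Hence $|du|_{C^0} < \infty$.

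I expect the main technical hurdle to be the compactness step: organizing the bootstrap so that the $S^1$-component $f_k$ converges (after the $S^1$-normalization) together with $w_k$, and checking lower semicontinuity of $E^\perp$ under $C^\infty_{\loc}$-convergence when the primitive $\widetilde f$ is only defined up to an additive constant. By contrast, the energy bookkeeping — $E^\pi(v_k)\to 0$ and $E^\perp(v_k)\le E^\perp(u)$ — is routine and is precisely where the hypothesis $E(u) = E^\pi(u)+E^\perp(u) < \infty$ and the conformal invariance of the equation are actually used.
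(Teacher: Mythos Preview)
Your proposal is correct and follows essentially the same Hofer-type rescaling argument as the paper's proof; the only cosmetic difference is that the paper reduces immediately to bounding $|dw|$ (since $|df| = |w^*\lambda\circ j| \le |dw|$) and runs the bubbling-off purely on the contact component $w$, whereas you carry the full pair $(w,f)$ through the rescaling and normalize $f_k(0)$ by the $S^1$-action. Both routes land on the same contradiction via Proposition~\ref{prop:C^1}, and your explicit tracking of $E^\perp$ through the rescaling is a correct elaboration of what the paper records more tersely as ``$E^\lambda(w_\infty) \le E(w) < \infty$''.
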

\begin{proof} Again it is enough to establish $|dw|_{C^0} < \infty$.
Suppose to the contrary that $|dw|_{C^0} = \infty$ and let $z_\alpha$ be a blowing-up
sequence. We denote $R_\alpha = |dw(z_\alpha)| \to \infty$. Then by applying Lemma \ref{lem:Hofer-lemma},
we can choose another such sequence $z_\alpha'$ and $\epsilon_\alpha \to 0$ such that
\be\label{eq:blowingup-sequence}
|dw(z_\alpha')| \to \infty, \quad \max_{z \in D_{\epsilon_\alpha}(z_\alpha')}|dw(z)| \leq 2 R_\alpha,
\quad \epsilon_\alpha R_\alpha \to 0.
\ee
We consider the re-scaling maps $\widetilde w_\alpha: D^2_{\epsilon_\alpha R_\alpha}(0) \to Q$
defined by
$$
w_\alpha(z) = w \left(z_\alpha' + \frac{z}{R_\alpha}\right).
$$
Then we have
$$
|d w_\alpha|_{C^0; \epsilon_\alpha R_\alpha} \leq 2, \quad |d w_\alpha(0)|=1.
$$
Applying Ascoli-Arzela theorem, there exists a continuous map $w_\infty: \C \to Q$ such that
$ w_\alpha \to w_\infty$ uniformly on compact subsets. Then by the a priori $W^{k,2}$-estimates,
Theorem \ref{thm:Wk2}, it follows that the convergence is in the $C^\infty$-sense on every compact subset,
and $w_\infty$ is smooth. Furthermore
$w_\infty$ satisfies $\delbar^\pi w_\infty = 0 = d(w_\infty^*\lambda \circ j) = 0$,
$$
E^\pi(w_\infty), \, E^\lambda(w_\infty) \leq E(w) < \infty
$$
and
$$
|d w_\infty|_{C^0; \C} \leq 2, \quad |dw_\infty(0)|=1.
$$

On the other hand, by the finite $\pi$-energy hypothesis and density identity $\frac12 |d^\pi w|^2 \, dA = d(w^*\lambda)$,
we derive
\beastar
0 & = & \lim_{\alpha \to \infty} \int_{D_{\epsilon_\alpha}(z_\alpha')} d(w^*\lambda) =
\lim_{\alpha \to \infty} \int_{D_{\epsilon_\alpha R_\alpha}(z_\alpha')} d( w_\alpha^*\lambda)\\
&= & \lim_{\alpha \to \infty} \int_{D_{\epsilon_\alpha R_\alpha}(z_\alpha')}\frac12 |d^\pi \widetilde w_\alpha|^2
= \int_\C \frac12 |d^\pi w_\infty|^2.
\eeastar
 Therefore we derive
$$
E^\pi(w_\infty) = 0.
$$
Then Proposition \ref{prop:C^1} implies $w_\infty$ is a constant map which contradicts
$|dw_\infty(0)| = 1$. This finishes the proof.
\end{proof}

An immediate corollary of this theorem and Proposition \ref{prop:C^1} is the following

\begin{cor}\label{cor:pi-positive} For any non-constant $\lcs$ instanton $u: \C \to Q \times S^1$
with energy bound $E(u) < \infty$, we obtain
$$
E^\pi(w) = \int z^*\lambda > 0
$$
for $z = \lim_{R \to \infty} w(R e^{2\pi it})$. In particular $E^\pi(w) \geq T_\lambda > 0$.
\end{cor}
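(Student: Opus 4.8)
The plan is to deduce the statement from three ingredients already in hand: the a priori $C^0$-bound of Theorem \ref{thm:C1bound}, the rigidity Proposition \ref{prop:C^1}, and the on-shell identity $w^*d\lambda = \tfrac12|d^\pi w|^2\,dA$ from \eqref{eq:onshell}, combined with the asymptotic analysis at the single puncture $\infty$ of $\C = \C P^1\setminus\{\infty\}$.

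First I would note that since $E(u) = E^\pi(u) + E^\perp(u) < \infty$, Theorem \ref{thm:C1bound} gives $|du|_{C^0} < \infty$, hence $|dw|_{C^0}<\infty$; thus $w$ satisfies Hypothesis \ref{hypo:basic} and the asymptotic results of Section~4 apply. Because $H^1(\C,\Z) = 0$, the charge class of $u$ vanishes, so the asymptotic charge of $w$ at $\infty$ is $Q = 0$. For positivity, suppose to the contrary that $E^\pi(w) = 0$. Then the hypotheses \eqref{eq:asymp-densitybound} of Proposition \ref{prop:C^1} hold ($E^\pi(u)=0$, $E^\perp(u)\le E(u)<\infty$, and $|dw|_{C^0}<\infty$), so $u$ is constant, contradicting the assumption; hence $E^\pi(w) > 0$.

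For the identity and the lower bound, I would use \eqref{eq:onshell} to write $E^\pi(w) = \tfrac12\int_\C|d^\pi w|^2\,dA = \int_\C w^*d\lambda$, and apply Stokes' theorem on the discs $\overline{D_R}\subset\C$: the quantities $\int_{D_R}w^*d\lambda = \int_{\partial D_R}w^*\lambda$ increase monotonically to the finite value $E^\pi(w)$ as $R\to\infty$. Identifying the circles $\partial D_R$ with the cross-sections $\{\tau\}\times S^1$ of a cylindrical end around $\infty$ and invoking Remark \ref{rem:TQ} (the asymptotic contact action $T$ equals $\tfrac12\int_{[s,\infty)\times S^1}|d^\pi w|^2\,dA + \int_{\{s\}\times S^1}w^*\lambda$ for every $s$, with the first term tending to $0$), this limit equals the asymptotic contact action $T$ of $w$ at $\infty$, so $E^\pi(w) = T = \int z^*\lambda$ for the asymptotic loop $z = \lim_{R\to\infty}w(Re^{2\pi i t})$. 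Finally, since $T = E^\pi(w) > 0$ and $Q = 0$, Theorem \ref{thm:subsequence} identifies $z$ as a genuine closed Reeb orbit of period $T$, so $\int z^*\lambda = T \in \operatorname{Spec}(Q,\lambda)$ and therefore $E^\pi(w) \geq T_\lambda > 0$.

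The main obstacle is the passage $\int_{\partial D_R}w^*\lambda \to T$ together with the identification of $z$ as a nonconstant closed Reeb orbit: a priori Theorem \ref{thm:subsequence} yields convergence only along subsequences, and for degenerate $\lambda$ the limit orbit could depend on the subsequence. This is circumvented by the monotonicity remark above—$\int_{D_R}w^*d\lambda$ is increasing and bounded, so its full limit exists and, by Remark \ref{rem:TQ}, coincides with the subsequence-independent action $T$—while the already-established positivity $E^\pi(w) > 0$ forces $T>0$ and hence rules out a constant asymptotic limit. The remaining points (orientation bookkeeping on $\partial D_R$ versus $\{\tau\}\times S^1$, and the conformal identification of the end) are routine.
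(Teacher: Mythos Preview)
Your proposal is correct and follows essentially the same route the paper indicates: it invokes Theorem~\ref{thm:C1bound} for the $C^0$-bound, then Proposition~\ref{prop:C^1} for the contrapositive yielding $E^\pi(w)>0$, and finally Stokes together with the asymptotic action identity (Remark~\ref{rem:TQ}) and Theorem~\ref{thm:subsequence} to identify $E^\pi(w)=T=\int z^*\lambda$ as the period of a closed Reeb orbit, hence $\geq T_\lambda$. Your added care about the subsequence issue (using monotonicity of $\int_{D_R}w^*d\lambda$ to get a genuine limit for $T$) is a welcome clarification of a point the paper leaves implicit here and only spells out in the subsequent Corollary.
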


Combining Theorem \ref{thm:subsequence}, Theorem \ref{thm:C1bound} and  Proposition \ref{prop:S1class},
we immediately derive

\begin{cor} Let $u = (w,f)$ be a non-constant $\lcs$ instanton on $\C$ with
\be\label{eq:C1-densitybound}
E(u) < \infty.
\ee
Then there exists a sequence $R_j \to \infty$ and a Reeb orbit $\gamma$ such that
$z_{R_j} \to \gamma(T(\cdot))$ with $T \neq 0$ and
$$
T = E^\pi(w), \quad Q = \int_z w^*\lambda \circ j = 0.
$$
\end{cor}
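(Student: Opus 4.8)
The plan is simply to assemble the results already proved in this section. First I would apply Theorem~\ref{thm:C1bound}: since $E(u)=E^\pi(u)+E^\perp(u)<\infty$, it follows that $|du|_{C^0}<\infty$, hence $\|dw\|_{C^0}<\infty$. Combined with $E^\pi(w)\le E(u)<\infty$, this shows that $w$ satisfies Hypothesis~\ref{hypo:basic}; moreover $w$ is a genuine contact instanton, because $d(w^*\lambda\circ j)=d(f^*d\theta)=0$. Thus all the asymptotic machinery of Section~\ref{sec:CRmap} is available for $w$. Also, since $u$ is non-constant, Corollary~\ref{cor:pi-positive} is applicable.

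Next, regard $\infty$ as the unique puncture of $\C\cong\C P^1\setminus\{\infty\}$. Because $H^1(\C,\Z)=0$, the charge class $[u]_{S^1}=[f^*d\theta]$ vanishes, so by Proposition~\ref{prop:S1class} the one-form $f^*d\theta=w^*\lambda\circ j$ extends smoothly over $\del\overline{\Sigma}$ and, being exact, has zero period there; equivalently, the asymptotic contact charge at $\infty$ is $Q=0$. Now I would feed $w$ and an arbitrary sequence $R_k\to\infty$ into the asymptotic convergence theorem, Theorem~\ref{thm:subsequence}: it yields a subsequence $\{R_j\}$, a closed parameterized Reeb orbit $\gamma$ of period $T$, and a massless instanton $w_\infty(\tau,t)=\gamma(-Q\tau+Tt)=\gamma(Tt)$ (using $Q=0$) with $w(\tau+\,\cdot\,,t)\to w_\infty$; transcribed back to the radial coordinate this is precisely $z_{R_j}\to\gamma(T(\cdot))$.

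It remains to identify $T$ and to see that it is nonzero. From the density identity $\frac12|d^\pi w|^2\,dA=d(w^*\lambda)$ and Stokes' theorem on $\C$ one gets $E^\pi(w)=\int_\C d(w^*\lambda)=\lim_{R\to\infty}\oint_{|z|=R}w^*\lambda=\int_\gamma\lambda=T$; this is just Remark~\ref{rem:TQ} applied at the puncture $\infty$, with the tail area term tending to $0$ since $E^\pi(w)<\infty$. Finally, $u$ being non-constant, Corollary~\ref{cor:pi-positive} gives $E^\pi(w)\ge T_\lambda>0$, hence $T=E^\pi(w)>0$, and $Q=\int_z w^*\lambda\circ j=0$ as noted.

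The only delicate points are bookkeeping ones. One should check that the genus-zero, $k\ge 3$ hypothesis in the statement of Theorem~\ref{thm:subsequence} is not essential for the purely local subsequential convergence at a single puncture (the argument there is local near the puncture, so it applies verbatim to the one-punctured plane), and that orientations are arranged so that $\infty$ is a \emph{positive} puncture --- which is automatic once one knows $T=E^\pi(w)>0$. I expect no analytic difficulty beyond what is already contained in the cited results.
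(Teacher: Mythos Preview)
Your proposal is correct and follows essentially the same route as the paper: establish $Q=0$ from $H^1(\C,\Z)=0$, invoke the $C^0$-bound (Theorem~\ref{thm:C1bound}) and the subsequential convergence theorem (Theorem~\ref{thm:subsequence}) to extract the limiting Reeb orbit, and use Corollary~\ref{cor:pi-positive} together with the density identity and Stokes to rule out $T=0$. The only cosmetic difference is that the paper argues $T\neq 0$ by contradiction (if $T=0$ then along the subsequence $\oint w^*\lambda\to 0$, forcing $E^\pi(w)=0$), whereas you first identify $T=E^\pi(w)$ directly via Remark~\ref{rem:TQ} and then apply Corollary~\ref{cor:pi-positive}; these are contrapositives of one another. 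Your caveat about the $k\ge 3$ hypothesis in Theorem~\ref{thm:subsequence} is well taken and applies equally to the paper's own invocation of that result.
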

\begin{proof} Since $[\overline{\C},S^1] = 0$, it follows $Q = 0$.

If $T = 0$, the above theorem shows that there exists a sequence
$\tau_i \to \infty$ such that $w(\tau_i,\cdot)$ converges to a constant in the $C^\infty$
topology and so
$$
\int_{\{\tau = \tau_i\}} w^*\lambda \to 0
$$
as $i \to \infty$. By Stokes' formula, we derive
$$
\int_{D_{e^{\tau_i}}(0)} w^*d\lambda = \int_{\tau = \tau_i} w^*\lambda \to 0.
$$
On the other hand, we have
$$
E^\pi(w) = \lim_{i \to \infty} \int_{D_{e^{\tau_i}}(0)} \frac12 |d^\pi w|^2
 = \lim_{i \to \infty} \int_{D_{e^{\tau_i}}} w^*d\lambda = 0.
$$
This contradicts Corollary \ref{cor:pi-positive}, which finishes the proof.
\end{proof}

The following is the analog to Proposition 30 \cite{hofer}.

\begin{cor}\label{cor:C^1oncylinder} Let $u$ be an $\lcs$ instanton on
$\R \times S^1$ in class $\eta = [u]_{S^1}$ with $E(u) < \infty$.
Then $\|du\|_{C^0} < \infty$.
\end{cor}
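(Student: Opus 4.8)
The plan is to transplant, essentially verbatim, the blow-up argument from the proof of Theorem \ref{thm:C1bound}; the point is that that argument is purely \emph{local} on the domain and hence does not distinguish $\C$ from $\R \times S^1$. First I would reduce the assertion to the bound $\|dw\|_{C^0} < \infty$: since $u = (w,f)$ solves \eqref{eq:lcs-instanton} we have $|df| = |f^*d\theta| = |w^*\lambda| \le |dw|$ pointwise and $du = dw \oplus df$, so a $C^0$-bound on $dw$ yields one on $du$. Suppose, for contradiction, that $\|dw\|_{C^0} = \infty$ and pick $z_\alpha \in \R \times S^1$ with $|dw(z_\alpha)| \to \infty$. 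Applying Lemma \ref{lem:Hofer-lemma} to the complete metric space $(\R \times S^1, d)$ with the flat cylinder metric, to the nonnegative function $|dw|$, to the points $z_\alpha$ and to the radii $\delta_\alpha := |dw(z_\alpha)|^{-1/2} \to 0$, I obtain points $z_\alpha'$ and radii $\epsilon_\alpha \le \delta_\alpha$ with $R_\alpha' := |dw(z_\alpha')| \to \infty$, $\max_{D_{\epsilon_\alpha}(z_\alpha')}|dw| \le 2R_\alpha'$, $\epsilon_\alpha \to 0$, and $\epsilon_\alpha R_\alpha' \ge |dw(z_\alpha)|^{1/2} \to \infty$.

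For $\alpha$ large, $\epsilon_\alpha$ is below the injectivity radius of the cylinder, so $D_{\epsilon_\alpha}(z_\alpha')$ is an isometrically embedded Euclidean disc and the rescaled maps $w_\alpha(z) := w(z_\alpha' + z/R_\alpha')$ are well defined on $D_{\epsilon_\alpha R_\alpha'}(0)$, solve the (conformally invariant) contact instanton equation $\delbar^\pi w_\alpha = 0$, $d(w_\alpha^*\lambda\circ j) = 0$, and satisfy $|dw_\alpha|_{C^0}\le 2$, $|dw_\alpha(0)| = 1$. By Ascoli--Arzela and the interior estimates of Theorem \ref{thm:Wk2}, a subsequence converges in $C^\infty_{\loc}$ to a contact instanton $w_\infty : \C \to Q$ with $|dw_\infty|_{C^0}\le 2$ and $|dw_\infty(0)| = 1$; since $\C$ is simply connected, $w_\infty^*\lambda\circ j = d\widetilde f_\infty$ for some $\widetilde f_\infty$, so $w_\infty$ underlies an $\lcs$ instanton $u_\infty = (w_\infty, f_\infty)$ of charge $0$. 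Next I would check $E^\pi(u_\infty) = 0$: using the density identity $\frac12|d^\pi w|^2\,dA = d(w^*\lambda)$ and absolute continuity of the integral — the discs $D_{\epsilon_\alpha}(z_\alpha')$ have area $\pi\epsilon_\alpha^2 \to 0$ while $|d^\pi w|^2 \in L^1(\R \times S^1)$ because $E^\pi(u) < \infty$ — one gets
$$
\int_\C \frac12 |d^\pi w_\infty|^2 = \lim_{\alpha \to \infty} \int_{D_{\epsilon_\alpha R_\alpha'}(0)} \frac12 |d^\pi w_\alpha|^2 = \lim_{\alpha\to\infty}\int_{D_{\epsilon_\alpha}(z_\alpha')}\frac12 |d^\pi w|^2 = 0.
$$
Finally, since the vertical energy is a diffeomorphism invariant of the domain, hence unchanged by the rescaling, and is lower semicontinuous under $C^\infty_{\loc}$ convergence of the nonnegative integrand $\varphi(\widetilde f)\,w^*\lambda\circ j\wedge w^*\lambda$, we get $E^\perp(u_\infty) \le E(u) < \infty$, exactly as in the proof of Theorem \ref{thm:C1bound}.

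With $|dw_\infty|_{C^0} < \infty$, $E^\pi(u_\infty) = 0$, and $E^\perp(u_\infty) < \infty$ in hand, Proposition \ref{prop:C^1} forces $u_\infty$ to be constant, contradicting $|dw_\infty(0)| = 1$; therefore $\|dw\|_{C^0} < \infty$ and hence $\|du\|_{C^0} < \infty$. The only things differing from Theorem \ref{thm:C1bound} are bookkeeping: that small metric balls in the flat cylinder are genuine Euclidean discs (so the rescaled domains really exhaust $\C$), and that the charge class $\eta$ plays no role, since it enters the equation only through a \emph{bounded} harmonic one-form which contributes nothing in the bubble limit. I do not anticipate a real obstacle beyond these routine adaptations; the one step worth double-checking is the persistence of the finite-energy bounds under rescaling and passage to the limit, but this is handled exactly as in Theorem \ref{thm:C1bound}.
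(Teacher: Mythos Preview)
Your overall strategy matches the paper's: run the local bubbling argument of Theorem~\ref{thm:C1bound} and invoke Proposition~\ref{prop:C^1} on the bubble. The one step that needs more care is the transfer of the vertical energy bound. You write the integrand of $E^\perp$ as $\varphi(\widetilde f)\,w^*\lambda\circ j\wedge w^*\lambda$, but on the cylinder in class $\eta\ne 0$ the defining relation is $d\widetilde f = f^*d\theta - \beta_\eta = w^*\lambda\circ j - \beta_\eta$, so the actual integrand for $E^\perp_\eta(u)$ is $\varphi(\widetilde f)\,d\widetilde f\wedge w^*\lambda$, which differs from yours by the $\beta_\eta$-term. Meanwhile the vertical energy of the rescaled map $u_\alpha$ on the simply connected disc uses a \emph{different} potential, namely a local primitive of $f^*d\theta$ itself. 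These two quantities are not obviously comparable, so ``diffeomorphism invariance plus lower semicontinuity'' does not directly yield $E^\perp(u_\infty)\le E(u)$.

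The paper handles this by the device you mention only in your closing sentence: it lifts to $\widetilde u=(\widetilde f,w):\R\times S^1\to Q\times\R$ and records that $\widetilde u$ satisfies the \emph{inhomogeneous} equation $\delbar_J\widetilde u=\beta_\eta$ with $\beta_\eta$ a constant-coefficient (hence bounded) one-form. One then runs the bubbling argument for $\widetilde u$ using Hofer's energy in the symplectization (which by definition coincides with $E^\perp_\eta(u)$); since the \emph{same} potential $\widetilde f$ is used before and after rescaling, the energy bound passes through exactly as in Theorem~\ref{thm:C1bound}, and the inhomogeneity pulls back to $\beta_\eta/R_\alpha'\to 0$, so the limit is an honest $J$-holomorphic plane to which Proposition~\ref{prop:C^1} applies. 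Your final remark that $\beta_\eta$ ``contributes nothing in the bubble limit'' is exactly the right idea; it just needs to be the mechanism by which the energy bound survives rescaling, not an afterthought.
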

\begin{proof} By definition, we have a harmonic one-form $\beta_\eta$ and
a function $\widetilde f: \R \times S^1 \to \R$ satisfying
$$
f^*d\theta = \beta_\eta + d\widetilde f.
$$
We note that $H^1(\dot \Sigma,\Z) \cong \Z$ and
$\beta_\eta(\tau,t) = a\, d\tau + b\, dt$ for constants $a, \, b$ and
$d\widetilde f \to 0$ as $\tau \to \pm \infty$. In particular $|\beta_\eta(\tau,t)|_{C^0} = \sqrt{a^2 + b^2}$.
In particular, the pair $\widetilde u= (\widetilde f, w): \dot \Sigma \to Q \times \R$ satisfies
$$
\delbar^\pi w = 0, \quad f^*d\theta - d\widetilde f = \beta_\eta.
$$
In other words, the pair $(\widetilde f, w)$ satisfies the following perturbed
Cauchy-Riemann equation $\delbar_J \widetilde u = \beta_\eta$ for a given harmonic one-form
on $\R \times S^1$ which is independent of $\widetilde u$, an inhomogeneous
$\delbar_J$-equation (with constant one-form).

As in Hofer's proof of Proposition 30 \cite{hofer}, using the Hofer's energy for $\widetilde u$,
we can apply the same kind of bubbling-off argument as that of Theorem \ref{thm:C1bound}
and derive the same conclusion.
\end{proof}

We now derive the following from Proposition \ref{prop:S1class}

\begin{cor}\label{cor:periodoncylinder} For any $\lcs$ instanton on
$\R \times S^1$ with $E(u) < \infty$, the charge class
$[u]_{S^1}$ is well-defined.
\end{cor}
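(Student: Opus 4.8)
The plan is to observe that declaring $[u]_{S^1}:=[f^*d\theta]$ to be a well-defined element of $H^1(\R\times S^1,\Z)$ requires only that the closed one-form $f^*d\theta$ genuinely define an integral de Rham class with controlled behavior at the two ends, and that this in turn follows as soon as $u$ (equivalently $w$) has bounded derivative. Since we have assumed only the finiteness of the total energy $E(u)=E^\pi(u)+E^\perp(u)$, the first task is to upgrade this to a $C^0$-bound on $du$. So the argument splits into three short steps.

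First I would invoke Corollary \ref{cor:C^1oncylinder}: since $u=(w,f)$ is an $\lcs$ instanton on $\R\times S^1$ with $E(u)<\infty$, we obtain $\|du\|_{C^0}<\infty$. In particular $E^\pi(u)\le E(u)<\infty$ together with this derivative bound makes $w$ a contact instanton satisfying Hypothesis \ref{hypo:basic}, so all the asymptotic results of Section \ref{sec:CRmap} — in particular Proposition \ref{cor:tangent-convergence} — apply to $w$.

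Next, the pair $u=(w,f)$ now satisfies precisely the hypotheses $E^\pi(u)<\infty$ and $\|du\|_{C^0}<\infty$ of Proposition \ref{prop:S1class}. Applying it, the one-form $f^*d\theta=w^*\lambda\circ j$ extends smoothly across $\del\overline\Sigma$; concretely, by Proposition \ref{cor:tangent-convergence} the restriction $f^*d\theta|_{\{\tau\}\times S^1}$ converges to $-Q^\pm\,dt+T^\pm\,d\tau$ as $\tau\to\pm\infty$ in the cylindrical coordinates $(\tau,t)$ at the two ends. Thus $f^*d\theta$ is a bona fide smooth closed one-form on $\R\times S^1$ with controlled asymptotics, and the decomposition $f^*d\theta=\beta_\eta+d\widetilde f$ of Proposition \ref{prop:harmonic-form} is available.

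Finally I would conclude integrality and independence of choices. Since $\R\times S^1$ deformation retracts onto $\{0\}\times S^1$, we have $H^1(\R\times S^1;\Z)\cong\Z\langle[dt]\rangle$, and the class $[f^*d\theta]\in H^1(\R\times S^1;\R)$ is determined by its single period $\int_{\{0\}\times S^1}f^*d\theta=\int_{S^1}(f|_{\{0\}\times S^1})^*d\theta=\deg\bigl(f|_{\{0\}\times S^1}\bigr)\in\Z$. This integer is the charge; it does not depend on the chosen circle $\{\tau_0\}\times S^1$ because $f^*d\theta$ is closed and $\R\times S^1$ is connected, and it is intrinsic to $u$ since $f$ is part of the data. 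Hence $[u]_{S^1}=[f^*d\theta]$ is a well-defined element of $H^1(\R\times S^1,\Z)$, as claimed. The only genuine input is the passage from the energy bound to the derivative bound $\|du\|_{C^0}<\infty$; once Corollary \ref{cor:C^1oncylinder} supplies that, the remainder is bookkeeping and there is no real obstacle.
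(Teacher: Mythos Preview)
Your proof is correct and follows the same route as the paper: the paper simply states ``We now derive the following from Proposition \ref{prop:S1class}'', and you have spelled out exactly that derivation --- first invoking Corollary \ref{cor:C^1oncylinder} to obtain $\|du\|_{C^0}<\infty$ from $E(u)<\infty$, and then feeding this into Proposition \ref{prop:S1class}. Your additional remarks about integrality of the period (via the degree of $f|_{\{0\}\times S^1}$) and independence of the chosen circle are helpful elaborations that the paper leaves implicit.
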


\section{Bubbling-off analysis and the period-gap theorem }
\label{sec:e-regularity}

We recall some basic definitions and results from
from \cite{oh-wang:CR-map1}, \cite{oh:contacton} on contact instantons
which are also relevant to the current study of lcs-instantons.
In \cite{oh-wang:CR-map1}, the local a priori $W^{k,2}$-regularity estimates are established with respect to
the bounds of $\|dw\|_{L^4}$ and $\|dw\|_{L^2}$. Therefore in addition to the local a priori $W^{k,2}$-regularity estimates,
one should establish another crucial ingredient, the $\epsilon$-regularity result, for the
study of moduli problem as usual in any of conformally invariant geometric non-linear PDE's.
This will in turn establish the $W^{1,p}$-bound
with $p > 2$ (say $p = 4$). (See \cite{sacks-uhlen}.)

In the current setting of $\lcs$ instanton map, it is not obvious what would be
the precise form of relevant $\epsilon$-regularity statement is.
We formulate this $\epsilon$-regularity theorem in the setting of contact instantons.

\begin{defn} Let $\lambda$ be a contact form of contact manifold $(Q,\xi)$.
Denote by $\frak R eeb(Q,\lambda)$ the set of closed Reeb orbits.
We define $\operatorname{Spec}(Q,\lambda)$ to be the set
$$
\operatorname{Spec}(Q,\lambda) = \left\{\int_\gamma \lambda \mid \lambda \in \frak Reeb(Q,\lambda)\right\}
$$
and call it the \emph{action spectrum} of $(Q,\lambda)$. We denote
$$
T_\lambda: = \inf\left\{\int_\gamma \lambda \mid \lambda \in \frak Reeb(Q,\lambda)\right\}.
$$
\end{defn}

We set $T_\lambda = \infty$ if there is no closed Reeb orbit.
The following is a standard lemma in contact geometry

\begin{lem} Let $(Q,\xi)$ be a closed contact manifold. Then
$\operatorname{Spec}(Q,\lambda)$ is either empty or a countable nowhere dense subset of $\R_+$
and $T_\lambda > 0$. Moreover the subset
$$
\operatorname{Spec}^{K}(Q,\lambda) = \operatorname{Spec}(Q,\lambda) \cap (0,K]
$$
is finite for each $K> 0$.
\end{lem}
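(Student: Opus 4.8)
The plan is to establish the three assertions in the order stated: first the positivity $T_\lambda>0$, then the finiteness of $\operatorname{Spec}^K(Q,\lambda)$ for each $K$, and finally deduce that $\operatorname{Spec}(Q,\lambda)$ is countable and nowhere dense.

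For $T_\lambda>0$ I would argue by contradiction using a flow-box covering of the Reeb flow $\phi^t$. Since $R_\lambda$ is nowhere vanishing on the closed manifold $Q$, every point $x$ has a neighborhood $U_x$ with coordinates $(s,y)\in(-\epsilon_x,\epsilon_x)\times S_x$, $S_x$ a local transversal, in which $R_\lambda=\partial/\partial s$; thus the $s$-coordinate is strictly increasing along any flow line contained in $U_x$, and in particular no closed orbit can lie entirely inside a single $U_x$. Fix an auxiliary metric $g$, put $C:=\max_Q|R_\lambda|_g<\infty$, choose $r_x>0$ with the $g$-ball of radius $r_x$ about $x$ inside $U_x$, and extract by compactness a uniform $r:=\inf_x r_x>0$. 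If a closed Reeb orbit had period $T<r/C$ then its image would lie in a $g$-ball of radius $CT<r$, hence inside some $U_x$, which is impossible. Therefore $T_\lambda\ge r/C>0$.

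For the finiteness of $\operatorname{Spec}^K$ I would combine a compactness argument with the observation that the $\lambda$-action of a closed Reeb orbit — which equals its period, since $\lambda(R_\lambda)\equiv 1$ — is constant along any smooth family of closed Reeb orbits: if $\{\gamma_u\}$ is such a family with variation field $X_u=\partial\gamma_u/\partial u$, then $\frac{d}{du}\int_{\gamma_u}\lambda=\int_{\gamma_u}\mathcal{L}_{X_u}\lambda=\int_{\gamma_u}d(\iota_{X_u}\lambda)+\int_{\gamma_u}\iota_{X_u}d\lambda=0$, the first integral vanishing by Stokes and the second because $\iota_{R_\lambda}d\lambda=0$. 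Next, the space $\mathcal{M}_{\le K}$ of parametrized closed Reeb orbits of period at most $K$ is compact: such orbits have uniformly bounded derivatives, Arzela--Ascoli gives $C^0$-subconvergence, and the Reeb equation $\gamma'=T\,R_\lambda(\gamma)$ with ODE bootstrapping upgrades this to $C^\infty$-convergence to a closed Reeb orbit whose period again lies in $[T_\lambda,K]$ by the previous step. Under a nondegeneracy (or Morse--Bott) hypothesis on $\lambda$ — which is where the actual content of the finiteness assertion sits — each closed Reeb orbit is isolated within its Morse--Bott family of closed orbits, on which the period is constant by the computation above; hence the period is a locally constant function on the compact space $\mathcal{M}_{\le K}$, and a locally constant $\R$-valued function on a compact space has finite image. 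Thus $\operatorname{Spec}^K(Q,\lambda)$ is finite.

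Finally, since $\operatorname{Spec}(Q,\lambda)=\bigcup_{m\in\N}\operatorname{Spec}^m(Q,\lambda)$ is a countable union of finite sets it is countable, and since it meets every bounded interval in a finite set it is closed with empty interior in $\R$, i.e. nowhere dense; together with $T_\lambda>0$ from the first step this gives all the claims. I expect the main obstacle to be precisely the compactness step for $\operatorname{Spec}^K$: Arzela--Ascoli by itself only produces a limit orbit, and it is the nondegeneracy/Morse--Bott input — guaranteeing that closed Reeb orbits occur in finitely many isolated families below a given action, so that the period function is locally constant on $\mathcal{M}_{\le K}$ — that rules out an accumulation of distinct periods.
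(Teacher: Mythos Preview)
The paper does not prove this lemma; it is simply introduced as ``a standard lemma in contact geometry'' and left without proof. So there is no paper argument to compare against, and your proposal has to be judged on its own.

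Your argument for $T_\lambda > 0$ via flow boxes is correct in spirit, with one small imprecision: ``extract by compactness a uniform $r := \inf_x r_x > 0$'' is not quite right, since the infimum over \emph{all} $x \in Q$ need not be positive. What you want is a finite subcover (and the minimum of the corresponding $r_x$) or, more cleanly, a Lebesgue number for the cover $\{U_x\}$. This is a minor technical fix.

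The substantive point is exactly the one you already flag. Arzel\`a--Ascoli and the Reeb ODE give compactness of the space of parametrized closed orbits with period in $[T_\lambda,K]$, but compactness alone does not force the period function to have finite image; you need it to be locally constant, and that is where the nondegeneracy (or Morse--Bott) hypothesis enters. Your computation that $\frac{d}{du}\int_{\gamma_u}\lambda = 0$ along a smooth family of closed Reeb orbits is correct, and under nondegeneracy the standard implicit-function-theorem argument then shows each orbit is isolated (or sits in a family of constant period in the Morse--Bott case), so the period is locally constant on the compact orbit space and $\operatorname{Spec}^K$ is finite. The lemma as literally stated in the paper omits this hypothesis, but nondegeneracy of $\lambda$ is a standing assumption elsewhere in the paper (it is imposed explicitly in the asymptotic analysis of Section~\ref{sec:asymptotic} and in the Fredholm theory), so the omission is almost certainly an oversight in the statement rather than a claim of an unconditional result. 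You are right to insist on it: for a general smooth contact form the spectrum need only be a closed measure-zero set, and finiteness below each $K$ (hence countability) genuinely requires the extra input.

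Your deduction of ``countable and nowhere dense'' from the finiteness of each $\operatorname{Spec}^K$ is of course immediate. In summary: your proof is correct under the hypothesis you name, and you have identified precisely the gap in the lemma's stated hypotheses.
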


The constant $T_\lambda$ will enter in a crucial way in the following
period gap theorem.  The proof of this theorem will closely follow the argument used in
\cite[Section 8.4]{oh:book} and \cite{oh:imrn} by adapting it to
the proof of the current gap theorem with the replacement of
the standard harmonic energy by the $\pi$-harmonic energy.

\begin{thm}[Theorem 7.4 \cite{oh:contacton}]\label{thm:e-regularity}
Denote by $D^2(1)$ the closed unit disc and let $u = (w,f)$ be an $\text{\rm lcs}$ instanton
defined on $D^2(1)$ so that $w:D^2(1)\to Q$ satisfies
$$
\delbar^\pi w = 0, \, w^*\lambda\circ j = f^*d\theta.
$$
Assume the vertical energy bound $E^\perp (w) < K_0$ defined in Definition \ref{defn:vertical-energy}.
Then for any given $0 <\epsilon < T_\lambda$ and $w$ satisfying
$E^\pi(w) < T_\lambda - \epsilon$, and for a smaller disc $D' \subset \overline D' \subset D$,
there exists some $K_1 = K_1(D', \epsilon,K_0) > 0$
\be\label{eq:dwC0}
\|dw\|_{C^0;D'} \leq K_1
\ee
where $K_1$ depends only on $(Q,\lambda,J)$, $\epsilon$, $D' \subset D$.
\end{thm}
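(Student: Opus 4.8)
The plan is to prove the bound by contradiction via Hofer's rescaling (bubbling) argument, reusing the setup from the proof of Theorem \ref{thm:C1bound}, and to derive a contradiction with the $\pi$-energy quantization of Corollary \ref{cor:pi-positive}. This follows the pattern of the energy estimates in \cite[Section 8.4]{oh:book} and \cite{oh:imrn}, with the standard harmonic energy replaced throughout by the $\pi$-harmonic energy $E^\pi$, and with the critical threshold furnished by $T_\lambda$.

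Concretely, if the asserted estimate fails on a fixed $D'$ with $\overline{D'}\subset D^2(1)$, there is a sequence of lcs instantons $u_n=(w_n,f_n)$ on $D^2(1)$ with $E^\perp(w_n)<K_0$ and $E^\pi(w_n)<T_\lambda-\epsilon$ but $\|dw_n\|_{C^0;D'}\to\infty$. I would apply Lemma \ref{lem:Hofer-lemma} to $|dw_n|$ to produce blow-up points $z_n'$ --- which stay in a compact subset of $D^2(1)$, since Hofer's lemma displaces the chosen point by an amount tending to $0$ --- and radii $\epsilon_n\to 0$ with $R_n:=|dw_n(z_n')|\to\infty$, $\max_{D_{\epsilon_n}(z_n')}|dw_n|\le 2R_n$, and $\epsilon_n R_n\to\infty$. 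Rescaling to $\widetilde w_n(z)=w_n(z_n'+z/R_n)$ on $D_{\epsilon_n R_n}(0)$, together with the real-valued primitive $\widetilde f_n$ of $\widetilde w_n^*\lambda\circ j$ normalized by $\widetilde f_n(0)=0$ (which exists since discs are simply connected), one gets solutions of \eqref{eq:lcs-instanton} with $\|d\widetilde w_n\|_{C^0}\le 2$, $|d\widetilde w_n(0)|=1$, and $|d\widetilde f_n|=|\widetilde w_n^*\lambda|\le 2$. By the local a priori estimates of Theorem \ref{thm:local-W12} and Theorem \ref{thm:Wk2} together with Arzel\`a--Ascoli, a subsequence converges in $C^\infty_{loc}(\C)$ to an lcs instanton $u_\infty=(w_\infty,f_\infty)$ on $\C$ --- here $H^1(\C)=0$ guarantees that the limiting primitive $\widetilde f_\infty$ descends to $f_\infty:\C\to S^1$, exactly as in Proposition \ref{prop:C^1} --- and $u_\infty$ is nonconstant because $|dw_\infty(0)|=1$.

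It then remains to bound the energies of the bubble. For the $\pi$-energy, the on-shell identity $\tfrac12|d^\pi w|^2\,dA=w^*d\lambda$ from \eqref{eq:onshell} is conformally invariant, so $\tfrac12\int_{D_\rho}|d^\pi\widetilde w_n|^2=\tfrac12\int_{D_{\rho/R_n}(z_n')}|d^\pi w_n|^2\le E^\pi(w_n)<T_\lambda-\epsilon$ for every $\rho$ and all large $n$, whence $E^\pi(w_\infty)\le T_\lambda-\epsilon$. For the vertical energy I would use that, for each fixed $\varphi\in\CC$ (Definition \ref{defn:CC}), the density $\varphi(\widetilde f)\,d\widetilde f\wedge w^*\lambda=\varphi(\widetilde f)\,w^*\lambda\circ j\wedge w^*\lambda$ is pointwise nonnegative and that $\CC$ is invariant under translations of the argument, so that the normalizing constants are harmless (Proposition \ref{prop:a-independent}); the same conformal substitution then gives $\int_{D_\rho}\varphi(\widetilde f_n)\,d\widetilde f_n\wedge\widetilde w_n^*\lambda\le E^\perp(w_n)<K_0$, hence $E^\perp(w_\infty)\le K_0<\infty$. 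Thus $u_\infty$ is a nonconstant lcs instanton on $\C$ with $E(u_\infty)<\infty$, and Corollary \ref{cor:pi-positive} forces $E^\pi(w_\infty)\ge T_\lambda$, contradicting $E^\pi(w_\infty)\le T_\lambda-\epsilon$. Tracking the constants through the rescaling then yields the stated dependence of $K_1$ on $(Q,\lambda,J)$, $\epsilon$, $K_0$ and $D'\subset D^2(1)$.

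I expect the main obstacle to be the energy bookkeeping for the vertical part $E^\perp$ under rescaling: unlike $E^\pi$, which is conformally invariant on shell, $E^\perp$ is a supremum over a test-function class applied to the auxiliary primitive $\widetilde f$, so the argument hinges on normalizing the rescaled primitives so that they converge, on the pointwise nonnegativity of $\varphi(\widetilde f)\,w^*\lambda\circ j\wedge w^*\lambda$ (which makes passage to subdomains monotone), and on checking that the $C^\infty_{loc}$-limit is genuinely an lcs instanton with a well-defined $S^1$-component. All of these ingredients are available from Proposition \ref{prop:harmonic-form}, Proposition \ref{prop:a-independent}, Proposition \ref{prop:C^1} and the vanishing of $H^1(\C)$, but they must be assembled with some care; a secondary, routine point is to confirm that the rescaled domains remain inside the open disc $D^2(1)$, which follows from the displacement estimate in Lemma \ref{lem:Hofer-lemma}.
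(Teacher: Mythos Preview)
Your proposal is correct and follows essentially the same rescaling-and-bubble argument as the paper's proof: assume a blow-up sequence, apply Hofer's lemma (Lemma \ref{lem:Hofer-lemma}) to produce well-controlled rescaled maps on exhausting discs, extract a nonconstant limit on $\C$ via the a priori estimates (Theorems \ref{thm:local-W12} and \ref{thm:Wk2}), and derive a contradiction between $E^\pi(w_\infty)\le T_\lambda-\epsilon$ and the quantization $E^\pi(w_\infty)\ge T_\lambda$. You are in fact somewhat more explicit than the paper about tracking the $S^1$-component and the vertical energy under rescaling (the paper simply asserts ``$E^\perp(v_\alpha)\le K_0$ (from the scale invariance)''), and you invoke Corollary \ref{cor:pi-positive} where the paper writes ``by definition of $T_\lambda$''---but these are matters of exposition rather than of strategy.
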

\begin{proof}
Suppose to the contrary that
there exists a disc $D' \subset D$ with $\overline{D'} \subset \overset{\circ}D$ and a sequence $\{ w_\alpha \}$
such that
$$
\delbar^\pi w_\alpha = 0, \quad w_\alpha \circ j = f_\alpha^*d\theta
$$
and
\be\label{eq:2to0ptoinfty}
E^\pi_{\lambda,J;D}(w_\alpha) < T_\lambda - \epsilon,, E^\perp(w_\alpha) < K_0,
\quad \norm{dw_\alpha}{C^0,D'} \to \infty
\ee
as $\alpha \to \infty$. Let $x_\alpha \in D'$ be such that $|dw_\alpha(x_\alpha)| \to \infty$.
By choosing a subsequence, we may assume that $x_\alpha \to x_\infty\in \overline D' \subset \overset{\circ} D$.
We take a coordinate chart centered at $x_\infty$ on $D_{x_\infty}(\delta) \subset \overset{\circ} D$
and identify $D_{x_\infty}(\delta)$ with the disc $D^2(\delta) \subset \C$ and $x_\infty$ with $0 \in \C$.
This can be done by choosing $\delta > 0$ sufficiently small since we assume $\overline D' \subset \overset{\circ} D$.
Then $x_\alpha \to 0$. We choose $\delta_\alpha \to 0$ so that $\delta_\alpha |dw_\alpha(x_\alpha)| \to \infty$.

Using Lemma \ref{lem:Hofer-lemma} as before, we adjust the sequence $x_\alpha$ to $y_\alpha$ so that
$y_\alpha \to 0$ and
\be\label{eq:adjustedy}
\max_{x \in B_{y_\alpha}(\epsilon_\alpha)}|dw_\alpha| \leq 2|dw_\alpha(y_\alpha)|, \quad
\delta_\alpha |dw_\alpha(y_\alpha)| \to \infty.
\ee
We denote $R_\alpha =  |dw_\alpha(y_\alpha)|$ and consider the re-scaled map
$$
v_\alpha(z) = w_\alpha\left(y_\alpha + \frac{z}{R_\alpha}\right).
$$
Then the domain of $w_\alpha$ at least includes $z \in \C$ such that
$$
y_\alpha + \frac{z}{R_\alpha} \in D^2(\delta),
$$
i.e., those $z$'s satisfying
$$
\left|y_\alpha + \frac{z}{R_\alpha} \right| \leq \delta.
$$
In particular, if $|z| \leq R_\alpha (\delta - |y_\alpha|)$, $v_\alpha(z)$ is
defined. Since $y_\alpha \to 0$ and $\delta_\alpha \to 0$ as $\alpha \to \infty$,
$R_\alpha (\delta - |y_\alpha|)> R_\alpha \epsilon_\alpha$
eventually, $v_\alpha$ is defined on $D^2(\epsilon_\alpha R_\alpha)$ for all sufficiently
large $\alpha$'s.
Since $\delta_\alpha R_\alpha \to \infty$ by \eqref{eq:adjustedy}, for any given $R>0$,
$D^2(\delta_\alpha R_\alpha)$ of $v_\alpha (z)$ eventually contains $B_{R+1}(0)$.

Furthermore, we may assume,
$$
B_{R+1}(0) \subset \left\{ z \in \mathbb{C} \mid \eta_\alpha z + y_\alpha \in \overline{D}'\right\}
$$
Therefore, the maps
$$
v_\alpha : B_{R+1} (0) \subset \mathbb{C} \to M
$$
satisfy the following properties:
\begin{enumerate}
 \item[(i)]  $E^\pi(v_\alpha) < T_\lambda -\epsilon$, \, $\delbar^\pi v_\alpha = 0$, \,
 $E^\perp(v_\alpha) \leq K_0$,
 (from the scale invariance)
 \item[(ii)] $|dv_\alpha(0)|=1 $ by definition of $v_\alpha$ and $R_\alpha$,
 \item[(iii)]$\norm{dv_\alpha}{C^0,B_1(x)} \leq 2$ for all $x \in B_R(0) \subset D^2(\epsilon_\alpha R_\alpha)$,
 \item[(iv)] $\delbar^\pi v_\alpha =0$ and $d(v_\alpha^*\lambda \circ j) = 0$.
\end{enumerate}

For each fixed $R$, we take the limit of $v_\alpha|_{B_R}$, which we denote by $w_R$.
Applying (iii) and then the local $W^{k,2}$ estimates, Theorem \ref{thm:Wk2},  we obtain
$$
\norm{dv_\alpha}{k,2;B_{\frac9{10}}(x)} \leq C
$$
for some $C=C(R)$. By the Sobolev embedding theorem, we have a
subsequence that converges in $C^2$ in each $B_{\frac{8}{10}}(x), x \in \overline D'$. Then
we derive that the convergence is in $C^2$-topology on $B_{\frac{8}{10}}(x)$ for all $x \in \overline D'$ and
in turn on $B_R(0)$.

Therefore the limit $w_R: B_R(0) \to M$ of $v_\alpha|_{B_R(0)}$ satisfies
\begin{itemize}
\item[(1)] $E^\pi(w_R) \leq T_\lambda -\epsilon$, $\delbar^\pi w_R = 0$,
$d(w_R^*\lambda \circ j) = 0$ and $E^\perp(v_\alpha) \leq K_0$,
\item[(2)] $E^\pi(w_R) \leq \limsup_\alpha  E^\pi_{(\lambda,J;B_R(0))}(v_\alpha) \leq T_\lambda -\epsilon$,
\item[(3)]  Since $v_\alpha \to w_R$ converges in $C^2$, we have
$$
\norm{dw_R}{p,B_1(0)}^2 = \lim_{\alpha \to \infty} \norm{dv_\alpha}{p,B_1(0)}^2 \geq \frac{1}{2}.
$$
\end{itemize}
By letting $R \to \infty$ and taking a diagonal subsequence argument, we have
derived a nonconstant contact instanton map $w_\infty: \C \to Q$. Therefore by definition of $T_\lambda$,
we must have $E^\pi(w_\infty) \geq T_\lambda$.

On the other hand, the bound
$E^\pi(w_R) \leq T_\lambda -\epsilon$ for all $R$ and again by Fatou's lemma implies
$$
E^\pi(w_\infty) \leq T_\lambda -\epsilon
$$
which gives rise to a contradiction.
This finishes the proof of \eqref{eq:dwC0}.
\end{proof}

\section{Asymptotic behaviors of finite energy $\lcs$ instantons}
\label{sec:asymptotic}

In this section, we study the asymptotic behavior of $\lcs$ instantons $u=(w,f)$
on the Riemann surface $(\dot\Sigma, j)$ associated with a metric $h$ with \emph{cylindrical ends}.
To be precise, we assume there exists a compact set $K_\Sigma\subset \dot\Sigma$,
such that $\dot\Sigma-\Int(K_\Sigma)$ is a disjoint union of punctured disks
 each of which is isometric to the half cylinder $[0, \infty)\times S^1$ or $(-\infty, 0]\times S^1$, where
the choice of positive or negative cylinders depends on the choice of analytic coordinates
at the punctures.
We denote by $\{p^+_i\}_{i=1, \cdots, l^+}$ the positive punctures, and by $\{p^-_j\}_{j=1, \cdots, l^-}$ the negative punctures.
Here $l=l^++l^-$. Denote by $\phi^{\pm}_i$ such isometries from cylinders to disks.

We start with the following lemma

\begin{lem}\label{lem:|dw|<infty} Let $\eta \in H^1(\dot \Sigma, \Z)$ be given.
Suppose $[u]_{S^1} = \eta$ and $E^\pi(w) \leq E(u) = E_\eta (u) < \infty$. Then
$$
\|du\|_{C^0;\dot \Sigma} < \infty.
$$
\end{lem}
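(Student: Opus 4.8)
Since $du=dw\oplus df$ and the $S^1$-component $f$ is determined by $w$ through $w^*\lambda\circ j=f^*d\theta$, we have $|df|\le|w^*\lambda|\le|dw|$ pointwise, so it suffices to prove $\|dw\|_{C^0;\dot\Sigma}<\infty$. The hypothesis gives $E^\pi(w)<\infty$ and $E^\perp_\eta(u)=E(u)-E^\pi(w)<\infty$. The plan is to run the same bubbling-off argument as in the proof of Theorem \ref{thm:C1bound}, the only new feature being that the blow-up locus may escape into one of the cylindrical ends of $\dot\Sigma$; this causes no difficulty because those ends are isometric to flat half-cylinders.

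Suppose to the contrary that there is a sequence $z_\alpha\in\dot\Sigma$ with $|dw(z_\alpha)|\to\infty$. Since $h$ is complete, Lemma \ref{lem:Hofer-lemma} lets us replace $z_\alpha$ by a new sequence (still denoted $z_\alpha$) and choose $\epsilon_\alpha\to 0$ so that, writing $R_\alpha:=|dw(z_\alpha)|$,
$$
R_\alpha\to\infty,\qquad \epsilon_\alpha R_\alpha\to\infty,\qquad \max_{D_{\epsilon_\alpha}(z_\alpha)}|dw|\le 2R_\alpha .
$$
Passing to a subsequence, either $z_\alpha$ converges to a point $z_\infty\in\dot\Sigma$, or $z_\alpha$ eventually lies in one of the ends, say $z_\alpha=(\tau_\alpha,t_\alpha)$ with $\tau_\alpha\to\pm\infty$ in the strip-like coordinates there. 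In either case, working in a holomorphic coordinate near $z_\infty$ (resp. in the flat coordinate $(\tau,t)$ on the end) and using $\epsilon_\alpha\to 0$, the discs $D_{\epsilon_\alpha}(z_\alpha)$ are genuine Euclidean subdiscs of $\dot\Sigma$ for all large $\alpha$, and the rescaled maps
$$
w_\alpha(z):=w\!\left(z_\alpha+\frac{z}{R_\alpha}\right),\qquad z\in D^2(\epsilon_\alpha R_\alpha),
$$
are contact instantons satisfying $|dw_\alpha(0)|=1$, $\|dw_\alpha\|_{C^0}\le 2$, and, by conformal invariance of the equation together with nonnegativity of the energy integrands, $E^\pi(w_\alpha)\le E^\pi(w)<\infty$ and $E^\perp(w_\alpha)\le E^\perp_\eta(u)<\infty$.

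By the local a priori estimates, Theorems \ref{thm:local-W12} and \ref{thm:Wk2}, the uniform bound $\|dw_\alpha\|_{C^0}\le 2$ gives uniform $W^{k,2}$-bounds on compact subsets, so a subsequence converges in $C^\infty_{\mathrm{loc}}(\C)$ to a smooth contact instanton $w_\infty:\C\to Q$ with $\|dw_\infty\|_{C^0}\le 2$ and $|dw_\infty(0)|=1$; in particular $w_\infty$ is nonconstant. Since $\frac12|d^\pi w|^2\,dA$ has an $L^1$-density on $\dot\Sigma$ and $\epsilon_\alpha\to 0$, dominated convergence yields $\int_{D_{\epsilon_\alpha}(z_\alpha)}\frac12|d^\pi w|^2\,dA\to 0$, hence by the identity $\frac12|d^\pi w|^2\,dA=d(w^*\lambda)$ and conformal invariance, $E^\pi(w_\infty)=\int_\C\frac12|d^\pi w_\infty|^2\,dA=0$, while $E^\perp(w_\infty)\le E^\perp_\eta(u)<\infty$ by Fatou's lemma. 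Choosing $f_\infty$ with $f_\infty^*d\theta=w_\infty^*\lambda\circ j$ (possible since $H^1(\C,\Z)=0$), the pair $u_\infty=(w_\infty,f_\infty)$ is an $\lcs$ instanton on $\C$ with $|du_\infty|_{C^0}<\infty$, $E^\pi(u_\infty)=0$ and $E^\perp(u_\infty)<\infty$, so Proposition \ref{prop:C^1} forces $w_\infty$ to be constant, a contradiction.

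The only step needing care is the bookkeeping of the two energies under rescaling and restriction to the shrinking discs $D_{\epsilon_\alpha}(z_\alpha)$; the essential facts are that $E^\pi$ and $E^\perp_\eta$ are integrals of globally defined nonnegative $2$-forms on $\dot\Sigma$, hence monotone under restriction to subdomains and invariant under conformal rescaling of the domain, exactly as exploited in the proof of Theorem \ref{thm:C1bound}. Everything else repeats Hofer's argument verbatim.
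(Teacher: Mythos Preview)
Your argument is a direct bubbling-off in the spirit of Theorem~\ref{thm:C1bound}, whereas the paper deduces the lemma from the $\epsilon$-regularity theorem (Theorem~\ref{thm:e-regularity}): choose $\delta>0$ so small that $E^\pi\bigl(w|_{D_{r_\ell}(\delta)\setminus\{r_\ell\}}\bigr)<\tfrac12 T_\lambda$ for every puncture, cover each cylindrical end by unit discs, and apply Theorem~\ref{thm:e-regularity} on those discs; the compact piece $\Sigma(\delta)$ is handled trivially. Since Theorem~\ref{thm:e-regularity} is itself proved by exactly the rescaling argument you wrote, the two routes are morally the same; the paper's is just more modular.

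There is one genuine imprecision in your write-up that is worth flagging. You assert that $E^\perp_\eta$ is the integral of a \emph{globally nonnegative} $2$-form on $\dot\Sigma$, hence monotone under restriction. But for a nonzero charge class $\eta$ the integrand is
\[
\varphi(\widetilde f)\,d\widetilde f\wedge w^*\lambda
=\varphi(\widetilde f)\bigl(|w^*\lambda|^2\,dA-\beta_\eta\wedge w^*\lambda\bigr),
\]
and the cross-term $\beta_\eta\wedge w^*\lambda$ has no sign. So the monotonicity you invoke for $E^\perp_\eta$ does not follow as stated. What \emph{is} true, and is all you need, is that on any disc $D\subset\dot\Sigma$ the \emph{local} vertical energy (with its own charge class $0$, i.e.\ using a lift $\widetilde f_0$ of $f|_D$) has the nonnegative density $\varphi(\widetilde f_0)\,|w^*\lambda|^2\,dA$, hence is monotone and conformally invariant; this is exactly the quantity that feeds into Proposition~\ref{prop:C^1} for the bubble. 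One still owes a word on why these local vertical energies are uniformly bounded in terms of $E^\perp_\eta(u)$ (the $\beta_\eta$-term is harmless since $\beta_\eta$ is bounded and its pullback under the rescaling $z\mapsto z_\alpha+z/R_\alpha$ acquires a factor $1/R_\alpha\to 0$), but once that is said your argument goes through. The paper's route via Theorem~\ref{thm:e-regularity} quietly confines itself to discs and so avoids having to untangle $E^\perp_\eta$ from $E^\perp$.
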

\begin{proof}
By the finiteness $E^\pi(w)\leq E(u) < \infty$, we can choose sufficiently small $\delta > 0$ such that
$$
E^\pi(w|_{\Sigma\setminus \Sigma(\delta)})< \frac{1}{2}T_\lambda, \, E^\perp(u) < \epsilon(\delta)
$$
Denote
$$
\Sigma(\delta) = \dot \Sigma \setminus \cup_{\ell =1}^k D_{r_\ell}(\delta).
$$
Then we apply the $\epsilon$-regularity theorem
Theorem \ref{thm:e-regularity}, to $w$ on $\cup_{\ell =1}^k D_{r_\ell}(\delta) = \dot \Sigma \setminus \Sigma(\delta)$
to derive
$$
\|du\|_{C^0;\cup_{\ell =1}^k D_{r_\ell}} < \infty.
$$
Obviously $\|du\|_{C^0;\Sigma(\delta)} < \infty$ and hence the proof.
\end{proof}

Because the behavior of the component $f$ largely determined by the component $w$,
we will focus on the study of the latter's asymptotic behavior.
We now state our assumptions for the study of the behavior of punctures.

\begin{defn}Let $\dot\Sigma$ be a punctured Riemann surface with punctures
$\{p^+_i\}_{i=1, \cdots, l^+}\cup \{p^-_j\}_{j=1, \cdots, l^-}$ equipped
with a metric $h$ with \emph{cylindrical ends} outside a compact subset $K_\Sigma$.
Let
$w: \dot \Sigma \to M$ be any smooth map.
\end{defn}

Throughout this section, we work locally near one puncture, i.e., on
$D^\delta(p) \setminus \{p\}$. By taking the associated conformal coordinates $\phi^+ = (\tau,t)
:D^\delta(p) \setminus \{p\} \to [0, \infty)\times S^1 \to $ such that $h = d\tau^2 + dt^2$,
we need only look at a map $w$ defined on the half cylinder $[0, \infty)\times S^1\to M$
without loss of generality.

The following is proved in \cite{oh:contacton}.

\begin{lem}[Lemma 8.2 \cite{oh:contacton}]\label{lem:massless}
Let $\dot \Sigma$ be any punctured Riemann surface.
Suppose $w: \dot \Sigma \to Q$ is a massless contact instanton
on $\dot \Sigma$. Then $w^*\lambda $ is a harmonic 1-form and
the image of $w$ lies in a single leaf of the Reeb foliation.
\end{lem}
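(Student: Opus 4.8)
The plan is to deduce everything from the on-shell density identity $w^*d\lambda = \frac12 |d^\pi w|^2\, dA$ of Lemma \ref{lem:omega-area}, together with the defining hypothesis of masslessness, namely $E^\pi(w) = \frac12\int_{\dot\Sigma}|d^\pi w|^2 = 0$. First I would observe that this forces the pointwise vanishing $|d^\pi w|^2 \equiv 0$, since $|d^\pi w|^2$ is a nonnegative continuous function whose integral vanishes. Hence $d^\pi w = \pi\, dw \equiv 0$ on all of $\dot\Sigma$, which is to say that $dw$ takes values in the Reeb line field at every point; explicitly, using the splitting $dw = d^\pi w + w^*\lambda\otimes R_\lambda$ we get $dw = w^*\lambda\otimes R_\lambda$ everywhere.

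Second, I would establish harmonicity of the $1$-form $w^*\lambda$. For closedness, either apply $d$ to the identity $dw = w^*\lambda\otimes R_\lambda$, or more directly use $d(w^*\lambda) = w^*d\lambda = \frac12|d^\pi w|^2\, dA = 0$. For co-closedness, recall that on a Riemann surface the operation $\alpha\mapsto \alpha\circ j$ on $1$-forms agrees up to sign with the Hodge star of the compatible K\"ahler metric $h$, so that the second contact instanton equation $d(w^*\lambda\circ j)=0$ is exactly $\delta(w^*\lambda) = 0$. Combining the two, $\Delta(w^*\lambda) = (d\delta + \delta d)(w^*\lambda) = 0$, so $w^*\lambda$ is harmonic.

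Third, I would show that $w$ maps into a single leaf of the Reeb foliation. Given any smooth path $\gamma:[0,1]\to\dot\Sigma$, the identity $dw = w^*\lambda\otimes R_\lambda$ gives $\frac{d}{ds}(w\circ\gamma)(s) = \big((w^*\lambda)(\dot\gamma(s))\big)\, R_\lambda\big(w(\gamma(s))\big)$, so $w\circ\gamma$ is, after reparametrization, an integral curve of $R_\lambda$ and therefore stays inside one leaf of the Reeb foliation. Since $\dot\Sigma$ is connected, the entire image $w(\dot\Sigma)$ lies in a single leaf, which is the assertion.

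As for where care is needed: there is no genuinely hard step here, but one must fix sign and normalization conventions so that $d(w^*\lambda\circ j)=0$ is correctly read as co-closedness — this is the only place the second instanton equation is used — and one should keep in mind that a leaf of the Reeb foliation need not be embedded in $Q$, so the conclusion is only a statement about the image as a subset of $Q$, obtained via connectedness of $\dot\Sigma$ and the uniqueness of integral curves of $R_\lambda$, with no immersion or embeddedness of $w$ being claimed.
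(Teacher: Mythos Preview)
Your proof is correct and follows essentially the same argument that the paper employs elsewhere (see the proofs of Proposition \ref{prop:C^1} and Proposition \ref{prop:classify}): masslessness forces $d^\pi w = 0$ via the on-shell identity of Lemma \ref{lem:omega-area}, whence $dw = w^*\lambda\otimes R_\lambda$, so $w^*\lambda$ is closed and co-closed by the instanton equation, and connectedness of $\dot\Sigma$ confines the image to a single Reeb leaf. The paper itself does not supply a separate proof of this lemma, citing \cite{oh:contacton} instead, but your write-up matches the reasoning used in those related passages.
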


The following result is the lcs-counterpart of \cite[Proposition 8.3]{oh:contacton}
whose proof can be repeated verbatim and so is omitted.

\begin{prop}[Compare with Proposition 8.3 \cite{oh:contacton}]\label{prop:pole-structure}
Let $u = (w,f)$ be an $\lcs$ instanton on $\dot \Sigma$ with punctures $p \in \{p_1, \cdots, p_k\}$.
Consider the complex-valued one-form on $\dot \Sigma$ defined by
\be\label{eq:chi}
\chi: = f^*d\theta + \sqrt{-1} w^*\lambda.
\ee
Let $p \in \{p_1, \cdots, p_k\}$ and let $z$ be an analytic coordinate at $p$. Suppose
$$
E(u) < \infty.
$$
Then for any given sequence $\delta_j \to 0$ there exists a subsequence, still denoted by $\delta_j$, and a conformal diffeomorphism
$\varphi_j: [-\frac{1}{\delta_j}, \infty) \times S^1 \to D_{\delta_j}(p)\setminus \{p\}$ such that
the one form $\varphi_j^*\chi$ converges to a bounded holomorphic one-form
$\chi_\infty$ on $(-\infty, \infty) \times S^1$.
\end{prop}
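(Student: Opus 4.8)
The plan is to pull the problem back to a half-cylinder and run a translation-invariant elliptic compactness argument; this is essentially the proof of Proposition 8.3 of \cite{oh:contacton} transported to the present setting. First I would fix the analytic coordinate $z = e^{-2\pi(\tau + \sqrt{-1}\,t)}$ at $p$, so that, with the given cylindrical-end metric $h = d\tau^2 + dt^2$, the punctured disc $D_{\delta_j}(p)\setminus\{p\}$ is isometric to the half-cylinder $[\tau_j, \infty)\times S^1$ with $\tau_j = -\tfrac{1}{2\pi}\log\delta_j \to \infty$. The conformal diffeomorphism $\varphi_j$ is then the translation $\sigma \mapsto \sigma + \tau_j + 1/\delta_j$ (possibly composed with a rotation of $S^1$), so that the restriction of $\varphi_j^*\chi$ to a fixed window $[-R, R]\times S^1$ is the restriction of $\chi$ to $[\tau_j + 1/\delta_j - R,\ \tau_j + 1/\delta_j + R]\times S^1$, which marches off toward the puncture as $j \to \infty$; and since $\delta_j \to 0$, the domains $[-1/\delta_j, \infty)\times S^1$ exhaust $(-\infty, \infty)\times S^1$.

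Next I would record the structure of $\chi$. Because $u = (w, f)$ solves \eqref{eq:lcs-instanton} we have $f^*d\theta = w^*\lambda \circ j$, so $\chi = w^*\lambda\circ j + \sqrt{-1}\, w^*\lambda$; a one-line computation using $j^2 = -1$ gives $\chi \circ j = \sqrt{-1}\,\chi$, i.e.\ $\chi$ is of type $(1,0)$. Moreover $d(f^*d\theta) = 0$, while the on-shell identity $w^*d\lambda = \tfrac12 |d^\pi w|^2\, dA$ from \eqref{eq:onshell} gives
\[
\delbar \chi \;=\; d\chi \;=\; \tfrac{\sqrt{-1}}{2}\,|d^\pi w|^2\, dA ,
\]
so $\chi$ fails to be holomorphic only by the $\pi$-energy density. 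This is the key point: since $E^\pi(w) \le E(u) < \infty$, the tail integrals $\int_{[\tau, \infty)\times S^1} |d^\pi w|^2$ tend to $0$ as $\tau \to \infty$, and hence, $\varphi_j$ being an isometry, $\|d^\pi(w\circ\varphi_j)\|_{L^2([-R, R]\times S^1)} \to 0$ for every fixed $R$.

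Then I would set up the uniform estimates and pass to the limit. By Lemma \ref{lem:|dw|<infty}, the hypothesis $E(u) < \infty$ gives $\|du\|_{C^0; \dot\Sigma} < \infty$, so $w$ is a bounded contact instanton satisfying Hypothesis \ref{hypo:basic}. Feeding the $C^0$-bound on $dw$, the $L^2$-bound $E^\pi(w) < \infty$ on $d^\pi w$, and the window-wise $L^4$-bound on $dw$ that these imply, into the iterated a priori estimates of Theorem \ref{thm:Wk2}, and using translation invariance of the cylindrical metric, one obtains $j$-uniform $W^{k, 2}$-bounds for $w \circ \varphi_j$ on each fixed window $[-R, R]\times S^1$, hence $j$-uniform $C^\infty$-bounds for $\varphi_j^*\chi$ there. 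Arzel\`a--Ascoli plus a diagonal extraction over $R \to \infty$ produces a subsequence along which $\varphi_j^*\chi \to \chi_\infty$ in $C^\infty_{\loc}$ on $(-\infty, \infty)\times S^1$. The limit is again of type $(1,0)$, and is bounded since $\|\varphi_j^*\chi\|_{C^0} \le C\,\|dw\|_{C^0;\dot\Sigma} < \infty$ uniformly in $j$. Finally, since the uniform $W^{k,2}$-bounds combined with the $L^2$-decay of $|d^\pi(w\circ\varphi_j)|$ upgrade (by interpolation) to $C^0_{\loc}$-decay, passing to the limit in the displayed identity gives $\delbar\chi_\infty = 0$, so $\chi_\infty$ is holomorphic.

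I expect the only delicate point — and the part that genuinely leans on the local regularity theory of \cite{oh-wang:CR-map1} imported through \cite{oh:contacton} — to be this promotion of the $L^2$-decay of the $\pi$-energy density on the receding windows to $C^0$-decay, since it is what makes the elliptic bootstrap uniform in $j$ and, simultaneously, forces $\delbar\chi_\infty = 0$ in the limit. The subsequence cannot be dispensed with (and, as the remark following the statement notes, the limiting $\chi_\infty$ may depend on it), because the only compactness available here is Arzel\`a--Ascoli.
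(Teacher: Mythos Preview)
Your proof is correct and is precisely the argument the paper has in mind: the paper omits the proof entirely, stating only that it ``can be repeated verbatim'' from \cite[Proposition~8.3]{oh:contacton}, and your reconstruction --- the $(1,0)$-type of $\chi$, the identity $\delbar\chi = \tfrac{\sqrt{-1}}{2}|d^\pi w|^2\,dA$, the $C^0$ derivative bound from Lemma~\ref{lem:|dw|<infty}, the uniform higher estimates from Theorem~\ref{thm:Wk2}, and the Arzel\`a--Ascoli diagonal extraction --- is exactly that argument. One small simplification: the upgrade from $L^2$-decay of $|d^\pi w|$ to $C^0$-decay on the receding windows, which you obtain by interpolation, is already recorded as Proposition~\ref{cor:tangent-convergence} in the paper (uniform $C^0$ convergence $|d^\pi w| \to 0$ as $\tau \to \infty$), so you may cite it directly rather than re-derive it.
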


We would like to emphasize that at the moment, the limiting holomorphic one-form $\chi_\infty$
may depend on the choice of subsequence.

Finally  we would like to further analyze the asymptotic behavior of the instanton $u = (w,f)$.
We will show that if both $T$ and $Q$ vanish,
$w_\infty$ is a constant map and so the puncture is removable.
For this purpose, we consider the complex valued one-form
\be\label{eq:specialchi}
\chi =  f^*d\theta + \sqrt{-1} w^*\lambda = w^*\lambda \circ j + \sqrt{-1} w^*\lambda
\ee
where the latter equality follows from $w^*\lambda \circ j = f^*d\theta$ as a
part of $\lcs$ instanton equation. We note that for $\lcs$ instanton $u = (w,f)$ $\chi$
is expressed purely in terms of $w$.

\begin{defn}\label{defn:charge} Let $(\Sigma,j)$ be a closed
Riemann surface with a finite number of marked points $\{p_1, \cdots, p_k\}$. Denote by $\dot \Sigma$ the
associated punctured Riemann surface with cylindrical metric near the punctures, and let $\overline \Sigma$
be the real blow-up of $\Sigma$ along the punctures.
Let $w$ be a contact instanton map. Let $p \in \{p_1, \cdots, p_k\}$. We call the integrals
\bea
Q(p) &: =& \int_{\del_{\infty;p}\Sigma} w^*\lambda \circ j\\
T(p) &: =& \int_{\del_{\infty;p}\Sigma} w^*\lambda
\eea
the \emph{contact instanton charge} and \emph{contact instanton action} at $p$ respectively.
Here $\del_{\infty;p}\Sigma$ is the
boundary component corresponding to $p$ of the real blow-up $\overline \Sigma$ of $\dot \Sigma$.
Then we call the form $\chi = w^*\lambda \circ j + \sqrt{-1} w^*\lambda$ the \emph{
contact Hick's field} of $w$ and
$$
Q(p) + \sqrt{-1} T(p)
$$
the \emph{asymptotic charge} of the instanton $w$ at the puncture $p$.
\end{defn}

Note that by the closedness $d(w^*\lambda \circ j) = 0$, the charge $Q(p)$ is the same as
the initial integral
$$
\int_{\{\tau = 0\}} w^*\lambda \circ j
$$
which does not depend on the choice of subsequence but is determined by
the initial condition at $\tau = 0$ and homology class of the loop
$w|_{\tau = 0} \in H_1(\dot \Sigma) = H_1(\Sigma \setminus \{p_1, \cdots, p_k\}$.
Furthermore if $u= (w,f)$ is an $\lcs$-instanton, then $Q(p) = 0$.

Now we consider the  asymptotic Hick's field $\chi_\infty$ associated to the asymptotic
$\lcs$ instanton $(w_\infty,\chi_\infty)$ obtained in the proof of Proposition \ref{prop:pole-structure}.
Because $w_\infty$ is massless and has bounded derivatives on $\R \times S^1$,
$\chi_\infty$ becomes a bounded holomorphic one-form. Therefore we derive
\be\label{eq:chiinfty}
\chi_\infty = c\, (d\tau + i \, dt)
\ee
for some complex number $c \in \C$. We denote $c = b + i a$ for $a, \, b \in \R$.
Equivalently, we obtain
$$
w_\infty^*\lambda = a\, d\tau + b\, dt
$$
where $a = - Q(p), \, b = T(p)$ .

\begin{thm} \label{thm:c=0}
Suppose $Q(p) = 0 =T(p)$. Then $u$ is smooth across $p$ and so the
puncture $p$ is removable.
\end{thm}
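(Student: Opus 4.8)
The plan is to deduce the removability of $p$ by upgrading the subsequential asymptotic information of Proposition~\ref{prop:pole-structure} and Theorem~\ref{thm:subsequence} to genuine exponential decay of $w$ near $p$, and then to invoke interior elliptic regularity for the system $\delbar^\pi w = 0$, $d(w^*\lambda\circ j) = 0$ in the punctured-disc coordinate $z = e^{-2\pi(\tau+it)}$. We work on the half-cylinder $[0,\infty)\times S^1$ conformal to $D_\delta(p)\setminus\{p\}$; since $f^*d\theta = w^*\lambda\circ j$ and $Q(p) = 0$ kills the winding of $f$ around $p$, the component $f$ is determined by $w$ up to an additive constant, so it suffices to prove that $w$ extends to a $C^\infty$ map on $D_\delta(p)$.

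First I would record the analytic consequences of $Q(p) = 0 = T(p)$. Lemma~\ref{lem:|dw|<infty} already gives $\|dw\|_{C^0} < \infty$; by Remark~\ref{rem:TQ} and the density identity $\tfrac12|d^\pi w|^2\,dA = d(w^*\lambda)$ the integral $\int_{\{\tau\}\times S^1}w^*\lambda$ is nondecreasing in $\tau$ with limit $T(p) = 0$, hence $E^\pi\big(w|_{[\tau_0,\infty)\times S^1}\big) = -\int_{\{\tau_0\}\times S^1}w^*\lambda \to 0$ as $\tau_0\to\infty$, while the $\tau$-independent quantity $\int_{\{\tau\}\times S^1}w^*\lambda\circ j$ equals $Q(p) = 0$. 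The next step is to show $\lim_{\tau\to\infty}\sup_{\{\tau\}\times S^1}|dw| = 0$. Suppose not; then $|dw|(\sigma_k,t_k)\ge\varepsilon_0 > 0$ for some $\sigma_k\to\infty$ and $t_k\to t_\infty$. The translated maps $w(\cdot + \sigma_k,\cdot)$ have uniformly bounded derivatives, so by the local a priori estimates of Theorems~\ref{thm:local-W12} and \ref{thm:Wk2} a subsequence converges in $C^\infty_{\mathrm{loc}}(\R\times S^1)$ to a contact instanton $w_\infty$ with $E^\pi(w_\infty) = 0$, $\int w_\infty^*\lambda = T(p) = 0$ and $\int w_\infty^*\lambda\circ j = Q(p) = 0$; by Theorem~\ref{thm:subsequence} (or Lemma~\ref{lem:massless}) $w_\infty$ is constant, contradicting $|dw_\infty(0,t_\infty)|\ge\varepsilon_0$.

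The main work — and the step I expect to be the principal obstacle — is the exponential decay estimate: there exist $C,\delta > 0$ with $|dw|(\tau,t)\le Ce^{-\delta\tau}$ and, after applying the iterated estimates of Theorem~\ref{thm:Wk2}, $|\nabla^k dw|(\tau,t)\le C_k e^{-\delta\tau}$ for all $k\ge 0$. I would follow the three-interval / differential-inequality method used for contact instantons in \cite{oh-wang:CR-map2} and \cite{oh:contacton}: once the previous step places $w$ on a tail $[\tau_1,\infty)\times S^1$ into a Darboux chart around a cluster point, one shows that the slice-energy function $\tau\mapsto E^\pi\big(w|_{[\tau,\infty)\times S^1}\big) + \tfrac12\int_{[\tau,\infty)\times S^1}|w^*\lambda|^2\,dA$ obeys a differential inequality of the type $E'' \ge \delta^2 E$ — using $\delbar^\pi w = 0$, the fact that $Q(p) = 0$ makes $w^*\lambda\circ j$ exact on the tail with small $L^2$ norm, and the Wirtinger inequality on the circle slices — which forces exponential decay of $E(\tau)$, and then elliptic bootstrapping converts energy decay into pointwise $C^k$ decay. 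The delicate point is that the relevant \emph{massless} linearization here is centred at a point rather than at a nontrivial Reeb orbit, so one must track that the vanishing of the charge removes the constant harmonic mode that would otherwise obstruct decay.

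Finally, exponential decay shows $w(\tau,\cdot)$ is uniformly Cauchy as $\tau\to\infty$, hence converges to a point $q_\infty\in Q$, so $w$ extends continuously over $z = 0$; moreover $\int_{[\tau_0,\infty)\times S^1}|w^*\lambda|^2\,dA < \infty$, and since $\int|d^\pi w|^2 < \infty$ the conformally invariant integral $\int_{D_\delta(p)\setminus\{p\}}|dw|^2$ is finite, i.e. $w\in W^{1,2}(D_\delta(p))$. A continuous $W^{1,2}$ map is then a weak solution of the elliptic system $\delbar^\pi w = 0$, $d(w^*\lambda\circ j) = 0$ across the point $p$ (removing a point of zero capacity does not affect the distributional equation), so the standard removable-singularity argument for such first-order elliptic systems (cf. \cite{oh:book}) applies, and the local regularity estimates of Theorems~\ref{thm:local-W12} and \ref{thm:Wk2} bootstrap $w$, and with it $f$, to a $C^\infty$ map on $D_\delta(p)$. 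Hence $p$ is removable.
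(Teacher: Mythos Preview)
Your approach is correct but takes a different route from the paper's. The paper proceeds more directly: since $Q(p)=T(p)=0$ forces $\chi_\infty=0$ and hence the subsequential limit $w_\infty$ of Proposition~\ref{prop:pole-structure} is a constant $q$, the paper argues (via the $C^\infty_{\mathrm{loc}}$ convergence $w_j\to q$ and Ascoli--Arzel\`a) that $w^*\lambda$, and then $dw=d^\pi w+w^*\lambda\otimes R_\lambda(w)$, extend continuously to $p$ in the \emph{disc} coordinate; this boundedness puts $dw\in L^2\cap L^4$ on $D_\delta(p)$, after which Theorems~\ref{thm:local-W12}--\ref{thm:Wk2} bootstrap $w$ to $C^\infty$ across $p$. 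You instead first establish the uniform slice decay $\sup_{\{\tau\}\times S^1}|dw|\to 0$, then upgrade to exponential decay via the three-interval method, and finally invoke $W^{1,2}$-removability of a point singularity for the elliptic system.

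Your exponential-decay step is in fact the honest content behind the paper's one-line assertion that $dw$ is bounded in the disc coordinate: that boundedness is equivalent to $|dw|_{\mathrm{cyl}}\lesssim e^{-2\pi\tau}$, which does not follow merely from $|dw|_{\mathrm{cyl}}\to 0$ and is exactly what is needed for the $L^4$ bound on the disc. What your route buys is a self-contained justification of that passage (and it correctly flags that $Q=0$ kills the constant harmonic mode that would otherwise obstruct decay); what the paper's route buys is brevity, treating the continuity of $dw$ at $p$ as immediate from the limiting picture.
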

\begin{proof} When $c = 0$, we obtain $dw_\infty = d^\pi w_\infty
+ \lambda^*w_\infty\, R_\lambda = 0$ and so
$w_\infty$ must be a constant map $q \in Q$. By the convergence $w_j \to w_\infty$ in the $C^\infty$ topology
on every compact subset, it follows that $w_j(0,\cdot) \to q$ or equivalently
$$
d(w|_{r = \delta_j}, q) \to 0
$$
and $w_j^*\lambda \to 0$ converges uniformly. Using the compactness of $Q$ and applying Ascoli-Arzela theorem,
we can choose a sequence $z_i \to p$ in $D_\delta(p) \setminus \{p\}$ such that
$w(z_i) \to p$ and $w^*\lambda|_{r = \delta_j} \to 0$ uniformly. Then this
continuity of $w^*\lambda$ at $p$ in turn implies $dw$ is continuous at $p$
by the expression
$$
dw = d^\pi w  + w^*\lambda\otimes R_\lambda(w)
$$
In particular $|dw|_{D_\delta(r)}$ is bounded and so lies in
$L^2 \cap L^4$ on $D_\delta(r)$.  Then the local $W^{k,2}$ a priori estimate implies
that $w$ is indeed smooth across $p$.

Recalling $f^*d\theta = w^*\lambda \circ j$, smoothness of $f$ across $p$ immediately follows.
This finishes the proof.
\end{proof}

\medskip

If $T \neq 0$, we obtain
$$
\lim_{k \to \infty}\int_{S^1} (w|_{r = \delta_k})^*\lambda  =  T \neq 0.
$$
\begin{prop}\label{prop:bnot0} Suppose $T \neq 0$. Then there exists a closed Reeb orbit $\gamma$ of
period $T = b$ such that there exists a sequence $\tau_k \to \infty$
for which $w(\tau_k, \cdot) \to \gamma(T(\cdot))$ in the $C^\infty$ topology.
\end{prop}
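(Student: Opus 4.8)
The plan is to reduce the assertion to the asymptotic package already in place and then read off the limiting Reeb orbit from the asymptotic Hick's field. First I would check the hypotheses of the earlier asymptotic results: since $u=(w,f)$ is an $\lcs$ instanton with $E(u)<\infty$, Lemma~\ref{lem:|dw|<infty} gives $\|du\|_{C^0;\dot\Sigma}<\infty$, so $w$ is a contact instanton satisfying Hypothesis~\ref{hypo:basic} near $p$; and because $u$ is an $\lcs$ instanton we are in the case $Q(p)=0$ (the remark after Definition~\ref{defn:charge}), while by hypothesis $T=T(p)=b\neq 0$.

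Next I would invoke the asymptotic massless $\lcs$ instanton $(w_\infty,\chi_\infty)$ on $\R\times S^1$ produced in the proof of Proposition~\ref{prop:pole-structure}: after passing to a subsequence there are conformal diffeomorphisms $\varphi_j$ which, in the cylindrical coordinate $(\tau,t)$ at $p$ and up to a rotation in the $t$-variable, are translations $(\tau,t)\mapsto(\tau+s_j,t)$ with $s_j\to\infty$, such that $w\circ\varphi_j\to w_\infty$ in $C^\infty_{\mathrm{loc}}(\R\times S^1)$ and $\varphi_j^*\chi\to\chi_\infty$. By \eqref{eq:chiinfty} together with the discussion preceding it, $\chi_\infty=c\,(d\tau+i\,dt)$ with $c=b+ia$ and $a=-Q(p)=0$, whence $w_\infty^*\lambda=\Im\chi_\infty=T\,dt$. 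Since $w_\infty$ is massless, $d^\pi w_\infty\equiv 0$, so $dw_\infty=w_\infty^*\lambda\otimes R_\lambda(w_\infty)=T\,dt\otimes R_\lambda(w_\infty)$; in particular $\partial_\tau w_\infty\equiv 0$ and $\partial_t w_\infty=T\,R_\lambda(w_\infty)$.

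I would then integrate this system. Letting $\gamma\colon\R\to Q$ be the Reeb trajectory with $\gamma'=R_\lambda\circ\gamma$ and $\gamma(0)=w_\infty(0,0)$, one gets $w_\infty(\tau,t)=\gamma(Tt)$ for all $(\tau,t)$, and the $1$-periodicity of $t\mapsto w_\infty(\tau,t)$ forces $\gamma(s+T)=\gamma(s)$ for all $s$, so $\gamma$ is a \emph{closed} Reeb orbit with $\int_\gamma\lambda=T=b$. Finally, restricting the convergence $w\circ\varphi_j\to w_\infty$ to the circle $\{\tau=0\}$ and relabelling the $s_j$ as $\tau_k$ yields $\tau_k\to\infty$ and $w(\tau_k,\cdot)\to w_\infty(0,\cdot)=\gamma(T(\cdot))$ in $C^\infty(S^1)$, which is exactly the claim.

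I expect the only friction here to be bookkeeping rather than substance: the genuinely delicate step — extracting a $C^\infty_{\mathrm{loc}}$ subsequential limit $w_\infty$ of the cylindrical translates of $w$, i.e. ruling out the loss of compactness caused by spiraling along the Reeb core — is precisely what the finite $\lambda$-energy bound secures in Proposition~\ref{prop:pole-structure} (and Theorem~\ref{thm:subsequence}), while the vanishing $Q(p)=0$, forced by $u$ being an $\lcs$ instanton, is what upgrades the limiting Reeb \emph{trajectory} to an honest closed \emph{orbit} of period $T$. If one prefers to bypass even the elementary identification of $w_\infty$ above, one can instead read the conclusion off directly from the formula $w_\infty(\tau,t)=\gamma(-Q(p)\tau+T(p)t)$ in Theorem~\ref{thm:subsequence}, which with $Q(p)=0$ and $T(p)=T$ already gives $w_\infty(0,\cdot)=\gamma(T\cdot)$.
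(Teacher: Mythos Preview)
Your proof is correct and follows essentially the same route as the paper's: both arguments read off from the asymptotic Hick's field that $w_\infty^*\lambda = T\,dt$ (using $Q(p)=0$), deduce $dw_\infty = (T\,dt)\otimes R_\lambda$, integrate to get $w_\infty(\tau,t)=\gamma(Tt)$ along a single Reeb leaf, and use $1$-periodicity in $t$ to conclude that $\gamma$ is closed of period $T$. Your write-up is somewhat more explicit than the paper's in spelling out the preliminary hypotheses (finiteness of $\|du\|_{C^0}$, the invocation of Proposition~\ref{prop:pole-structure}) and in extracting the sequence $\tau_k$ from the $C^\infty_{\mathrm{loc}}$ convergence, but the substance is identical.
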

\begin{proof}
When $T \neq 0$, we obtain
$$
dw_\infty =  (T \, dt)\otimes R_\lambda.
$$
Again by the connectedness of $[0,\infty) \times S^1$, it follows that
the image of $w_\infty$ must be contained in a single leaf of the Reeb foliation
and so
$$
w_\infty(\tau,t) = \gamma(T\, t)
$$
for a parameterized Reeb orbit $\gamma$ such that $\dot \gamma = R_\lambda(\gamma)$.
Such a parameterization is unique modulo a time-shift.
Since the map $w$ is one-periodic for any $\tau$, we derive
$$
\gamma(T) = \gamma(0).
$$
This implies first that $\gamma$ is a periodic Reeb orbit of period $T$.
\end{proof}

Now we are ready to define the notion of positive and negative
punctures of contact instanton map $w$. Assume $\lambda$ is nondegenerate.

Let $p$ be one of the punctures of $\dot \Sigma$.
In the disc $D_\delta(p) \subset \C$ with the standard orientation, we consider the function
$$
\int_{\del D_\delta(p)} w^*\lambda
$$
as a function of $\delta > 0$. This function is either decreasing or increasing
by the Stokes' formula, the positivity $w^*d\lambda \geq 0$ and the finiteness of
$\pi$-energy
$$
\frac{1}{2} \int_{\dot \Sigma} |d^\pi w|^2 \, dA = \int_{\dot \Sigma} w^*d\lambda < \infty.
$$
\begin{defn}[Classification of punctures]\label{defn:plus-minus-punctures}
Let $\dot \Sigma$ be a puncture Riemann
surface with punctures $\{p_1, \cdots, p_k\}$ and let
$w: \dot \Sigma \to Q$ be a contact instanton map.
\begin{enumerate}
\item We call a puncture $p$ \emph{removable} if $T(p) = Q(p) = 0$, and \emph{non-removable} otherwise.
\item
We say a non-removable puncture \emph{positive} (resp. \emph{negative}) puncture if the function
$$
\int_{\del D_\delta(p)} w^*\lambda
$$
is increasing (resp. decreasing) as $\delta \to 0$.
\end{enumerate}
\end{defn}
\begin{rem}
The appearance of `adiabatic puncture' (i.e. a puncture with $T=0$ and $Q \neq 0$
is an obstacle towards the compactification of contact instantons in \cite{oh-wang:CR-map1,oh-wang:CR-map2}
\emph{ when the form $w^*\lambda \circ j$ is not exact}. It is quite remarkable
that this obstacle is removed when the contact instanton $w$ is coupled with
a map $f: Q \to S^1$ to form an $\lcs$ instanton $u=(w,f)$.
\end{rem}

\section{Definition of the moduli space of $\lcs$-instantons}

In this section, we define the moduli space of $\lcs$-instantons and
construct its compactification. It turns out that compactification of
$\lcs$-instantons is much simpler since we will not need the well-known SFT-type
compactification that appears in the study of moduli space of pseudoholomorphic
curves in the case of symplectization.

Let $(\dot \Sigma, j)$ be a punctured Riemann surface and let
$$
p_1, \cdots, p_{s^+}, q_1, \cdots, q_{s^-}
$$
be the positive and negative punctures. For each $p_i$ (resp. $q_j$), we associate the isothermal
coordinates $(\tau,t) \in [0,\infty) \times S^1$ (resp. $(\tau,t) \in (-\infty,0] \times S^1$)
on the punctured disc $D_{e^{-2\pi R_0}}(p_i) \setminus \{p_i\}$
(resp. on $D_{e^{-2\pi R_0}}(q_i) \setminus \{q_i\}$) for some sufficiently large $R_0 > 0$.

For each $\eta \in [\dot \Sigma, S^1]$, we define the associated energy
$$
E_\eta(u) = E^\pi(u) + E^\perp_\eta(u)
$$
for each smooth map $u = (w,f)$ in class $\eta$, i.e., $[u]_{S^1} = \eta$.
\begin{defn} For each fixed $\eta$, we define
$$
\widetilde{\CM}_{k,\ell}^\eta(\dot \Sigma, M;J), \quad k, \, \ell \geq 0, \, k+\ell \geq 1
$$
to be the moduli space of $\lcs$-instantons $u = (w,f)$ with $E_\eta(u) < \infty$.
\end{defn}
Then we have a decomposition
$$
\widetilde{\CM}_{k,\ell}^\eta(\dot \Sigma, M;J)
= \bigcup_{\vec \gamma^+,\vec \gamma^-} \widetilde{\CM}_{k,\ell}^\eta
\left(\dot \Sigma, Q;J;(\vec \gamma^+,\vec \gamma^-)\right)
$$
by Theorem \ref{thm:subsequence} where
\beastar
\widetilde{\CM}_{k,\ell}^\eta(\dot \Sigma, Q;J;(\vec \gamma^+,\vec \gamma^-))
& = &\{u = (w,f) \mid u \, \text{is an $\lcs$ instanton with } \\
&{}& \quad E_\eta(u) < \infty, \, w(-\infty_j) = \gamma^-_j, \, w(\infty_i) = \gamma_i \}.
\eeastar
Here we have
the collections of Reeb orbits $\gamma^+_i$ and $\gamma^-_j$
and of points $\theta^+_i$, $\theta^-_j$ for $i =1, \cdots, s^+$
and for $j = 1, \cdots, s^-$ respectively such that
\be\label{eq:limatinfty}
\lim_{\tau \to \infty}w((\tau,t)_i) = \gamma^+_i(T_i(t+t_i)), \quad
\lim_{\tau \to - \infty}w((\tau,t)_j) = \gamma^-_j(T_j(t-t_j))
\ee
for some $t_i, \, t_j \in S^1$, where
$$
T_i = \int_{S^1} (\gamma^+_i)^*\lambda, \, T_j = \int_{S^1} (\gamma^-_j)^*\lambda.
$$
Here $t_i,\, t_j$ depends on the given analytic coordinate and the parameterization of
the Reeb orbits.

Due to the $S^1$-equivariance of the equation \eqref{eq:lcs-instanton}
under the $S^1$ action of rotations on $S^1$, this action induces a free action on
$\widetilde{\CM}_{k,\ell}^\eta(\dot \Sigma, M;J)$. Then we denote
\be\label{eq:CMkell}
\CM_{k,\ell}^\eta(\dot \Sigma, M;J) = \widetilde{\CM}_{k,\ell}(\dot \Sigma, M;J)/S^1.
\ee
We also have the decomposition
$$
\widetilde{\CM}_{k,\ell}^\eta(\dot \Sigma, M;J) = \bigcup_{\vec \gamma^\pm}
\widetilde{\CM}_{k,\ell}^\eta(\dot \Sigma, Q;J;(\vec \gamma^+,\vec \gamma^-)).
$$
Here we denote
$$
\vec \gamma^+ = (\gamma^+_i), \quad \vec \gamma^- = (\gamma^-_j).
$$
The above mentioned $S^1$-action acts on each
$\widetilde{\CM}_{k,\ell}(\dot \Sigma, Q;J;(\vec \gamma^+,\vec \gamma^-))$
by image rotation of the component $f$, and hence
$$
\CM_{k,\ell}^\eta(\dot \Sigma, M;J) = \bigcup_{\vec \gamma^\pm}
\CM_{k,\ell}^\eta(\dot \Sigma, Q;J;(\vec \gamma^+,\vec \gamma^-)).
$$
For each element $(w,f)$ of $\widetilde{\CM}_{k,\ell}^\eta(\dot \Sigma, Q;J;(\vec \gamma^+,\vec \gamma^-))$,
we have
points $(\vec \theta^+, \vec \theta^-) \in (S^1)^k \times (S^1)^\ell$ such that
$$
\lim_{\tau \to \infty}f((\tau,\cdot)_i )= \theta^+_i,  \quad \lim_{\tau \to \infty}f((\tau,\cdot)_j )= \theta^-_j.
$$
We now introduce a uniform energy bound for $u$ with given asymptotic condition at
its punctures. To describe this energy estimate, we
consider the associated function $\widetilde f: \dot \Sigma \to \R$
determined by
\be\label{eq:tildef}
f^*d\theta - \beta_\eta = d\widetilde f
\ee
for given $u = (w,f) \in \CM_{k,\ell}^\eta(\dot \Sigma, Q;J;(\vec \gamma^+,\vec \gamma^-))$.

The following proposition is the analog to \cite[Lemma 5.15]{behwz} and
\cite[Proposition 9.2]{oh:contacton}  whose proof is also similar.

\begin{prop}\label{prop:proper-energy} Let $\eta \in [\dot \Sigma,S^1]$ and
$u = (w,f) \in \CM_{k,\ell}^\eta(\dot \Sigma, Q;J;(\vec \gamma^+,\vec \gamma^-))$.
Suppose that $E^\pi(w) < \infty$ and the function $f:\dot \Sigma \to \R$ is proper.
Then $E(w) < \infty$.
\end{prop}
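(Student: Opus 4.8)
The plan is to reduce the statement to a uniform bound on the fiber integrals of $w^*\lambda$ over the level sets of the potential function, in the spirit of \cite[Lemma 5.15]{behwz} and \cite[Proposition 9.2]{oh:contacton}. Since $E_\eta(u) = E^\pi(u) + E^\perp_\eta(u)$ and $E^\pi(u) = E^\pi(w) = \frac12\int_{\dot\Sigma}|d^\pi w|^2\, dA$ is finite by hypothesis, it suffices to show that the vertical energy $E^\perp_\eta(u)$ is finite. Recall from Definition \ref{defn:vertical-energy} that $E^\perp_\eta(u)$ is the common value of
\[
E_{\CC,\widetilde f}(j,u) = \sup_{\varphi\in\CC}\int_{\dot\Sigma}\varphi(\widetilde f)\, d\widetilde f\wedge w^*\lambda ,
\]
where $\widetilde f:\dot\Sigma\to\R$ is the function determined by $f^*d\theta - \beta_\eta = d\widetilde f$ as in \eqref{eq:tildef}; this is the $\R$-valued function assumed to be proper in the statement.

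The key step is to bound the right-hand side. First I would invoke Sard's theorem: for almost every $s\in\R$ the level set $L_s := \widetilde f^{-1}(s)$ is a smooth $1$-submanifold of $\dot\Sigma$, and it is \emph{compact} precisely because $\widetilde f$ is proper. Orienting each $L_s$ as a level set, set $g(s):=\int_{L_s} w^*\lambda$, a finite real number. Next, for regular values $a<b$ the set $\widetilde f^{-1}([a,b])$ is, by properness, a compact surface-with-boundary, and since $\dot\Sigma$ itself is boundaryless its oriented boundary is exactly $L_b - L_a$. Because $w$ satisfies $\delbar^\pi w = 0$, the on-shell identity \eqref{eq:onshell} of Lemma \ref{lem:omega-area} gives $w^*d\lambda = \frac12|d^\pi w|^2\, dA \ge 0$, so Stokes' theorem yields
\[
0 \;\le\; g(b) - g(a) \;=\; \int_{\widetilde f^{-1}([a,b])} w^*d\lambda \;\le\; \int_{\dot\Sigma} w^*d\lambda \;=\; E^\pi(w) \;<\;\infty .
\]
Hence $g$ is nondecreasing with total oscillation at most $E^\pi(w)$; being finite at one regular value, $g$ is bounded, say $|g(s)|\le C$ for all $s$.

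To conclude, fix $\varphi\in\CC$. Since $\varphi$ has compact support and $\widetilde f$ is proper, the integrand $\varphi(\widetilde f)\, d\widetilde f\wedge w^*\lambda$ is supported in the compact set $\widetilde f^{-1}(\supp\varphi)$, so integration along the fibres of $\widetilde f$ (the coarea/Fubini identity) is legitimate and gives
\[
\int_{\dot\Sigma}\varphi(\widetilde f)\, d\widetilde f\wedge w^*\lambda \;=\; \int_{\R}\varphi(s)\, g(s)\, ds \;\le\; C\int_{\R}\varphi(s)\, ds \;=\; C .
\]
Taking the supremum over $\varphi\in\CC$ yields $E^\perp_\eta(u)\le C<\infty$, and therefore $E_\eta(u) = E^\pi(w)+E^\perp_\eta(u)<\infty$, as claimed.

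I expect the only real work to be bookkeeping rather than new analysis: translating properness of $\widetilde f$ into the compactness statements that justify Stokes' theorem on $\widetilde f^{-1}([a,b])$ and the fibrewise integration formula — passing to regular values via Sard, fixing the orientations of the $L_s$ so that $\partial\bigl(\widetilde f^{-1}([a,b])\bigr) = L_b - L_a$, and noting that properness forces $\widetilde f$ to be unbounded on each cylindrical end so that those ends are genuinely exhausted by the regions $\widetilde f^{-1}([a,b])$. The conceptual content — a monotone function of finite total variation is bounded — is the same mechanism as in \cite[Lemma 5.15]{behwz}, and apart from the on-shell identity of Lemma \ref{lem:omega-area} no further input is needed.
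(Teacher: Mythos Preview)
Your argument is correct and rests on the same mechanism as the paper's proof in Appendix~\ref{sec:energy-bound}: the monotonicity of $s\mapsto\int_{L_s} w^*\lambda$ coming from $w^*d\lambda\ge 0$ and Stokes, together with properness of the potential to make the slices compact. The presentation differs slightly: you slice by level sets of $\widetilde f$ and apply a coarea/Fubini identity to write $\int_{\dot\Sigma}\varphi(\widetilde f)\,d\widetilde f\wedge w^*\lambda=\int_\R\varphi(s)g(s)\,ds$, whereas the paper integrates by parts via $\varphi(\widetilde f)\,d\widetilde f=d(\psi(\widetilde f))$, approximates $\varphi$ by $\varphi_n$ vanishing near the punctures, and reads off the bound as the sum of asymptotic periods $\lim_{\delta\to 0}\sum_{\ell^+}\int_{C_\delta(p_{\ell^+})}w^*\lambda$ at the positive punctures. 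Your route is arguably cleaner and yields the same bound (since properness forces $L_s$ to approach the punctures as $|s|\to\infty$, so $\sup_s g(s)$ equals the paper's $C'$); the paper's route has the minor advantage of making the constant explicit in terms of the Reeb periods, which feeds directly into Proposition~\ref{prop:uniform-energy-bound}.
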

\begin{proof} We postpone the details of the proof till Appendix \ref{sec:energy-bound}.
\end{proof}

By the same argument as the derivation of \cite[Lemma 5.16]{behwz}, we obtain
the following a priori energy bounds for the $\text{\rm lcs}$ instantons $u = (w,f)$ in a \emph{fixed}
charge class $[u] \in H^1(\dot \Sigma)$.

\begin{prop}\label{prop:uniform-energy-bound} Let $\eta \in [\dot \Sigma,S^1]$ and
$u = (w,f) \in \CM_{k,\ell}^\eta(\dot \Sigma, Q;J;(\vec \gamma^+,\vec \gamma^-))$.
Let $\widetilde f: \dot \Sigma \to \R$ be the function given as \eqref{eq:tildef}.
Suppose the function $\widetilde f$ is proper. Then we have
\beastar
E^\pi(u) & = &  E^\pi(w) = \sum_{i=1}^k \CA_\lambda(\gamma^+_i) - \sum_{j=1}^\ell \CA_\lambda(\gamma^-_j)\\
E^\perp_\eta (u)& = & \sum_{j=1}^k \CA_\lambda(\gamma^+_i)  \\
E(u) & = & 2  \sum_{i=1}^k \CA_\lambda(\gamma^+_i) - \sum_{j=1}^\ell \CA_\lambda(\gamma^-_j).
\eeastar
\end{prop}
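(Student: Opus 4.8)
The plan is to run the argument of \cite[Lemma 5.16]{behwz} in the $\mathfrak{lcs}$ setting, using three ingredients: the on-shell identity \eqref{eq:onshell} of Lemma~\ref{lem:omega-area}; the asymptotic convergence of Theorem~\ref{thm:subsequence} and Proposition~\ref{cor:tangent-convergence}; and Stokes' theorem on the compact exhaustion $\{\Sigma_R\}$ of $\dot\Sigma$ obtained by truncating each cylindrical end at $\{\tau=\pm R\}$. First, properness of $\widetilde f$ together with Proposition~\ref{prop:proper-energy} gives $E(u)<\infty$, hence $\|du\|_{C^0}<\infty$ by Lemma~\ref{lem:|dw|<infty}, so that the asymptotic results apply at each puncture and the limiting orbits $\gamma_i^+,\gamma_j^-$ of \eqref{eq:limatinfty} are defined, with $\CA_\lambda(\gamma_i^+)=\lim_{R\to\infty}\int_{\{\tau=R\}_i}w^*\lambda$ and likewise at the negative ends (Remark~\ref{rem:TQ}).

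For the $\pi$-energy: since $\delbar^\pi w=0$, \eqref{eq:onshell} gives $w^*d\lambda=\tfrac12|d^\pi w|^2\,dA\ge0$, so $E^\pi(u)=E^\pi(w)=\int_{\dot\Sigma}w^*d\lambda$. Applying Stokes to $w^*\lambda$ on $\Sigma_R$ and letting $R\to\infty$, the boundary slice over a positive puncture contributes $+\CA_\lambda(\gamma_i^+)$ and that over a negative puncture $-\CA_\lambda(\gamma_j^-)$ by the induced orientations, which is the first formula. (In particular $\sum_i\CA_\lambda(\gamma_i^+)\ge\sum_j\CA_\lambda(\gamma_j^-)$, as $\int w^*d\lambda\ge0$.)

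For the vertical energy: write $f^*d\theta=\beta_\eta+d\widetilde f$ as in \eqref{eq:tildef}, so $E^\perp_\eta(u)=\sup_{\varphi\in\CC}\int_{\dot\Sigma}d(\psi(\widetilde f))\wedge w^*\lambda$, where $\psi$ is a primitive of $\varphi\in\CC$, normalized (harmlessly, since the integrand only involves $\psi'=\varphi$) so that $\psi(\R)=[0,1]$ with $\psi(-\infty)=0$, $\psi(+\infty)=1$ and $\supp\psi'$ compact. Integrating by parts,
\[
\int_{\dot\Sigma}d(\psi(\widetilde f))\wedge w^*\lambda=\int_{\dot\Sigma}d\bigl(\psi(\widetilde f)\,w^*\lambda\bigr)-\int_{\dot\Sigma}\psi(\widetilde f)\,w^*d\lambda .
\]
Properness is used twice. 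First, because $\supp\psi'$ is compact and --- as discussed below --- $\widetilde f\to+\infty$ uniformly in $t$ on every positive end and $\widetilde f\to-\infty$ uniformly on every negative end, one has $\psi(\widetilde f)\equiv1$ near each positive puncture and $\psi(\widetilde f)\equiv0$ near each negative puncture; so the first term equals $\sum_{i=1}^{s^+}\CA_\lambda(\gamma_i^+)$ by the Stokes computation above, \emph{independently of the choice of $\psi$}. Second, $0\le\psi(\widetilde f)\le1$ and $w^*d\lambda\ge0$ with $\int_{\dot\Sigma}w^*d\lambda=E^\pi(w)<\infty$, so the last term is $\ge0$, whence $E^\perp_\eta(u)\le\sum_i\CA_\lambda(\gamma_i^+)$; and choosing $\psi_n$ with $\supp\psi_n'\subset[n,n+1]$ makes $\psi_n(\widetilde f)$ supported in $\{\widetilde f\ge n\}$, whose $w^*d\lambda$-mass tends to $0$ since $\{\widetilde f\ge n\}\downarrow\emptyset$, giving the matching lower bound. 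Hence $E^\perp_\eta(u)=\sum_{i=1}^{s^+}\CA_\lambda(\gamma_i^+)$, and adding the two computations yields $E(u)=2\sum_{i=1}^{s^+}\CA_\lambda(\gamma_i^+)-\sum_{j=1}^{s^-}\CA_\lambda(\gamma_j^-)$.

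The step I expect to be the main obstacle is the claim that $\widetilde f$ escapes to $+\infty$ (resp. $-\infty$), uniformly in $t$, along the positive (resp. negative) ends, i.e. that the direction of escape is forced by the type of the puncture in the sense of Definition~\ref{defn:plus-minus-punctures}. On each end $d\widetilde f=w^*\lambda\circ j-\beta_\eta$, and the $d\tau$-flux of the left-hand side across $\{\tau=s\}$ equals $\int_{\{\tau=s\}}w^*\lambda$ minus the $s$-independent $d\tau$-flux of $\beta_\eta$; since $\frac{d}{ds}\int_{\{\tau=s\}}w^*\lambda=\int_{\{\tau=s\}}\tfrac12|d^\pi w|^2\ge0$ this quantity is monotone in $s$, and together with the normalization of the harmonic representative $\beta_\eta$ this pins down the sign, while uniformity in $t$ follows from properness and compactness of $S^1$. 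The details, along with the proof of Proposition~\ref{prop:proper-energy}, belong to Appendix~\ref{sec:energy-bound}.
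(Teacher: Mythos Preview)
Your argument is correct and is exactly the adaptation of \cite[Lemma~5.16]{behwz} that the paper invokes; the paper gives no further detail beyond that citation, so your Stokes/approximation computation of $E^\pi$ and $E^\perp_\eta$ is in fact more explicit than what appears there.

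One caution on the obstacle you flag in your last paragraph: as stated, Proposition~\ref{prop:harmonic-form} asserts that $\widetilde f$ extends continuously to the real blow-up $\overline\Sigma$, hence is bounded, which is in tension with the properness hypothesis of the present proposition. The intended normalization is presumably one in which $\beta_\eta$ carries only the $-Q\,dt$ component on each cylindrical end (the cohomologically nontrivial part), so that $d\widetilde f\sim T\,d\tau$ there; with that choice your monotonicity argument does pin down the sign ($\widetilde f\to+\infty$ exactly at the positive punctures since $T>0$ there), and the rest of your proof goes through verbatim. This is a wrinkle in the paper's conventions rather than in your argument.
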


\section{Fredholm theory and index calculations}
\label{sec:Fredholm}

In this section, we work out the Fredholm theories of $\lcs$-instantons. We will adapt the
exposition given in \cite{oh:contacton} for the case of contact instantons to that of $\lcs$-instantons
by incorporating the presence of the $S^1$-factor in the product $M = Q^{2n-1} \times S^1$.

We divide our discussion into the closed case and the punctured case.

\subsection{Calculation of the linearization map}
\label{subsec:linearization}

Let $\Sigma$ be a closed Riemann surface and $\dot \Sigma$ be its
associated punctured Riemann surface.  We allow the set of whose punctures
to be empty, i.e., $\dot \Sigma = \Sigma$.
We would like to regard the assignment $u \mapsto \delbar_J u$ which can be
decomposed into
$$
u= (w,f) \mapsto \left(\delbar^\pi w, w^*\lambda \circ j - f^*d\theta\right)
$$
for a map $w: \dot \Sigma \to Q$ as a section of the (infinite dimensional) vector bundle
over the space of maps of $w$. In this section, we lay out the precise relevant off-shell framework
of functional analysis.

Let $(\dot \Sigma, j)$ be a punctured Riemann surface, the set of whose punctures
may be empty, i.e., $\dot \Sigma = \Sigma$ is either a closed or a punctured
Riemann surface. We will fix $j$ and its associated K\"ahler metric $h$.

We consider the map
$$
\Upsilon(w,f) = \left(\delbar^\pi w, w^*\lambda \circ j - f^*d\theta \right)
$$
which defines a section of the vector bundle
$$
\CH \to \CF = C^\infty(\Sigma,Q)
$$
whose fiber at $u \in C^\infty(\Sigma,Q \times S^1)$ is given by
$$
\CH_u: = \Omega^{(0,1)}(u^*\xi) \oplus \Omega^{(0,1)}(u^*\CV).
$$
Recalling $\CV_{(q,\theta)} = \span_\R\{R_\lambda, \frac{\del}{\del \theta}\}$,
we have a natural isomorphism
$$
\Omega^{(0,1)}(u^*\CV) \cong \Omega^1(\Sigma) = \Omega^0(T^*S^1)
$$
via the map
$$
\alpha \in \Omega^0(T^*S^1) \mapsto \frac12\left(\alpha \frac{\del}{\del \theta} + \alpha \circ j R_\lambda\right).
$$
Utilizing this isomorphism,
we decompose $\Upsilon = (\Upsilon_1,\Upsilon_2)$ where
\be\label{eq:upsilon1}
\Upsilon_1: \Omega^0(w^*TQ) \to \Omega^{(0,1)}(w^*\xi); \quad \Upsilon_1(w) = \delbar^\pi(w)
\ee
and
\be\label{eq:upsilon2}
\Upsilon_2: \Omega^0(w^*TQ) \to \Omega^1(\Sigma); \quad \Upsilon_2(w) = w^*\lambda \circ j - f^*d\theta.
\ee

We first compute the linearization map which defines a linear map
$$
D\Upsilon(u) : \Omega^0(w^*TQ) \oplus \Omega^0(f^*TS^1) \to \Omega^{(0,1)}(w^*\xi) \oplus \Omega^1(\dot \Sigma)
$$
where we have
$$
T_u \CF = \Omega^0(w^*TQ) \oplus \Omega^0(f^*TS^1).
$$

For the optimal expression of the linearization map and its relevant
calculations, we use the $\mathfrak{lcs}$-fication connection $\nabla$ of $(Q \times S^1,\lambda,J)$
which is the lcs-lifting of contact triad connection introduced in \cite{oh-wang:CR-map1}.
We refer readers to \cite{oh-wang:CR-map1}, \cite{oh:contacton} for the unexplained notations
appearing in our tensor calculations during the proof.

\begin{thm}\label{thm:linearization} We decompose $d\pi = d^\pi w + w^*\lambda\otimes R_\lambda$
and $Y = Y^\pi + \lambda(Y) R_\lambda$, and $X = (Y, v) \in \Omega^0(u^*T(Q \times S^1))$.
Denote $\kappa = \lambda(Y)$ and $\upsilon = d\theta(v)$. Then we have
\bea
D\Upsilon_1(u)(Y,v) & = & \delbar^{\nabla^\pi}Y^\pi + B^{(0,1)}(Y^\pi) +  T^{\pi,(0,1)}_{dw}(Y^\pi) \nonumber\\
&{}& \quad + \frac{1}{2}\kappa \cdot  \left((\CL_{R_\lambda}J)J(\del^\pi w)\right)
\label{eq:Dwdelbarpi}\\
D\Upsilon_2(u)(Y,v) & = &  (\CL_Y \lambda) \circ j- \CL_v d\theta = d\kappa \circ j - d\upsilon
+ Y \rfloor d\lambda \circ j
\nonumber\\
\label{eq:Dwddot}
\eea
where $B^{(0,1)}$ and $T_{dw}^{\pi,(0,1)}$ are the $(0,1)$-components of $B$ and
$T_{dw}^{\pi,(0,1)}$, where $B, \, T_{dw}^\pi: \Omega^0(w^*TQ) \to \Omega^1(w^*\xi)$ are
zero-order differential operators given by
$$
B(Y) =
- \frac{1}{2}  w^*\lambda \left((\CL_{R_\lambda}J)J Y\right)
$$
and
$$
T_{dw}^\pi(Y) = \pi T(Y,dw)
$$
respectively.
\end{thm}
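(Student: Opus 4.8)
The plan is to linearize the two components of $\Upsilon=(\Upsilon_1,\Upsilon_2)$ separately. Note first that $\Upsilon_1(w,f)=\delbar^\pi w$ depends only on $w$, so $D\Upsilon_1(u)$ will not see $v$; conversely $\Upsilon_2$ couples the two components only through a pair of exterior derivatives, and its linearization is pure exterior calculus. Indeed, fixing a variation $u_s=(w_s,f_s)$ with $\frac{\del}{\del s}\big|_{s=0}u_s=(Y,v)$, the first-variation formula for the pullback of a one-form gives $\frac{d}{ds}\big|_{0}w_s^*\lambda=d(\iota_Y\lambda)+\iota_Y\,w^*d\lambda=d\kappa+Y\rfloor d\lambda$ and, since $d\theta$ is closed, $\frac{d}{ds}\big|_{0}f_s^*d\theta=d(\iota_v d\theta)=d\upsilon$. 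Post-composing with $j$ yields \eqref{eq:Dwddot} immediately; no connection is needed here.

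The substance is $D\Upsilon_1$, for which I would follow the tensorial scheme of \cite{oh-wang:CR-map1}, using the $\mathfrak{lcs}$-fication connection $\nabla$ so that each term acquires invariant meaning. Write $\Pi=\Id_{TQ}-R_\lambda\otimes\lambda$ for the $\xi$-projection, so that $d^\pi w=\Pi\circ dw$ and $\delbar^\pi w=\frac12\big(d^\pi w+J\,d^\pi w\circ j\big)$. Differentiating $d^\pi w_s=\Pi(dw_s)$ along the variation with $\nabla$ produces $(\nabla_Y\Pi)(dw)+\Pi(\nabla_s\,dw_s)$. For the first summand, $\nabla_Y\Pi=-(\nabla_Y R_\lambda)\otimes\lambda-R_\lambda\otimes(\nabla_Y\lambda)$; the $R_\lambda$-valued piece is annihilated by the final $\xi$-projection, while $-(\nabla_Y R_\lambda)\otimes\lambda$ evaluated on $dw$ equals $-\frac12\,w^*\lambda\,\big((\CL_{R_\lambda}J)JY\big)=B(Y)$ by Corollary \ref{cor:connection}(1), and this equals $B(Y^\pi)$ since $(\CL_{R_\lambda}J)R_\lambda=0$. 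For the second summand I would commute $\nabla_s$ past $d$, which costs the torsion term $T(Y,dw)$, and then split $\nabla Y=\nabla^\pi Y^\pi+d\kappa\otimes R_\lambda+\kappa\,\nabla R_\lambda$: property (2) of the triad connection, $T(R_\lambda,\cdot)=0$, collapses $\pi\,T(Y,dw)$ to $T^\pi_{dw}(Y^\pi)$; properties (3)--(4) make $\Pi$ preserve $\nabla^\pi Y^\pi$ and kill $d\kappa\otimes R_\lambda$; and $\kappa\,\Pi(\nabla R_\lambda)=\frac12\kappa\,(\CL_{R_\lambda}J)J(d^\pi w)$, again using $(\CL_{R_\lambda}J)R_\lambda=0$. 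Finally one applies the symmetrizing operator $\frac12(\Id+J(\cdot)\circ j)$: on $\nabla^\pi Y^\pi$ it yields $\delbar^{\nabla^\pi}Y^\pi$ because $\nabla^\pi$ is Hermitian (property (4)) so $J$ passes through; on the $B$ and $T^\pi_{dw}$ terms it extracts the $(0,1)$-components $B^{(0,1)}$ and $T^{\pi,(0,1)}_{dw}$; and on the $\kappa$-term, since $(\CL_{R_\lambda}J)J$ anticommutes with $J$, it replaces $d^\pi w$ by its $(1,0)$-part, giving $\frac12\kappa\,(\CL_{R_\lambda}J)J(\del^\pi w)$. Assembling these identifications gives \eqref{eq:Dwdelbarpi}.

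The main obstacle is the bookkeeping rather than any single hard estimate: one must check that every term produced lands in $w^*\xi$, so that $D\Upsilon_1(u)$ genuinely maps into $\Omega^{(0,1)}(w^*\xi)$; track the interaction between the point-dependent projection $\Pi$ and the non-$J$-invariant direction $R_\lambda$ (this is exactly where $\nabla R_\lambda\neq0$ forces the extra zeroth-order terms $B$ and the $\kappa$-term to appear); and keep straight the $(0,1)$-projections and the distinction between $d^\pi w$ and $\del^\pi w$. A secondary point to verify is that, although $\nabla$ denotes here the $\mathfrak{lcs}$-fication connection on $T(Q\times S^1)$, all covariant derivatives occurring in $D\Upsilon_1$ involve only the $TQ$-summand and directions tangent to $Q$, so the computation reduces to one with the contact triad connection, and Theorem \ref{thm:connection} together with Corollary \ref{cor:connection} apply verbatim; properties (7)--(11) and the $S^1$-data enter only to keep the ambient formalism consistent.
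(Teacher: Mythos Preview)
Your proof is correct and follows essentially the same approach as the paper. For $D\Upsilon_2$ you use exactly the Cartan-formula computation the paper presents; for $D\Upsilon_1$ the paper simply defers to \cite[Theorem 10.1]{oh:contacton}, and your sketch reproduces that referenced argument faithfully --- differentiating $\Pi\circ dw_s$ with the triad connection, separating the $(\nabla_Y\Pi)$- and torsion-contributions, and then applying the $(0,1)$-projection using the anticommutation $J(\CL_{R_\lambda}J)=-(\CL_{R_\lambda}J)J$ to convert $d^\pi w$ into $\del^\pi w$.
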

\begin{proof}
Let $Y$ be a vector field over $w$ and $w_s$ be a family of maps $w_s: \Sigma \to Q$ with
$w_0 = w$ and $Y = \frac{d}{ds}\Big|_{s = 0} w^s$, and $a = \frac{d\gamma}{dt}\Big|_{t = 0}$
for a curve $\gamma$ with $\gamma(0) = z$. We decompose
$$
Y = Y^\pi + \kappa \, R_\lambda, \quad \kappa = \lambda(Y)
$$
into the sum of $\xi$-component and $R_\lambda$-component.

Calculation of \eqref{eq:Dwdelbarpi} is the same as that of \cite[Theorem 10.1]{oh:contacton} which
we refer readers to for the details.

Next we compute $D\Upsilon_2(u)$ and prove \eqref{eq:Dwddot}.
We compute $\frac{d}{ds}|_{s= 0} w_s^*\lambda \circ j$
\be\label{eq:nablas}
\frac{d}{ds}\Big|_{s = 0} w_s^*\lambda\circ j
= \frac{d}{ds}\Big|_{s = 0}w_s^*\lambda \circ j.
\ee
By Cartan's formula applied to the \emph{vector field $Y$ over the map $w$},
we obtain
$$
\frac{d}{ds}\Big|_{s = 0} w_s^*\lambda = Y \rfloor d\lambda + d(Y \rfloor \lambda)
$$
where $\rfloor$ is the interior product over the map $w$.
Substituting this into \eqref{eq:nablas}, we derive
$$
\frac{d}{ds}\Big|_{s = 0} w_s^*\lambda\circ j  =
d(\lambda(Y))\circ j + (Y \rfloor d\lambda) \circ j = d\kappa \circ j + (Y \rfloor d\lambda) \circ j.
$$
Similarly we compute
$$
\frac{d}{ds}\Big|_{s = 0} f_s^*d\theta = d(v \rfloor d\theta).
$$
This proves
\be\label{eq:linearizedUpsilon2}
D\Upsilon_2(w)(Y) = d\kappa \circ j + (Y \rfloor d\lambda) \circ j - d\upsilon, \, \upsilon = d\theta(v)
\ee
which finishes the proof of Theorem \ref{thm:linearization}.
\end{proof}

Now we evaluate $D\Upsilon_1(w)$
more explicitly. We have
$$
\delbar^{\nabla^\pi}Y = \frac{1}{2} \left(\nabla^\pi Y + J \nabla^\pi_{j(\cdot)} Y \right)
$$
and $B^{(0,1)}(Y)$ becomes
$$
-\frac{1}{4}\left(w^*\lambda\,  \pi ((\CL_{R_\lambda}J)J Y) + w^*\lambda \circ j\, \pi (\CL_{R_\lambda}J)Y \right).
$$

\subsection{The closed case}

We recall the classification result of contact instantons, Proposition \ref{prop:abbas}.
This (or rather its proof given in \cite[Proposition 3.4]{oh-wang:CR-map1}) gives rise to the following
immediate classification result
for smooth $\lcs$ instantons too.

\begin{prop}\label{prop:classify} Let $u=(w,f) : \Sigma \to Q \times S^1$ be a smooth $\lcs$ instanton from
a closed Riemann surface $(\Sigma,j)$. Then
\begin{enumerate}
\item If $g(\Sigma) = 0$, then $u$ is a constant map.
\item If $g(\Sigma) \geq 1$, there exists a closed Reeb orbit $C$ such that
$u$ can be factored into
$u = \iota_u \circ \varphi_u$ where $\varphi_u: \Sigma \to S^1 \times C$
is a surjective map and $\iota_u: C \times S^1 \to Q \times S^1$ is the inclusion map.
\end{enumerate}
Furthermore if we equip $C \times S^1$ with the complex structure induced from $J$ and denote
by $E_{C,J}$ the resulting elliptic curve, then the map $\varphi_u: \Sigma \to E_{C,J}$
is a holomorphic branched covering. When $g(\Sigma) =1 $, $\varphi_u$ is an \'etale covering.
\end{prop}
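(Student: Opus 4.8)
The plan is to deduce the statement from the classification of closed contact instantons, Proposition \ref{prop:abbas}, applied to the contact component $w$, and to control the $S^1$-component $f$ via the second lcs instanton equation $w^*\lambda\circ j = f^*d\theta$.

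First, since $u = (w,f)$ is an lcs instanton, $w:\Sigma\to Q$ is a smooth closed contact instanton, so Proposition \ref{prop:abbas} applies: if $g(\Sigma)=0$ then $w$ is constant, and if $g(\Sigma)\geq 1$ then $w$ is either constant or the locus of its image is an embedded closed Reeb orbit $C\subset Q$. Next I would observe that whenever $w$ is constant the equation $w^*\lambda\circ j = f^*d\theta$ forces $f^*d\theta = 0$, hence $df = 0$, hence $f$ is constant since $\Sigma$ is connected; thus $w$ is constant if and only if $u$ is. This already proves (1). In the case $g(\Sigma)\geq 1$ it reduces us to the case where $u$ is non-constant, in which the image of $w$ is an embedded closed Reeb orbit $C$, so $w = \iota_C\circ\psi$ for the inclusion $\iota_C:C\hookrightarrow Q$ and a smooth map $\psi:\Sigma\to C$.

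The main point is then to recognize $C\times S^1\subset Q\times S^1$ as a $J$-holomorphic elliptic curve. Along $C$ one has $TC = \R\langle R_\lambda\rangle$, hence $T(C\times S^1) = \span_\R\{R_\lambda,\frac{\del}{\del\theta}\} = \CV|_{C\times S^1}$; since a $\lambda$-admissible $J$ preserves $\CV$ (indeed it acts on it by $J\frac{\del}{\del\theta}=R_\lambda$, $J R_\lambda = -\frac{\del}{\del\theta}$), the embedded $2$-torus $C\times S^1$ is $J$-invariant and therefore carries an induced almost complex structure, automatically integrable on a real surface; this is the elliptic curve $E_{C,J}$, and $\iota_u:E_{C,J} = C\times S^1\hookrightarrow Q\times S^1$ is a holomorphic embedding. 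Because $u$ is $J$-holomorphic (Proposition \ref{prop:lcs-instanton}) with image contained in $C\times S^1$, the induced map $\varphi_u := \iota_u^{-1}\circ u:\Sigma\to E_{C,J}$ — which is $(\psi,f)$ under the obvious identification $C\times S^1\cong S^1\times C$ — is a well-defined holomorphic map, and it is non-constant since $u$ is. A non-constant holomorphic map of compact Riemann surfaces is a branched covering, in particular surjective, which yields the claimed factorization $u = \iota_u\circ\varphi_u$ with $\varphi_u$ surjective and a holomorphic branched covering.

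Finally, when $g(\Sigma)=1$, both $\Sigma$ and $E_{C,J}$ have genus $1$, so Riemann--Hurwitz gives $0 = 2g(\Sigma)-2 = n\bigl(2g(E_{C,J})-2\bigr) + \sum_p(e_p-1) = \sum_p(e_p-1)$, forcing $e_p = 1$ at every point; thus $\varphi_u$ is unramified, i.e. an \'etale covering. I expect the only step needing genuine care — and the one that is actually new compared with \cite{oh-wang:CR-map1} — is the identification of $C\times S^1$ as a $J$-holomorphic elliptic curve together with the upgrade of $\varphi_u$ from merely smooth to holomorphic; everything else is formal from Proposition \ref{prop:abbas} and elementary Riemann surface theory.
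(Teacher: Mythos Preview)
Your proof is correct and follows essentially the same approach as the paper's: reduce to the classification of closed contact instantons (Proposition~\ref{prop:abbas}), identify $C\times S^1$ as a $J$-invariant elliptic curve via the $\lambda$-admissibility of $J$, and conclude using the standard fact that a nonconstant holomorphic map between closed Riemann surfaces is a branched covering (\'etale when $g=1$ by Riemann--Hurwitz). The only cosmetic difference is that the paper handles the genus-$0$ case by observing that the holomorphic one-form $\chi=f^*d\theta+\sqrt{-1}\,w^*\lambda$ must vanish since $\dim_\C H^{(1,0)}(\Sigma)=0$, whereas you first invoke Proposition~\ref{prop:abbas} to get $w$ constant and then use $f^*d\theta=w^*\lambda\circ j=0$; both arguments are immediate.
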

\begin{proof} We recall that $w$ is a contact instanton.
The beginning of our proof closely follows that of \cite[Proposition 3.4]{oh-wang:CR-map1}
but for readers' convenience, we duplicate it here.
Since $\Sigma$ is closed,  the identity $|d^\pi w|^2 dA = d(2w^*\lambda)$ implies
$d^\pi w = 0$ by Stokes' formula. Therefore we have
$$
dw = w^*\lambda \otimes R_\lambda.
$$
In particular the image of $w$ is contained in a single leaf, say $C$, of the Reeb foliation of $(Q,\lambda)$.
It was proven in the proof of \cite[Proposition 3.4]{oh-wang:CR-map1} that if $C$ is not a closed leaf,
$w$ must be a constant map and so is $u$. Therefore we have only to consider the case where $C$ is a
closed leaf.

The vanishing $d^\pi w= 0$ also implies $dw^*\lambda = 0$. Combined with $d(w^*\lambda \circ j) = 0$,
we conclude that $w^*\lambda$ is a harmonic one-form on $\Sigma$. Then we derive that
the one-form
$$
\alpha = f^*d\theta + \sqrt{-1} w^*\lambda = w^*\lambda \circ j + \sqrt{-1} w^*\lambda
$$
is a smooth holomorphic one-form on $\Sigma$. Recall
$$
\dim_\C H^{(1,0)}(\Sigma) = g.
$$
Therefore if $g(\Sigma) = 0$, it must be zero and so $u$ is a constant map.

When $g(\Sigma) \geq 1$, we note that
the map $u$ is factored through $\Sigma \to C \times S^1 \hookrightarrow Q \times S^1$.
We note that $C \times S^1$ is diffeomorphic to a two-torus. Furthermore it is equipped with
the induced complex structure satisfying
$$
J\frac{\del}{\del \theta} = R_\lambda = \dot \gamma, \quad J \dot \gamma = - \frac{\del}{\del \theta}.
$$
We denote the resulting elliptic curve by $(E_{C,J},J)$ by some abuse of notation.
Obviously the map $u$ induces a (non-constant)
holomorphic map $\varphi_u: (\Sigma,j) \to (E_{C,J},J)$ since the map $u$ is $(j,J)$-holomorphic.
 By the general property of holomorphic maps between two
closed Riemann surfaces, it follows that $\varphi_u$ is a branched covering map
onto $E_{C,J}$, provided it is not constant. Furthermore if $g(\Sigma) = 1$, it is an \'etale covering, i.e.,
a covering map without branching. In particular if the degree is 1, the map is an
isomorphism of an elliptic curve. This finishes the proof.
\end{proof}
\begin{rem}
This proposition has two implications. On the one hand, there are `many' closed $\lcs$ instantons
for the genus zero case which are all constants, and existence of nonconstant closed $\lcs$ instanton for
the higher genus case implies existence of a closed Reeb orbit of $(Q,\lambda)$.
Therefore if one wants to develop a more interesting story of contact instantons, one
should consider those defined on open Riemann surfaces, either with boundary or with punctures.
\end{rem}

Next we would like to compute the Fredholm index of such an $\lcs$ instanton
$u = (f, w)$. We start with considering the $w$ component.

From the expression of the map $\Upsilon = (\Upsilon_1,\Upsilon_2)$, the map defines a bounded linear map
\be\label{eq:dUpsilon}
D\Upsilon(w): \Omega^0_{k,p}(w^*TQ) \to \Omega^{(0,1)}_{k-1,p}(w^*\xi) \oplus \Omega^2_{k-2,p}(\Sigma).
\ee
We choose $k \geq 2, \, p > 2$. Recalling the decomposition
$$
Y = Y^\pi + \lambda(Y)\, R_\lambda,
$$
we have the decomposition
$$
\Omega^0_{k,p}(w^*TQ) \cong \Omega^0_{k,p}(w^*\xi) \oplus \Omega^0_{k,p}(\dot \Sigma,\R)\cdot R_\lambda.
$$
Here we use the splitting
$$
TQ = \span_\R\{R_\lambda\} \oplus \xi
$$
where $\span_\R\{R_\lambda\}: = \CL$ is a trivial line bundle and so
$$
\Gamma(w^*\CL) \cong C^\infty(\Sigma).
$$
By definition the linearization operator $D\Upsilon_2(w)$ acts trivially on the section
$Y$ tangent to the Reeb direction.

We derive the following index formula for $D\Upsilon(w)$ from the homotopy
invariance of the index

\begin{thm}\label{thm:index} Let $\Sigma$ be any closed Riemann surface
of genus $g$, and let $w: \Sigma \to Q$ be a contact instanton
with finite $\pi$-energy. Then the operator \eqref{eq:dUpsilon} is a Fredholm operator
whose index is given by
\be\label{eq:indexwhen0}
\operatorname{Index} D\Upsilon(w) =  2n(1-g).
\ee
\end{thm}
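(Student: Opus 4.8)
The plan is to split $D\Upsilon(w)$ along the bundle decomposition $TQ=\xi\oplus\R\langle R_\lambda\rangle$ and recognize it, up to terms that do not affect the Fredholm index, as the direct sum of a complex Cauchy--Riemann operator on $w^*\xi$ and a scalar Laplace-type operator on $\Sigma$. First I would record that \eqref{eq:dUpsilon} is Fredholm: although $\delbar^\pi w=0$ alone is underdetermined, in the closed case the second component of $\Upsilon$ is the contact instanton term $w\mapsto d(w^*\lambda\circ j)$, and (as already noted after Lemma~\ref{lem:omega-area}) this closedness equation supplies the missing scalar equation so that the pair $(\Upsilon_1,\Upsilon_2)$ is elliptic. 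Writing $Y=Y^\pi+\kappa R_\lambda$ with $\kappa=\lambda(Y)$ identifies $\Omega^0_{k,p}(w^*TQ)\cong\Omega^0_{k,p}(w^*\xi)\oplus\Omega^0_{k,p}(\Sigma,\R)$, and Theorem~\ref{thm:linearization}, specialized to the $w$-component so that $D\Upsilon_2(w)(Y)=d\bigl(d\kappa\circ j+(Y\rfloor d\lambda)\circ j\bigr)$, exhibits $D\Upsilon(w)$ as a $2\times2$ block operator with diagonal entries $\delbar^{\nabla^\pi}$ on $w^*\xi$ and $L\colon\Omega^0_{k,p}(\Sigma,\R)\to\Omega^2_{k-2,p}(\Sigma)$, $L\kappa=d(d\kappa\circ j)$.

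Next I would use that a contact instanton on a \emph{closed} $\Sigma$ has $d^\pi w=0$: by Lemma~\ref{lem:omega-area}, $\tfrac12|d^\pi w|^2\,dA=w^*d\lambda$, whose integral vanishes by Stokes' theorem. Hence $\del^\pi w=0$ and $w^*d\lambda=0$, so the zeroth order term $\tfrac12\kappa\bigl((\CL_{R_\lambda}J)J(\del^\pi w)\bigr)$ vanishes, and since $dw=w^*\lambda\otimes R_\lambda$ together with $T(R_\lambda,\cdot)=0$ (property (2) of Theorem~\ref{thm:connection}) the torsion term $T^\pi_{dw}$ vanishes too. Off the diagonal there remain the order-zero coupling $B^{(0,1)}$ of $Y^\pi$ into $\Upsilon_1$ and the order-one operator $Y^\pi\mapsto d\bigl((Y^\pi\rfloor d\lambda)\circ j\bigr)$ into $\Upsilon_2$. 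In the Douglis--Nirenberg grading with input weights $1$ for the $\xi$-sections and $2$ for the functions, the principal symbol matrix at a nonzero covector $\zeta$ is block lower-triangular, with diagonal symbols $\sigma_{\delbar^{\nabla^\pi}}(\zeta)$ (invertible for $\zeta\ne0$) and $\pm|\zeta|^2$; hence the system is elliptic and, deforming the off-diagonal entries to zero through elliptic operators of the same type, one gets
\[
\operatorname{Index}D\Upsilon(w)=\operatorname{Index}\bigl(\delbar^{\nabla^\pi}\bigr)+\operatorname{Index}(L),
\]
where $L$ is, after identifying $\Omega^2(\Sigma)\cong\Omega^0(\Sigma)$ by the area form, a nonzero constant multiple of the scalar Hodge Laplacian $\Delta_\Sigma$.

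Finally I would evaluate the two indices. By property (4) of Theorem~\ref{thm:connection}, $\nabla^\pi$ is a Hermitian connection for $(d\lambda|_\xi,J)$, so $\delbar^{\nabla^\pi}$ is a complex-linear Cauchy--Riemann operator on the rank-$n$ complex bundle $w^*\xi\to\Sigma$, and Riemann--Roch gives $\operatorname{Index}(\delbar^{\nabla^\pi})=2\bigl(n(1-g)+\langle c_1(\xi),w_*[\Sigma]\rangle\bigr)$. By Proposition~\ref{prop:abbas} a closed contact instanton has $[w]=0$ in $H_2(Q;\Z)$, so $\langle c_1(\xi),w_*[\Sigma]\rangle=0$ and this index equals $2n(1-g)$. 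The operator $L\cong\Delta_\Sigma$ is self-adjoint and elliptic of order two on the closed surface $\Sigma$, with kernel and cokernel both the constant functions, hence $\operatorname{Index}(L)=0$. Adding the two contributions yields $\operatorname{Index}D\Upsilon(w)=2n(1-g)$.

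I expect the main obstacle to be the careful justification of the second paragraph: one must fix the weighted (Douglis--Nirenberg) off-shell framework so that $\delbar^{\nabla^\pi}$ and $L$ are the genuine diagonal blocks of mixed orders $1$ and $2$, verify that the off-diagonal terms preserve the block-triangular structure of the principal symbol (so that ellipticity and the Fredholm index survive the deformation to the diagonal operator), and check that the lower-order couplings are compact perturbations. The index computations themselves (Riemann--Roch for $\delbar^{\nabla^\pi}$, vanishing of the index of the scalar Laplacian) are then routine.
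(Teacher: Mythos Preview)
Your argument is internally coherent, but it diverges from the paper's proof in two ways that happen to cancel, so the final number is right for the wrong reason.

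First, the operator the paper actually indexes is the full \emph{lcs}-instanton linearization $D\Upsilon(u)$ for $u=(w,f)$: its second diagonal block is the ordinary first-order Cauchy--Riemann operator $\delbar$ on the trivial complex line $u^*\CV\cong\underline{\C}$, acting on $\upsilon+i\kappa$, not the second-order Laplacian $L\kappa=d(d\kappa\circ j)$ that you obtain by applying $d$ and discarding the $S^1$-variation $\upsilon$. In the paper's set-up both off-diagonal entries of the block matrix \eqref{eq:matrix-form} are of order zero, so the homotopy to the diagonal goes through first-order elliptic (hence Fredholm) operators with no Douglis--Nirenberg grading needed; the vertical contribution is then $\operatorname{Index}(\delbar)=2(1-g)$, not $0$.

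Second, the Fredholm section fixes $\dim Q=2n-1$ (see its opening sentence), so $w^*\xi$ has complex rank $n-1$, not $n$; the paper accordingly writes $\operatorname{Index}(\delbar^{\nabla^\pi})=2c_1(w^*\xi)+2(n-1)(1-g)$. Your rank-$n$ claim and your replacement of $\delbar$ (index $2(1-g)$) by the self-adjoint Laplacian (index $0$) exactly compensate. If you keep your contact-instanton interpretation with the correct rank $n-1$, you get $2(n-1)(1-g)$, which is the index of \emph{that} operator but not what the theorem asserts; the displayed target $\Omega^2_{k-2,p}(\Sigma)$ in \eqref{eq:dUpsilon} appears to be a typo for $\Omega^1_{k-1,p}(\Sigma)\cong\Omega^{(0,1)}_{k-1,p}(u^*\CV)$, and the domain should carry the $S^1$-factor as well. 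Your observations that $\del^\pi w=0$ and $T^\pi_{dw}=0$ on a closed surface are correct but unnecessary once one works with the first-order lcs operator, since the off-diagonal terms are already compact perturbations.
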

\begin{proof}

We already know that the operators $\delbar^{\nabla^\pi} +  T_{dw}^{\pi,(0,1)} + B^{(0,1)}$ and
$\delbar$ are Fredholm. Furthermore we can homotope the operator
\eqref{eq:dUpsilon} to the direct sum operator
$$
\left(\delbar^{\nabla^\pi}  + T_{dw}^{\pi,(0,1)} + B^{(0,1)} + \frac{1}{2}
\lambda(\cdot) (\CL_{R_\lambda}J)J \del^\pi w \right)
\oplus \delbar
$$
 Then the index is given by
\beastar
\operatorname{Index}\delbar^{\nabla^\pi} + \operatorname{Index} (\delbar)
& = & 2c_1(w^*\xi) + 2(n-1)(1-g) + 2(1-g)\\
& = & 2c_1(w^*\xi) + 2n(1-g).
\eeastar
in general. But since  $[w] = 0$ in $H_2(Q;\Z)$ for any contact instanton
by Proposition \ref{prop:abbas}, this is reduced to
\eqref{eq:indexwhen0}. This finishes the proof.
\end{proof}

\subsection{The punctured case}
\label{subsec:punctured}

We start with the general remark on the Fredholm theory of
nonlinear elliptic equation defined on the non-compact domain. The
Fredholm theory usually requires both the domain and the linearized
operator to be \emph{cylindrical} in that the operator is translationally
invariant under the radial coordinates. For the current $\lcs$ instanton
equation, the linearized operator is not cylindrical when the
asymptotic charge $Q(p)$ is nonzero and so the linearization operator
derived in Section \ref{subsec:linearization} is not suitable for the
Fredholm theory. Here enters the charge class $[u]_{S^1}$ again!

By restricting the charge class $[u]_{S^1}$, we restrict to the
smaller off-shell function space
$$
\CW^{k,p}_{\delta;\eta}\left(\dot \Sigma,Q \times S^1;\vec \gamma^+, \vec \gamma^-\right)
$$
and linearize the map
$$
(\widetilde f, w) \mapsto \left(\delbar^\pi w, \beta_\eta + d\widetilde f\right)
$$
with the harmonic part $\beta_\eta$ \emph{fixed} instead. This restricted linearization
operator then becomes cylindrical. With this preparation, the rest of the discussion
will be restricted to this smaller function space and so the linearized
operator becomes cylindrical.  Then the Fredholm theory details
are entirely similar to the case of a symplectization. For readers' convenience, we
provide full details of the Fredholm theory and the index calculation.

Fix an elongation function $\rho: \R \to [0,1]$
so that
\beastar
\rho(\tau) & = & \begin{cases} 1 \quad & \tau \geq 1 \\
0 \quad & \tau \leq 0
\end{cases} \\
0 & \leq & \rho'(\tau) \leq 2.
\eeastar

Then we consider sections of $w^*TQ$ by
\be\label{eq:barY}
\overline Y_i = \rho(\tau - R_0) R_\lambda(\gamma^+_k(t)),\quad
\underline Y_j = \rho(\tau + R_0) R_\lambda(\gamma^+_k(t))
\ee
and denote by $\Gamma_{s^+,s^-} \subset \Gamma(w^*TQ)$ the subspace defined by
$$
\Gamma_{s^+,s^-} = \bigoplus_{i=1}^{s^+} \R\{\overline Y_i\} \oplus \bigoplus_{j=1}^{s^-} \R\{\underline Y_j\}.
$$
Let $k \geq 2$ and $p > 2$. We denote by
$$
\CW^{k,p}_\delta(\dot \Sigma, Q;J;\gamma^+,\gamma^-), \quad k \geq 2
$$
the Banach manifold such that
\be\label{eq:limatinfty}
\lim_{\tau \to \infty}w((\tau,t)_i) = \gamma^+_i(T_i(t+t_i)), \quad
\lim_{\tau \to - \infty}w((\tau,t)_j) = \gamma^-_j(T_j(t-t_j))
\ee
for some $t_i, \, t_j \in S^1$, where
$$
T_i = \int_{S^1} (\gamma^+_i)^*\lambda, \, T_j = \int_{S^1} (\gamma^-_j)^*\lambda.
$$
Here $t_i,\, t_j$ depends on the given analytic coordinate and the parameterization of
the Reeb orbits.

The local model of the tangent space  of $\CW^{k,p}_\delta(\dot \Sigma, Q;J;\gamma^+,\gamma^-)$ at
$w \in C^\infty_\delta(\dot \Sigma,Q) \subset W^{k,p}_\delta(\dot \Sigma, Q)$ is given by
\be\label{eq:tangentspace}
\Gamma_{s^+,s^-} \oplus W^{k,p}_\delta(w^*TQ)
\ee
where $W^{k,p}_\delta(w^*TQ)$ is the Banach space
\beastar
&{} & \{Y = (Y^\pi, \lambda(Y)\, R_\lambda)
\mid e^{\frac{\delta}{p}|\tau|}Y^\pi \in W^{k,p}(\dot\Sigma, w^*\xi), \,
\lambda(Y) \in W^{k,p}(\dot \Sigma, \R)\}\\
& \cong & W^{k,p}(\dot \Sigma, \R) \cdot R_\lambda(w) \oplus W^{k,p}(\dot\Sigma, w^*\xi).
\eeastar
Here we measure the various norms in terms of the triad metric of the triad $(Q,\lambda,J)$.
To describe the choice of $\delta > 0$, we need to recall the covariant linearization of the map $
D\Upsilon_{\lambda, T}: W^{1,2}(z^*\xi) \to L^2(z^*\xi)
$
of the map
$$
\Upsilon_{\lambda,T}: z \mapsto \dot z - T\, R_\lambda(z)
$$
for a given $T$-periodic Reeb orbit $(T,z)$. The operator has the expression
\be\label{eq:DUpsilon}
D\Upsilon_{\lambda, T} = \frac{D^\pi}{dt} - \frac{T}{2}(\CL_{R_\lambda}J) J=: A_{(T,z)}
\ee
where $\frac{D^\pi}{dt}$ is the covariant derivative
with respect to the pull-back connection $z^*\nabla^\pi$ along the Reeb orbit
$z$ and $(\CL_{R_\lambda}J) J$ is a (pointwise) symmetric operator with respect to
the triad metric. (See Lemma 3.4 \cite{oh-wang:connection}.)
We choose $\delta> 0$ so that $0 < \delta/p < 1$ is smaller than the
spectral gap
\be\label{eq:gap}
\text{gap}(\gamma^+,\gamma^-): = \min_{i,j}
\{d_{\text H}(\text{spec}A_{(T_i,z_i)},0),\, d_{\text H}(\text{spec}A_{(T_j,z_j)},0)\}.
\ee

Now for each given $w \in \CW^{k,p}_\delta:= \CW^{k,p}_\delta(\dot \Sigma, Q;J;\gamma^+,\gamma^-)$,
we consider the Banach space
$$
\Omega^{(0,1)}_{k-1,p;\delta}(w^*\xi)
$$
the $W^{k-1,p}_\delta$-completion of $\Omega^{(0,1)}(w^*\xi)$ and form the bundle
$$
\CH^{(0,1)}_{k-1,p;\delta}(\xi) = \bigcup_{w \in \CW^{k,p}_\delta} \Omega^{(0,1)}_{k-1,p;\delta}(w^*\xi)
$$
over $\CW^{k,p}_\delta$. Then we can regard the assignment
$$
\Upsilon_1: (w,f) \mapsto \delbar^\pi w
$$
as a smooth section of the bundle $\CH^{(0,1)}_{k-1,p;\delta}(\xi) \to \CW^{k,p}_\delta$.

Furthermore the assignment
$$
\Upsilon_2: (w,f) \mapsto w^*\lambda \circ j - f^*d\theta
$$
defines a smooth section of the bundle
$$
\Omega^1_{k-1,p}(u^*\CV) \to \CW^{k,p}_\delta.
$$
We have already computed the linearization of each of these maps in the previous section.

With these preparations, the following is a corollary of exponential estimates established
in \cite{oh-wang:CR-map1}.

\begin{prop}[Corollary 6.5 \cite{oh-wang:CR-map1}]\label{prop:on-containedin-off}
Assume $\lambda$ is nondegenerate.
Let $w:\dot \Sigma \to Q$ be a contact instanton and let $w^*\lambda = a_1\, d\tau + a_2\, dt$.
Suppose
\bea
\lim_{\tau \to \infty} a_{1,i} = -Q(p_i), &{}& \, \lim_{\tau \to \infty} a_{2,i} = T(p_i)\nonumber\\
\lim_{\tau \to -\infty} a_{1,j} = -Q(p_j) , &{}& \, \lim_{\tau \to -\infty} a_{2,j} = T(p_j)
\eea
at each puncture $p_i$ and $q_j$.
Then $w \in \CW^{k,p}_\delta(\dot \Sigma, Q;J;\gamma^+,\gamma^-)$.
\end{prop}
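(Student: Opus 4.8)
The plan is to verify the three conditions that define membership of a contact instanton $w$ in the Banach manifold $\CW^{k,p}_\delta(\dot\Sigma,Q;J;\gamma^+,\gamma^-)$: that $w(\tau,\cdot)$ is asymptotic at each puncture to the prescribed closed Reeb orbit, that the deviation of $w$ from these orbits decays exponentially in $\tau$ at a rate exceeding $\delta/p$, and that this decay persists in all the $W^{k,p}$-norms so that the exponentially weighted norms in the definition of $W^{k,p}_\delta(w^*TQ)$ are finite. Everything is local near a puncture, so I would fix one positive puncture $p_i$, pass to the strip-like coordinates $(\tau,t)\in[R_0,\infty)\times S^1$ in which $h=d\tau^2+dt^2$, and study the restriction of $w$ to that end; the negative punctures are treated identically after reversing the sign of $\tau$. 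Throughout I use that $w$ satisfies Hypothesis~\ref{hypo:basic}, which holds for the contact instantons underlying the finite-energy $\lcs$ instantons considered in this section.

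First I would invoke Theorem~\ref{thm:subsequence} together with Proposition~\ref{cor:tangent-convergence} and the nondegeneracy of $\lambda$. Since $Q(p_i)=0$ (automatic for the contact component of an $\lcs$ instanton, and in any case forced by the requirement that $w$ converge to a Reeb orbit, whose pullback $w_\infty^*\lambda$ has no $d\tau$-component), the hypotheses $\lim_{\tau\to\infty}a_{1,i}=-Q(p_i)$ and $\lim_{\tau\to\infty}a_{2,i}=T(p_i)$ give uniform convergence of $w(\tau,\cdot)$, as $\tau\to\infty$, to a parametrized closed Reeb orbit $\gamma^+_i$ of period $T(p_i)=\int_{S^1}(\gamma^+_i)^*\lambda$, up to the phase shift $t_i$ recorded in the definition of $\CW^{k,p}_\delta$. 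This pins down the asymptotic orbits and the splitting $TQ=\span_\R\{R_\lambda\}\oplus\xi$ along which the tangent model $\Gamma_{s^+,s^-}\oplus W^{k,p}_\delta(w^*TQ)$ is written; the non-decaying generators $\overline Y_i,\underline Y_j$ spanning $\Gamma_{s^+,s^-}$ absorb precisely the constant $R_\lambda$-component coming from the limiting orbit.

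Next I would establish the exponential decay. Writing $w$ in a tubular neighbourhood of the image of $\gamma^+_i$ and projecting to $\xi$, the contact instanton equation linearizes, along the Reeb orbit, to the self-adjoint operator $A_{(T_i,z_i)}=\frac{D^\pi}{dt}-\frac{T_i}{2}(\CL_{R_\lambda}J)J$ of \eqref{eq:DUpsilon}. Nondegeneracy of $\lambda$ means $0\notin\operatorname{spec}A_{(T_i,z_i)}$, hence there is a positive spectral gap, and the standard exponential-decay argument for asymptotically cylindrical elliptic systems (carried out in \cite{oh-wang:CR-map1}) yields $|\pi\,\partial_\tau w|+|\pi\,\partial_t w|+|w^*\lambda-T_i\,dt|\le C e^{-c\tau}$ on the end for any $c<\operatorname{gap}(\gamma^+,\gamma^-)$. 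Feeding this back into the a priori local estimates of Theorems~\ref{thm:local-W12} and \ref{thm:Wk2}, applied on unit cylinders $[\tau-1,\tau+1]\times S^1$ whose right-hand sides are then dominated by the $C^0$-decay just obtained, upgrades the estimate to $|\nabla^\ell(dw)|\le C_\ell e^{-c\tau}$ for all $\ell\ge0$. Since $\delta/p<\operatorname{gap}(\gamma^+,\gamma^-)$ by the choice \eqref{eq:gap}, picking $c$ with $\delta/p<c<\operatorname{gap}(\gamma^+,\gamma^-)$ shows that, for the representative $Y=(Y^\pi,\lambda(Y)R_\lambda)$ of $w$ in the chart centred at a model map equal to $\gamma^+_i$ near the puncture, $e^{\frac{\delta}{p}|\tau|}Y^\pi\in W^{k,p}(\dot\Sigma,w^*\xi)$ and $\lambda(Y)$, modulo its non-decaying part captured by $\Gamma_{s^+,s^-}$, lies in $W^{k,p}(\dot\Sigma,\R)$; therefore $w\in\CW^{k,p}_\delta(\dot\Sigma,Q;J;\gamma^+,\gamma^-)$.

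The only substantive step is the exponential-decay estimate of the third paragraph, and this is exactly the asymptotic analysis of \cite{oh-wang:CR-map1}; the present proposition merely records that the decay rate produced there lies above the weight $\delta$, which was chosen in \eqref{eq:gap} for precisely this purpose. So the proof reduces to quoting the exponential estimates of \cite{oh-wang:CR-map1} and matching the constants, with no new analytic input.
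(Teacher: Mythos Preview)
Your proposal is correct and aligns with the paper's treatment: the paper does not give a proof but simply states the proposition as ``a corollary of exponential estimates established in \cite{oh-wang:CR-map1},'' and your argument unpacks exactly that, invoking the asymptotic convergence (Theorem~\ref{thm:subsequence}, Proposition~\ref{cor:tangent-convergence}), the nondegeneracy to get a positive spectral gap for $A_{(T,z)}$, and then the exponential decay from \cite{oh-wang:CR-map1} matched against the weight $\delta$ chosen in \eqref{eq:gap}. Your closing paragraph correctly identifies that there is no new analytic input here beyond quoting those estimates.
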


Now we are ready to describe the moduli space of $\lcs$ instantons with prescribed
asymptotic condition as the zero set
\be\label{eq:defn-MM}
\CM(\dot \Sigma, Q;J;\gamma^+,\gamma^-) = \left(\CW^{k,p}_\delta(\dot \Sigma, Q;J;\gamma^+,\gamma^-)
\oplus \CW^{k,p}_\delta(\dot \Sigma, S^1)\right) \cap \Upsilon^{-1}(0)
\ee
whose definition does not depend on the choice of $k, \, p$ or $\delta$ as long as $k\geq 2, \, p>2$ and
$\delta > 0$ is sufficiently small. One can also vary $\lambda$ and $J$ and define the universal
moduli space whose detailed discussion is postponed.

In the rest of this section, we establish the Fredholm property of
the linearization map
$$
D\Upsilon_{(\lambda,T)}(u): \Omega^0_{k,p;\delta}(u^*T(Q \times S^1);J;\gamma^+,\gamma^-) \to
\Omega^{(0,1)}_{k-1,p;\delta}(w^*\xi) \oplus \Omega^{(0,1)}_{k-1,p}(u^*TS^1)
$$
and compute its index. Here we also denote
$$
\Omega^0_{k-1,p;\delta}(u^*T(Q \times S^1);J;\gamma^+,\gamma^-) =
W^{k-1,p}_\delta(u^*T(Q \times S^1);J;\gamma^+,\gamma^-)
$$
for the semantic reason.

For this purpose, we remark that
as long as the set of punctures is non-empty, the symplectic vector bundle
$w^*\xi \to \dot \Sigma$ is trivial. We denote by
$
\Phi: E \to \overline \Sigma \times \R^{2n}
$
and by
$$
\Phi_i^+: = \Phi|_{\del_i^+ \overline \Sigma}, \quad \Phi_j^- = \Phi|_{\del_j^- \overline \Sigma}
$$
its restrictions on the corresponding boundary components of $\del \overline \Sigma$.
Using the cylindrical structure near the punctures,
we can extend the bundle to the bundle $E \to \overline \Sigma$ where $\overline \Sigma$
is the real blow-up of the punctured Riemann surface $\dot \Sigma$.

We then consider the following set
\beastar
\CS &: = & \{A: [0,1] \to Sp(2n,\R) \mid 1 \not \in \text{spec}(A(1)), \\
&{}& \hskip0.5in A(0) = id, \, \dot A(0) A(0)^{-1} = \dot A(1) A(1)^{-1} \}
\eeastar
of regular paths in $Sp(2n,\R)$ and denote by $\mu_{CZ}(A)$ the Conley-Zehnder index of
the paths following \cite{robbin-salamon}. Recall that for each closed Reeb orbit $\gamma$ with a fixed
trivialization of $\xi$, the covariant linearization $A_{(T,z)}$ of the Reeb flow along $\gamma$
determines an element $A_\gamma \in \CS$. We denote by $\Psi_i^+$ and $\Psi_j^-$
the corresponding paths induced from the trivializations $\Phi_i^+$ and $\Phi_j^-$ respectively.

We have the decomposition
$$
\Omega^0_{k,p;\delta}(w^*T(Q \times S^1);J;\gamma^+,\gamma^-) =
\Omega^0_{k,p;\delta}(w^*\xi) \oplus \Omega^0_{k,p;\delta}(u^*\CV)
$$
and again the operator
\be\label{eq:DUpsilonu}
D\Upsilon_{(\lambda,T)}(u): \Omega^0_{k,p;\delta}(w^*T(Q \times S^1);J;\gamma^+,\gamma^-) \to
\Omega^{(0,1)}_{k-1,p;\delta}(w^*\xi) \oplus \Omega^{(0,1)}_{k-1,p;\delta}(u^*\CV)
\ee
which is decomposed into
$$
D\Upsilon_1(u)(Y,v)\oplus D\Upsilon_2(u)
$$
where the summands are given as in
\eqref{eq:Dwdelbarpi} and \eqref{eq:Dwddot} respectively. We see therefrom that
$D\Upsilon_{(\lambda,T)}$ is the first-order differential operator whose first-order part
is given by the direct sum operator
$$
(Y^\pi,(\kappa, \upsilon)) \mapsto \delbar^{\nabla^\pi} Y^\pi \oplus (d\kappa \circ j - d\upsilon)
$$
where we write $(Y,v) = \left(Y^\pi + \kappa R_\lambda, \upsilon \frac{\del}{\del \theta}\right)$
for $\kappa = \lambda(Y), \, \upsilon = d\theta(v)$.
Here we have
$$
\delbar^{\nabla^\pi} : \Omega^0_{k,p;\delta}(w^*\xi;J;\gamma^+,\gamma^-) \to
\Omega^{(0,1)}_{k-1,p;\delta}(w^*\xi)
$$
and the second summand can be written as the standard Cauchy-Riemann operator
$$
\delbar: W^{k,p}(\dot \Sigma;\C) \to \Omega^{(0,1)}_{k-1,p}(\dot \Sigma,\C); \quad
\upsilon + i \kappa =: \varphi \mapsto
\delbar \varphi.
$$

The following proposition can be derived from the arguments used by
Lockhart and McOwen \cite{lockhart-mcowen}. However before applying their
general theory, one needs to pay some preliminary
measure to handle the fact that the order of the operators $D\Upsilon(w)$ are
different depending on the direction of $\xi$ or on that of $R_\lambda$.

\begin{prop}\label{prop:fredholm} Suppose $\delta > 0$ satisfies the inequality
$$
0< \delta < \min\left\{\frac{\text{\rm gap}(\gamma^+,\gamma^-)}{p}, \frac{2}{p}\right\}
$$
where $\text{\rm gap}(\gamma^+,\gamma^-)$ is the spectral gap, given in \eqref{eq:gap},
of the asymptotic operators $A_{(T_j,z_j)}$ or $A_{(T_i,z_i)}$
associated to the corresponding punctures. Then the operator
\eqref{eq:DUpsilonu} is Fredholm.
\end{prop}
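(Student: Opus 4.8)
\medskip
\noindent\textbf{Proof strategy.}
The plan is to read off the block structure of $D\Upsilon_{(\lambda,T)}(u)$ from \eqref{eq:Dwdelbarpi}--\eqref{eq:Dwddot}, equivalently from the matrix form \eqref{eq:matrix-form-intro}, and to reduce the assertion to two model cylindrical Cauchy--Riemann problems. The first observation is that the two off-diagonal entries of that matrix --- the operator $\kappa\mapsto\tfrac12\kappa\,(\CL_{R_\lambda}J)J(\del^\pi w)$ and the operator $Y^\pi\mapsto(Y^\pi\rfloor d\lambda)\circ j$ --- are order-zero operators whose coefficients are pointwise bounded by a constant multiple of $|d^\pi w|$; for the second this uses $d\lambda(\cdot,R_\lambda)=0$, so only the $\xi$-part $d^\pi w$ of $dw$ enters. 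By Theorem \ref{thm:subsequence} and Proposition \ref{cor:tangent-convergence}, together with the exponential decay estimates of \cite{oh-wang:CR-map1} recalled in Proposition \ref{prop:on-containedin-off}, $|d^\pi w|$ decays exponentially at every puncture, at a rate bounded below by the spectral gap \eqref{eq:gap}. Hence on the weighted spaces entering \eqref{eq:tangentspace} with $0<\delta<\mathrm{gap}(\gamma^+,\gamma^-)/p$ both off-diagonal terms are compact; deforming them linearly to $0$ through a path of operators with unchanged (and invertible) indicial data, one sees that Fredholmness of $D\Upsilon_{(\lambda,T)}(u)$ in \eqref{eq:DUpsilonu} is equivalent to that of the block-diagonal operator $\bigl(\delbar^{\nabla^\pi}+B^{(0,1)}+T^{\pi,(0,1)}_{dw}\bigr)\oplus\delbar$ on $\Omega^0_{k,p;\delta}(w^*\xi)\oplus\bigl(W^{k,p}(\dot\Sigma;\C)\oplus\Gamma_{s^+,s^-}\bigr)$.

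Next I would treat the two blocks separately. For the $\xi$-block, the indicial operator at a puncture $p_i$ is exactly the asymptotic operator $A_{(T_i,z_i)}$ of \eqref{eq:DUpsilon}, which is self-adjoint with respect to the triad metric; since $\lambda$ is assumed nondegenerate, $0\notin\mathrm{spec}\,A_{(T_i,z_i)}$, and because $\delta<\mathrm{gap}(\gamma^+,\gamma^-)/p$ the weight $\delta$ is non-exceptional, so $\delbar^{\nabla^\pi}+B^{(0,1)}+T^{\pi,(0,1)}_{dw}$ is Fredholm on $\Omega^0_{k,p;\delta}(w^*\xi)$ by the theory of \cite{lockhart-mcowen} (equivalently by the standard parametrix-patching argument on the cylindrical ends, as in the symplectization case). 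For the $\CV$-block the indicial operator on $C^\infty(S^1;\C)$ is $\pm\sqrt{-1}\,\del_t$, whose spectrum is an arithmetic progression containing $0$; thus the naive unweighted $\delbar$ on $W^{k,p}(\dot\Sigma;\C)$ is not Fredholm, and this is precisely why the domain in \eqref{eq:tangentspace} is enlarged by the finite-rank space $\Gamma_{s^+,s^-}$ of elongated Reeb/angular constants: adding these constant sections restores the balance of kernel and cokernel at each puncture. The role of the hypothesis $\delta<2/p$ is to keep the weight strictly below the first nonzero eigenvalue of $\pm\sqrt{-1}\,\del_t$, so that exactly the $0$-eigenspaces are absorbed into $\Gamma_{s^+,s^-}$; with this the $\CV$-block is Fredholm as well. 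Combining the two Fredholm blocks and undoing the deformation shows that $D\Upsilon_{(\lambda,T)}(u)$ is Fredholm.

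The step that requires genuine care --- the ``preliminary measure'' alluded to just before the statement --- is the set-up of the mixed weighted Banach spaces: the $\xi$-directions are measured with the exponential weight $e^{\frac{\delta}{p}|\tau|}$ and decay, whereas the $\CV$-directions (the $R_\lambda$- and $\del_\theta$-components) are only asymptotically constant, so one must split off the finite-dimensional space of elongated constants before the Lockhart--McOwen machinery applies and then verify invertibility of the asymptotic operator on the complement. Once this bookkeeping is done, the hypothesis $0<\delta<\min\{\mathrm{gap}(\gamma^+,\gamma^-)/p,\,2/p\}$ is exactly the condition making $\delta$ a non-exceptional weight for both indicial operators simultaneously, and the proposition follows. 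The evaluation of the resulting index is carried out afterwards; see Theorem \ref{thm:indexforDUpsilon-intro}.
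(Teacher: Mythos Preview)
Your proof follows the same strategy as the paper's: write $D\Upsilon_{(\lambda,T)}(u)$ in the $2\times 2$ block form \eqref{eq:matrix-form-intro}, homotope to the block-diagonal operator by scaling the zero-order off-diagonal entries to zero, and then verify that each diagonal block $\bigl(\delbar^{\nabla^\pi}+T^{\pi,(0,1)}_{dw}+B^{(0,1)}\bigr)$ and $\delbar$ is Fredholm on the relevant weighted space. Your treatment is in fact more explicit than the paper's on several points---you justify compactness of the off-diagonal terms via the exponential decay of $d^\pi w$, and you spell out the separate roles of the bounds $\delta<\mathrm{gap}(\gamma^+,\gamma^-)/p$ (for the $\xi$-block) and $\delta<2/p$ (for the $\CV$-block, together with the finite-rank correction $\Gamma_{s^+,s^-}$)---whereas the paper's proof simply cites the a priori estimates of \cite{oh-wang:CR-map1} for the first block, asserts Fredholmness of $\delbar$ for the second, and notes that the off-diagonal terms are of order zero.
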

\begin{proof} We first note that the operators $\delbar^{\nabla^\pi} + T^{\pi,(0,1)}_{dw}  + B^{(0,1)}$ and
$\delbar$ are Fredholm: The relevant a priori coercive $W^{k,2}$-estimates for any integer $k \geq 1$
for the derivative $dw$ on the punctured Riemann surface $\dot \Sigma$ with cylindrical metric
near the punctures are established in \cite{oh-wang:CR-map1} for the operator
$\delbar^{\nabla^\pi}  + T^{\pi,(0,1)}_{dw} + B^{(0,1)}$ and the one for $\delbar$ is standard.
From this, the standard interpolation inequality establishes the $W^{k,p}$-estimates
for $D\Upsilon(w)$ for all $k \geq 2$ and $p \geq 2$.

Secondly, we can express the operator $D\Upsilon(u)$ in a matrix form
\be\label{eq:matrix-form}
\left(
\begin{matrix}
 \delbar^{\nabla^\pi} + B^{(0,1)} +  T^{\pi,(0,1)}_{dw} &, & \frac{1}{2}(\cdot) \cdot
  \left((\CL_{R_\lambda}J)J(\del^\pi w)\right)\\
\left((\cdot)^\pi \rfloor d\lambda\right)\circ j &, & \delbar
\end{matrix}
\right)
\ee
with respect to the decomposition
$$
\left(Y, \upsilon \frac{\del}{\del \theta}\right) = \left(Y^\pi+\kappa R_\lambda, \upsilon\frac{\del}{\del \theta}\right)
\cong (Y^\pi, \upsilon + i\kappa)
$$
in terms of the splitting
$$
T(Q\times S^1) = \xi \oplus (\span\{R_\lambda\} \oplus TS^1) \cong \xi \oplus \C.
$$
Therefore it can be homotoped to the block-diagonal form, i.e., into the direct sum operator
$$
\left(\delbar^{\nabla^\pi} + T^{\pi,(0,1)}_{dw}  + B^{(0,1)}\right)\oplus \delbar
$$
via a continuous path of Fredholm operators given by
$$
s \in [0,1] \mapsto \left(
\begin{matrix}
 \delbar^{\nabla^\pi} + B^{(0,1)} +  T^{\pi,(0,1)}_{dw} &, &\frac{s}{2}(\cdot) \cdot
 \left((\CL_{R_\lambda}J)J(\del^\pi w)\right)\\
s \left((\cdot)^\pi \rfloor d\lambda\right)\circ j &, &\delbar
\end{matrix}
\right)
$$
from $s =1$ to $s = 0$. The Fredholm property of this path follows from the fact that
the off-diagonal terms are $0$-th order linear operators.
\end{proof}

Then by the continuous invariance of the Fredholm index, we obtain
\be\label{eq:indexDXiw}
\operatorname{Index} D\Upsilon_{(\lambda,T)}(w) =
\operatorname{Index} \left(\delbar^{\nabla^\pi} + T^{\pi,(0,1)}_{dw}  + B^{(0,1)}\right)
 + \operatorname{Index}(\delbar).
\ee
Therefore it remains to compute the latter two indices.

We denote by $m(\gamma)$ the multiplicity of the Reeb orbit in general.
Then we have the following index formula.

\begin{thm}\label{thm:indexforDUpsilon} We fix a trivialization
$\Phi: E \to \overline \Sigma$ and denote
by $\Psi_i^+$ (resp. $\Psi_j^-$) the induced symplectic paths associated to the trivializations
$\Phi_i^+$ (resp. $\Phi_j^-$) along the Reeb orbits $\gamma^+_i$ (resp. $\gamma^-_j$) at the punctures
$p_i$ (resp. $q_j$) respectively. Then we have
\bea
&{}& \operatorname{Index} (\delbar^{\nabla^\pi} + T^{\pi,(0,1)}_{dw}  + B^{(0,1)}) \nonumber\\
& = &
n(2-2g-s^+ - s^-) + 2c_1(w^*\xi)  + (s^+ + s^-) \nonumber \\
&{}& \quad  + \sum_{i=1}^{s^+} \mu_{CZ}(\Psi^+_i)- \sum_{j=1}^{s^-} \mu_{CZ} (\Psi^-_j)
\label{eq:Indexdelbarpi}
\eea
\be
\operatorname{Index} (\delbar) = 2\sum_{i=1}^{s^+} m(\gamma^+_i)+ 2\sum_{j=1}^{s^-} m(\gamma^-_j) -2 g.
\label{eq:indexdelbar}
\ee
In particular,
\bea\label{eq:indexforDUpsilon}
&{}& \operatorname{Index} D\Upsilon_{(\lambda,T)}(u) \nonumber\\
& = & n(2-2g-s^+ - s^-) + 2c_1(w^*\xi)\nonumber\\
&{}&  + \sum_{i=1}^{s^+} \mu_{CZ}(\Psi^+_i)
- \sum_{j=1}^{s^-} \mu_{CZ}(\Psi^-_j)\nonumber \\
&{}&  +
\sum_{i=1}^{s^+} (2m(\gamma^+_i)+1) + \sum_{j=1}^{s^-}( 2m(\gamma^-_j)+1)  - 2g.
\eea
\end{thm}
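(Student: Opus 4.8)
The plan is to reduce, via the block decomposition already set up in Proposition~\ref{prop:fredholm}, to two independent Riemann--Roch computations and then add them.

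\emph{Step 1: reduction to two indices.} By Theorem~\ref{thm:linearization} the operator $D\Upsilon_{(\lambda,T)}(u)$ has the matrix form \eqref{eq:matrix-form} relative to the splitting $T(Q\times S^1)=\xi\oplus(\span_\R\{R_\lambda\}\oplus TS^1)\cong\xi\oplus\underline{\C}$, and its off-diagonal entries are zeroth-order. Hence the path of Fredholm operators built in the proof of Proposition~\ref{prop:fredholm} joins $D\Upsilon_{(\lambda,T)}(u)$ to the block-diagonal operator $(\delbar^{\nabla^\pi}+T^{\pi,(0,1)}_{dw}+B^{(0,1)})\oplus\delbar$, so homotopy invariance of the Fredholm index yields \eqref{eq:indexDXiw}. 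It therefore suffices to establish \eqref{eq:Indexdelbarpi} and \eqref{eq:indexdelbar} separately; formula \eqref{eq:indexforDUpsilon} then follows by adding them, using $c_1(w^*\xi)$ exactly as in \eqref{eq:Indexdelbarpi} together with the elementary identity $(s^++s^-)+2\sum_{i}m(\gamma^+_i)+2\sum_{j}m(\gamma^-_j)=\sum_{i=1}^{s^+}(2m(\gamma^+_i)+1)+\sum_{j=1}^{s^-}(2m(\gamma^-_j)+1)$.

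\emph{Step 2: the $\xi$-block.} Here $\delbar^{\nabla^\pi}$ is a Hermitian Cauchy--Riemann operator on the complex vector bundle $(w^*\xi,w^*J,w^*(d\lambda|_\xi))\to\dot\Sigma$. On the weighted spaces $W^{k,p}_\delta$ with $\delta>0$ below the spectral gap \eqref{eq:gap}, and in the trivializations $\Phi^\pm_i$ adapted to the asymptotic Reeb orbits, this operator is asymptotic at each puncture to $\partial_\tau+A_{(T,z)}$ with $A_{(T,z)}$ the self-adjoint operator \eqref{eq:DUpsilon}; since $\lambda$ is nondegenerate $A_{(T,z)}$ has trivial kernel and its Robbin--Salamon/Conley--Zehnder index in $\Phi^\pm_i$ is $\mu_{CZ}(\Psi^\pm_i)$. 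The zeroth-order terms $T^{\pi,(0,1)}_{dw}$ and $B^{(0,1)}$ decay exponentially at the punctures by Corollary~\ref{cor:tangent-convergence} and the exponential estimates of \cite{oh-wang:CR-map1}, hence change neither the asymptotic operator nor the index. Applying the Riemann--Roch index theorem for cylindrical-end Cauchy--Riemann operators (see e.g. \cite{bourgeois} and the appendix of \cite{behwz}, with the weighted Fredholm package of \cite{lockhart-mcowen}) then gives \eqref{eq:Indexdelbarpi}; the summand $(s^++s^-)$ is the cumulative shift between the chosen small positive weight and the normalization of $\mu_{CZ}$, exactly as in the contact-instanton computation of \cite{oh:contacton}.

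\emph{Step 3: the $\underline{\C}$-block.} This is the plain $\delbar$-operator on the pull-back over $\dot\Sigma$ of the \emph{trivial} line bundle $\span_\R\{R_\lambda\}\oplus TS^1$, taken on the off-shell space that contains the finite-dimensional space $\Gamma_{s^+,s^-}$ spanned by the sections \eqref{eq:barY} together with the matching asymptotically-constant modes of the $S^1$-component (equivalently of the function $\widetilde f$ of \eqref{eq:dtildef}). Passing to the real blow-up $\overline\Sigma$, where each puncture becomes a boundary circle along which $w$ wraps $m(\gamma^\pm_i)$ times around the underlying simple Reeb orbit, identifies this with a $\delbar$-operator on a line bundle over $\overline\Sigma$ whose asymptotic winding data at the punctures are the $m(\gamma^\pm_i)$; the Riemann--Roch count, accounting for the $\Gamma_{s^+,s^-}$-extension, then produces \eqref{eq:indexdelbar}. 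Adding \eqref{eq:Indexdelbarpi} and \eqref{eq:indexdelbar} completes the proof of \eqref{eq:indexforDUpsilon}.

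\emph{Main obstacle.} The substantive work is the asymptotic/boundary bookkeeping in Steps 2 and 3: pinning down how the exponential weight $\delta$ sits relative to the spectra of the $A_{(T,z)}$ and relative to the $\mu_{CZ}$-normalization (producing the $+(s^++s^-)$), and correctly reading off the winding contributions of the $\underline{\C}$-direction at the punctures from the multiplicities of the asymptotic Reeb orbits (producing the $2m(\gamma^\pm_i)$ terms). One must also check that the weighted and finite-dimensionally-extended function spaces genuinely make all the operators involved Fredholm, which relies on the coercive $W^{k,2}$- and exponential-decay estimates of \cite{oh-wang:CR-map1}.
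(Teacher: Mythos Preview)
Your overall strategy matches the paper's: reduce via the block decomposition of Proposition~\ref{prop:fredholm} to the sum \eqref{eq:indexDXiw}, then compute the two summands separately. For the $\xi$-block you both defer to the cylindrical-end Riemann--Roch formula of Bourgeois, so Step~2 is essentially the paper's argument.

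Two points of divergence are worth noting. First, the paper's explanation of the summand $(s^++s^-)$ in \eqref{eq:Indexdelbarpi} is different from yours: rather than a weight-versus-$\mu_{CZ}$ normalization shift, the paper attributes it directly to the finite-dimensional augmentation $\Gamma_{s^+,s^-}$ in the domain \eqref{eq:tangentspace}. Since those sections \eqref{eq:barY} lie in the Reeb direction, your placement of $\Gamma_{s^+,s^-}$ in the $\underline{\C}$-block is arguably more natural, but you should be aware that the paper books it on the $\xi$-side; either way the total is the same once you avoid double-counting. Second, for the $\underline{\C}$-block the paper does \emph{not} pass to the real blow-up with winding data. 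Instead it identifies the weighted $\delbar$-problem on the trivial line bundle over $\dot\Sigma$ with the Dolbeault complex $0\to\Omega^0(\Sigma;D)\to\Omega^1(\Sigma;D)\to 0$ on the \emph{closed} surface $\Sigma$, where $D=\sum_i m(\gamma^+_i)p_i+\sum_j m(\gamma^-_j)q_j$ is the divisor recording the multiplicities, and then reads off \eqref{eq:indexdelbar} from the classical Riemann--Roch formula $\chi(D)=2\deg(D)-2g$. Your real-blow-up/winding-number route and the paper's divisor route are standard equivalent packagings of the same index, but the divisor formulation makes the appearance of the multiplicities $m(\gamma^\pm)$ more transparent and avoids the somewhat vague ``Riemann--Roch count, accounting for the $\Gamma_{s^+,s^-}$-extension'' in your Step~3.
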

\begin{proof} The formula \eqref{eq:Indexdelbarpi} can be immediately derived from
the general formula given in the top of p. 52 of Bourgeois's thesis \cite{bourgeois}:
The summand $(s^+ + s^-)$ comes from the factor $\Gamma_{s^+,s^-}$ in the decomposition
\eqref{eq:tangentspace} which has dimension $s^+ + s^-$.

So it remains to compute the index \eqref{eq:indexdelbar}.
To compute the (real) index of $\delbar$, we consider the Dolbeault complex
$$
0 \to \Omega^0(\Sigma; D) \to \Omega^1(\Sigma;D) \to 0
$$
where $D = D^+ + D^-$ is the divisor associated to the set of punctures
$$
D^+ =  \sum_{i=1}^{s^+}m(\gamma^+_i) p_i, \quad
D^- = \sum_{j=1}^{s^-} m(\gamma^-_j) q_j
$$
where $m(\gamma^+_i)$ (resp. $m(\gamma^-_j)$) is the multiplicity of the Reeb orbit
$\gamma^+_i$ (resp. $\gamma^-_j$). The standard Riemann-Roch formula then gives rise to
the formula for the Euler characteristic
\beastar
\chi(D) & = & 2\dim_\C H^0(D) - 2\dim_\C H^1(D) = 2\operatorname{deg} (D) - 2g\\
& = &\sum_{i=1}^{s^+} 2m(\gamma^+_i)+ \sum_{j=1}^{s^-} 2m(\gamma^-_j) - 2g.
\eeastar

This finishes the proof.

\end{proof}

\section{Generic transversality under the perturbation of $J$}
\label{sec:generic}

We start with recalling the linearization of the equation $\dot x  = R_\lambda(x)$
along a closed Reeb orbit. Let $z$ be a closed Reeb orbit of period $T > 0$. In other words,
$z: \R \to Q$ is a periodic solution of $\dot z = R_\lambda(z)$ with period $T$, thus satisfying $z(T) = z(0)$.

Denote by $\phi^t= \phi^t_{R_\lambda}$ the flow of the Reeb vector field $R_\lambda$,
so that we can write $z(t) = \phi^t(z(0))$.
In particular $p:= z(0)$ is a fixed point of the diffeomorphism $\phi^T$.
Further, since $L_{R_\lambda}\lambda = 0$,  the contact diffeomorphism $\phi^T$ induces the isomorphism
$$
\Psi_z : = d\phi^T(p)|_{\xi_p}: \xi_p \to \xi_p
$$
which is the tangent map of the Poincar\'e return map $\phi^T$ restricted to $\xi_p$.

\begin{defn} We say a Reeb orbit with period $T$ is \emph{nondegenerate}
if $\Psi_z:\xi_p \to \xi_p$ with $p = z(0)$ has no eigenvalue 1.
\end{defn}

Denote by $\Cont(Q,\xi)$ the set of contact forms with respect to the contact structure $\xi$ and $\CL(Q)=C^\infty(S^1,Q)$
the space of loops $z: S^1 = \R /\Z \to Q$. Let $\CL^{1,2}(Q)$ be the $W^{1,2}$-completion of $\CL(Q)$.
We would like to consider some Banach vector bundle $\CL$ over the Banach manifold
$(0,\infty) \times \CL^{1,2}(Q) \times \Cont(Q,\xi)$ whose fiber at $(T, z, \lambda)$ is given by $L^2(z^*TQ)$.
We consider the assignment
$$
\Upsilon: (T,z,\lambda) \mapsto \dot z - T \,R_\lambda(z)
$$
which is a section of $\CL$.

Denote by
$$
DR_\lambda(z)
$$
the covariant derivative of $R_\lambda$ along the
curve $z$. Then we have the following expression of the full linearization.
\begin{lem}\label{lem:full-linearization}
\beastar
d{(T,z, \lambda)}\Upsilon(a,Y, B) = \frac{D Y}{dt} - T D R_\lambda(z)(Y)-a R_\lambda- T \delta_{\lambda}R_\lambda(B),
\eeastar
where $a\in \R$, $Y\in T_z\CL^{1,2}(Q)=W^{1,2}(z^*TQ)$ and $B\in T_\lambda \Cont(Q, \xi)$ and
the last term $\delta_{\lambda}R_\lambda$ is some linear operator.
\end{lem}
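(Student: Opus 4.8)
The plan is to differentiate the section $\Upsilon\colon (T,z,\lambda)\mapsto \dot z - T\,R_\lambda(z)$ directly, treating $T$, $z$ and $\lambda$ as independent variables. Since $\Upsilon(T,z,\lambda)$ takes values in the fibre $L^2(z^*TQ)$, which varies with the loop, the derivative must be computed with respect to a linear connection along the family of loops; the cleanest choice is the pull-back of the Levi--Civita connection of the contact triad metric, for which the torsion vanishes.

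Concretely, fix a smooth variation $s\mapsto (T_s,z_s,\lambda_s)$ through $(T,z,\lambda)$ with $\frac{d}{ds}\big|_{0}T_s=a$, $\frac{d}{ds}\big|_{0}z_s=Y$, $\frac{d}{ds}\big|_{0}\lambda_s=B$, and write $\Upsilon(T_s,z_s,\lambda_s)=\dot z_s - T_s R_{\lambda_s}(z_s)$ as a section of $z_s^*TQ$. For the first summand, using $[\partial_s,\partial_t]=0$ together with torsion-freeness, one obtains $\frac{D}{ds}\big|_{0}\dot z_s=\frac{D}{dt}\frac{\partial z_s}{\partial s}\big|_{0}=\frac{DY}{dt}$. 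For the second summand the Leibniz rule gives
\[
\frac{D}{ds}\Big|_{0}\bigl(T_s R_{\lambda_s}(z_s)\bigr)=a\,R_\lambda(z)+T\,\frac{D}{ds}\Big|_{0}R_{\lambda_s}(z_s),
\]
and the remaining term decomposes, according to whether one moves the evaluation point or perturbs the contact form, into $\nabla_Y R_\lambda+\delta_\lambda R_\lambda(B)=:DR_\lambda(z)(Y)+\delta_\lambda R_\lambda(B)$, where $\delta_\lambda R_\lambda$ denotes the derivative of the assignment $\lambda\mapsto R_\lambda$ with the point held fixed. Collecting terms yields precisely the stated formula; moreover, by Corollary~\ref{cor:connection}, on $\xi$ the operator $DR_\lambda(z)(Y)$ is the familiar $\frac12(\CL_{R_\lambda}J)JY$ appearing in the restricted linearization \eqref{eq:DUpsilon}.

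It remains only to check that $\delta_\lambda R_\lambda$ is a well-defined linear operator in $B\in T_\lambda\Cont(Q,\xi)$, viewed as a one-form on $Q$; this is the one place where a small computation is genuinely needed. Differentiating the two relations that characterize the Reeb field, $\lambda_s(R_{\lambda_s})\equiv 1$ and $\iota_{R_{\lambda_s}}d\lambda_s\equiv 0$, at $s=0$ produces the linear system
\[
\lambda\bigl(\delta_\lambda R_\lambda(B)\bigr)=-B(R_\lambda),\qquad \iota_{\delta_\lambda R_\lambda(B)}d\lambda=-\iota_{R_\lambda}dB,
\]
which has a unique solution since $d\lambda$ is nondegenerate on $\xi$ and $\lambda(R_\lambda)=1$ (note $(\iota_{R_\lambda}dB)(R_\lambda)=0$, so the right-hand side lies in the image), and this solution depends linearly and continuously on $B$. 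The only subtlety worth flagging is the compatibility of the chosen connection with the symmetry identity $\frac{D}{ds}\dot z_s=\frac{D}{dt}Y$: with the Levi--Civita connection it is immediate, and if one insists on working with the contact triad connection $\nabla$ of Theorem~\ref{thm:connection} throughout, the resulting torsion correction $T(Y,\dot z)$ is merely a zeroth-order term in $Y$ (and it vanishes along an actual Reeb orbit, where $\dot z=T R_\lambda$, by property~(2) of that theorem), so it can be absorbed without changing the shape of the formula.
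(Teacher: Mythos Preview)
Your proof is correct. The paper itself does not prove this lemma: it is stated without proof, and the surrounding text merely remarks that the full linearization can be used to establish generic nondegeneracy of Reeb orbits, referring to the appendix of \cite{wendl} for details. So there is nothing to compare against here; you have supplied an argument where the paper gives none.

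Your computation is the standard one: covariantly differentiate $\Upsilon(T_s,z_s,\lambda_s)=\dot z_s - T_s R_{\lambda_s}(z_s)$ along a one-parameter variation, use the symmetry $\frac{D}{ds}\dot z_s=\frac{D}{dt}Y$ for a torsion-free connection, apply Leibniz to the second summand, and split the remaining derivative of $R_{\lambda_s}(z_s)$ into the spatial part $\nabla_Y R_\lambda=DR_\lambda(z)(Y)$ and the contact-form variation $\delta_\lambda R_\lambda(B)$. Your verification that $\delta_\lambda R_\lambda$ is well-defined and linear, obtained by differentiating the defining relations $\lambda(R_\lambda)=1$ and $\iota_{R_\lambda}d\lambda=0$, is a nice addition that makes the statement self-contained. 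The closing remark about the torsion correction $T(Y,\dot z)$ vanishing along Reeb orbits (by Theorem~\ref{thm:connection}(2)) is also apt, since it shows the formula is insensitive to whether one uses the Levi--Civita or the contact triad connection in this setting.
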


By using this full linearization, one can study the generic existence of the contact one-forms which make all
Reeb orbits nondegenerate. We refer to Appendix of \cite{wendl} for its complete proof.
We assume in the rest of our discussion below that $\lambda$ is such a generic contact form.

Now we involve the set $\CJ(Q \times S^1,\lambda)$ given in Definition \ref{defn:lambda-admissible-J}.
We study the linearization of the map $\Upsilon^{univ}$ which is the map $\Upsilon$ augmented by
the argument $J \in \CJ(Q \times S^1,\lambda)$. More precisely, we define
$$
\Upsilon^{univ}(j, w, J) = \left(\delbar_J^\pi w, w^*\lambda \circ j - f^*d\theta\right)
$$
$\delbar$ at each $(j,w,J) \in \delbar^{-1}(0)$. In the discussion below, we will fix the complex
structure $j$ on $\Sigma$, and so suppress $j$ from the argument of $\Upsilon^{univ}$.

We denote the zero set $(\Upsilon^{univ})^{-1}(0)$ by
$$
\MM(\dot \Sigma,Q \times S^1;\vec \gamma^+, \vec \gamma^-) = \left\{ (w,J)
|\, \Upsilon^{univ}(w, J) = 0 \right\}
$$
which we call the universal moduli space, where
$$
(w,J) \in \CW^{k,p}_\delta(\dot \Sigma,Q \times S^1;\vec \gamma^+, \vec \gamma^-) \times \CJ^\ell(Q \times S^1,\lambda).
$$
Denote by
$$
\pi_2: \CW^{k,p}_\delta(\dot \Sigma,Q \times S^1;\vec \gamma^+, \vec \gamma^-) \times \CJ^\ell(Q \times S^1,\lambda) \to
\CJ^\ell(Q \times S^1,\lambda)
$$
the projection. Then we have
\be\label{eq:MMK}
\MM_J(\dot \Sigma,Q \times S^1;\vec \gamma^+, \vec \gamma^-)
 = \pi_2^{-1}(J) \cap \MM(\dot \Sigma,Q \times S^1;\vec \gamma^+, \vec \gamma^-).
\ee

One essential ingredient for the generic transversality under the perturbation of
$J \in \CJ(Q \times S^1,\lambda)$ is the usage of the unique continuation result,
which applies to arbitrary $J$-holomorphic curves on almost complex manifolds.

\begin{prop}[Unique continuation lemma]\label{prop:unique-conti}
Any non-constant $\lcs$ instanton does not
have an accumulation point in the zero set of $dw$.
\end{prop}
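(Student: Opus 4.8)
The statement to prove is the unique continuation lemma: any non-constant $\lcs$ instanton $u = (w,f)$ cannot have an accumulation point of zeros of $dw$. The plan is to reduce this to the classical Aronszajn unique continuation theorem for solutions of second-order elliptic inequalities of the form $|\Delta v| \le C(|v| + |\nabla v|)$, applied to a suitable first-order quantity built from $dw$. First I would recall from Proposition \ref{prop:lcs-instanton} that $u$ being $J$-holomorphic is equivalent to $\delbar^\pi w = 0$ together with $w^*\lambda \circ j = f^*d\theta$; in particular $w$ is a contact instanton, so $\delbar^\pi w = 0$ and $d(w^*\lambda\circ j) = 0$. The point is that the component $f$ is (locally, up to a constant) determined by $w$ via the equation $df = (w^*\lambda\circ j)\circ j^{-1}$, hence it suffices to prove the unique continuation property for $w$ alone: if $dw$ vanishes on a sequence accumulating at a point $z_0$, then $dw \equiv 0$ near $z_0$, and then by connectedness and a standard open-closed argument $dw \equiv 0$ on all of $\dot\Sigma$, forcing $w$ (and hence $u$) to be constant on each connected component.

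The key technical step is to set up the elliptic system satisfied by $dw$. Working in a local holomorphic coordinate $z$ on $\dot\Sigma$ and local coordinates on $Q$ adapted to the triad $(Q,\lambda,J)$, the contact Cauchy--Riemann equation $\delbar^\pi w = 0$ together with the closedness $d(w^*\lambda\circ j)=0$ forms an elliptic first-order system for $w$ (this is precisely the ellipticity noted after Lemma \ref{lem:omega-area} and exploited throughout \cite{oh-wang:CR-map1}). Differentiating this system, one finds that the vector-valued function $\zeta := \del w$ (equivalently the full $1$-jet of $w$, or the section $d^\pi w \oplus w^*\lambda$) satisfies a second-order elliptic equation of the schematic form
\be
\Delta \zeta = a \cdot \zeta + b \cdot \nabla \zeta,
\ee
where the coefficients $a, b$ are bounded on a neighborhood of $z_0$; boundedness follows from the a priori local $W^{k,2}$-estimates of Theorem \ref{thm:local-W12} and Theorem \ref{thm:Wk2}, which give $C^\infty_{\mathrm{loc}}$ bounds on $w$ (hence on all relevant coefficient functions involving $w$, $J$, $\lambda$ and their derivatives). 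The most convenient formulation uses the $\mathfrak{lcs}$-fication connection $\nabla$ of Section \ref{sec:connection}: writing things covariantly one gets a Laplace-type inequality $|\nabla^*\nabla \zeta| \le C(|\zeta| + |\nabla\zeta|)$ on a coordinate disc $D_r(z_0)$. Then Aronszajn's theorem (or the Carleman-estimate version) applies: a solution of such an inequality that vanishes to infinite order at an interior point vanishes identically on the disc. Since the zeros of $dw$ accumulate at $z_0$ and $w$ is smooth, $dw$ vanishes to infinite order at $z_0$ (its Taylor coefficients all vanish along the accumulation sequence), so $dw \equiv 0$ near $z_0$.

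To finish, let $Z = \{z \in \dot\Sigma : dw \text{ vanishes in a neighborhood of } z\}$. This set is open by definition and closed by the argument above (if $z_0$ is a limit of points of $Z$, then $dw$ vanishes to infinite order at $z_0$, hence on a neighborhood, so $z_0 \in Z$), so $Z$ is a union of connected components of $\dot\Sigma$; thus either $dw \equiv 0$ on a component — where then $w$ is constant and, since $w^*\lambda\circ j = f^*d\theta$ vanishes there, so is $f$, contradicting non-constancy of $u$ on that component — or $Z$ is empty, which is the claim. The main obstacle I anticipate is the bookkeeping in step two: verifying carefully that differentiating the coupled elliptic system $(\delbar^\pi w = 0,\ d(w^*\lambda\circ j) = 0)$ really does produce a clean second-order elliptic inequality with locally bounded coefficients, as opposed to a degenerate one along the Reeb direction. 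This is exactly the subtlety addressed by the contact triad connection machinery of \cite{oh-wang:connection}: the splitting $TQ = \xi \oplus \R\langle R_\lambda\rangle$ lets one treat the $\xi$-component and the $R_\lambda$-component separately, the former governed by $\delbar^{\nabla^\pi}$ and the latter by the scalar Laplacian via $d(w^*\lambda\circ j) = 0$, and in both pieces the structure is genuinely elliptic; once this is in place the Aronszajn argument is entirely standard and the rest is routine.
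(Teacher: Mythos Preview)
Your overall strategy---reduce to the contact instanton $w$, set up an elliptic system, and invoke unique continuation---is reasonable, but there is a genuine gap at the Aronszajn step. You assert that ``since the zeros of $dw$ accumulate at $z_0$ and $w$ is smooth, $dw$ vanishes to infinite order at $z_0$ (its Taylor coefficients all vanish along the accumulation sequence).'' This inference is false for smooth functions: accumulation of zeros of a smooth section does \emph{not} force infinite-order vanishing at the limit point (think of $g(x,y)=x$ on $\R^2$, whose zero set is a line). Aronszajn's theorem needs infinite-order vanishing as a hypothesis, so your argument does not close. The standard fix is to work one order lower: the section $\zeta=\partial_s w$ satisfies a first-order generalized Cauchy--Riemann equation $\bar\partial\zeta + A(z)\zeta=0$ with $A\in L^\infty_{\mathrm{loc}}$, and then the similarity principle (or Carleman estimates for first-order systems) gives directly that the zeros of $\zeta$ are isolated unless $\zeta\equiv 0$---no infinite-order vanishing step is needed.

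The paper's route is much shorter and avoids all of this. It simply observes (in the sentence preceding the proposition) that the result ``applies to arbitrary $J$-holomorphic curves on almost complex manifolds'': by Proposition~\ref{prop:lcs-instanton} the map $u=(w,f)$ is a genuine $J$-holomorphic curve into $(Q\times S^1,J)$, and for such curves the isolation of critical points is classical. The only thing to check is that the zero set of $dw$ coincides with that of $du$; but the equation $f^*d\theta = w^*\lambda\circ j$ shows $df(z)=0$ whenever $w^*\lambda(z)=0$, and $dw(z)=0$ already forces $w^*\lambda(z)=0$, so $dw(z)=0\iff du(z)=0$. The remark following the proposition then notes that the stronger statement for general contact instantons (your intended route) is handled in \cite{oh:contacton}, where the closedness of $w^*\lambda\circ j$ is what makes the first-order elliptic system work. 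So your approach can be salvaged by replacing Aronszajn with the similarity principle, but for the $\lcs$ case the paper's one-line reduction to standard $J$-holomorphic theory is both simpler and sufficient.
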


\begin{rem} In fact this unique continuation applies to general
contact instanton $w$ whose proof strongly relies on
the closedness of the one-form $w^*\lambda \circ j$.
(See \cite{oh:contacton} for its proof.)
\end{rem}

The following theorem summarizes the main transversality scheme
needed for the study of the moduli problem of the contact instanton map, whose proof
is not very different from that of pseudo-holomorphic curves, once the above
unique continuation result is established, and so is omitted.

\begin{thm}\label{thm:trans} Let $0 < \ell < k -\frac{2}{p}$.
Consider the moduli space $\MM(\dot \Sigma,Q \times S^1;\vec \gamma^+, \vec \gamma^-)$. Then
\begin{enumerate}
\item $\MM(\dot \Sigma,Q \times S^1;\vec \gamma^+, \vec \gamma^-)$ is
an infinite dimensional $C^\ell$ Banach manifold.
\item The projection $\Pi_\alpha =
\pi_2|_{\MM(\dot \Sigma,Q \times S^1;\vec \gamma^+, \vec \gamma^-)}:
\MM(\dot \Sigma,Q \times S^1;\vec \gamma^+, \vec \gamma^-) \to \CJ^\ell(Q \times S^1,\lambda)$ is a
Fredholm map and its index is the same as that of $D\Upsilon(w)$
for a (and so any) $w \in  \MM(Q,\lambda,J;\overline \gamma, \underline \gamma)$.
\end{enumerate}
\end{thm}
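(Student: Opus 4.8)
The plan is to run the standard Sard--Smale transversality scheme, with the unique continuation statement Proposition~\ref{prop:unique-conti} playing the role that ``somewhere injectivity'' of simple curves plays in Gromov's theory. Throughout one fixes a generic contact form $\lambda$ as above, so that all closed Reeb orbits are nondegenerate and the weighted Fredholm package of Section~\ref{subsec:punctured} applies, and one fixes $0<\ell<k-\frac{2}{p}$ so that $\CJ^\ell(Q\times S^1,\lambda)$ is a $C^\ell$-Banach manifold. First I would regard $\Upsilon^{univ}$ as a $C^\ell$ section of the Banach bundle over $\CW^{k,p}_\delta(\dot\Sigma,Q\times S^1;\vec\gamma^+,\vec\gamma^-)\times\CJ^\ell(Q\times S^1,\lambda)$ whose fibre at $(u,J)$ is $\Omega^{(0,1)}_{k-1,p;\delta}(w^*\xi)\oplus\Omega^1_{k-1,p}(u^*\CV)$, and note that only the first component $\delbar^\pi_J w$ of $\Upsilon^{univ}$ depends on $J$, since $\lambda$ is fixed. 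Hence at a zero $(u,J)$ the linearization has the form
\[
D\Upsilon^{univ}(u,J)(Y,v,\mathfrak j)=D\Upsilon_{(\lambda,T)}(u)(Y,v)+\Bigl(\tfrac12\,\mathfrak j\,(d^\pi w\circ j),\;0\Bigr),
\]
where $\mathfrak j$ ranges over smooth sections of the bundle of $g_\xi$-symmetric, $J$-anticommuting endomorphisms of $\xi$, i.e.\ over the admissible infinitesimal variations of $J|_\xi$ as a $d\lambda|_\xi$-compatible almost complex structure, exactly as in the contact instanton case, and $D\Upsilon_{(\lambda,T)}(u)$ is Fredholm by Proposition~\ref{prop:fredholm}.

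The crux is to show $D\Upsilon^{univ}(u,J)$ is surjective at every $(u,J)$ in $\MM(\dot\Sigma,Q\times S^1;\vec\gamma^+,\vec\gamma^-)$. Since $D\Upsilon_{(\lambda,T)}(u)$ has closed range of finite codimension, it is enough to see that no nonzero element $(\zeta,\varrho)$ of its $L^2$-cokernel annihilates all perturbations $\tfrac12\mathfrak j\,(d^\pi w\circ j)$. Such $(\zeta,\varrho)$ solves the formal-adjoint equation, hence is smooth and, by the exponential weights, decays at the punctures; using the homotopically block-triangular matrix form \eqref{eq:matrix-form}, its vertical component $\varrho$ satisfies a twisted Cauchy--Riemann equation which has no cokernel on the genus-zero domains of primary interest (and is disposed of as in \cite{oh:contacton} in general), so $\varrho=0$ and it remains to kill $\zeta$. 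Annihilation of all $\mathfrak j$-perturbations gives $\int_{\dot\Sigma}\bigl\langle\zeta,\mathfrak j\,(d^\pi w\circ j)\bigr\rangle\,dA=0$ for every admissible $\mathfrak j$. By Proposition~\ref{prop:unique-conti} the zero set of $dw$ has no accumulation point, so on the relevant part of the moduli space (where $w$ is not a cover of a Reeb orbit) one has $d^\pi w\ne0$ on a dense open subset; at such a point $z_0$ one chooses $\mathfrak j$ supported in a small neighbourhood of $w(z_0)$ realizing $\bigl\langle\zeta(z_0),\mathfrak j(z_0)\,(d^\pi w\circ j)(z_0)\bigr\rangle>0$ unless $\zeta(z_0)=0$, which is the usual pointwise fact for compatible complex structures. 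Hence $\zeta$ vanishes on a dense set, so $\zeta\equiv0$ by continuity, and $(\zeta,\varrho)=0$. Surjectivity of $D\Upsilon^{univ}$ and the implicit function theorem then give part (1).

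For part (2) one invokes the abstract lemma: if $L=L_0\oplus L_1\colon X_0\oplus X_1\to Y$ is onto and $L_0=L|_{X_0}$ is Fredholm, then the restriction to $\ker L$ of the projection onto $X_1$ is Fredholm with kernel $\ker L_0$ and cokernel $\operatorname{coker}L_0$. Applying this with $L_0=D\Upsilon_{(\lambda,T)}(u)$ and $X_1=T_J\CJ^\ell(Q\times S^1,\lambda)$ identifies $d\Pi_\alpha$ at $(u,J)$ with such a projection, so $\Pi_\alpha$ is Fredholm with $\operatorname{Index}\Pi_\alpha=\operatorname{Index}D\Upsilon_{(\lambda,T)}(u)$, the value computed in Theorem~\ref{thm:indexforDUpsilon}; this proves (2).

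The main obstacle is the cokernel-killing step. Because the $\CJ$-variations perturb only the $\xi$-part of the equation, one must separately argue that the vertical part of a cokernel element vanishes, and the $J$-perturbation is inert precisely on the ``trivial cylinder'' type solutions with $d^\pi w\equiv0$ (covers of a single Reeb orbit), which therefore have to be excised and analyzed by hand as branched covers of Reeb orbits; in addition, the formal-adjoint and integration-by-parts manipulations, and the density of injective points, must be carried out carefully in the weighted Sobolev spaces near the punctures. Once Proposition~\ref{prop:unique-conti} and the exponential asymptotics of Section~\ref{subsec:punctured} are in hand, everything else is routine and parallels the symplectization case.
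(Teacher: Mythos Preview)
The paper does not give a proof of this theorem: it says the argument ``is not very different from that of pseudo-holomorphic curves, once the above unique continuation result is established, and so is omitted.'' Your outline follows precisely this indicated route---the standard Sard--Smale scheme with Proposition~\ref{prop:unique-conti} in the role of somewhere-injectivity---so in that sense you match the paper, only with considerably more detail than the paper itself supplies.

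Two points in your sketch deserve more care, and the paper's omission leaves them equally implicit. First, your elimination of the vertical cokernel component $\varrho$ is not an argument: the assertion that the vertical block ``has no cokernel on the genus-zero domains of primary interest'' is not justified by anything in Section~\ref{sec:Fredholm}, and in general one must kill the pair $(\zeta,\varrho)$ together, using the off-diagonal coupling in~\eqref{eq:matrix-form} in conjunction with the $J$-variation, rather than by a separate surjectivity claim for the $\delbar$-block alone. Second, Proposition~\ref{prop:unique-conti} only says that the critical set of $dw$ does not accumulate; your step ``choose $\mathfrak j$ supported near $w(z_0)$'' also needs an \emph{injective} point of $w$ so that other preimages of $w(z_0)$ do not produce cancelling contributions in the pairing $\int\langle\zeta,\mathfrak j\,(d^\pi w\circ j)\rangle$. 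This is exactly the standard somewhere-injectivity input for simple $J$-holomorphic curves, and is presumably what the paper's phrase ``not very different from that of pseudo-holomorphic curves'' is invoking; you should state it and restrict to the simple stratum explicitly.
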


One should compare this with the corresponding statement for Floer's perturbed Cauchy-Riemann equations
on symplectic manifolds.

\appendix

\section{Proof of energy bound for the case of proper potential}
\label{sec:energy-bound}

In this appendix, we give the proof of Proposition \ref{prop:uniform-energy-bound}.

Since $f$ is assumed to be proper, $f(r) = \pm \infty$
for each puncture $r_\ell$ of $\dot \Sigma$ depending on whether the puncture is positive or
negative.

The proof is entirely similar to the proof of Lemma 5.15 \cite{behwz} verbatim
with replacement of $a$ and the equation $dw^*\lambda \circ j = da$ therein by $f$
and the equation
$$
dw^*\lambda \circ j + \sum_{e \in E(T)} Q(w;e)\, dt_e = df
$$
respectively in our current context. (\emph{We would also like to point out that \cite{behwz}
used the letter `$f$' for the map $w$ while our notation $f$ is for the contact instanton potential function
 which corresponds to $a$ in their notation}. This should not confuse the readers, hopefully.)

In a neighborhood $D_\delta(p) \subset \C$ of a given puncture $p$ with analytic coordinate
$z$ centered at $p$ and $C_\delta(p) = \del D_\delta(p)$, oriented positively
for a positive puncture, and negatively for a
negative puncture, consider the function
$$
\delta \mapsto \int_{C_\delta(p)} w^*\lambda.
$$
It is increasing and bounded above (resp. decreasing and bounded below), if the puncture is
positive (resp. negative), since $d\lambda \geq 0$ on any contact Cauchy-Riemann map $w$ and
$\int_{D_\delta(p)} dw^*\lambda \leq E^\pi(w) < \infty$. Therefore the integral
$$
\int_{C_\delta(p)} w^*\lambda
$$
has a finite limit as $\delta \to 0$ for all punctures. Now let $\varphi \in \CC$ and let $\varphi_n \in \CC$ such that
$\|\varphi - \varphi_n\|_{C^0} \to 0$ and
$\varphi_n\circ f = 0$ on $D_{\frac{1}{n}}(p)$ for all punctures $p$. Such functions exist by
the assumption on properness of the potential function $f$. Moreover we can choose $\varphi_n$ so that
$$
\int_{\dot\Sigma}(\varphi_n \circ f)\, df \wedge w^*\lambda = \int_{\dot \Sigma} w^*d(\psi_n w^*\lambda)
- \int_{\dot \Sigma} (\psi_n \circ f) w^*d\lambda,
$$
where $\psi_n(s) = \int_{-\infty}^s \varphi_n(\sigma)\, d\sigma$. Notice that $\psi_n\circ f = 1$ in
$D_{\frac{1}{n}}(p)$ when $p$ is a positive puncture and $\psi_n \circ f = 0$ therein when $p$ is negative.
By Stokes' theorem,
$$
\int_{\dot \Sigma} w^*d(\psi_n \lambda) = \lim_{\delta \to 0} \sum_{\ell^+}
\int_{\del_{\ell^+} \Sigma(\delta)} w^*\lambda
$$
where the sum is taken over all positive punctures $p_{\ell^+}$. Therefore
\beastar
\int_{\dot \Sigma} (\varphi_n \circ f)\, df \wedge w^*\lambda & = & \lim_{\delta \to 0}
\sum_{\ell^+} \int_{\del_{\ell^+} \Sigma(\delta)} w^*\lambda - \int_{\dot \Sigma}(\psi_n \circ f)\, w^*d\lambda \\
& \leq & \lim_{\delta \to 0} \sum_{\ell^+}\int_{\del_{\ell^+}D_\delta(p)} w^*\lambda < C' < \infty.
\eeastar
Moreover
$$
\int_{\dot \Sigma} (\varphi_n \circ f)\, df \wedge w^*\lambda \rightarrow \int_{\dot \Sigma}
(\varphi\circ f)\, df \wedge w^*\lambda
$$
as $n \to \infty$, which implies
$$
\int_{\dot \Sigma} (\varphi \circ f)\, df  \leq C',
$$
and so $E(w) \leq E^\pi(w) + C' < \infty$. This finishes the proof.

\end{document}